\documentclass[draft,12pt,reqno,a4paper]{amsart}

\usepackage[margin=3cm,footskip=1cm]{geometry}

\usepackage{amssymb} 
\usepackage{amsmath} 
\usepackage{amsfonts}
\usepackage{amsthm} 
\usepackage{mathtools}
\usepackage{color}
\usepackage{enumerate} 
\usepackage[latin1]{inputenc}
\usepackage[shortlabels]{enumitem}
\usepackage{color}
\usepackage{graphicx} 
\usepackage{verbatim}

\newcommand{\supp}{\operatorname{supp}}

\newcommand{\dist}{\operatorname{dist}}

\newcommand{\dbar}{\bar{\partial}}

\newcommand{\grad}{\operatorname{grad}}

\DeclareMathOperator\ran{ran}

\newcommand{\ad}{{\operatorname{ad}}}
\newcommand{\N}{{\mathbb{N}}}

\newcommand{\R}{{\mathbb{R}}}
\newcommand{\Z}{{\mathbb{Z}}}
\newcommand{\C}{{\mathbb{C}}}

\newcommand{\T}{{\mathbb{T}}}

\DeclareFontFamily{U}{mathx}{\hyphenchar\font45}
\DeclareFontShape{U}{mathx}{m}{n}{
      <5> <6> <7> <8> <9> <10>
      <10.95> <12> <14.4> <17.28> <20.74> <24.88>
      mathx10
      }{}
\DeclareSymbolFont{mathx}{U}{mathx}{m}{n}
\DeclareFontSubstitution{U}{mathx}{m}{n}
\DeclareMathAccent{\widecheck}{0}{mathx}{"71}

\renewcommand\i{\mathrm{i}}

\newcommand{\p}{{\mathrm p}}

\renewcommand{\c}{{\mathrm c}}

\newcommand{\s}{{\mathrm s}}
\newcommand{\e}{{\mathrm e}}

\renewcommand{\d}{{\mathrm d}}

\newcommand{\unif}{{\mathrm {unif}}}

\newcommand{\pupo}{{\mathrm {pp}}}

\newcommand{\ener}{{\mathrm {ener}}}
\newcommand{\wave}{{\mathrm {wave}}}
\newcommand{\ac}{{\mathrm {ac}}}
\renewcommand{\sc}{{\mathrm {sc}}}

\renewcommand{\Re}{\operatorname{Re}}
\renewcommand{\Im}{\operatorname{Im}}

\DeclarePairedDelimiter\inp\langle\rangle

\newcommand\parb[2][]{#1 \big ( #2#1\big )}

\newcommand\parbb[2][]{#1 \Big ( #2#1\Big )}
 
 \newcommand{\pp}{{\mathrm {pp}}}
 \renewcommand{\exp}{{\mathrm
    {exp}}} 
\newcommand{\mand}{\text{ \,and\, }}

\newcommand{\cas}{{\textrm {the Cauchy--Schwarz inequality }}}
\newcommand{\caS}{{\textrm {the Cauchy--Schwarz inequality}}}

\DeclarePairedDelimiter\ket{\lvert}{\rangle}
\DeclarePairedDelimiter\bra{\langle}{\rvert}

\DeclareMathOperator*{\slim}{s-lim}

\DeclareMathOperator*{\vHlim}{{\mathcal  H}-lim}

\DeclarePairedDelimiter\abs\lvert\rvert
\DeclarePairedDelimiter\norm\lVert\rVert
\DeclarePairedDelimiter\set{\{}{\}}
\DeclarePairedDelimiter\comm{[}{]}

\newcommand\Step[1]{ 
  \par\bigskip
  \noindent
  \textbf{#1}.\enspace
}

\newcommand{\bD}{{\mathbf D}}

\newcommand{\bM}{{\mathbf M}}

\newcommand{\bZ}{{\mathbf Z}}
\newcommand{\bY}{{\mathbf Y}}
\newcommand{\bX}{{\mathbf X}}

\def\sq{\sqrt}
\def\pa{\partial}

\def\q{\quad}
\def\qq{\qquad}
\def\wt{\widetilde}
\def\wc{\widecheck}
\def\wcH{\widecheck H}
\def\t{\tilde}

\def\br{\breve}

\newcommand{\vA}{{\mathcal A}}

\newcommand{\vD}{{\mathcal D}}
\newcommand{\vE}{{\mathcal E}}
\newcommand{\vF}{{\mathcal F}}
\newcommand{\vG}{{\mathcal G}}

\newcommand{\vH}{{\mathcal H}}

\newcommand{\vL}{{\mathcal L}}

\newcommand{\vO}{{\mathcal O}}

\newcommand{\vT}{{\mathcal T}}

\newcommand{\vS}{{\mathcal S}}

\theoremstyle{plain}
\newtheorem{thm}{Theorem}[section]
 
\newtheorem{defns}[thm]{Definitions} 
\newtheorem{proposition}[thm]{Proposition}
\newtheorem{lemma}[thm]{Lemma} \newtheorem{corollary}[thm]{Corollary}
\newtheorem{cond}[thm]{Condition}
\theoremstyle{definition}

 \newtheorem{remark}[thm]{Remark}
\newtheorem{remarks}[thm]{Remarks}
\newtheorem*{remarks*}{Remarks}
\newtheorem*{remark*}{Remark}

\numberwithin{equation}{section}

\title {Finite energy  subspace for time-periodic   Schr\"odinger operators}

\author{E. Skibsted} \address[E. Skibsted]{Institut for Matematik\\
Aarhus Universitet\\ Ny Munkegade 8000 Aarhus C, Denmark}
\email{skibsted@math.au.dk}

\begin{document}

\begin{abstract} For $N$-body  Schr\"o\-dinger operators with
  time-periodic  short-range pair-potentials we show by a time-dependent commutator method that all channel wave operators
  exist.
For $N=2$ we prove  asymptotic completeness by a simplified version of the  method, recovering Yajima's completeness result \cite{Yaj1} proven by a stationary method.
 We propose a definition of a \emph{finite energy 
    subspace}, intuitively consisting of states  with `finite asymptotic energy'. This 
  geometric notion is used to characterize the \emph{wave  operator 
    subspace} given as the direct sum of the ranges of the channel wave operators. Thus our main result states that the two subspaces coincide. In turn they coincide for $N= 2$  with the orthogonal subspace of the pure point subspace of
    the monodromy  operator (by asymptotic
  completeness). For $N\geq 3$  asymptotic
  completeness for time-periodic  short-range potentials  remains an open problem. The  main result of the paper may in this case potentially   serve as an
  intermediate step for proving (or possibly disproving) asymptotic completeness. Another potential ingredient could be a   key intermediate result of the paper,   asserting  that the states  orthogonal to the pure point subspace obey a (sharp)   \emph{minimal velocity bound}. Thus it remains an open problem possibly to derive a good maximal velocity bound for $N\geq 3$. Our results apply to $N$-body systems of particles in a time-periodic external electric field with time-mean equal to zero, for example the AC-Stark model.
  \end{abstract}

\allowdisplaybreaks

\maketitle

\medskip
\noindent
Keywords: $N$-body time-periodic Schr\"odinger operators; short-range   scattering
theory; wave operator and finite energy subspaces.

\medskip
\noindent
Mathematics Subject Classification 2010: 81Q10,  35P05.
\tableofcontents

\section{Introduction}\label{sec:Introduction} The large time behaviour of a quantum-mechanical system of $N$ particles interacting by time-independent short-range pair-potentials is well understood: In the center of mass framework the channel wave operators exist and they are complete.  This  asymptotic completeness result constitutes  a satisfactory  large time description of  the evolution of the scattering states. In the present paper we will study scattering problems for time-dependent (more precisely) time-periodic short-range many-body Hamiltonians. Although  asymptotic completeness is not shown/known, we do obtain various new related results. Our principle tool is commutator methods.

\subsection{The  time-independent model}\label{subsec:TIMEin} We  briefly recall the
mentioned  completeness result for the  time-independent case,  partially to motivate the scattering problems to be studied in our paper. 
Thus let us consider $n$-dimensional  particles $ j=1, \dots,  N$ interacting by short-range
pair-potentials
\begin{equation}\label{eq:SHRT}
	V_{ij} (x_i -x_j)=\vO(\abs{x_i -x_j}^{-1-2\mu}), \quad \mu>0.
\end{equation}
 The total 
Hamiltonian  reads 
\begin{equation}\label{eq:HamNoTime}
H=H^{a_{\max}}=-\sum_{j = 1}^N \frac{1}{2m_j}\Delta_{x_j} + \sum_{1 \le i<j \le N} V_{ij} (x_i -x_j).
\end{equation} It acts  
on the Hilbert space $\vH=L^2(\bX)$, where the centre of mass space
\begin{equation*}
  \bX=\bX^{a_{\max}}  =\set[\Big]{ x=(x_1,\dots ,x_N)\in \R^n\times \dots \times  \R^n\
    \mid\sum_{j=1}^{N} m_j x_j =
0}. 
\end{equation*}
 For the cluster decompositions  $a\neq a_{\max}=\set{1, \dots,  N}$
there are  sub-Hamiltonians $H^a$  defined similarly
acting on   $L^2(\bX^a)$, where   $\bX^a\subsetneq \bX$ consists of the  vectors for which the centre  of mass of each of the clusters of $a$ vanishes.

For any  \emph{channel}
$\alpha =(a,\lambda^\alpha, u^\alpha)$, i.e.   $a$ be  a cluster
decomposition, $a\neq a_{\max}$, $\lambda^\alpha\in \R$ and  
$(H^a-\lambda^\alpha)u^\alpha=0$ for a normalized $u^\alpha\in
L^2(\bX^a)$,  the corresponding forward and backward \emph{channel wave operators} $W_\alpha^{\pm}$  are  given by
\begin{subequations}
 \begin{equation}\label{eq:wave_opq}
  W_{\alpha}^{\pm}\psi_a=\lim_{t\to \pm\infty}\e^{\i tH}\parbb{\parb{\e^{-\i t\lambda^\alpha }u^\alpha }\otimes
\parb{\e^{\i t \Delta_{x_a}}\psi_a}};\quad \psi_a\in L^2(\bX_a).
 \end{equation} Here we have decomposed orthogonally $\bX =\bX^a\oplus \bX_a$. The existence of the channel wave operators as well as
 their  \emph{completeness}, i.e. the property  that the
 absolutely continuous subspace  of $H$ is  spanned by their ranges
 \begin{equation}\label{eq:ACo}
   \sum_\alpha^\oplus \, \ran(W_\alpha^\pm)=\vH_{\rm ac}(H)=\vH_{\rm{ pp}}^{\perp}(H),
 \end{equation} 
\end{subequations}
is a well-studied   problem in mathematical physics and since decades ago  definitively solved,  see   \cite{SS, Gra, De, Ya1,   DG,  HS,  Is}. Some of the mentioned works (partly being  reviews) include scattering theory for  long-range potentials, however   all treatments are based on  commutator methods. The present paper  relies on commutator methods as well,  used somewhat in the same spirit as  in the  literature. More generally   our  paper  depends on ideas  from the mentioned literature  as well as   on ideas and results  from \cite {Yaj1, KY, MS}, specifically   devoted to time-dependent problems. 

\subsection{A physical   time-dependent model}\label{subsec:PhysTIME}
 Now let us add $-\sum_{j = 1}^N \,q_j\  E(t)\cdot x_j $ as a new  term on the right-hand side of (\ref{eq:HamNoTime}), where $ E(t)\in \R^n$ is any  time-periodic vector  with zero time-mean  and $q_j$ is the charge of the $j$th particle. This Hamiltonian models  a  quantum-mechanical system of charged particles in a constant time-periodic external electric field with time-mean equal to zero and interacting by short-range
pair-potentials. Again   we can trivially factor out the center of mass motion, leading us to  study the (reduced)   evolution in $L^2(\bX)$, which is now generated by a time-periodic Hamiltonian $H=H(t)$.
 It is   natural to  ask the same question, how do the `scattering states' in $L^2(\bX)$ evolve? This is the topic of the present paper.

 Let us for simplicity  assume that the period of $ E(t)$ is one. Hence we assume that
 \begin{equation}
    E(t+1)= E(t)\mand \int_0^1\,  E(t)\, \d t=0.
 \end{equation} Let us write the generator $H(t)$ in the following way. Using as before the notation  $x=(x_1, \dots, x_N)\in (\R^n)^N$ and $y=(y_1, \dots, y_N)\in (\R^n)^N$, the expression 
 \begin{equation*}
   x\cdot y= \sum_{j = 1}^N \,2m_jx_j\cdot y_j
 \end{equation*} defines an inner product   for which the total Hamiltonian splits into a direct sum,
 \begin{equation*}
   H(t)\otimes I +I\otimes H_{\rm CM}(t),
 \end{equation*}
  corresponding to the orthogonal splitting $(\R^n)^N=\bX\oplus \bX_{\rm CM}$. Let $Q=q_1+\cdots q_N$,  $M=m_1+\cdots m_N$ and
 \begin{equation*}
   \vE(t)= \tfrac 12 \parbb{\parb{\tfrac {q_j} {m_j}-\tfrac Q {M}}E(t), \dots, \parb{\tfrac {q_N} {m_N}-\tfrac Q {M}}E(t)}.
 \end{equation*} Then $ \vE(t)\in \bX$, and we can express the centre of mass Hamiltonian $H(t)$ in terms of the momentum operator  $p=-\i\nabla_x$ as
 \begin{equation*}
   H=H(t)= p^2-\vE(t)\cdot x+ \sum_{1 \le i<j \le N} V_{ij} (x_i -x_j).
 \end{equation*} Clearly, unless the charge-mass ratio $\tfrac {q_j} {m_j}=\tfrac Q {M}$ for all $j=1,\cdots, N$, this Hamiltonion  has a non-trivial time-dependence, although of course being periodic  with zero time-mean. It generates a dynamics,
 \begin{align*}
   \i\frac{\d}{\d t}{U}(t,s)\psi =
 H(t){U}(t,s)\psi\mand {U}(s,s)=I,
 \end{align*}
  and we are interesting in understanding the behaviour of the evolution  ${U}(t,0)\psi$ of  states $\psi $ being orthogonal to the pure point subspace     of the \emph{monodromy operator}  $U(1,0)$. (These states  are the scattering states of our model.)

  \subsubsection{A reduction of  the physical   model}\label{subsubsec:redu}
  Following \cite{MS} it is convenient to apply a Galilean transformation to simplify the evolution. Hence let
 \begin{align*}
   &b(t)=\int_0^t\, \vE(s)\,\d s-b_0, \q b_0=\int_0^1\, \int_0^t\,\vE(s)\,\d s\,\d t, \\
   &c(t)=2\int_0^t\, b(s)\,\d s-c_0, \q c_0=2\int_0^1\, \int_0^t\,b(s)\,\d s\,\d t, \\
   &\alpha(t)=\int_0^t\, |b(s)|^2\,\d s-t\alpha_0, \q \alpha_0=\int_0^1\, |b(s)|^2\,\d s.
 \end{align*} Here  $b(t),c(t)\in \bX$ and  these  vectors  are periodic with period one and  zero mean, and the scalar $\alpha(t)$ is periodic as well. Defining then
 \begin{subequations}
 \begin{equation}\label{eq:Sgal}
   S(t)= \e ^{-\i \alpha(t)}\exp ( \i b(t)\cdot x)\exp( -\i c(t)\cdot p),
 \end{equation} we  can record that
 \begin{align*}
   S(t)^*H(t)S(t)&=H_{\rm G}(t)-\i\parb{\tfrac \d{\d t}S(t)^*}S(t)+\alpha_0;\\
   H_{\rm G}(t)&=p^2 +\sum_{1 \le i<j \le N} V_{ij} (x_i -x_j+ c(t)_i-c(t)_j),
 \end{align*} Therefore  
 \begin{equation}\label{eq:conjug}
   S(t)^*U(t,s)S(s)= \e ^{\i (s-t) \alpha_0} U_{\rm G}(t,s),
 \end{equation} where $U_{\rm G}$ denotes  the  dynamics
 \begin{align*}
   \i\frac{\d}{\d t}{U_{\rm G}}(t,s)\psi =
 H_{\rm G}(t){U_{\rm G}}(t,s)\psi\mand {U_{\rm G}}(s,s)=I.
 \end{align*}
   \end{subequations} Clearly  the generator $H_{\rm G}(t)$ is a time-dependent Schr\"odinger operator with time-periodic short-range pair-potentials. The fundamental question is then:  How does the  evolution  ${U_{\rm G}}(t,0)\psi$ of  states $\psi $ being orthogonal to the pure point subspace  of the monodromy operator $U_{\rm G}(1,0)$, say denoted by $\vH_{{\rm G},\rm{ pp}}$,   behave?  

 Motivated by the formulation of completeness for the time-independent  short-range case we are lead to consider the following  conjecture on this problem.

For any  \emph{channel}
$\alpha =(a,\lambda^\alpha, u^\alpha)$, i.e.  $a$  be  a  cluster decomposition  $a\neq a_{\max}$, $\lambda^\alpha\in [0,2\pi)$ and  
$U^a_{\rm G}(1,0)u^\alpha = \e^{-\i \lambda^\alpha }u^\alpha$ for a normalized $u^\alpha\in
L^2(\bX^a)$, we introduce the corresponding \emph{channel wave
  operators}  $W_{{\rm G},\alpha}^{\pm}$  by 
\begin{subequations}
\begin{equation}\label{eq:wave_op0}
  W_{{\rm G},\alpha}^{\pm}\psi_a=\lim_{t\to \pm\infty}U_{\rm G}(0,t)\parbb{\parb{\e^{-\i \lambda^\alpha [t]}U_{\rm G}^a(t- [t])u^\alpha} \otimes
\parb{\e^{-\i tp_a^2} \psi_a}};\, \psi_a\in L^2(\bX_a).
 \end{equation} Here     $ [t]\in [0,1)$ denotes  the integer
 part of  $t\in \R$, $p_a=-\i \nabla_{x_a}$ and as before we have decomposed orthogonally $\bX =\bX^a\oplus \bX_a$. Note also that $U_{\rm G}^a(t)=U_{\rm G}^a(t,0)$
 denotes the dynamics generated by sub-Hamiltonians $H_G^a(t)$  defined similarly as $H_G(t)$ (but acting on $L^2(\bX^a)$ rather than on $L^2(\bX)$). 
 Since asymptotic completeness is not known in this  setting let us  state it as  a \underline{conjecture}:  
 \begin{equation}\label{eq:ACo2}
   \vH_{{\rm G},\rm{ wave}}^\pm:=\sum_\alpha^\oplus \, \ran( W_{{\rm G},\alpha}^{\pm})=\vH_{{\rm G},\rm{ pp}}^{\perp}.
 \end{equation}  
\end{subequations}
If (\ref{eq:ACo2}) was known,  we would then have a similar \emph{completeness} result  for the original time-dependent problem. This would  be  given be applying the Galilean transformation in the opposite direction. More precisely note for example the property  $\vH_{\rm{ pp}}^{\perp}=S(0)\vH_{{\rm G},\rm{ pp}}^{\perp} $, where $\vH_{\rm{ pp}}$ is the pure point subspace  of the monodromy operator $U(1,0)$. Similarly the operators $S(0)W_{{\rm G},\alpha}^{\pm}S(0)^*_a$ are nothing but the physical channel wave
  operators, see   the computation \eqref{eq:sWave} and (\ref{eq:wave_op022}) below. Hence a rather satisfactory  large time description of  the evolution of all  scattering states of the  physical model would follow.

Although (\ref{eq:ACo2}) remains  unproven  in general, let us mention that the result   is known for $N=2$,  as well as for $N=3$ or bigger under  strong additional conditions, see \cite{Yaj1, Yaj2, Na}. The known  conditions for $N\geq 3$ appear  very  strong, and  one could suspect  that in general (\ref{eq:ACo2}) might  not be valid (potentially rooted in examples from  Classical Mechanics?).  At the practical level  there are two  obstacles one (presumably) would  need to overcome for possibly proving  (\ref{eq:ACo2}):

1) Does the evolution $\psi_G(t)=U_{\rm G}(t,0)\psi$, $\psi\in\vH_{{\rm G},\rm{ pp}}^{\perp}$, concentrate in the  strictly outgoing/incoming phase space regions for $t\to \pm \infty$, respectively?

2)  Does the evolution  $\psi_G(t)$   concentrate in the finite kinetic energy region?

Anyway,   if (\ref{eq:ACo2}) was known, then   these problems  would   have confirmative answers.
They are non-trivial since  the Hamiltonian is time-dependent. Quite opposite  there are short direct confirmative answers in the 
time-independent case.

Let us elaborate on 2) by defining the \emph{finite energy   subspaces}
    $\vH^\pm_{{\rm G},\ener}$ as 
      \begin{equation*}
     \vH^\pm_{{\rm G},\ener}= \set[\big]{\psi\in \vH_{{\rm G},\rm{ pp}}^{\perp} \mid  \lim_{E\to \infty}\limsup_{t\to \pm\infty}\, \norm{F(p^2>E)\psi_G(t)}=0}.
   \end{equation*} 
     (Here and below $F(\cdot >E)=1_{(E,\infty)}$ and $F(\cdot <E)=1_{(-\infty,E)}$.)  Then the  precise confirmative answer to 2) would be the assertion  $\vH_{{\rm G},\rm{ pp}}^{\perp}=\vH^\pm_{{\rm G},\ener}$. This property   is not known.

     \subsection{Results for the physical and the reduced physical  models}\label{subsec:resultsOBT}
The main result of this paper stated for the reduced physical  model is (up to milder conditions supplementing \eqref{eq:SHRT} and proven in a generalized setting) the assertion
    \begin{equation}\label{eq:ACo23}
   \vH_{{\rm G},\rm{ wave}}^\pm=\sum_\alpha^\oplus \, \ran( W_{{\rm G},\alpha}^{\pm})=\vH^\pm_{{\rm G},\ener}.
 \end{equation}
 In addition we show that for $N=3$
 \begin{equation}
   \label{eq:ortFor}
   \vH^\pm_{{\rm G},\ener}=\vH_{{\rm G},\rm{ pp}}^{\perp}  \ominus \vH_{{\rm G},{\rm ie}}^\pm ,
 \end{equation}
   where the \emph{increasing energy  subspaces} are  given as
 \begin{equation*}
     \vH_{{\rm G},{\rm ie}}^\pm= \set[\big]{\psi\in \vH_{{\rm G},\rm{ pp}}^{\perp} \mid  \forall E>0:\q \lim_{t\to \pm\infty}\, \norm{F(p^2< E)\psi_G(t)}=0}.
   \end{equation*}

   While (\ref{eq:ortFor}) is not known for $N\geq 4$, we do show that if $\vH_{{\rm G},{\rm ie}}^\pm =\set{0}$ for $H_{\rm G}$ as well as for all sub-Hamiltonians then indeed asymptotic completeness $\vH_{{\rm G},\rm{ wave}}^\pm=\vH_{{\rm G},\rm{ pp}}^{\perp}$ holds (which for $N=3$ is a consequence of (\ref{eq:ACo23}) and (\ref{eq:ortFor})). 
   
  In the process of proving (\ref{eq:ACo23})  we prove the localization assertion in  1) for all states $\psi\in \vH_{{\rm G},\rm{ pp}}^{\perp}$. Hence for such states the evolution  $\psi_G(t)$  indeed   concentrates in the  strictly outgoing/incoming phase space regions for  $t\to \pm\infty$. In particular  $\psi_G(t)$  concentrates  in   the strictly positive  kinetic energy region (which already    is a non-trivial property due to  the time-dependence of the Hamiltonian).

The analogue of (\ref{eq:ACo23}) for the  physical model is
\begin{equation}\label{eq:ACo23P}
   \sum_\alpha^\oplus \, \ran( W_{\alpha}^{\pm})=\vH^\pm_{\ener},
 \end{equation}
 \begin{subequations} where (given in terms of  the  canonical splitting $S(t)=S(t)^a\otimes S(t)_a$) 
   \begin{align}
     \label{eq:sWave}
     \begin{split}
     W_{\alpha'}^{\pm}&=S(0)W_{{\rm G},\alpha}^{\pm}S(0)^*_a,\\  \alpha =(a,\lambda^\alpha, u^\alpha),\q
      \alpha'&= (a,\lambda^\alpha+\alpha_0^a, S(0)^au^\alpha),\q \alpha_0^a=\int_0^1\, |b(s)^a|^2\,\d s,
     \end{split}
   \end{align}
     are the physical channel wave
  operators (appearing here reparametrized) and 
   \begin{equation*}
     \vH^\pm_{\ener}= \set[\big]{\psi\in \vH_{\rm{ pp}}^{\perp} \mid  \lim_{E\to \infty} \limsup_{t\to \pm\infty}\, \norm{F(p^2>E)\psi(t)}=0};\q \psi(t)=U(t,0)\psi.
   \end{equation*}

   As for (\ref{eq:sWave}) we note that the expression 
  $p_a^2-\vE(t)_a\cdot x_a$  defines a Hamiltonian on $L^2(\bX_a)$ generating a dynamics $U_a$ for which there exist the limits   
 \begin{equation}\label{eq:wave_op022}
  W_{\alpha}^{\pm}\psi_a=\lim_{t\to \pm\infty}U(0,t)\parbb{\parb{\e^{-\i \lambda^\alpha [t]}U^a(t- [t])u^\alpha} \otimes
\parb{U_a(t)\psi_a}},\, \psi_a\in L^2(\bX_a).
\end{equation} Here  $U^a(t)=U^a(t,0)$
 denotes the dynamics generated by sub-Hamiltonians $H^a(t)$  defined similarly as $H(t)$. 
 In fact using \eqref{eq:Sgal} and \eqref{eq:conjug} we deduce that  the  limits   exist  by  applying $S(0)$ to both sides of \eqref{eq:wave_op0}. The right-hand side simplifies as $W_{\alpha'}^{\pm}S(0)_a\psi_a$. In particular $W_{\alpha'}^{\pm}$ exist and therefore in turn also $W_{\alpha}^{\pm}$. Obviously we obtained the formula (\ref{eq:sWave}) as well.
\end{subequations}

As for (\ref{eq:ACo23P})   we obtain the formula by  applying $S(0)$ to both sides of  (\ref{eq:ACo23}) and then  use  \eqref{eq:sWave},  (\ref{eq:wave_op022}) and the fact that  $\vH^\pm_{\ener}=S(0)\vH^\pm_{{\rm G},\ener}$. Of course completeness would amount to (\ref{eq:ACo23P}) with the right-hand side being replaced by $\vH_{\rm{ pp}}^{\perp} $.

For  $N=3$ completeness  is equivalent to 
 the property  $ \vH_{{\rm ie}}^\pm =\set{0}$,  where
 \begin{equation*}
     \vH_{\rm ie}^\pm= \set[\big]{\psi\in \vH_{\rm{ pp}}^{\perp} \mid  \forall E>0:\,\, \lim_{t\to \pm\infty}\, \norm{F(p^2< E)\psi(t)}=0},
   \end{equation*} since in this case  
   \begin{equation}
     \label{eq:ortForPhy}
     \vH^\pm_{\ener}=\vH_{\rm{ pp}}^{\perp}  \ominus \vH_{{\rm ie}}^\pm. 
   \end{equation}
   Note that (\ref{eq:ortForPhy}) for  $N=3$  follows similarly from the analogous assertion  \eqref{eq:ortFor}. For $N\geq 4$
   the  criterion for completeness (similarly derived) reads that $\vH_{{\rm ie}}^\pm =\set{0}$ for $H$ as well as for all sub-Hamiltonians.
   
The states in $\vH_{\rm ie}^\pm$ correspond in  Classical Mechanics to orbits  with increasing kinetic energy $p(t)^2\to \infty$. It is not known if such orbits exist (ideally  they are  absent).

Although our analysis is mainly concentrated on the study of $\vH^\pm_{{\rm G},\ener}$ (and hence including  $\vH^\pm_{\ener} $) we obtain a new propagation estimate     for states in 
$\vH_{{\rm G},\rm{ pp}}^{\perp}$. This is the following pointwise \emph{minimal velocity bound}. 
 \begin{equation}
\label{eq:decayInx22G}
  \forall \psi\in \vH_{{\rm G},\rm{ pp}}^{\perp} \,\forall \epsilon>0:\,\, \lim_{|t|\to \infty}\, \norm{F(|x|< |t|^{1-\epsilon})\psi_G(t)}=0.
\end{equation} Of course a similar assertion  is valid  for  states in    $\vH_{\rm{ pp}}^{\perp}$. This    minimal velocity bound is important  for our proof of (\ref{eq:ACo23}), however we believe it has   independent interest. Note in particular that the lower velocity bound is valid for states where  we do not know (and possibly  maybe not even expect to have) a finite upper velocity bound. For the time-independent model energy conservation yields lower as well as upper velocity bounds, see for example \cite[Proposition  4.11.1, Theorem 4.13.1]{DG} and \cite[Theorems 3.8, 3.11, 3.12]{Is} (including  integral versions of the
bounds).  

\subsubsection{Concluding remarks}\label{subsubsec:Concluding remarks}
Clearly  (\ref{eq:decayInx22G}) is a characterizing geometric property of  the space $\vH_{{\rm G},\rm{ pp}}^{\perp}$, and we also remark  that the integral  local decay estimates from \cite{MS} (see \eqref{eq:LocalDecay}) are related to (\ref{eq:decayInx22G}), however still rather different. These integral estimates are important for us throughout the paper, and in particular they are used to derive (\ref{eq:decayInx22G}) (explicitly we exploit in this case \eqref{eq:LocalDecay} with $r=0$ and $s> \tfrac 12$ arbitrary). The proof  is somewhat    complicated. Besides the mentioned  estimates from \cite{MS} it is based on commutator methods (as for all of the paper) and in particular on unbounded  time-dependent \emph{propagation observables}. For other purposes  we  shall frequently
use the concept of bounded propagation observables.

A  main goal  of our work is to see how far the techniques  of positive commutators can be pushed in the direction of proving (or possibly disproving) the completeness conjecture. These tools  are  rather different from   Kato's  perturbative smoothness technique   (originating in  \cite{Ka1}) on which the mentioned works \cite{Yaj1, Yaj2, Na} rely. Although we do not achieve the ultimate  completeness result, our efforts yield   new related results of geometric nature (as outlined above). Apart from a (quantum)   smoothing effect,    the positive commutator techniques are strongly related  to (and in fact possibly   limited of) Classical Mechanics. This is further emphasized in this paper in that we only consider sufficiently smooth potentials. However even with this assumption it is a major  challenge to see that various `quantum errors' of the
techniques are `negligible', for example and  typically  implemented by the integral  local decay estimates from \cite{MS}. As a general feature it follows that   the classical Poisson bracket determines  the desired content of  commutator positivity, as expected.

 \subsection{Related literature and organization  of  the paper }\label{subsec:contOrg}
 There are other works on  scattering for a many-body quantum-mechanical system of charged particles in a constant  external electric field,  time-independent or time-periodic, see \cite{AT, HMS, Ad}.  However for these works  the  time-mean is  nonzero, which dramatically amounts to different physics as well as  different  methods of proof. In particular asymptotic completeness is known  for the nonzero time-mean models. The present paper appears more premature in the sense that asymptotic completeness is not proven (or disproven).

 Let us also mention the works \cite{KY, BM} whose  results    are given in  the same spirit as (\ref{eq:ACo23}) (although being  different).    More precisely their main results    give  geometric characterizations  of states in the range of certain wave operators, and clearly (\ref{eq:ACo23}) has the same nature.

 The  derivation of (\ref{eq:decayInx22G}) is  given in Section \ref{sec:Preliminary localization results}.
 We do a further analysis in Section \ref{sec:Localization to  outgoing  region} and introduce an appropriate \emph{asymptotic observable} to show that the forward  evolution indeed concentrates  in  the strictly outgoing region.  The setup in Sections \ref{sec:Proof of {eq:Strength}} and \ref{sec:Asymptotic clustering}  is more `time-independent like', hence in some sense  simpler to deal with. On the other hand the analysis of  Sections \ref{sec:Proof of {eq:Strength}} and \ref{sec:Asymptotic clustering} involves a new complicated \emph{square root commutation problem}. We overcome it by invoking the  full strength of the integral  local decay estimates from \cite{MS} (which amounts to the  explicit content of  momentum  space decay given by \eqref{eq:LocalDecay} with  $r>0$). As a main result we prove an \emph{asymptotic clustering} property for states in $\vH^+_{{\rm G},\ener}$, which is an important ingredient of the proof of (\ref{eq:ACo23})  (given for simplicity only for the plus case). This concept can also be used to characterize the asymptotic completeness property  (\ref{eq:ACo2}) (this is done in Section \ref{sec:Proof
  of Theorem (N)}). In  Sections \ref{sec:Existence of
  channel wave operators}-\ref{sec:Proof
  of Theorem (N)} we prove various results  stated in Subsection \ref{subsec:Main result}. The  proofs are based on Yafaev's constructions/observables \cite{Ya1},  to be  recalled  in Section \ref{sec: Yafaev type
  constructions}. These  results along with the needed results from \cite{MS} are stated in the familiar setting of  generalized Schr\"odinger operators. We devote Subsection \ref{subsubsec::Short-range N-body Hamiltonians} to an account of  conditions, notation, extracts from  \cite{MS}  and other  prerequisites.

\section{$N$-body Hamiltonians, notation  
  and  results}\label{sec:body Hamiltonians, limiting absorption
  principle  and notation}

\subsection{Generalized $N$-body short-range  Hamiltonians}\label{subsubsec::Short-range N-body Hamiltonians}

Let $\bX$ be                    
a (nonzero) finite dimensional real inner product space,
equipped with a
finite family $\{\bX_a\}_{a\in \vA}$ of subspaces closed under intersection:
For any $a,b\in\mathcal A$ there exists $c\in\mathcal A$, denoted
$c=a\vee b$,  such that $\bX_a\cap\bX_b=\bX_c$.
 We
 order $\vA$ by  writing $a\leq b$ (or equivalently as $b\geq a$) if
$\bX_a\supseteq \bX_b$. 
It is assumed that there exist
$a_{\min},a_{\max}\in \vA$ such that 
$\bX_{a_{\min}}=\bX$ and 
$\bX_{a_{\max}}=\{0\}$. We will use the abbreviated notation
$a_0=a_{\max}$. Let $\vA_1=\vA\setminus
  \{a_{0}\} $,  $\vA_2=\vA\setminus
  \{a_{\min}\} $ and $\vA_3=\vA_1\cap
\vA_2$. For  $a\in\vA_2$ the subspaces $\bX_a$   are called  \emph{collision
 planes}. For    $a\in\vA_1$ we will  use the notation $d_a=\dim
\mathbf X_a$ and in particular  the abbreviated notation
$d=d_{a_{\min}}=\dim \mathbf X$.  
 
For any   \emph{chain}  $a_{\min}=a_1\lneq
\cdots \lneq a_n=a_{0}$ in $\vA$, the integer $n\geq 2$ is referred to as its
\emph{length}. The model is referred to as an $N$-body model  if    $N$ is 
 the biggest appearing  length in $\vA$. Hence the $2$-body model is
based on the structure 
 $\vA=\set{a_{\min},a_{0}}$. The   $3$-body model  is given  by the
 properties $ \vA_3 \neq \emptyset$  and 
 that
 $\bX_a\cap\bX_b=\set{ 0}$ for all  different $a,b\in \vA_3$. 
 The condition of this  paper, stated below as Condition
   \ref{cond:smooth2wea3n12}, is  a  short-range condition. In particular under this condition the $N$-body scattering  theory has a  canonical
   meaning (as to be demonstrated). While the  short-range scattering
   theory for the $2$-body   model is fairly well understood \cite{Ho,
     Yaj1}, our knowledge on   many-body scattering  theory   is rather limited.

Let $\bX^a\subseteq\bX$ be the orthogonal complement of $\bX_a\subseteq \bX$,
and denote the associated orthogonal decomposition of $x\in\bX$ by 
$$x=x^a\oplus x_a=\pi^ax\oplus \pi_ax\in\bX^a\oplus \bX_a.$$ 
The vectors $x^a$ and $x_a$ may be  called the \emph{internal
  component}  and the 
\emph{inter-cluster component}  of $x$, respectively. The momentum operator
$p=-\i \nabla$ decomposes similarly, $p=p^a\oplus p_a$. For $a\neq a_{\min} $ the  unit sphere  in $\mathbf X^{a}$ is denoted
by $\mathbf{S}^{a}$, while for $a\neq a_{0} $ the  unit sphere  in $\mathbf X_{a}$ is denoted
by $\mathbf{S}_{a}$. We shall use the notation 
$r(x)=\inp{x}=\parb{1+\abs{x}^2}^{1/2}$ for  $x\in \mathbf X$ (or
possibly for another normed space).  For $x\in \mathbf X\setminus\set{0}$ we
write $\hat x=x/\abs{x}\in \mathbf{S}^{a_0}$.

A real-valued measurable function $V\colon\bX\to\mathbb R$ is 
a \textit{potential of many-body type} 
if there exist real-valued measurable functions
$V_a\colon\bX^a\to\mathbb R$ such that 
\begin{equation*}
V(t,x)=\sum_{a\in\mathcal A}V_a(t,x^a)\ \ \text{for }x\in\mathbf X.
\end{equation*} We take $V_{a_{\min}}(t, \cdot)=0$ and  impose throughout the paper the
following condition for    $a\neq a_{\min}$. 
\begin{cond}\label{cond:smooth2wea3n12}
    There exists $\mu\in (0,1/2)$  such that for all $a\in \vA_2$ the 
    potential $V_a=V_a(t,x^a)$ fulfils:
    \begin{enumerate}[(1)]
    \item \label{item:shortr}$V_a(t+1,x^a)=V_a(t,x^a)$, $t\in\R$.

    \item\label{item:Regularity} $V_a\in C^2(\R\times X^a)$.
\item\label{item:BoundsatInfinity}$V_a(t,y)=o(1)$,  $y\cdot\nabla_y
  V_a(t,y)=o(1) $ and  
  \begin{equation*}
    |y|^{|\gamma|}\partial_y^\gamma \partial_t^k V_a(t,y) = \vO(1)
\text{ for }0\leq k\leq 1 \mand k+|\gamma|\leq 2.
  \end{equation*}
 \item \label{item:shortl}$\abs{y}^{1+2\mu}
      \pa_y^\gamma V_a(t,y)= \vO(1)$ for $ |\gamma|\leq1$.
    \end{enumerate} Here $o(1)$ means $o(1)\to 0$
    for $|y|\to\infty$, uniformly in
$t$, and similarly for $\vO(1)$. 
\end{cond}

For some  intermediate
results the condition \ref{item:shortl} is not needed.  With  the short-range bound   \ref{item:shortl} included,  the scattering theory has a unique meaning.    Apart from the fact that local singularities are not included  in Condition \ref{cond:smooth2wea3n12}, it defines  a rather general  class of time one periodic short-range potentials.

 For any $a\in\vA$  we  introduce   associated  {Hamiltonians} $H^a$
 and $H_a$   as follows. 
For $a=a_{\min}$  we define
$H^{a_{\min}}=0$ on $\mathcal H^{a_{\min}}=L^2(\set{0})=\mathbb C $
and $H_{a_{\min}} =
p^2$ on $L^2(\bX)$, respectively. 
For $a\neq a_{\min}$ 
we let 
\begin{equation*}
V^a=V^a(t,x^a)=\sum_{b\leq a} V_{b}(t,x^b)
,\quad
x^a\in \bX^a,
\end{equation*} 
and  define  then 
\begin{equation*}
 H^a=H^a(t)=-
\Delta_{x^a} +V^a\ \ 
\text{on }\mathcal H^a=L^2(\bX^a)\mand H_a=H^a \otimes I +I\otimes
p^2_a\text{ on }L^2(\bX).
\end{equation*} 
We  abbreviate 
\begin{align*}
V^{a_0}=V=V(t),\quad
 H^{a_0}=H= H(t),\quad 
 \mathcal H^{a_0}=\mathcal H=L^2(\bX).
\end{align*}
 The operators  $H$,  $H^a$ and $H_a$, $a\neq a_0$, generate dynamics, i.e. families of
 unitary operators $U(t,s)$, $U^a(t,s)$
 and $U_a(t,s)$ fulfilling certain requirements. We specify those for
 $U(t,s)$ for arbitrary $r,s,t\in\R $ (the
 conditions are similar  for $U^a(t,s)$ and $U_a(t,s)$):
 \begin{subequations}
 \begin{align}\label{eq:Iden} {U}(s,s)&=I,\\
{U}(s,r){U}(r,t)&={U}(s,t),\qquad  \label{eq:ChapmanKolm}\\
{U}(t+1,s+1) &= {U}(t,s), \label{eq:periodicU}\\
\i\frac{\d}{\d t}{U}(t,s) &= H(t){U}(t,s)
 \quad \text{(in the strong sense)}. \label{eq:solSchr}
\end{align}  
 \end{subequations} The meaning of \eqref{eq:solSchr} is that the
 formula 
   $\i\frac{\d}{\d t}{U}(t,s)\psi =
 H(t){U}(t,s)\psi$  holds for any $ \psi\in\inp{p}^{-2}L^2(\bX)$.  In addition it holds that for   any $\kappa \in [0,2]$ and $ \psi\in\inp{p}^{-\kappa}L^2(\bX)$ the map $\R^2\ni (t,s)\to \inp{p}^{\kappa}{U}(t,s)\psi\in L^2(\bX)$ is continuous. By Yosida's theorem indeed such dynamics exists, see \cite[Section XIV.4]{Yo}.

Abbreviating  ${U}(t)={U}(t,0)$, ${U^a}(t)={U^a}(t,0)$ and
${U_a}(t)={U_a}(t,0)$, the \textit{thresholds} of the \emph{monodromy  operator}  $U(1)$ are by
definition the
eigenvalues of the    sub-monodromy  operators $U^a(1)$, 
$a\in  \vA_1$,
  and hence they constitute the   set 
\begin{equation*}
 \vT=\vT (U(1))= \bigcup_{a\in\vA_1} \sigma_{\pupo}( U^a(1)).
\end{equation*}
 Let $ [t]\in \Z$ denote   the integer
 part of any given $t\in \R$ (characterized by  $ t-[t]\in [0,1)$). We can then record that 
\begin{equation}
  \label{eq:groupLike} U(t)=U(t-[t])U([t])=U(t-[t])U(1)^{[t]},  
\end{equation} and similarly for $U^a(t)$ and $U_a(t)$.

 It is known from  \cite{MS} that the threshold set $\vT$  
is closed and countable, and that the singular continuous spectrum $\sigma_{\sc}(U(1)) = \emptyset$. Moreover the set of non-threshold eigenvalues
  is discrete in $\T\setminus \vT $. 
We introduce the notation
\begin{equation*}
  \vT_{\p}=\vT_{\p}(U(1))=\sigma_{\pp}(U(1))\cup
  \vT(U(1)).
\end{equation*}

Away from $\vT_{\p}$ the following class of integral local decay estimates (of local Kato smoothness type, \cite{Ka2}) is known. 

  \begin{thm}(\cite{MS})\label{thm:LocalDecay}
 Suppose  $V=\Sigma_{a\in\mathcal A}\,V_a$ satisfies Condition \ref{cond:smooth2wea3n12}
\ref{item:shortr}--\ref{item:BoundsatInfinity}.  Let $s>\frac12>r\geq 0$ and
let $g$ be a bounded Borel-measurable function on the unit-circle
 supported  away from $\vT_{\p}(U(1))$. Then
there exists $C>0$ such that
\begin{equation}\label{eq:LocalDecay}
 \forall \,\psi\in \vH:\quad \int_{-\infty}^\infty
 \norm[\big]{\inp{p}^r\inp{x}^{-s}U(t)g(U(1))\psi }^2\,\d t \leq C\|\psi\|^2. 
\end{equation}
 \end{thm}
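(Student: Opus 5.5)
The plan is to pass to the Howland--Yajima Floquet picture and derive \eqref{eq:LocalDecay} from a Mourre estimate, via Kato smoothness, for the Floquet Hamiltonian. Let $\T=\R/\Z$ and $\vH_F=L^2(\T;\vH)$, and let $K=-\i\partial_t+H(t)$ on $\vH_F$. $K$ generates the unitary group
\begin{equation*}
(\e^{-\i\sigma K}\Phi)(t)=U(t,t-\sigma)\Phi(t-\sigma).
\end{equation*}
This group is unitarily related to the monodromy operator: $\sigma(K)$ is $2\pi\Z$-invariant, and $\e^{-\i\lambda}\in\sigma_{\pp}(U(1))$ if and only if $\lambda\in\sigma_{\pp}(K)$. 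By \eqref{eq:groupLike}, the map $\psi\mapsto\Phi_\psi(t)=U(t)\psi$ (made periodic by the phase convention on the quasi-energy) intertwines $g(U(1))$ with $\tilde g(K)$, where $\tilde g(\lambda)=g(\e^{-\i\lambda})$. The first step is to make this precise. After this reduction, $\int_\R\norm{B U(t)g(U(1))\psi}^2\d t$ with $B=\inp{p}^r\inp{x}^{-s}$ is bounded by
\begin{equation*}
\int_0^1\d t_0\int_\R\norm{(B\e^{-\i\sigma K}\tilde g(K)\Phi)(t_0+\sigma)}^2\d\sigma,
\end{equation*}
suitably averaged. So it suffices to show that $B$, acting fibrewise, is $K$-smooth on spectral subsets away from the lifted set $\tilde\vT_{\p}=\set{\lambda\mid \e^{-\i\lambda}\in\vT_{\p}}$. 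Since $\vT_{\p}$ is closed, $g$ can be replaced by a smooth cutoff, and a partition of unity reduces everything to small compact intervals.

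The second step is a Mourre estimate for $K$ with the dilation generator $A=\tfrac12(x\cdot p+p\cdot x)$. Since $A$ commutes with $\partial_t$,
\begin{equation*}
\i[K,A]=2p^2-x\cdot\nabla_xV(t,x).
\end{equation*}
Condition \ref{cond:smooth2wea3n12}\ref{item:BoundsatInfinity} makes $K$ of class $C^2(A)$, with bounded double commutator relative to $p^2$. The key point is that $\partial_tV$ appears only in $[K,p^2]$, not in $[K,A]$. I would prove, by induction on the lattice $\vA$ following Froese--Herbst, Perry--Sigal--Simon and Yajima's two-body argument, that for $\lambda\notin\tilde\vT_{\p}$ there exist $c>0$, a small interval $I\ni\lambda$ and a compact operator $\vK$ with
\begin{equation*}
1_I(K)\i[K,A]1_I(K)\ge c\,1_I(K)+\vK.
\end{equation*}
The threshold function is $\inf\{2(\lambda-\tau)\mid \tau\in\tilde\vT(U^a(1)),\ \tau\le\lambda\}$ in the sub-Floquet picture. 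Two ingredients feed the induction. First, an IMS-type localization in the $x$-variable only, which commutes with $\partial_t$; errors $\vO(\inp{x}^{-2})$ are compact relative to $\inp{p}$-localization. Second, a spectral decomposition of $K_a=K^a\otimes I+I\otimes p_a^2$. A technical obstacle here is that $1_I(K)$ does not localize $p^2$, because of the unbounded $-\i\partial_t$ term. I would handle this by working with $1_I(K)\inp{p}^{-1}$, or with $\inp{K_0}^{-1}$ where $K_0=-\i\partial_t+p^2$, and using that $V$ is bounded so that $\mathcal D(K)=\mathcal D(K_0)$. With the Mourre estimate in hand, Mourre theory yields the limiting absorption principle and hence $\sup_{\Im z\ne0}\norm{\inp{x}^{-s}(K-z)^{-1}\inp{x}^{-s}}<\infty$ on $I$ for $s>\tfrac12$, using $\inp{A}^{-s}$ dominated by $\inp{x}^{-s}\inp{p}^{s}$ combined with the resolvent. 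By Kato's theorem this gives \eqref{eq:LocalDecay} for $r=0$.

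The main obstacle, I expect, is the gain $\inp{p}^r$ with $0<r<\tfrac12$, because $K$ does not control $p^2$. For this I would use a second positive commutator with a bounded propagation observable of the form $\Psi=\inp{x}^{-2s+1}\inp{p}^{2r-1}A$, or more naturally $\Psi=\tfrac12(\hat x\cdot p\inp{p}^{2r-1}+\mathrm{h.c.})\inp{x}^{1-2s}$. The leading part of its Heisenberg derivative $\i[K,\Psi]$ is $\gtrsim \inp{x}^{-s}\inp{p}^{2r}\inp{x}^{-s}$ on the non-radial part, while the radial part is controlled by the $r=0$ bound. The remainders come from $V$ and $\partial_tV$. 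These would be handled by the decay $\abs{y}\abs{\nabla V_a}=\vO(1)$ and, for the collision planes, by the inductive (cluster) decomposition again. Integrating $\frac{\d}{\d\sigma}\inp{\Psi}_{\e^{-\i\sigma K}\Phi}$ over $\sigma$ and using the boundedness of $\Psi$ for $r<\tfrac12$ would then give the full estimate with constant $C\norm{\psi}^2$.
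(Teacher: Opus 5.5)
The paper does not prove this theorem; it quotes it from \cite{MS}. So there is no in-paper argument to compare with, and your outline has to stand on its own.

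Your first step is standard and can be made precise: the Howland--Yajima reduction to local $K$-smoothness of $\inp{p}^r\inp{x}^{-s}$ on spectral intervals of $K=-\i\partial_t+H(t)$ away from $\tilde\vT_\p$. The gap is in the second step, which is where the whole difficulty lies. $K$ is \emph{not} of class $C^1(A)$, let alone $C^2(A)$, for the dilation generator $A$. Since $V$ is bounded, $\vD(K)=\vD(K_0)$ with equivalent graph norms. Take $\Phi=\e^{2\pi\i nt}\phi_n(x)$ with $\hat\phi_n$ supported where $2\pi n+\abs{\xi}^2\in[-1,1]$, and let $n\to-\infty$. These states have bounded $K$-graph norm, while $\inp{\Phi,\i[K,A]\Phi}\approx 2\inp{\Phi,p^2\Phi}\sim\abs{n}$. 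Hence $[K,A]$ is not bounded from $\vD(K)$ to $\vD(K)^*$, and the dilation group does not even preserve $\vD(K)$. Consequently the step ``with the Mourre estimate in hand, Mourre theory yields the limiting absorption principle'' is not available off the shelf. Your proposed remedy (inserting $\inp{p}^{-1}$ or $\inp{K_0}^{-1}$, using $\vD(K)=\vD(K_0)$) does not touch the problem, because the free Floquet operator $K_0$ already fails to be regular with respect to $A$. A commutator theory tolerating such an unbounded commutator has to be built; supplying it is a main point of \cite{MS}.

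Even granting a limiting absorption principle in the weights $\inp{A}^{-s}$, your passage to $\inp{x}^{-s}$ runs the wrong way. The operator $\inp{x}^{-s}\inp{A}^{s}$ is of order $\inp{p}^{s}$ in momentum, and spectral localization in $K$ gives no bound on $p$. This is the same obstruction the present paper works around with the auxiliary operator $\wcH$. So your argument does not deliver even the case $r=0$.

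Heuristically, the only source of momentum control is the unbounded positive part $2p^2$ of $\i[K,A]$. Exploited as a gain rather than treated as a regularity defect, it yields smoothing of $p\inp{A}^{-s}$. Via $\inp{A}\lesssim\inp{x}\inp{p}$ this becomes smoothing of $\inp{p}^{1-s}\inp{x}^{-s}$, which is exactly the shape $r<\tfrac12<s$ of the statement. So $\inp{p}^r$ is not a separate add-on to the $r=0$ case.

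Your separate argument for $r>0$ also fails as written:
\begin{enumerate}[1)]
\item $\Re(\hat x\cdot p\inp{p}^{2r-1})\inp{x}^{1-2s}$ has order $\inp{p}^{2r}$, so it is unbounded for $r>0$, contrary to what you use when integrating.
\item The weight $\inp{x}^{1-2s}$ is decreasing for $s>\tfrac12$. The radial part of $\i[p^2,\Psi]$ is therefore negative, of size $\inp{x}^{-2s}(\hat x\cdot p)^2\inp{p}^{2r-1}$, which the $r=0$ bound cannot absorb.
\item The many-body potential terms are delegated to an unspecified cluster decomposition.
\end{enumerate}
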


The above local decay estimates will be vital for us. For completeness of presentation let us state the following result on  the decay of eigenfunctions.

\begin{thm}(\cite{MS})\label{thm:DecayEigenfct} Suppose
  $V=\Sigma_{a\in\mathcal A}\,V_a$ 
satisfies Condition \ref{cond:smooth2wea3n12}
\ref{item:shortr}--\ref{item:BoundsatInfinity}. Let $\e^{-\i \lambda}\in
\sigma_{\pp}(U(1))$ and $u\in \vH$ satisfy $U(1)u = \e^{-\i \lambda}u$. Then
$u\in\vD(p)$,  and if furthermore $\e^{-\i \lambda}\notin\vT(U(1))$ then for every $\kappa>0$ with
$$
\lambda+\kappa^2 < \inf\left\{E>\lambda : \e^{-\i E}\in\vT(U(1))\right\}
$$
 the function  $\e^{\kappa |x|}u\in \vH$.
\end{thm}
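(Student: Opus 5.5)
The plan is to work in the Floquet (Howland) extended picture rather than with $U(1)$ directly. Let $\e^{-\i\lambda}\in\sigma_{\pupo}(U(1))$ with $U(1)u=\e^{-\i\lambda}u$, and set $\phi(t)=\e^{\i\lambda t}U(t)u$. By \eqref{eq:groupLike} and \eqref{eq:periodicU}, $\phi$ is one-periodic, so $\phi\in L^2(\T;\vH)$. It solves
\begin{equation*}
K\phi=\lambda\phi,\qquad K=-\i\tfrac{\d}{\d t}+H(t),
\end{equation*}
at least in the weak sense on $L^2(\T;\vH)$. Both assertions will be proved for $\phi$ and then transferred back to $u=\phi(0)$. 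The transfer uses the continuity of $t\mapsto\inp{p}^{\kappa}U(t)\psi$ and the fact that $U(t,s)$ commutes with $\e^{F}$ up to controllable errors over the finite time interval $[0,1]$.

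\emph{Step 1: $u\in\vD(p)$ via a virial argument.} The conjugate operator is the generator of dilations $A=\tfrac12(x\cdot p+p\cdot x)$, which commutes with $-\i\,\d/\d t$. Formally,
\begin{equation*}
\i[K,A]=2p^2-x\cdot\nabla_xV(t,x).
\end{equation*}
Since $x\cdot\nabla V$ is bounded by Condition \ref{cond:smooth2wea3n12} \ref{item:BoundsatInfinity}, the virial identity $\inp{\phi,\i[K,A]\phi}=0$ would give $2\int_0^1\norm{p\phi(t)}^2\,\d t\le C\norm{\phi}^2$.
\begin{itemize}
\item To make this rigorous, replace $A$ by the bounded regularization $A_{R,\epsilon}=\chi(|x|/R)\,\tfrac12\parb{x\cdot p_\epsilon+p_\epsilon\cdot x}\,\chi(|x|/R)$, with $p_\epsilon=p\inp{\epsilon p}^{-1}$.
\item Use $\inp{\phi,[K,A_{R,\epsilon}]\phi}=0$, which holds because $A_{R,\epsilon}$ is bounded and preserves periodicity.
\item Pass to the limit $\epsilon\to0$ and then $R\to\infty$ by monotone convergence in the $p^2$ term. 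The error terms involve derivatives of $\chi$ and commutators with $V$, and they are bounded uniformly by Condition \ref{cond:smooth2wea3n12} \ref{item:BoundsatInfinity}.
\end{itemize}
This gives $p\phi\in L^2(\T;\vH)$. Hence $U(t)u\in\vD(p)$ for a.e.\ $t$. Applying $U(0,t)$, which preserves $\vD(p)$ by the stated continuity property with $\kappa=1$, then yields $u\in\vD(p)$.

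\emph{Step 2: exponential decay via Froese--Herbst.} Set
\begin{equation*}
\theta=\sup\set{\kappa\ge0\mid \e^{\kappa|x|}u\in\vH},
\end{equation*}
and suppose, for contradiction, that $\lambda+\theta^2<E_1$, where $E_1$ denotes the infimum in the statement. Take $\kappa\uparrow\theta$ and bounded regularized weights $F=F_{\kappa,\delta}(x)$ that approximate $\kappa|x|$ and have $|\nabla F|\le\kappa$. Form $\phi_F=\e^{F}\phi/\norm{\e^F\phi}$. The norms $\norm{\e^{F}\phi}$ blow up as $\delta\to0$ whenever $\kappa>\theta$, so $\phi_F\to0$ weakly.

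Conjugating gives
\begin{equation*}
(K-\lambda)\phi_F=\parb{|\nabla F|^2+\i(\nabla F\cdot p+p\cdot\nabla F)}\phi_F.
\end{equation*}
Pairing with $\i[\cdot,A]$ and using the standard Froese--Herbst algebra yields, up to $o(1)$ terms,
\begin{equation*}
\inp{\phi_F,\i[K,A]\phi_F}\le -c\norm{\nabla F\cdot p\,\phi_F}^2+o(1)\le o(1),
\end{equation*}
with $\phi_F$ asymptotically localized in $K$ near $\lambda+\theta^2$. On the other hand, $\e^{-\i(\lambda+\theta^2)}\notin\vT$ by the choice of $\theta$. A Mourre estimate for $K$ at this energy would give
\begin{equation*}
f(K)\,\i[K,A]\,f(K)\ge c f(K)^2+\text{compact},
\end{equation*}
and applying it to the weakly null sequence $\phi_F$ produces a strictly positive lower bound. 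This contradicts the inequality above, so $\lambda+\theta^2\ge E_1$. Two loose ends remain. The case $\theta=\infty$ must be excluded by a separate unique-continuation-type argument, run the same way with $\kappa$ large. Decay for every $\kappa$ with $\lambda+\kappa^2<E_1$ then follows.

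\emph{Main obstacle.} The hardest step is the Mourre estimate for the extended operator $K$ near non-threshold energies, with thresholds defined through the sub-monodromy operators $U^a(1)$. I expect to prove it by induction over the cluster lattice $\vA$, using an IMS-type partition of unity in $x$ and treating the time-periodic sub-Floquet operators $K^a$ as the channel pieces. The delicate point is that $-\i\,\d/\d t$ is unbounded and does not localize in $x$. Energy localization must therefore be done through functions of $K$ or of $U(1)$, and $\vD(K)$ is not contained in $\vD(p)$, which is exactly why Step 1 is needed first.
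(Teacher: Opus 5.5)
The paper does not prove this theorem. It is quoted from \cite{MS} ``for completeness of presentation'', so your outline has to stand on its own, and as written it has genuine gaps.

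\textbf{Step 1.} You treat this step as routine, but it fails as stated. Up to the momentum cut-off, $\chi_RA\chi_R$ with $\chi_R=\chi(|x|/R)$ equals $\Re(\nabla G_R\cdot p)$ with $\nabla G_R=\chi_R^2\,x$. The kinetic part of its commutator with $p^2$ is therefore $2p\cdot\nabla^2G_R\,p$. Because $G_R$ is not convex, the radial entry $\chi_R^2+2\chi_R\,\chi'(|x|/R)\,|x|/R$ of $\nabla^2G_R$ takes negative values of size one on the annulus $|x|\sim R$.

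So the terms coming from derivatives of $\chi$ are second order in $p$. They are as large as the term you want to bound and have the opposite sign. Condition~\ref{cond:smooth2wea3n12} says nothing about them. They are negligible only if $\norm{1_{\{|x|\sim R\}}p\phi}\to0$, which is what you are trying to prove. For time-independent $H$ this comes for free since $\vD(H)\subseteq\vD(p)$; here $\vD(K)\not\subseteq L^2(\T;\vD(p))$, and that is the whole point.

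In addition, with $p_\epsilon=p\inp{\epsilon p}^{-1}$ the commutator is still unbounded, of size $p^2\inp{\epsilon p}^{-1}$. Its expectation in $\phi$ is therefore not a priori defined.

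A version that works: take $A_{R,\epsilon}=(1+\epsilon p^2)^{-1/2}\Re(\nabla G_R\cdot p)(1+\epsilon p^2)^{-1/2}$, with $G_R$ radial and convex, $\nabla G_R=x$ on $\{|x|\le R\}$, and $|\nabla G_R|\le 2R$. Then:
\begin{enumerate}[1)]
\item the kinetic part is $\ge0$ up to $\vO(R^{-2})$;
\item $\nabla G_R\cdot\nabla V_a=\tfrac{|\nabla G_R|}{|x|}\,x^a\cdot\nabla_{x^a}V_a$ is bounded uniformly in $R$;
\item the terms from $[V,(1+\epsilon p^2)^{-1/2}]$ contribute $o(1)$ to the expectation in $\phi$ as $\epsilon\to0$ at fixed $R$;
\item lower semicontinuity together with $\nabla^2G_R\ge 1_{\{|x|\le R\}}$ then gives $p\phi\in L^2(\T;\vH)$.
\end{enumerate}

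\textbf{Step 2.} This is where the substance of the theorem lies. It rests on a Mourre estimate for $K$ that you do not prove, and the Froese--Herbst framework around it does not carry over from $H$ to $K$.

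\emph{No a priori momentum bound.} In the time-independent argument, $\sup\norm{p\phi_F}<\infty$ comes from the real part of the conjugated equation together with $H\ge p^2-C$. Here $\Re\inp{\phi_F,(K-\lambda)\phi_F}=\inp{|\nabla F|^2}_{\phi_F}$ only controls $\inp{-\i\pa_t}_{\phi_F}+\norm{p\phi_F}^2$, and $-\i\pa_t$ is unbounded below. Without that bound, the right-hand side of your conjugated equation is uncontrolled and the commutator algebra with $A$ is unjustified. The claimed localization of $\phi_F$ in $K$ near $\lambda+\theta^2$ does not follow either.

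\emph{No standard Mourre framework.} The commutator $\i[K,A]=2p^2-x\cdot\nabla V$ is not bounded relative to $K$, so $K\notin C^1(A)$; dilations do not even preserve $\vD(K)=\vD(-\i\pa_t+p^2)$. Also, $f(K)$ does not localize $p^2$. For the free operator, $f(K)$ acts on the $k$-th Fourier mode in $t$ as $f(2\pi k+p^2)$, and $p^2\to\infty$ as $k\to-\infty$. This is exactly the obstruction the paper stresses when it says energy localization does not provide momentum localization. So even a form bound $f(K)\i[K,A]f(K)\ge cf(K)^2+\text{compact}$ could not be fed into the usual machinery without extra momentum cut-offs.

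\emph{The transfer to $u=\phi(0)$.} This is not a matter of controllable commutation errors. For $F=\kappa\,\omega\cdot x$ one has $\e^{F}\e^{-\i tp^2}\e^{-F}=\e^{\i t\kappa^2}\e^{-\i tp^2}\e^{2t\kappa\,\omega\cdot p}$, which is unbounded for every $t\neq0$. Hence $\e^F\phi\in L^2(\T;\vH)$ only yields decay of $U(t)u$ for almost every $t$. Reaching $t=0$ requires weighted momentum bounds such as $\int_0^1\norm{\e^Fp\,\phi(t)}^2\,\d t<\infty$, and these again have to come out of the argument.

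By contrast, the case $\theta=\infty$ needs no separate treatment: then the conclusion holds for every $\kappa$.
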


\subsubsection{General 
  notation}\label{subsubsec:General 
  notation}

 Any  function $f\in C^\infty_\c(\R)$ taking values in $[0,1]$ is
referred to as a  
\emph{support function}. Given  such
function  $f$ and a function  $f_1\in C^\infty_\c(\R)$  we write
$f\succ f_1$, if $f=1$ in a neighbourhood of $\supp f_1$. We will use
similar notation $g\succ g_1$ for smooth functions on $\T$.

If $T$ is a self-adjoint  operator on a
Hilbert space and $f$ is a support function we can represent the
operator $f(T)$ by the well-known Helffer--Sj\"ostrand formula
\begin{align}\label{82a0}
  \begin{split}
 f(T) 
&=
\int _{\C}(T -z)^{-1}\,\mathrm d\mu_f(z)\, \text{ with}\\
&\mathrm d\mu_f(z)=\pi^{-1}(\bar\partial\tilde f)(z)\,\mathrm du\mathrm dv;\quad 
z=u+\i v.   
  \end{split}
\end{align} Here $\tilde{f}$ is an `almost analytic' extension of
$f$, which in this case may be taken compactly supported (see also \cite[(3.1.11)]{H1}). The
formula \eqref{82a0} extends to more general  classes of  functions, cf. \cite[Proposition C.2.2]{DG},
and  serves in particular as a standard tool for computing commutators. 
Since  it
will be used rather freely  in this paper  the interested  reader might benefit from
consulting 
 \cite[Section 6]{Sk1},  which  is devoted to various  
applications of \eqref{82a0} to  $N$-body Schr\"odinger operators.

\begin{defns}\label{def:chi}
  \begin{enumerate}[1)]
  \item \label{item:Fplus}
  The class $\vF_+$ denotes the set of non-negative 
  functions $\chi\in C^\infty(\R)$  supported in $\R_+=(0,\infty)$,  with $\sq\chi\in C^\infty(\R)$ 
   and with a non-negative 
   derivative obeying $\sqrt{\chi'} \in C_\c^\infty(\R)$.
 \item \label{chi} Let $\chi_+$ denote any $\chi\in\vF_+$ obeying the following  properties:
\begin{subequations}
\begin{align}\label{eq:chi1}
  \begin{split}
  \chi(t)
=\left\{\begin{array}{ll}
0 &\mbox{ for } t \le 4/3 \\
1 &\mbox{ for } t \ge 5/3
\end{array},
\right.
\quad
\chi'\geq  0,  
  \end{split}
\end{align}
\begin{align}\label{eq:chi2}
  &\q \q\q \sqrt{\chi}, \sqrt{\chi'}, (1-\chi^2)^{1/4} ,
  \sqrt{-\parb{(1-\chi^2)^{1/2} }'}\in C^\infty\\
  &\text{and the square root of  the  function }\R\ni t \to \int_{-\infty}^t\,\chi^2(s)\,\d s \in\R\text{ is smooth}. \label{eq:chi2b}
\end{align}  
   \item \label{chi_minus} Let $\chi_-=(1-\chi_+^2)^{1/2} $.
   
 \item\label{chikK} For arbitrary $\kappa, K\in \R_+$ with  $
2\kappa<K$  the  notation $\chi_{\kappa,K}$ denotes  the function
$\chi_{\kappa,K}(t)=
\chi_+(t/\kappa)\chi_-(t/K)$.
\end{subequations} 
   \end{enumerate}
  \end{defns}
 The functions $\chi_+$ and $\chi_-$ are considered as fixed. Unless otherwise stated, we will henceforth  abbreviate $\chi_+=\chi$. We record that
\begin{align*}
  \chi_+^2+\chi_-^2=1, \q\sqrt{\chi_+},\, \sqrt{\chi_+'}, \sqrt{\chi_-}, \sqrt{-\chi_-'}\in C^\infty
\end{align*} and that the functions  $\R\ni t \to \int_{-\infty}^t\,\chi_{\kappa{,K}}^2(s)\,\d s \in\R$, $0<
2\kappa<K$,  are  in $\vF_+$.
\begin{remarks}\label{remarks:chiSMooTH}
  \begin{enumerate}[i)]
  \item \label{item:invarWavesmooth} We can construct the function
    $\chi=\chi_+$ in \ref{chi}  obeying $\chi(t)=
    (t-4/3)^{-1}\exp\parb{1/ (4/3-t)}$ for all $t$ slightly to the
    right of $4/3$, obviously conforming with parts of the
    requirements of \ref{chi}.
  \item \label{item:invarScatsmooth} For any  $\epsilon>0$  and any given real-valued $h\in
    C^\infty (\R)$ supported in $\R_+$, and with compactly supported derivative,  there exist
    $\chi_1,\chi_2\in \vF_+$ such that
    \begin{equation*}
      \sup_{
      \R}\,|h-\chi_1+\chi_2|\leq \epsilon.
    \end{equation*}
This  approximation result follows  easily by  first 
    decomposing  the derivative  of $h$ in terms of its positive and
    negative parts, and  second by smoothing out the square roots of the
    latter functions by the standard convolution technique. Proper anti-derivatives of the smooth squares then represent the desired  functions $\chi_1$ and $\chi_2$.
\end{enumerate}
\end{remarks}

If $T$ is a self-adjoint  operator on a
Hilbert space $\vG$ and $\varphi\in \vG$, then
$\inp{T}_\varphi:=\inp{\varphi,T\varphi}$ and $F(T<C):=1_{(-\infty,
  C)}(T)$ (and similarly for $F(T>C)$ and $F(-C<T<C)$). If $S$ is another
operator on  $\vG$ we introduce (formally) the iterated commutators $\ad^k _{T}(S)$ for  $k\in \N_0
=\N\cup\set{0}$ as follows. Let $\ad^0 _{T}(S)=S$ and 
(recursively) $\ad^k _{T}(S)=\big [\ad^{k-1} _{T}(S),T \big ]$
for 
$k\in \N$. Then   we may  formally  use \eqref{82a0}  expanding 
\begin{equation*}
  f(T)S=\sum ^\infty_{k=0}\, \tfrac{(-1)^k} {k!}\ad ^k_{T}(S)
  f^{(k)}(T) .
\end{equation*} We will  concretely use truncations of this formula
 with errors controlled in terms of  the concept of order introduced below.

We denote the space of  bounded operators 
from one  (abstract) Banach space $X$ to another one $Y$ by $\vL(X,Y)$ 
and abbreviate $\mathcal L(X,X)=\mathcal L(X)$. The dual space of $X$
is denoted by $X^*$.

Recall  the standard \emph{weighted $L^2$ spaces}  
$$
L_s^2=L_s^2(\mathbf X)=\inp{x}^{-s}L^2(\mathbf X)\ \ \text{for }s\in\mathbb R ,\quad
L_{-\infty}^2=\bigcup_{s\in\R}L^2_s,\quad
L^2_\infty=\bigcap_{s\in\mathbb R}L_s^2.
$$

Let $S$ be any  operator on $\mathcal H=L^2(\mathbf X)$ such that
$S,S^*:L^2_\infty\to L^2_\infty$, and let $\sigma\in\mathbb R$.  Then we
say that   $S$ is  {\emph{of order $\sigma$}}, if 
 for each $s\in\mathbb R$  the restriction  $S_{|L^2_\infty}$ extends to
 an operator $S_s\in\vL(L^2_{s}, L^2_{s-\sigma})$. Alternatively stated,
 \begin{subequations}
  \begin{equation}\label{eq:defOrder}
\|\inp{x}^{s-\sigma}S\inp{x}^{-s}\psi\|\le C_s\|\psi\| \text{ for all }\psi\in
L^2_\infty.
\end{equation} 
 If   $S$ is of {order $\sigma$}, we write 
\begin{equation}
  S=\vO(\inp{x}^\sigma)=\vO(r^\sigma);\quad r=\inp{x}.
\label{eq:1712022}
\end{equation}  The class of ordered operators forms  a  graded
algebra. In particular, if $S$ and   $T$  are  of {order $\sigma$} and $\tau$, respectively,
then $ST$ is of {order $\sigma+\tau$}. For any given
$\sigma\in\mathbb R$  the notation $S=\vO(r^{-\sigma_+})$ means that
$S=\vO(r^{-\br\sigma})$ for some $\br \sigma>\sigma$.
\end{subequations}  If $(S_\theta)_{\theta\in \Theta}$ is a family of operators of
order $\sigma$  for which \eqref{eq:defOrder} holds uniformly in
$\theta\in \Theta$ for all  $s\in\mathbb R$,  we say that $S_\theta$ is $\theta$-\emph{uniformly of
  order $\sigma$}, written $S=\vO_{\unif}(r^\sigma)$.
 For any given
$\sigma\in\mathbb R$   we write
$S=\vO_{\unif}(r^{-\sigma_+})$ if $S=\vO_{\unif}(r^{-\br\sigma})$ for
some $\br \sigma>\sigma$. We write
$S=\vO(r^{-\infty })$, if $S=\vO(r^\sigma)$  for all $\sigma\in \R$, and
similarly for the uniform setting,   $S=\vO_{\unif}(r^{-\infty})$ means
that $S=\vO_{\unif}(r^\sigma)$  for all $\sigma \in \R$.

We use  the convention
  $S=\vO(t^\sigma)$ for a given 
  family of 
   operators  $S(t)\in \vL(\vH)$, $t\geq 1$,  to mean that
  the  operator norm of $S(t)$  is
   bounded as $\norm{S(t)}\leq C t^\sigma$. The notations 
  $S=\vO(t^{-\sigma_+})$ and  $S=\vO(t^{-\infty
})$ signify that 
$S=\vO(t^{-\br\sigma})$ for some $\br \sigma>\sigma$ and that
$S=\vO(t^\sigma)$  for all $\sigma\in \R$, respectively.

\subsection{Results}\label{subsec:Main result}

For any  \emph{channel}
$\alpha =(a,\lambda^\alpha, u^\alpha)$, i.e.   $a\in\vA_1$, $\lambda^\alpha\in [0,2\pi)$ and  
$U^a(1)u^\alpha = \e^{-\i \lambda^\alpha }u^\alpha$ for a normalized $u^\alpha\in
\mathcal H^a$, we introduce the corresponding \emph{channel wave
  operators} $W_\alpha^{\pm}$ as the strong limits
\begin{equation}\label{eq:wave_op}
  W_\alpha^{\pm}\psi_a=\lim_{t\to \pm\infty}U(0,t)\parbb{\parb{\e^{-\i \lambda^\alpha [t]}U^a(t- [t])u^\alpha} \otimes
\parb{\e^{-\i tp_a^2} \psi_a}};\quad \psi_a\in L^2(\bX_a).
 \end{equation} Here     $ [t]\in [0,1)$ denote  the integer
 part of  $t\in \R$, and we recall that $p_a=-\i \nabla_{x_a}$. Note
 also (cf.  \eqref{eq:groupLike}) that
 \begin{equation*}
 \e^{-\i \lambda^\alpha [t]} U^a(t- [t])u^\alpha=U^a(t)u^\alpha, 
 \end{equation*}
and hence (since $U_a(t)=U^a(t)\otimes \e^{-\i tp_a^2}$) that 
\begin{equation*}
  W_\alpha^{\pm}\psi_a=\lim_{t\to \pm\infty}U(0,t)U_a(t)\parb{u^\alpha \otimes
\psi_a};\quad \psi_a\in L^2(\bX_a).
 \end{equation*} 
For the channel $\alpha_{\min}:=(a_{\min}, 0,1)$ the corresponding  wave
  operators simplify as 
  \begin{equation*}
     W_{\alpha_{\min}}^{\pm} \psi=\lim_{t\to \pm\infty} U(0,t)\e^{-\i
       tp^2} \psi;\quad \psi\in \vH.
  \end{equation*}

A basic   result of this  paper is the following version  of 
\cite[Theorems XI.35 and  XI.36]{RS}, which concerns  the
time-independent case. Our proof of the existence of the channel  wave
operators
does not use the Cook--Kuroda argument (which is used in \cite{RS}), it 
does not  need decay assumptions on channel  eigenfunctions (possibly not being known  from Theorem
 \ref{thm:DecayEigenfct})   and it works
for `pair-potentials' of any dimension.  (As far as we know the proof of
\cite[Theorems XI.34 and  XI.35]{RS} requires   three or higher dimensional pair-potentials.)
\begin{thm}\label{thm:ACn-body-short1} Under Condition
  \ref{cond:smooth2wea3n12} all of the channel wave operators
  $W_\alpha^\pm$  exist. For any two  different channels  $\alpha\neq \beta$,
  it holds that $ \ran(W_\alpha^\pm)\perp \ran (W_\beta^\pm)$ (i.e. orthogonality
  of   the forward channels as well as of   the backward channels hold).
 \end{thm}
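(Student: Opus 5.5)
We reduce by density, via a propagation-observable (Kato-type) argument, to states $\psi_a\in L^2(\bX_a)$ with $\hat\psi_a\in C^\infty_\c(\bX_a\setminus\{0\})$ supported in the region $\set{\xi_a\mid \abs{\xi_a}\in(\kappa,K)}$ and away from all collision planes. The plan avoids Cook's method: we do not need decay of $u^\alpha$, so we cannot bound $\norm{I_a(t)U_a(t)(u^\alpha\otimes\psi_a)}$ by pointwise decay in $t$. Here $I_a=V-V^a=\sum_{b\not\le a}V_b$ is the inter-cluster potential.

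Instead we introduce a time-dependent bounded observable adapted to the free channel motion. Write
\begin{equation*}
\phi(t)=U_a(t)(u^\alpha\otimes\psi_a),\qquad M(t)=\chi_{\kappa,K}\parb{\abs{p_a}}\,\tilde\chi(x_a/t),
\end{equation*}
where $\tilde\chi$ localizes $x_a/t$ to a small neighbourhood of the classical velocity set $2\supp\hat\psi_a$, which is bounded away from $\bigcup_{b\not\le a}\bX^b\cap\bX_a$-type collision sets. The function $\tilde\chi$ is built from functions of type $\vF_+$, using Remarks~\ref{remarks:chiSMooTH}, so that Heisenberg derivatives come out with signs. Since $e^{-\i tp_a^2}\psi_a$ concentrates where $x_a\approx 2tp_a$, we get $\phi(t)-M(t)\phi(t)\to0$. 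It then suffices to show that $\lim_{t\to\infty}U(0,t)M(t)\phi(t)$ exists.

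Differentiating gives
\begin{equation*}
\tfrac{\d}{\d t}U(0,t)M(t)U_a(t)=U(0,t)\parb{\i(HM-MH_a)+\partial_tM}U_a(t)=U(0,t)\parb{\i I_aM+\bD_{a}M}U_a(t),
\end{equation*}
with $\bD_a M=\partial_t M+\i[H_a,M]$. Three points handle the terms.

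\textbf{The term $\i I_aM$.} On the support of $\tilde\chi(x_a/t)$ we have $\abs{x^b}\gtrsim t$ for all $b\not\le a$ only if $x^a$ is also small. Cutting in $x^a$ with $F(\abs{x^a}<\epsilon t)$, the complement is controlled by $\norm{F(\abs{x^a}>\epsilon t)U^a(t)u^\alpha}\to0$, which holds by compactness, since $u^\alpha$ is a fixed $L^2$ vector and $U^a(t)u^\alpha$ is a periodic orbit times a phase (\eqref{eq:groupLike}). This is only $o(1)$, not integrable, so we must use a two-step argument. In the region $\abs{x^a}<\epsilon t$, Condition~\ref{cond:smooth2wea3n12}\ref{item:shortl} gives $\norm{I_aMF(\cdot)}=\vO(t^{-1-2\mu})$, which is integrable. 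For the $o(1)$ part we make the observable itself contain a factor $f(\abs{x^a}/(\epsilon t))$, with $f$ a decreasing cutoff. Its Heisenberg derivative with respect to $H^a$ is $-\tfrac{\abs{x^a}}{\epsilon t^2}f'+\vO(t^{-1})p^a f'$. The first piece has a sign and is handled by the Kato-type argument: $\int\norm{\sqrt{-f'}\cdots}^2\d t<\infty$ by integrating the derivative identity. The second needs $\int t^{-1}\norm{\sqrt{\abs{f'}}\,p^aU^a(t)u^\alpha}\,\d t$ to be finite. Here $u^\alpha\in\vD(p)$ by Theorem~\ref{thm:DecayEigenfct} and $\sup_{t}\norm{p^aU^a(t)u^\alpha}<\infty$ by periodicity. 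We use Cauchy--Schwarz, pairing a $t^{-1}$ factor with the $L^2_t$-smallness obtained from the positive first piece.

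\textbf{The term $\bD_aM$.} The free part is standard: $\bD_{p_a^2}\tilde\chi(x_a/t)=t^{-1}\parb{p_a-x_a/(2t)}\cdot\nabla\tilde\chi+\vO(t^{-2})$. With $\tilde\chi$ chosen as a product of $\vF_+$-type functions of $\abs{x_a}/t$ and of angular variables, its principal part is either of definite sign on $\phi(t)$ or is $\vO(t^{-1})\times(p_a-x_a/2t)$. The latter acting on $e^{-\i tp_a^2}\psi_a$ is $\vO(t^{-\infty})$ in norm, by nonstationary phase. For terms of definite sign, we get convergence of $\int\norm{\cdot\,U(0,t)^*\cdots}^2$ on both the $U$ side and the $U_a$ side. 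This follows from the standard propagation inequality: the derivative of $\inp{M}$ along each evolution is bounded below modulo integrable terms. The existence of the limit then follows from the Cauchy criterion. Concretely, we show that $\inp{\varphi,U(0,t)M\phi(t)}$ is Cauchy uniformly in $\norm\varphi\le1$, using Cauchy--Schwarz in the form $\abs{\inp{B_1 U(t)\varphi,B_2\phi(t)}}$ with both factors square integrable in $t$.

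I expect the main obstacle to be the non-integrable $o(1)$ bound on the internal localization of $U^a(t)u^\alpha$, caused by the lack of eigenfunction decay in the threshold case. The Kato square-integrability trick for the $x^a$-cutoff has to be carried out carefully, including the $U(0,t)$-side estimate, where only $H$ and not $H^a$ is available.

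Orthogonality is shown as follows. For $\alpha\ne\beta$ we compute $\inp{W^+_\alpha\psi_a,W^+_\beta\psi_b}=\lim_t\inp{U_a(t)(u^\alpha\otimes\psi_a),U_b(t)(u^\beta\otimes\psi_b)}$ and split into cases.
\begin{enumerate}[(i)]
\item If $a=b$ and $\lambda^\alpha\ne\lambda^\beta$, or $u^\alpha\perp u^\beta$, the inner product equals $\inp{u^\alpha,u^\beta}\inp{\psi_a,\psi_b}$. This is zero when $u^\alpha\perp u^\beta$, which holds automatically for distinct eigenvalues of the unitary $U^a(1)$, and we take channels with orthonormal eigenvector choices.
\item If $a\ne b$, say $\bX_a\not\subseteq\bX_b$, then $U_b(t)(u^\beta\otimes\psi_b)$ has $x^b$ localized in $\abs{x^b}<\epsilon t$. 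Meanwhile $U_a(t)(u^\alpha\otimes\psi_a)$, with $\psi_a$ in the dense set above and velocities off $\bX_b$, has $\abs{x^b}\gtrsim t$ up to $o(1)$. The inner product therefore tends to $0$.
\end{enumerate}
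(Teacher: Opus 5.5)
Your orthogonality argument is correct, and simpler than the paper's. The paper approximates $u^\alpha,\psi_a$ by Schwartz functions, smooths the potentials and uses the $t^{-n/2}$ dispersive decay of $\e^{-\i t(p_b^2-p_a^2)}$. For $b\not\le a$, your separation of $\abs{x^b}<\epsilon t$ from $\abs{x^b}\gtrsim t$ suffices once existence is known.

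The existence part, however, has a genuine gap exactly at the step you flag, and it is not a technicality. After you insert $f(\abs{x^a}/(\epsilon t))$ into $M(t)$, the terms coming from $\partial_t f$ and $\i[(p^a)^2,f]$, acting on $U^a(t)u^\alpha$, have norm comparable to $t^{-1}\norm{F(\abs{x^a}\approx\epsilon t)U^a(t)u^\alpha}$. This is not integrable for a general $L^2$ eigenvector, which is precisely why Cook fails. So in your Cauchy argument you must write this term as $B_1^*B_2$ and use a smoothing bound $\int\norm{B_1U(t)\varphi}^2\,\d t\le C\norm{\varphi}^2$ for the full dynamics, with $B_1\approx t^{-1/2}\sqrt{\abs{f'}}(\abs{x^a}/\epsilon t)$ times your $x_a/t$ and $\abs{p_a}$ cutoffs. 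That bound is false. Take $\varphi=W^+_{\alpha_{\min}}\psi$ (elementary by Cook) with $\hat\psi$ concentrated near $\xi$ off all collision planes, $2\abs{\xi^a}\approx\epsilon$ and $\xi_a$ in your support. This is a scattering state in which cluster $a$ breaks up with internal speed about $\epsilon$, so it stays in $\set{\abs{x^a}\approx\epsilon t}$, and the integral behaves like $\int^\infty t^{-1}\,\d t=\infty$. Equivalently, along $H$ the Heisenberg derivative of $f(\abs{x^a}/\epsilon t)$ is $(\epsilon t)^{-1}\Re\parb{f'\,(2\hat x^a\cdot p^a-\abs{x^a}/t)}$, which has no sign. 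So the claim that the derivative of $\inp{M}$ along each evolution is bounded below modulo integrable terms fails on the $U$ side. Redistributing powers of $t$ between $B_1$ and $B_2$ only moves the problem. The same holds if you use the local decay estimate \eqref{eq:LocalDecay} on the $U$ side, which you never invoke and which would require localizing by $g(U(1))$ away from $\vT_\p$. In either case the $U_a$-side factor then needs $u^\alpha\in L^2_\gamma$ for some $\gamma>0$, i.e. exactly the eigenfunction decay that is unavailable at thresholds.

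The missing idea is an observable whose Heisenberg derivative is controlled by estimates that hold for both $U$ and $U_b$. The paper restricts to $\varphi_b=u^\beta\otimes\psi_b$ with $\varphi_b=g(U_b(1))\varphi_b$ and $g$ supported off $\vT_\p(U(1))$. It then uses the phase-space observable $S^b_b(t)$, built from $f(\wcH)$, Yafaev's channel operators $M_b\chi_{-L,L}(M_b)$, $h_\kappa(M_0)$ and $\chi_I(\bM)$. The $m_a$ are homogeneous of degree one and near $\bX_c$ depend only on $x_c$ (Lemma \ref{lemma:ma1}). Hence the inter-cluster interaction is $\vO(r^{-1-2\mu})$ on the relevant support and is absorbed by \eqref{eq:LocalDecay} on both sides. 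The derivatives of the cutoffs produce only $p\cdot\nabla^2m\,p$-type terms, i.e. the radiation-condition quantities $Q(a,j)=\xi_jG_{b_j}$ and $Q_{2,i}$; see Corollaries \ref{cor:smoothEst} and \ref{cor:smoothEst3}, Lemma \ref{lemma:concretLem} and \eqref{eq:GenQ}. These angular-momentum-type operators vanish along genuinely outgoing motion, so Kato-type bounds for them hold for $H$ as well as for $H_b$, and no decay of $u^\beta$ is needed. Your radial cutoff in $\abs{x^a}/t$ has no such property.
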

Thanks to   \eqref{eq:groupLike} the   intertwining property for our 
$1$-periodic model reads (see \cite[Theorem  XI.36]{RS} for the
analogous version  for the
time-independent case)
\begin{equation}\label{eq:intertwin}
  U(1)W_\alpha^{\pm}=W_\alpha^{\pm}\e^{-\i k_\alpha};\quad
  k_\alpha:=p_a^2+\lambda^\alpha.
\end{equation}

We will focus on studying the forward channel wave operators $W_\alpha^+$ (i.e. scattering as $t\to
+\infty$). Of course there are similar results for the backward channel wave operators $W_\alpha^-$.
\begin{defns}\label{defn:scattering subspace} 
  \begin{enumerate}[1)]
  \item \label{item:ppSubspace} The subspace 
    $\vH_{\rm ac}\subseteq \vH$ denotes the absolutely continuous subspace of $U(1)$. (It is given as the orthogonal complement of the set
    of bound states of $U(1)$.)
\item \label{item:energySubspace} The  finite energy   subspace
    $\vH^+_{\ener}\subseteq \vH$ consists of the vectors  $\psi\in \vH_{\rm ac}$ for
    which the following bound  holds for $\psi(t)=U(t)\psi$.
    \begin{align}\label{eq:BndEnergy} \quad \quad \lim_{E\to \infty}\,
      \limsup_{t\to \infty}\, \norm{F(p^2> E)\psi(t)}=0.
    \end{align}
  \item \label{item:excSubspace} The {increasing energy subspace} $\vH^+_{\rm ie}$ consists of the vectors  $\psi\in \vH_{\rm ac}$ for
    which the following bounds  hold for  $\psi(t)=U(t)\psi$.
 \begin{equation}\label{eq:BndEnergyEXC}
     \forall E>0:\q \lim_{t\to \infty}\, \norm{F(p^2< E)\psi(t)}=0.
   \end{equation}
  \item \label{item:WaveOper} The   wave operator  subspace 
    $\vH^+_{\wave}\subseteq \vH$ is given as the direct sum  of the ranges of
    the   forward channel wave operators, i.e.  
    \begin{equation*}
      \vH^+_{\wave}=\sum_\alpha^\oplus \, \ran(W_\alpha^+).
    \end{equation*}
\end{enumerate}
\end{defns}

The above spaces are  defined for the family $H=H(\cdot)$ and its dynamics $U(\cdot)$. To emphasize this dependence we
may write $\vH_{\rm ac}=\vH_{\rm ac}(H)$,
$\vH^+_{\ener}=\vH^+_{\ener}(H)$, $\vH^+_{\rm ie}=\vH^+_{\rm ie}(H)$  and $\vH^+_{\wave}=\vH^+_{\wave}(H)$. The analogous spaces  for $a\in \vA_3$, which amounts to replacing $H$
 by $H^a$ and the dynamics $U$ by $U^a$,  are 
denoted by $\vH_{\rm ac}(H^a)$,
$\vH^+_{\ener}(H^a)$, $\vH^+_{\rm ie}(H^a)$ and $\vH^+_{\wave}(H^a)$.

\begin{remarks}\label{remarks:n-body-short}
  \begin{enumerate}[i)]
  \item \label{item:invarScat} It is obvious from 
    \eqref{eq:groupLike} that  $\vH^+_{\ener}$ and $\vH^+_{\rm ie}$ 
    are  invariant invariant subspaces  of   the mo\-nodromy operator   
    $U(1)$ and its inverse $U(0,1)$.
    \item \label{item:invarWave} It follows from \eqref{eq:intertwin}
    that also $\vH^+_{\wave}$ is invariant for 
    $U(1)$ and its inverse. It also follows that $\vH^+_{\wave}\subseteq \vH_{\rm ac}$.
  \item \label{item:Inclusions} It is easy to see using  \ref{item:invarWave} 
    and Yosida's theorem that $\vH^+_{\wave}\subseteq\vH^+_{\ener}$, and obviously $\vH^+_{\ener }$ and $\vH^+_{\rm ie}$ are othogonal subspaces. Hence 
    \begin{equation}\label{eq:Inclusions}
      \vH^+_{\wave}\subseteq\vH^+_{\ener }\subseteq \vH_{\rm ac} \ominus \vH^+_{\rm ie}\subseteq \vH_{\rm ac}.
    \end{equation}
\item \label{item:AsCom} Asymptotic completeness is  the
  assertion that $\vH^+_{\wave}=\vH_{\rm ac}$.
\end{enumerate}
\end{remarks}

For $N=2$  asymptotic completeness it well-known, cf. \cite{Yaj1}. We provide in this case a fast and  purely time-dependent  proof of this  property.
\begin{thm}\label{thm:ACn-body-short2} 
  Asymptotic completeness is valid for the  two-body model, i.e. it
    holds  that  $\vH^+_{\wave}(H)=\vH_{\rm ac}(H)$  for $N=2$.
\end{thm}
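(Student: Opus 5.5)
For $N=2$ we have $\vA=\set{a_{\min},a_0}$. The channels are therefore the free channel $\alpha_{\min}$ and the bound states of $U(1)$ itself, and the latter are orthogonal to $\vH_{\rm ac}$. Hence $\vH^+_{\wave}=\ran(W^+_{\alpha_{\min}})$. By Remarks \ref{remarks:n-body-short} \ref{item:invarWave} it suffices to show $\vH_{\rm ac}\subseteq\ran(W^+_{\alpha_{\min}})$.

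For this we prove existence of the inverse wave operator
\begin{equation*}
  \Omega^+\psi=\lim_{t\to\infty}\e^{\i tp^2}U(t)\psi
\end{equation*}
for all $\psi$ in the dense subset $\set{g(U(1))\psi\mid\psi\in\vH}$ of $\vH_{\rm ac}$, where $g$ is smooth and supported away from $\vT_\p(U(1))$. Here $\vT(U(1))=\set{1}$ and $\sigma_\pp(U(1))$ is discrete away from it, so this set is indeed dense in $\vH_{\rm ac}$. Once $\Omega^+\psi$ exists we get $\psi=W^+_{\alpha_{\min}}\Omega^+\psi$, and a closure argument using unitarity of the dynamics finishes the proof.

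To get existence of $\Omega^+\psi$ we work with a bounded propagation observable. Given a support function $f$ (localizing $p^2$ in a compact set of $\R_+$, with a cutoff near $0$), we set
\begin{equation*}
  B(t)=f(p^2)\,\chi_+\!\parb{\tfrac{|x|}{\kappa t}}\ \text{(symmetrized)}\qquad\text{or an outgoing version}\qquad B(t)=f(p^2)\,\chi_+\!\parb{\tfrac{x\cdot p}{\kappa t}+c}.
\end{equation*}
We would then run the following three steps, in this order.

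\textbf{(i) Low and high momentum are negligible.} For $\psi\in g(U(1))\vH$ the quantity $\norm{(1-f(p^2))U(t)\psi}$ is small for large $t$, uniformly in the choice of $f$ in the sense that it tends to $0$ as the support of $f$ exhausts $\R_+$. High energy: since $V$ is bounded and $1$-periodic, the energy $\inp{H(t)}$ stays bounded over each period, and combined with \eqref{eq:LocalDecay} at $r>0$ this controls large $p^2$. Low energy: \eqref{eq:LocalDecay} combined with the free-type estimate $F(p^2<\epsilon)\inp{x}^{s}$ averaged in $t$ handles it.

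\textbf{(ii) Heisenberg derivative estimate.} We compute
\begin{equation*}
  \mathbf D B=\partial_tB+\i[H(t),B].
\end{equation*}
For the $p$-part, $\i[V,f(p^2)]$ is $\vO(r^{-1-2\mu})$ with the same order in $\inp{x}$, and it is integrable along the flow by \eqref{eq:LocalDecay} (we choose $s=\tfrac12+\mu$, so that the error factorizes as $\inp{x}^{-s}(\cdot)\inp{x}^{-s}$). The main term from the free part has the classical sign, i.e. it is positive modulo terms controlled by \eqref{eq:LocalDecay}. Integrating in $t$ then shows that $U(t)\psi$ concentrates where $|x|\ge\kappa t$, with $\kappa$ small relative to $\inf\supp f$.

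\textbf{(iii) Cook argument.} We apply a Cook argument to $\e^{\i tp^2}B(t)U(t)\psi$. The derivative contains $V(t,x)\chi_+(|x|/\kappa t)$, which is $\vO(t^{-1-2\mu})$ by Condition \ref{cond:smooth2wea3n12} \ref{item:shortl}, plus commutator terms. These commutator terms are either integrable in $t$ or of the form $C(t)^*C(t)$, whose integral along the flow is bounded thanks to the positivity obtained in (ii). The Cauchy property for $\e^{\i tp^2}U(t)\psi$ then follows by combining this with (i).

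The main obstacle is step (i)/(ii): the time dependence destroys energy conservation. Step (i) must therefore rely entirely on \eqref{eq:LocalDecay} rather than on a functional calculus of $H$. Our first attempt would be to use $r>0$ in \eqref{eq:LocalDecay} together with periodicity, in order to bound $\int\norm{F(p^2>E)\inp{x}^{-s}U(t)\psi}^2\,\d t$, and then to upgrade this integral bound to a pointwise-in-$t$ statement via the free evolution outside a ball.
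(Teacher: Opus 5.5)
Your overall frame (prove that $\lim_{t\to\infty}\e^{\i tp^2}U(t)\psi$ exists for $\psi=g(U(1))\psi$, then use that $\ran W^+_{\alpha_{\min}}$ is closed) is the same as the paper's. The route you propose to that limit, however, has a genuine gap, exactly at the step you flag.

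\textbf{Step (i) is not established.} Boundedness of $\inp{H(t)}_{\psi(t)}$ over one period gives nothing uniform over infinitely many periods. Indeed $\tfrac{\d}{\d t}\inp{H(t)}_{\psi(t)}=\inp{\pa_tV(t)}_{\psi(t)}$, and Condition \ref{cond:smooth2wea3n12} only gives $\pa_tV=\vO(1)$, so at best you get linear growth. Compare Lemma \ref{lemmaHighEnergy}, which only controls energies of order $t^{2\lambda\delta}$ with $2\lambda\delta>1$. Moreover \eqref{eq:LocalDecay} with $r<\tfrac12$ only bounds the time-integrated, spatially localized quantity $\int\norm{\inp{p}^r\inp{x}^{-s}\psi(t)}^2\,\d t$. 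It says nothing about $\norm{F(p^2>E)\psi(t)}$ far out in $x$, and ``upgrading via the free evolution outside a ball'' is not an argument. A uniform energy bound on all of $\vH_{\rm ac}$ is precisely the property $\vH^+_{\ener}=\vH_{\rm ac}$, which the paper treats as the crux (open for $N\geq 3$), so it cannot be waved through. Your low-energy claim is equally unsupported, since $F(p^2<\epsilon)\inp{x}^{s}$ is not even bounded.

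\textbf{Step (ii) fails for the radial observable.} Symbolically,
$\pa_t\chi_+(|x|/\kappa t)+\i[p^2,\chi_+(|x|/\kappa t)]\approx(\kappa t)^{-1}\chi_+'(|x|/\kappa t)\parb{2\Re(\hat x\cdot p)-|x|/t}$.
This is negative wherever $2\hat x\cdot p<|x|/t$, for example on incoming states, and $f(p^2)$ does not exclude them. Without energy conservation, a minimal velocity bound is the hard content of Lemma \ref{lemmaNegForbidden} and Proposition \ref{lemma:scatDef22}.

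\textbf{The missing idea: for $N=2$ you need none of (i)--(iii), because the spectral cutoff does the job of $f(p^2)$.} The paper picks $\t g\succ g$ in $C^\infty_\c(\T\setminus\vT_\p)$ and first proves \eqref{eq:dec2233}, i.e. $\psi(t)\approx \e^{-\i(t-[t])p^2}\t g(\e^{-\i p^2})\psi([t])$. It replaces $g_1$ by powers $U(1)^k$ (Stone--Weierstrass) and uses Duhamel. The only input is that $V\psi(\cdot)$ tends to zero over bounded time windows; the paper quotes Proposition \ref{lemma:scatDef22} for this, but \eqref{eq:LocalDecay} already suffices. Since $\t g(\e^{-\i \lambda})=0$ for $\lambda$ near $2\pi\Z\ni 0$, low momenta are removed automatically.

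Existence of $\lim_{t\to\infty}\t g(\e^{-\i p^2})\e^{\i tp^2}U(t)\psi$ is then a Kato--Cook argument, as in the proof of Proposition \ref{prop:negB}. For $\varphi\in\vH$ and $s=\tfrac12+\mu$,
\begin{equation*}
  \abs[\Big]{\int_{t_1}^{t_2}\inp[\big]{\e^{-\i tp^2}\t g(\e^{-\i p^2})^*\varphi,V(t)\psi(t)}\,\d t}\leq \norm{\inp{x}^{s}V\inp{x}^{s}}\parbb{\int_\R\norm{\inp{x}^{-s}\e^{-\i tp^2}\t g(\e^{-\i p^2})^*\varphi}^2\d t}^{1/2}\parbb{\int_{t_1}^{t_2}\norm{\inp{x}^{-s}\psi(t)}^2\d t}^{1/2}.
\end{equation*}
This is $o(t_1^0)\norm{\varphi}$ by \eqref{eq:LocalDecay}, applied both to $U$ and to the free dynamics on $\ran\t g(\e^{-\i p^2})$, where $p^2$ is bounded away from $0$. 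So only local decay and the short-range bound are used; no energy or velocity localization of $U(t)\psi$ is needed.

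If you want to keep your route, the high-energy half of (i) can be repaired for $N=2$. Because $V$ decays in all directions, $\norm{\inp{x}^{s}[V(t),f(p^2/E)]\inp{x}^{s}}=\vO(E^{-1/2})$ for $f$ with $f'\in C^\infty_\c$. Then \eqref{eq:LocalDecay} gives $\sup_t\abs{\inp{f(p^2/E)}_{\psi(t)}-\inp{f(p^2/E)}_{\psi}}\leq CE^{-1/2}\norm{\psi}^2$. Step (ii) would still require a different, outgoing-type observable.
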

 Our main result is for the  general many-body model. It reads as follows.

\begin{thm}\label{thm:ACn-body-short3} For
  arbitrary  $N $
  \begin{equation}\label{eq:mainR}
    \vH^+_{\wave}(H)=\vH^+_{\ener}(H).
  \end{equation}
\end{thm}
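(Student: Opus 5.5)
Since $\vH^+_{\wave}\subseteq\vH^+_{\ener}$ is already recorded in Remark \ref{remarks:n-body-short} \ref{item:Inclusions}, only the inclusion $\vH^+_{\ener}\subseteq\vH^+_{\wave}$ has to be proved. I would argue by induction on the length $N$ of $\vA$. For $N=2$ the statement follows from Theorem \ref{thm:ACn-body-short2}. In the induction step I would assume \eqref{eq:mainR} for every sub-Hamiltonian $H^a$, $a\in\vA_3$. Fix $\psi\in\vH^+_{\ener}$. Since $\vH^+_{\ener}$ is invariant under $U(1)$ and $\vT_\p$ is countable and closed, it is enough to treat $\psi=g(U(1))\psi$ with $g$ supported away from $\vT_\p(U(1))$, and then use a density argument. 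For such $\psi$ the local decay estimate \eqref{eq:LocalDecay} is available.

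\Step{Step 1: minimal velocity and outgoing localization} Using \eqref{eq:LocalDecay} with $r=0$, I would construct unbounded time-dependent propagation observables, for example $\chi(\abs{x}/t^{1-\epsilon})$ combined with $t^{-1}\abs{x}$. From these I would derive the minimal velocity bound $\norm{F(\abs{x}<t^{1-\epsilon})\psi(t)}\to0$. Because of the finite energy assumption \eqref{eq:BndEnergy}, I may insert $F(p^2<E)$ with an error that is small uniformly in large $t$. Inside this finite-energy region the usual Graf/Derezi\'nski-type argument applies: the commutator of $H(t)$ with $\tfrac{x^2}{4t}$ or with $A=\tfrac12(x\cdot p+p\cdot x)$ is bounded modulo local decay terms. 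This should give a maximal velocity bound $\abs{x}\lesssim t$, so that $\psi(t)$ concentrates where $x/t\approx 2p$ and $\abs{x}/t\in[c,C]$ with $c>0$.

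\Step{Step 2: asymptotic clustering} Next I would cover the region $\set{c\le\abs{x}/t\le C}$ by Yafaev-type partitions of unity $\sum_a J_a(x/t)$, built as in \cite{Ya1}. Here $J_a$ localizes near $\bX_a$ and away from the smaller collision planes. The aim is to show that $\psi(t)\approx\sum_a U_a(t)\phi_a^+$ for suitable vectors $\phi_a^+$, i.e.\ the existence of the limits $\lim_t U_a(t)^*J_a U(t)\psi$. Their existence follows from Kato-type propagation estimates in which the time derivative of the observable is non-negative up to integrable errors. The short-range bound (4) of Condition \ref{cond:smooth2wea3n12} controls the intercluster potential $I_a$ on $\supp J_a$, where it is $\vO(t^{-1-2\mu})$. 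The main obstacle is here: the Yafaev observables involve square roots such as $(p-x/2t)^2$ localized by functions of $H$-like quantities. Making the commutator positive requires commuting operator square roots with a time-dependent $H(t)$, whose energy is not conserved. I would first try to control these errors with \eqref{eq:LocalDecay} at $r>0$, which gives momentum decay and allows the square root to be replaced by a smooth symbol at integrable cost.

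\Step{Step 3: conclusion} I would then show that $\phi_a^+$ lies in $\vH^+_{\ener}(H^a)\otimes L^2(\bX_a)$, modulo the component on $\vH_{\pp}(U^a(1))\otimes L^2(\bX_a)$. The finite energy bound passes to the internal variables because $p^2\ge(p^a)^2$. By the induction hypothesis, the part of $\phi_a^+$ in $\vH^+_{\ener}(H^a)\otimes L^2(\bX_a)$ lies in the ranges of the internal channel wave operators, and these decompose further into smaller cluster channels. The bound-state component produces exactly the states $u^\alpha\otimes\psi_a$. Theorem \ref{thm:ACn-body-short1} together with the chain rule for wave operators then gives $\psi=\sum_\alpha W^+_\alpha\psi_\alpha$. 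Finally, $\vH^+_{\wave}$ is closed, being an orthogonal sum of ranges of isometries, so the density step concludes the proof.
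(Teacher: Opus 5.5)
Your outer architecture matches the paper's. It consists of induction in $N$ with base case Theorem \ref{thm:ACn-body-short2}, an asymptotic clustering statement for $\psi\in\vH^+_{\ener}$ (Proposition \ref{prop:negB}), and a splitting of each cluster piece into its $\vH_{\pp}(U^a(1))$ part and its $g_n(U^a(1))$ part. The latter is shown to lie in $\vH^+_{\ener}(H^a)$ via $F((p^a)^2>E)\otimes I\leq F(p^2>E)$; one then applies the induction hypothesis and uses closedness of $\vH^+_{\wave}$.

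The gap is in Steps 1--2, i.e.\ in how you would actually prove the clustering, which is the substance of the theorem. You insert $F(p^2<E)$ using \eqref{eq:BndEnergy}. You then run Graf/Derezi\'nski arguments to get a maximal velocity bound, localization in $c\leq\abs{x}/t\leq C$, $x/t\approx 2p$, and Kato bounds for $J_a(x/t)$-type observables. Those arguments require the energy cutoff to sit \emph{inside} the propagation observable and to be (almost) conserved; in the autonomous case this cutoff is $f(H)$.

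Here \eqref{eq:BndEnergy} is only a statement about $\psi(t)$ at each fixed large time, and it cannot be carried through a Heisenberg derivative computation. Both $\bD F(p^2<E)=\i[V(t),F(p^2<E)]$ and $\bD f(H(t))\approx f'(H(t))\pa_tV$ involve $\nabla V_a$ and $\pa_t V_a$. These decay only in $x^a$, not in $\abs{x}$, so \eqref{eq:LocalDecay} does not make them integrable. Likewise $\i[H(t),A]=2p^2-x\cdot\nabla V$ is neither bounded nor a local decay term. Consequently neither the maximal velocity bound (which the paper never proves, and flags as open for general scattering states) nor phase-space bounds for $p-x/2t$ are available, and Step 2 as formulated cannot be carried out. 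The same issue affects your sketch of the minimal velocity bound: momentum is not controlled along the flow, which is why the paper first needs Lemma \ref{lemmaHighEnergy}.

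The missing idea is the paper's substitute for energy localization. The paper introduces the auxiliary Hamiltonian $\wcH=m^{-\delta}H(t)m^{-\delta}$ with $\delta>1/2$. Its time derivative $m^{-\delta}(\pa_tV)m^{-\delta}=\vO(r^{-2\delta})$ is integrable by \eqref{eq:LocalDecay}. Moreover $\norm{(I-f(\wcH))U(t)\psi}\to 0$ for every $\psi\in\vH_{\ac}$ (Lemma \ref{lemmaHighEnergy} and Remark \ref{remarks:f-outg-regi}), so $f(\wcH)$ gives a weak but propagatable momentum localization ($p^2\lesssim r^{2\delta}$).

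This is combined with \emph{time-independent} Yafaev operators $M_0$, $M_i$, $M_a$, not $x/t$-scaled ones. Convexity of $m$ then gives the positive term $4p\cdot\nabla^2m\,p$ in commutators, while the remaining errors are $\vO(r^{-1_+})$ after symmetrization. This yields the integral estimates of Corollaries \ref{cor:smoothEst}, \ref{cor:smoothEst3} and Lemma \ref{lemma:concretLem} for \emph{all} $g(U(1))\psi$.

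Outgoing localization $M_0\geq\kappa$ comes from $\abs{x}\geq t^\lambda$ and the asymptotic observable $M^+$ with trivial kernel (Lemmas \ref{lemma:asympOBser}, \ref{lemma:vanishingKernel}), not from $x/t\approx 2p$. Finite energy is used only to insert the bounded cutoffs $\chi_-(M_i/K)$ and $\chi_{-L,L}(M_a)$ at fixed times, see \eqref{eq:approxINT}. Existence of $\lim_{t\to\infty}U_a(t)^*S_a(t)U(t)\psi$ then follows from those integral estimates without further use of finite energy.

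Your idea of using \eqref{eq:LocalDecay} with $r>0$ to handle the square root does match the paper (the operator $P_{\rho,W}$ of \eqref{eq:aux}). However, it only resolves one technical obstacle inside this framework.
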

One can view  (\ref{eq:mainR}) as  a geometric  characterization of the wave operator subspace.
Moreover it follows from (\ref{eq:mainR}) that asymptotic completeness for the
 many-body model is equivalent to the property that  $\vH^+_{\ener }=\vH_{\ac}$, which consequently is a  criterion  for completeness. In this paper
 we do not show the applicability of the criterion (nor prove asymptotic completeness) for $N\geq  3$.  We remark that Nakamura \cite{Na} showed completeness for  $N= 3$ under   strong conditions (including   exponential decay of pair-potentials). Recently Yajima \cite{Yaj2} showed it for arbitrary $N$,  however again   under  strong conditions (including a smallness condition on the pair-potentials).
  
 In
    Proposition \ref{prop:asympt-compl-anoth} we give a different
    criterion  for asymptotic completeness  than $\vH^+_{\ener }=\vH_{\ac}$. This is in terms of a notion of 
    {`asymptotic clustering'}, which actually is also a key ingredient in our proof of the above more geometrically flavoured  theorem.

    For  $N  =3$ yet another criterion  for  asymptotic completeness  is $\vH^+_{\rm ie}=\set{0}$ thanks to the following strenghtening of (\ref{eq:mainR}).
\begin{proposition}\label{thm:ACn-body-short3EXC} For  $N  =3$  
  \begin{equation}\label{eq:strength}
    \vH_{\rm ac}=  \vH^+_{\ener} \oplus  \vH^+_{\rm ie} =\vH^+_{\wave} \oplus  \vH^+_{\rm ie} .
  \end{equation} 
\end{proposition}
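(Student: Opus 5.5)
Since $\vH^+_{\wave}=\vH^+_{\ener}$ by Theorem \ref{thm:ACn-body-short3}, and $\vH^+_{\ener}\perp\vH^+_{\rm ie}$ with both contained in $\vH_{\rm ac}$ by \eqref{eq:Inclusions}, it suffices to prove $\vH_{\rm ac}\ominus\vH^+_{\ener}\subseteq\vH^+_{\rm ie}$. Equivalently, for $\psi\in\vH_{\rm ac}$ with $\psi\perp\vH^+_{\wave}$, we must show $\norm{F(p^2<E)\psi(t)}\to0$ as $t\to\infty$ for every $E>0$. The plan is to argue by contradiction. Suppose that for some $E$ and some sequence $t_n\to\infty$ the quantity $\norm{F(p^2<E)\psi(t_n)}$ stays bounded below. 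We then aim to produce a nonzero component of $\psi$ lying in $\vH^+_{\wave}$.

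\textbf{Step 1 (localization and clustering).} Using the minimal velocity bound \eqref{eq:decayInx22G}, together with the outgoing-region localization of Section \ref{sec:Localization to  outgoing  region}, the low-energy part $F(p^2<E)\psi(t)$ lives at $|x|\gtrsim t^{1-\epsilon}$ and is outgoing. With bounded momentum, $|x|\lesssim t$ on this part, modulo $\vO(t^{-\infty})$-type errors from Yafaev-type observables (Section \ref{sec: Yafaev type constructions}). For $N=3$ the geometry is simple: the collision planes $\bX_a$, $a\in\vA_3$, meet only at $0$. Away from conic neighbourhoods of these planes all pair potentials are short-range, so the state there evolves freely and lies in $\ran W^+_{\alpha_{\min}}$. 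Near a plane $\bX_a$ the internal Hamiltonian $H^a$ is a two-body operator. By Theorem \ref{thm:ACn-body-short2}, its states are either bound states of $U^a(1)$, giving channels $\alpha=(a,\lambda,u)$, or scattering states, and for the latter $p^a$ also has bounded energy. In the scattering case the orbit must leave the neighbourhood of $\bX_a$ and become free. The ingredient I would invoke here is the asymptotic clustering result of Section \ref{sec:Asymptotic clustering}, in the form: a bounded-energy piece of the evolution is asymptotically a sum of channel evolutions $U_a(t)(u^\alpha\otimes\phi)$.

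\textbf{Step 2 (the key localization).} Define the cutoff family $\psi_E(t)=\chi_-(p^2/E)\psi(t)$, or a smooth time-dependent version of it. We want to show that a Heisenberg-type limit $\lim_{t\to\infty}U(0,t)B_E(t)U(t)\psi$ exists and lies in $\vH^+_{\wave}$ for a suitable bounded propagation observable $B_E(t)$ which is comparable to $F(p^2<E)$ on the outgoing region. If so, orthogonality of $\psi$ to $\vH^+_{\wave}$ gives $\inp{B_E(t)}_{\psi(t)}\to0$. Positivity of $B_E$ together with the lower bound $B_E\gtrsim F(p^2<E')$ for some $E'<E$ then yields the contradiction. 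The existence of the limit would follow by the usual commutator method: compute the Heisenberg derivative of $B_E$, and show that it is integrable, or equal to a positive term plus integrable errors. The errors are controlled by the local decay estimate \eqref{eq:LocalDecay} with $r>0$ and by the short-range bound \ref{item:shortl}.

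\textbf{Main obstacle.} The main obstacle is Step 2. The energy cutoff $F(p^2<E)$ is not monotone along the time-dependent flow: energy can pass from the internal two-body subsystem to the inter-cluster motion and back, since $H^a(t)$ is time-periodic. Moreover, a priori a piece of the state can oscillate between low and high energy. For $N=3$ I would use that inside each cluster region the internal motion is two-body, and hence asymptotically complete. Bound states of $U^a(1)$ contribute bounded internal energy, and the internal scattering part separates from the plane at positive speed, after which $p^2$ becomes asymptotically conserved up to errors of order $\vO(t^{-1-2\mu})$. This restores monotonicity of $p_a^2$ asymptotically and excludes oscillation, so any nontrivial low-energy mass along a subsequence persists and is captured by a wave operator. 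For $N\geq4$ this argument fails because the subsystems need not be complete, which is consistent with the proposition being stated only for $N=3$.
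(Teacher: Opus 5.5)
Your reduction to $\vH_{\rm ac}\ominus\vH^+_{\ener}\subseteq\vH^+_{\rm ie}$ is correct. The argument after it, however, has a genuine gap, located where you yourself put the ``main obstacle''.

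In Step~1 you invoke Proposition~\ref{prop:negB} for ``a bounded-energy piece of the evolution''. That proposition concerns $U(t)\psi$ for an \emph{initial} vector $\psi\in\vH^+_{\ener}$. A family like $F(p^2<E)\psi(t)$ is not of the form $U(t)\phi$, and for $\psi\perp\vH^+_{\ener}$ you have no bounded-energy vector to feed into it. (Likewise, $|x|\lesssim t$ on $F(p^2<E)\psi(t)$ is a maximal velocity bound, which is not available here: small momentum at time $t$ says nothing about the past trajectory.)

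Step~2 asks for the Heisenberg limit of an observable comparable to $F(p^2<E)$. But $\bD p^2=-2\Re(\nabla V\cdot p)$ has neither a sign nor integrable decay: the $\nabla V_a$ do not decay along $\bX_a$, and there is no conserved energy to compensate. So the commutator method you sketch does not produce the limit. Your proposed fix (two-body completeness in each cluster region, separation from the plane, asymptotic conservation of $p_a^2$) presupposes that the evolving state is already decomposed into cluster pieces. That is exactly the clustering you could not invoke, so the argument is circular. Moreover, if such a limit existed for an energy-localizing $B_E$, the limit vector would lie in $\vH^+_{\ener}=\vH^+_{\wave}$ by Theorem~\ref{thm:ACn-body-short3} for \emph{every} $N$. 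So Step~2 is not a reduction; it contains the whole difficulty, which the paper leaves open for $N\geq4$.

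The missing idea is to split the \emph{initial} state using asymptotic observables built not from $p^2$ but from Yafaev's operators $M_0,M_1,\dots,M_I$ and $M_b$, $b\in\vA_1$. Their Heisenberg derivatives are positive up to integrable errors, so the limits $h(M_\cdot^+)=\vHlim_{t\to\infty}U(t)^{-1}h(M_\cdot)U(t)$ exist, as in Lemma~\ref{lemma:asympOBser}. Their spectral projections commute with each other and with $U(1)$. With $\vE^+_{\rm tot}$ their product, the paper sets $\psi_{\rm fi}=\vE^+_{\rm tot}\psi$ and $\psi_{\rm ie}=\psi-\psi_{\rm fi}$.
\begin{itemize}
\item The range of $I-\vE^+_{\rm tot}$, where some asymptotic observable is ``infinite'', lies in $\vH^+_{\rm ie}$, essentially because $|M_\cdot|\lesssim\inp{p}$.
\item The part $\psi_{\rm fi}$ admits the phase-space localization \eqref{eq:approxINT} with no energy assumption. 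Hence the clustering \eqref{eq:scatdeepsilon97} holds, but only with $\varphi_{a,\varepsilon}\in\vH$.
\item Since these cluster states are not known to have bounded energy, one needs full completeness $\vH_{\ac}(H^a)=\vH^+_{\wave}(H^a)$ of the subsystems. For $N=3$ they are two-body, so Theorem~\ref{thm:ACn-body-short2} supplies it.
\item The argument of Theorem~\ref{thm:ACn-body-short3} then gives $\psi_{\rm fi}\in\vH^+_{\wave}$.
\end{itemize}
This is where $N=3$ genuinely enters, and the argument is direct, with no contradiction needed.
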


In general this geometric criterion reads.
 \begin{proposition}\label{prop:asympt-compl-anoth9} For
  arbitrary  $N $  the following assertions  $i)$ and $ii)$ are equivalent.
  \begin{enumerate}[i)]
  \item\label{item:1asymp49} $\forall  a\in \vA_2$: \q $\vH^+_{\rm ie }(H^a)=\set{0}$. 
     \item \label{item:2equi59} $\forall  a\in \vA_2$: \q $\vH^+_{\wave }(H^a)=\vH_{\ac}(H^a)$. 
  \end{enumerate}
     \end{proposition}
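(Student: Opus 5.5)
The implication ii)$\Rightarrow$i) is immediate, so essentially all the work is in i)$\Rightarrow$ii), which I would prove by induction along chains in $\vA$. Throughout I use that each $H^a$, $a\in\vA_2$, is itself a generalized many-body model (with index set $\{b\in\vA\mid b\leq a\}$) satisfying Condition \ref{cond:smooth2wea3n12}. So Theorems \ref{thm:ACn-body-short1}, \ref{thm:ACn-body-short2}, \ref{thm:ACn-body-short3} and the inclusions \eqref{eq:Inclusions} all hold with $H$ replaced by $H^a$.

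\emph{ii)$\Rightarrow$i).} Fix $a\in\vA_2$. By \eqref{eq:Inclusions} for $H^a$ we have $\vH^+_{\wave}(H^a)\subseteq \vH^+_{\ener}(H^a)\subseteq \vH_{\ac}(H^a)\ominus\vH^+_{\rm ie}(H^a)$. If $\vH^+_{\wave}(H^a)=\vH_{\ac}(H^a)$, then $\vH^+_{\rm ie}(H^a)\perp\vH_{\ac}(H^a)$. But $\vH^+_{\rm ie}(H^a)\subseteq\vH_{\ac}(H^a)$ by definition, so $\vH^+_{\rm ie}(H^a)=\set{0}$.

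\emph{i)$\Rightarrow$ii).} I argue by induction on the maximal chain length below $a$. For $a$ with chain length $2$ (two-body subsystems), ii) is Theorem \ref{thm:ACn-body-short2}. Now fix $a$ and assume ii) holds for all $b\lneq a$, $b\in\vA_2$. By Theorem \ref{thm:ACn-body-short3} for $H^a$, it suffices to show $\vH^+_{\ener}(H^a)=\vH_{\ac}(H^a)$. The plan is to prove the general analogue of Proposition \ref{thm:ACn-body-short3EXC}:
\begin{equation*}
\vH_{\ac}(H^a)=\vH^+_{\ener}(H^a)\oplus\vH^+_{\rm ie}(H^a),
\end{equation*}
under the hypothesis that all proper subsystems are asymptotically complete. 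Combined with i), this gives $\vH_{\ac}(H^a)=\vH^+_{\ener}(H^a)$. The idea is that the proof for $N=3$ should only use, about the subsystems, their asymptotic completeness (Theorem \ref{thm:ACn-body-short2}). The induction hypothesis supplies exactly this input for general $N$.

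To obtain the decomposition, I would take $\psi\in\vH_{\ac}(H^a)$ and study the time-dependent family $P_E(t)=F(p^2<E)$ (smoothed) along $U^a(t)$. First I would invoke the minimal velocity bound \eqref{eq:decayInx22G}, the localization to the outgoing region, and the integral estimates of Theorem \ref{thm:LocalDecay}, applied to $H^a$. With these I would show that the asymptotic observable
\begin{equation*}
P^+_E=\slim_{t\to\infty}U^a(0,t)f(p^2<E)U^a(t)
\end{equation*}
exists on $\vH_{\ac}(H^a)$. The point is that, in the outgoing region, the Heisenberg derivative of $f(p^2)$ is a sum of inter-cluster terms. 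These are integrable either by the short-range bound \ref{item:shortl} or, on regions where some cluster is bound, by completeness of the corresponding subsystem. The latter lets one replace the subsystem dynamics by channel dynamics, on which internal energies are conserved.

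Next I would let $E\to\infty$ to get a limit projection $P^+$ commuting with $U^a(1)$. Its range is contained in $\vH^+_{\ener}(H^a)$, and its complement in $\vH_{\ac}(H^a)$ satisfies \eqref{eq:BndEnergyEXC}, i.e.\ lies in $\vH^+_{\rm ie}(H^a)$. Since the two subspaces are orthogonal, the decomposition follows.

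The main obstacle is this existence step, specifically controlling the energy flow near collision planes $\bX_b$ where intermediate clusters may exchange energy. This is exactly where the induction hypothesis (completeness of $H^b$, $b\lneq a$) must be fed in, via the asymptotic clustering characterization of Proposition \ref{prop:asympt-compl-anoth} and Yafaev-type observables. It is also where the square-root commutation problem reappears, and I would handle it with the $r>0$ version of \eqref{eq:LocalDecay}.
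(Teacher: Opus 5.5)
Your direction ii)$\Rightarrow$i) is fine, and so is the induction skeleton. The reduction via Theorem \ref{thm:ACn-body-short3} (for $H^a$) to a splitting $\vH_{\ac}(H^a)=\vH^+_{\ener}(H^a)\oplus\vH^+_{\rm ie}(H^a)$ is also fine, as is the idea of an asymptotic observable whose ``mass at infinity'' is the increasing energy part.

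The gap is the observable you choose. The strong limit $P^+_E=\slim_{t\to\infty}U^a(0,t)f(p^2<E)U^a(t)$ does not exist on $\vH_{\ac}(H^a)$ as soon as a proper subsystem has a bound state. No induction hypothesis can repair this, because the obstruction sits on the channel states themselves. Take $\psi=W^+_\beta\psi_b$ with $\beta=(b,\lambda^\beta,u^\beta)$, $b\neq a_{\min}$. The Cauchy defect between integer times $t$ and $t+1$ is $\norm{[f(p^2),U(t+1,t)]U(t)\psi}$. On such cluster-separated states $U(t)\psi\approx U_b(t)(u^\beta\otimes\psi_b)$ and $U(t+1,t)\approx U_b(1)=U^b(1)\otimes\e^{-\i p_b^2}$. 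Fibering in the inter-cluster momentum $k\in\bX_b$ then shows that the defect tends to
\begin{equation*}
\Big(\int_{\bX_b}|\hat\psi_b(k)|^2\,\norm[\big]{\big(U^b(1)-\e^{-\i\lambda^\beta}\big)f\big((p^b)^2+k^2\big)u^\beta}^2\,\d k\Big)^{1/2},
\end{equation*}
independently of $t$. This is in general nonzero, since the cutoff $f((p^b)^2+k^2)$ pushes part of $u^\beta$ out of the eigenspace of $U^b(1)$. So your heuristic is wrong: subsystem completeness does not make the Heisenberg derivative integrable on bound-cluster regions, and internal energies are not conserved there. The term $\i[V^b(t),f(p^2)]$ stays of order one on those regions for all times. (Even granting existence, you would also need multiplicativity in $f$ to conclude that $P^+$ is a projection.)

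The missing idea is to use asymptotic observables that see only inter-cluster motion. The paper replaces $p^2$ by the Yafaev type operators $M_0,M_1,\dots,M_I$ and $M_b$, $b\in\vA_1$. Near each collision plane $\bX_c$ their defining functions depend on $x_c$ only, so they do not feel the internal dynamics there. For these the limits $\slim_{t\to\infty}U(t)^{-1}h(M_i)U(t)$, $h\in C_\c(\R)$ (and likewise for $M_b$), do exist; this is Lemma \ref{lemma:asympOBser} and its analogues.

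The paper then sets $\psi_{\rm fi}=\vE^+_{\rm tot}\psi$ and $\psi_{\rm ie}=\psi-\psi_{\rm fi}$.
\begin{itemize}
\item Since these observables are dominated by $|p|$, the complementary part $\psi_{\rm ie}$ lies in $\vH^+_{\rm ie}$.
\item On $\psi_{\rm fi}$ the cutoffs $\chi_-(M_i/K)$ and $\chi_{-L,L}(M_b)$ can be inserted, so \eqref{eq:approxINT} and the clustering \eqref{eq:scatdeepsilon97} hold. No induction hypothesis is used at this stage.
\item Under i), $\psi=\psi_{\rm fi}$.
\item Completeness of the proper subsystems enters only afterwards, exactly as in the proof of Theorem \ref{thm:ACn-body-short3}. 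It turns the clustered pieces $U_a(t)\varphi_{a,\varepsilon}$ into ranges of channel wave operators.
\end{itemize}
This gives $\psi\in\vH^+_{\wave}$ directly, without passing through $\vH^+_{\ener}$.

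Note also that Proposition \ref{prop:asympt-compl-anoth} cannot serve as a tool for your existence step. Its hypothesis, clustering of every a.c.\ state, is exactly what has to be proved, and the construction of $\psi_{\rm fi}$ is what proves it.
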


We record the following pointwise  decay result (for 
  arbitrary  $N $). 
 \begin{lemma}\label{lemma:scatDef} For any  $\psi\in \vH^+_{\ener}$ and
   any $R>0$
   \begin{equation}
\label{eq:decayInx}
   \lim_{t\to \infty}\, \norm{F(|x|< R)\psi(t)}=0;\q \psi(t)=U(t)\psi.
   \end{equation} 
 \end{lemma}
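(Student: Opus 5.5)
The plan is to combine the integral local decay estimate of Theorem \ref{thm:LocalDecay} with a uniform Lipschitz bound in time, which the finite energy condition \eqref{eq:BndEnergy} makes available. Fix $\psi\in\vH^+_{\ener}$, $R>0$, and a smooth cut-off $\chi_R\in C^\infty_\c(\bX)$ with values in $[0,1]$ and $\chi_R=1$ on $\{|x|<R\}$. It suffices to show $\norm{\chi_R\psi(t)}\to 0$. Fix also a support function $f_E$ with $f_E(s)=1$ for $s\le E$ and $f_E(s)=0$ for $s\ge 2E$. By \eqref{eq:BndEnergy},
\begin{equation*}
  \lim_{E\to\infty}\limsup_{t\to\infty}\norm{(1-f_E(p^2))\psi(t)}=0 .
\end{equation*}
It therefore remains to prove that for each fixed $E$ the function $h(t)=\norm{\chi_R f_E(p^2)\psi(t)}^2$ tends to zero.

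\emph{Step 1 (approximation).} Since $\psi\in\vH_{\rm ac}$, its spectral measure with respect to $U(1)$ has no atoms. The set $\vT_{\p}$ is countable (it is the union of the closed countable set $\vT$ and $\sigma_{\pp}(U(1))$, which is countable since $\vH$ is separable), so it has $\psi$-spectral measure zero. Hence for any $\epsilon>0$ there is a bounded Borel $g$ supported away from $\vT_{\p}$ with $\norm{\psi-g(U(1))\psi}\le\epsilon$; I take $g=1_{\T\setminus O}$ for a suitable open $O\supseteq\vT_{\p}$ of small $\psi$-measure. Put $\psi_1=g(U(1))\psi$. By unitarity, $\norm{\psi(t)-\psi_1(t)}\le\epsilon$ for all $t$. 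Since $\chi_R f_E(p^2)\inp{x}^{s}$ is bounded for any $s>\tfrac12$, \eqref{eq:LocalDecay} with $r=0$ gives that $k(t)=\norm{\chi_R f_E(p^2)\psi_1(t)}^2$ is integrable on $\R$. Consequently
\begin{equation*}
  h(t)\le 2k(t)+2\epsilon^2 \mand \parb{h-2\epsilon^2}_+\in L^1(\R_+).
\end{equation*}

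\emph{Step 2 (Lipschitz bound).} Compute $\tfrac{\d}{\d t}h(t)=\inp{\i[H(t),B]}_{\psi(t)}$ with $B=f_E(p^2)\chi_R^2 f_E(p^2)$, first for $\psi$ in $\inp{p}^{-2}L^2(\bX)$, using \eqref{eq:solSchr}, and then for general $\psi$ by density. The commutator $[p^2,B]$ is bounded because $f_E(p^2)$ localizes $p$ to a compact set and $[p^2,\chi_R^2]$ is first order in $p$. The commutator $[V(t),B]$ is bounded because $V$ is bounded uniformly in $t$ by Condition \ref{cond:smooth2wea3n12}. Hence $|h'(t)|\le L_E$ uniformly in $t$, and the same bound holds for $(h-2\epsilon^2)_+$.

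\emph{Step 3 (conclusion).} A nonnegative Lipschitz function in $L^1(\R_+)$ tends to zero: if it were at least $\delta$ at arbitrarily large times, it would be at least $\delta/2$ on infinitely many disjoint intervals of length $\delta/(2L_E)$, contradicting integrability. Thus $\limsup h(t)\le 2\epsilon^2$. Letting $\epsilon\to0$ and then $E\to\infty$, and using
\begin{equation*}
  \norm{\chi_R\psi(t)}\le h(t)^{1/2}+\norm{(1-f_E(p^2))\psi(t)},
\end{equation*}
yields \eqref{eq:decayInx}.

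The main obstacle is Step 2, which is where finite energy is indispensable. Without the momentum cut-off, $\tfrac{\d}{\d t}\norm{\chi_R\psi(t)}^2$ involves $\nabla\chi_R\cdot p$, which is not uniformly bounded along the evolution. Inserting $f_E(p^2)$ shifts the time derivative onto $[V(t),f_E(p^2)]$, which is bounded, while \eqref{eq:BndEnergy} guarantees that the discarded high-momentum part is asymptotically negligible. Care is also needed in justifying the differentiation under the weak regularity of the dynamics, which is handled by the density argument in Step 2 using the continuity properties stated after \eqref{eq:solSchr}.
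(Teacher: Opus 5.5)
Your proof is correct and follows essentially the paper's argument. You show that a spatially localized quantity is integrable in time via \eqref{eq:LocalDecay} with $r=0$ and has a uniformly bounded time derivative, hence tends to zero, with density and the finite-energy condition handling the reduction. The only cosmetic difference is that the paper uses the observable $\phi(t)=\norm{\inp{p}^{-1}r^{-1}\psi(t)}^2$ with smooth $g$, and brings in the energy cutoff only "by commutation" to pass from $F(|x|<R)$ to $\inp{p}^{-1}r^{-1}$, whereas you build the cutoff $f_E(p^2)$ directly into the observable.
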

 \begin{proof}  Consider  $\phi(t)=\norm
   {\inp{p}^{-1}r^{-1}\psi(t)}^2$, where $\psi=g(U(1))\psi$ for some
 $g\in C^\infty_{\c}(\T\setminus \vT_\p)$. By density  and  commutation it suffices to show that
 $\phi(t)\to 0$ for $t\to \infty$. Thanks to \eqref{eq:LocalDecay} the
 function $\phi$
 is integrable at infinity. Since furthermore it  has a
 bounded derivative,  indeed $\lim_{t\to \infty}\, \phi(t)=0$.
\end{proof}

 With a considerably more complicated proof (given in Section \ref{sec:Preliminary localization results})  this pointwise  decay result is shown to be valid for any $\psi\in \vH_{\ac}$.  The further generalization,  stated below as \eqref{eq:decayInx22} for $\lambda<1$,  is sharp in the sense that  it is incorrect for $\lambda=1$. It may be referred to as a \emph{minimal velocity bound}, which is consistent with the use of  this terminology  in   time-independent $N$-body 
scattering theory. 
 
 \begin{proposition}\label{lemma:scatDef22}
    For any  $\psi\in \vH_{\ac}$ and
    any $R>0$
    \begin{subequations}
      \begin{equation}
\label{eq:decayInx2}
   \lim_{t\to \infty}\, \norm{F(|x|< R)\psi(t)}=0;\q \psi(t)=U(t)\psi.
 \end{equation}

 More generally, for any  $\psi\in \vH_{\ac}$ and any $\lambda<1$
 \begin{equation}
\label{eq:decayInx22}
   \lim_{t\to \infty}\, \norm{F(|x|< t^\lambda)\psi(t)}=0;\q \psi(t)=U(t)\psi.
 \end{equation}\end{subequations}
 \end{proposition}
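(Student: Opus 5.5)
The plan is a propagation-observable argument built on the integral local decay estimate \eqref{eq:LocalDecay}. Since $\sigma_{\sc}(U(1))=\emptyset$ and $\vT_\p$ is closed and countable, vectors $\psi=g(U(1))\psi$ with $g\in C^\infty_\c(\T\setminus\vT_\p)$ are dense in $\vH_{\ac}$. We may also take $\psi\in\inp{p}^{-2}L^2$, by a further density argument and Yosida's theorem. Since the quantity in \eqref{eq:decayInx22} is uniformly continuous in $\psi$, it suffices to treat such $\psi$. Also \eqref{eq:decayInx2} follows from \eqref{eq:decayInx22}, so we fix $\lambda\in(0,1)$.

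Using the fixed function $\chi_-$ of Definitions \ref{def:chi}, we introduce
\begin{equation*}
  B(t)=\chi_-\parb{r/t^\lambda},\qquad \phi(t)=\inp{B(t)}_{\psi(t)},\qquad t\geq 1.
\end{equation*}
Then $\norm{F(|x|<t^\lambda)\psi(t)}^2\leq\phi(t)$ up to a constant, since $\chi_-=1$ on $(-\infty,4/3]$ and $r\leq 1+|x|$. The crucial point is that $B(t)$ is a function of $x$ only, so it commutes with $V(t)$. Hence the many-body potential does not enter the Heisenberg derivative at all:
\begin{equation*}
  \tfrac{\d}{\d t}\phi(t)=\inp[\big]{-\lambda t^{-1}(r/t^\lambda)\,\chi_-'(r/t^\lambda)}_{\psi(t)}+t^{-\lambda}\inp[\big]{2\Re\parb{\chi_-'(r/t^\lambda)\,(\nabla r)\cdot p}}_{\psi(t)} .
\end{equation*}

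Both terms are localized in the region $r\sim t^\lambda$, where $\inp{x}^{2s}\lesssim t^{2\lambda s}$. We choose $s>\tfrac12$ with $2\lambda s\leq 1$; this is possible precisely because $\lambda<1$, which matches the stated sharpness. The first term is then bounded by
\begin{equation*}
  C\,t^{-1+2\lambda s}\,\norm{\inp{x}^{-s}\psi(t)}^2\leq C\norm{\inp{x}^{-s}\psi(t)}^2 ,
\end{equation*}
which is integrable in $t$ by \eqref{eq:LocalDecay} with $r=0$. The second term involves one power of $p$. I would split it symmetrically as $\inp{p}^{1/2}\cdots\inp{p}^{1/2}$, commuting the $p$'s through $\chi_-'(r/t^\lambda)$ at the cost of errors $\vO(t^{-\lambda})$ of lower order. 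It is then bounded by
\begin{equation*}
  C\,t^{-\lambda+2\lambda s}\,\norm{\inp{p}^{\rho}\inp{x}^{-s}\psi(t)}\,\norm{\inp{p}^{1-\rho}\inp{x}^{-s}\psi(t)}+\text{lower order}.
\end{equation*}
This is where I expect the main obstacle. Theorem \ref{thm:LocalDecay} only allows $\rho<\tfrac12$, so the two $\inp{p}$-weights do not quite match. What I would try first is to exploit the extra factor $t^{-\lambda+2\lambda s}$: for $s$ close to $\tfrac12$ it behaves like $t^{0}$, and we can trade a small power of $t$ against a small loss in $p$. Concretely, I would interpolate between the $\rho<\tfrac12$ estimate and the trivial $\vO(t^{C})$ growth of $\norm{\inp{p}\psi(t)}$, which holds by Gronwall since $[H,p]=-\i\nabla V$ is bounded. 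If that fails, I would instead insert a regularization $\inp{p}^{-\delta}B(t)\inp{p}^{-\delta}$. This produces a $V$-commutator term $[V,\inp{p}^{-\delta}]$, which has order $-\delta$ in $p$ but only boundedness in $x$. I would then need Condition \ref{cond:smooth2wea3n12} (\ref{item:BoundsatInfinity}) together with a partition of phase space to show this term is integrable.

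Granting that $\phi'=f_1+f_2$ with $f_1,f_2\in L^1([1,\infty))$, the limit $\phi_\infty=\lim_{t\to\infty}\phi(t)$ exists. On the other hand, since $\chi_-(r/t^\lambda)$ is supported where $r\leq 2t^\lambda$,
\begin{equation*}
  \int_1^\infty\phi(t)\,\frac{\d t}{t}\leq C\int_1^\infty t^{2\lambda s-1}\norm{\inp{x}^{-s}\psi(t)}^2\,\d t<\infty
\end{equation*}
by \eqref{eq:LocalDecay}. Because $\int_1^\infty \d t/t=\infty$, this forces $\phi_\infty=0$, which yields \eqref{eq:decayInx22}.
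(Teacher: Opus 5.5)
Your density reduction and your endgame are fine. If $\phi'\leq h$ for some $h\in L^1([1,\infty))$ (a one-sided bound would already do), then $\phi$ converges, and $\int_1^\infty\phi(t)\,\d t/t<\infty$ forces the limit to vanish.

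The gap is the step you grant: control of the flux term $t^{-\lambda}\inp{2\Re(\chi_-'(r/t^\lambda)\nabla r\cdot p)}_{\psi(t)}$. Neither of your remedies can close it, and the obstruction is not only the $\inp{p}$-weight mismatch.

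\begin{itemize}
\item \textbf{Borderline spatial weight.} Even with bounded momenta, $t^{-\lambda}\abs{\chi_-'(r/t^\lambda)}$ is of size $r^{-1}$ on its support, the shell $r\approx t^\lambda$. This is exactly the borderline weight that \eqref{eq:LocalDecay} does not control, since it needs $\inp{x}^{-s}$ with $s>\tfrac12$ on each factor. Your own bound leaves the factor $t^{\lambda(2s-1)}$, which grows for every admissible $s$; it does not ``behave like $t^0$''. Splitting the weight unevenly between the two factors only forces one of them to carry a weight no stronger than $r^{-1/2}$.
\item \textbf{Interpolation goes the wrong way.} Interpolating against the Gronwall (or energy) growth of $\norm{\inp{p}\psi(t)}$ only adds positive powers of $t$, whereas you need to gain them.
\item \textbf{The regularization fails too.} For $\inp{p}^{-\delta}B(t)\inp{p}^{-\delta}$, the commutator $[V,\inp{p}^{-\delta}]$ decays only in the internal variables $x^a$, not in $\abs{x}$. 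Its contribution is therefore $\vO(1)$ on the ball and not integrable in $t$.
\item \textbf{No high-momentum control.} Since $H(t)$ is time-dependent there is no energy conservation. A regularized quantity tending to zero would not give \eqref{eq:decayInx22} without separate control of high momenta.
\end{itemize}

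What is missing is a sign, not absolute integrability. Write $\chi_-'=-\eta^2$. The flux term is then exactly $-2t^{-\lambda}\inp{\eta\,\Re(\nabla r\cdot p)\,\eta}_{\psi(t)}$, which is nonpositive wherever the radial momentum in the shell is nonnegative. So the one-sided bound you would need amounts to showing that the non-outgoing part of $\psi(t)$ is asymptotically negligible. That is precisely Lemma \ref{lemmaNegForbidden}. From it the paper concludes directly, using $F(|x|<R)\chi_+(t^{\epsilon_1}M_t)=0$ for $R=t^\lambda/8$ and large $t$.

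The proof of that lemma uses two ingredients absent from your proposal:
\begin{itemize}
\item Effective momentum localization from Lemma \ref{lemmaHighEnergy}. This goes through the auxiliary Hamiltonian $\wcH_t=m_t^{-\delta}H(t)m_t^{-\delta}$, with the paper's $\delta$ chosen so that $2\lambda\delta>1$ in order to absorb $\pa_tV$.
\item The radial observable $M_t$ built from the convex Yafaev function $m$. By convexity, $\bD M_t=B_t^*B_t+R_t$ has a nonnegative leading part.
\end{itemize}
For $\Psi=f(\wcH_t)\chi_-(t^{\epsilon_1}\wt M_t)m_t^{\epsilon_2}\chi_-(t^{\epsilon_1}\wt M_t)f(\wcH_t)$, every borderline contribution then has a favourable sign. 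This leaves only $\bD\Psi\leq\vO(t^{-1_+})+\vO_{\unif}(r^{-1_+})$, which \eqref{eq:LocalDecay} can integrate. Boundedness of $\inp{\Psi}_{\psi(t)}$, together with $m_t\geq t^\lambda/2$, then yields the decay.
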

  
\section{Yafaev's  constructions}\label{sec: Yafaev type
  constructions}

In \cite{Ya1} various  real functions $m_a\in C^\infty(\mathbf X)$,
$a\in\vA_1$,  and $m_{a_{0}}\in C^\infty(\mathbf X)$  are 
constructed. They  are  homogeneous of degree $1$ for $\abs{x}> 5/6$,
and in addition $ m_{a_0}$ is (locally) convex in that region. These
functions are constructed as depending  of a  small parameter
$\epsilon>0$, and once given, they  are  used in the constructions
\begin{equation}\label{eq:M_a0}
 M_a=2\Re(w_a\cdot p)=-\i\sum_{j\leq d}\parb{(w_a)_j\partial_{x_j}+\partial_{x_j}(w_a)_j};\quad  w_a=\mathop{\mathrm{grad}} m_a.   
\end{equation}
The operators $M_a$, $a\in\vA_1$, may  be considered as `channel
localization operators', while  operators of the form  $M_{a_0}$  partially enter as  auxiliary  quantities  controlling  commutators of  
the Hamiltonian and the channel
localization operators. (For a more preliminary application, see \eqref{eq:DECO}.)  This  scheme is useful for  proving  asymptotic completeness  for the time-independent case, as demonstrated in \cite{Ya1}.

We  consider various conical
subsets of  $\mathbf X\setminus\set{0}$, defined for 
  $a\in\vA_1$ and $\varepsilon,\delta\in (0,1)$,
\begin{align}\label{eq:primes}
  \begin{split}
    \mathbf X'_a&={\mathbf X_a }\setminus\cup_{{b\not\leq a,\,b\in
                     \vA}\,
                 }\mathbf X_b,\\
\mathbf X_a(\varepsilon)&=\set{x\in
  \bX\mid\abs{x_a}>(1-\varepsilon)\abs{x}},\\\quad
&\quad\quad\overline{\mathbf X_a(\varepsilon)}=\set{x\in \bX\setminus\set{0}\mid\abs{x_a}\geq(1-\varepsilon)\abs{x}},\\
\mathbf Y_a(\varepsilon)&=\parb{\mathbf X \setminus\set{0}}\setminus\cup_{{b\not\leq a,\,b\in \vA_1}
}\,\mathbf X_b(\varepsilon),\\
\mathbf Z_a(\delta)&=\mathbf X_a(\delta)\setminus\cup_{b\not\leq a,\,b\in \vA_1}
\,\overline{\mathbf X_b(3\delta^{1/d_a})};\quad\quad d_a=\dim
\mathbf X_a.
\end{split}
\end{align}
The structure of the sets $\mathbf X_a(\varepsilon)$, $\mathbf
Y_a(\varepsilon)$ and $\mathbf Z_a(\delta)$ is
${\R_+ V}$, where  $V$ is a
subset of the unit sphere $\mathbf{S}^{a_0}\subseteq\mathbf X$. For $\mathbf
X_a(\varepsilon)$ and $\mathbf Z_a(\delta)$ the set $V$ 
 is relatively open, while for  $\mathbf Y_a(\varepsilon)$ the  set is  compact.
We also record that $\mathbf X_a\setminus\set{0}\subseteq\mathbf
  X_a(\varepsilon)\subseteq \set{\abs{x^{a}}< \sqrt{2\ \varepsilon}\abs{x}}$,
and that $\mathbf Z_a(\delta)\subseteq \mathbf X_a(\delta)\cap\mathbf
Y_a(3\delta^{1/d_a})$. The following inclusion is less obvious (see
\cite[Lemma 3.10]{Sk2}):
\begin{equation}\label{eq:cover}
  \forall a\in\vA_1\, \forall \varepsilon\in (0,1):\quad \mathbf Y_a(\varepsilon)\subseteq\cup_{b\leq a}  \,\mathbf X'_b.
\end{equation}

The Yafaev constructions involve families of  cones $\mathbf X_a(\varepsilon)$ roughly with    `width'
$\varepsilon\approx\epsilon^{d_a}$ for a 
parameter $\epsilon>0$. This parameter is taken sufficiently small   as primarily 
determined  by the following
geometric property: 
There exists $C>0$ such that for all $a,b \in \vA_1$ and $x\in\mathbf X$
\begin{equation}\label{eq:3.2}
  \abs{x^c}\leq C\parb{\abs{x^a}+\abs{x^b}};\quad c=a\vee b.
  \end{equation}
We note that \eqref{eq:3.2} is  also vital in Graf's constructions
\cite{Gra, De}. For a self-contained account of Yafaev's constructions recalled in this section we refer to \cite{Sk2}.

\subsection{Yafaev's  homogeneous functions}\label{subsec: Homogeneous Yafaev type functions} 

For small $\epsilon>0$ and any $a\in \vA_1$ we define
\begin{equation*}
  \varepsilon^a_k= k\epsilon^{d_a},\quad k=1,2,3,
\end{equation*} and view the numbers  $\varepsilon_a$ in the
interval 
$(\varepsilon^a_2, \varepsilon^a_3)$  as 
`admissible'. More precisely we view $\varepsilon_a$  as a free  real parameter constrainted by 
\begin{equation}\label{eq:admis}
  \varepsilon^a_2=2\epsilon^{d_a}<\varepsilon_a<3\epsilon^{d_a}=\varepsilon^a_3.
\end{equation} Fixing an  arbitrary
ordering of $\vA_1$ we then introduce in terms of  these  parameters  the `admissible' vector 
\begin{equation*}
  \bar\varepsilon=(\varepsilon_{a_1}, \dots
\varepsilon_{a_n}),\quad n= \# \vA_1,
\end{equation*} and denote by $\d \bar\varepsilon$ the corresponding
Lebesgue measure. 

Let for $\varepsilon>0$ and $a\in \vA_1$
\begin{equation*}
  h_{a,\varepsilon}(x)=(1+\varepsilon)\abs{x_a};\quad
  x\in \mathbf X\setminus \set{0}.
\end{equation*} 
 Letting  $\Theta= 1_{[0,\infty)}$,  we define for  any $a\in \vA_1$ and any admissible vector $\bar\varepsilon$
  \begin{equation*}
    m_a(x, \bar \varepsilon)= h_{a,\varepsilon_a}(x)\Theta\parb{
      h_{a,\varepsilon_a}(x)-\max_{\vA_1 \ni c\neq a } h_{c,\varepsilon_c}(x)};\quad
  x\in\mathbf X\setminus \set{0}.
  \end{equation*} 
For each   $c\in \vA_1$ we fix an arbitrary 
non-negative function $\varphi_c\in C^\infty_\c(\R)$ with
\begin{equation*}
  \supp \varphi_c\subseteq (\varepsilon^c_2,\varepsilon^c_3)
\mand \int_\R \varphi_c(\varepsilon)\,\d \varepsilon=1,
\end{equation*} and then   average  the functions $ m_a(x,
\bar \varepsilon)$,     $a\in \vA_1$, over $\bar
\varepsilon$ as    
\begin{equation*}
  m_a(x)=\int_{\R^n} m_a(x, \bar \varepsilon)\,\prod_{c\in \vA_1}\, \varphi_c(\varepsilon_c)\,\d \bar\varepsilon;\quad x\in\mathbf X\setminus \set{0}.
\end{equation*} 
\begin{lemma}\label{lemma:ma1}
For each  $a\in\vA_1$ the  function 
 $m_a:\mathbf X\setminus \set{0}\to [0,\infty)$ fulfils  the following
properties for any sufficiently small  $\epsilon>0$ and any $b\in\vA_1$: 
\begin{enumerate}[1)]
\item\label{item:10a} $m_a$ is homogeneous of degree $1$.
\item\label{item:11a} $m_a\in C^\infty (\bX\setminus\set{0})$.
\item\label{item:12a} If $b\leq a$  and  $x\in \mathbf X_b(\varepsilon^b_1)$, then
  $m_a(x)=m_a(x_b)$.
\item\label{item:13a} If ${b\not\leq a}$ and $x\in \mathbf
  X_b(\varepsilon^b_1)$, then   $m_a(x)=0$.
\item\label{item:14a} If $a\neq {a_{\min}}$,  $x\neq 0$  and $\abs{x^{a}}\geq \sqrt{2\ \varepsilon^a_3}\abs{x}$, then   $m_a(x)=0$.
\end{enumerate} 
\end{lemma}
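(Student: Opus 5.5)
The plan is to reduce everything to elementary inequalities between the functions $h_{c,\varepsilon_c}$, working first with the unaveraged $m_a(\cdot,\bar\varepsilon)$ for every admissible $\bar\varepsilon$ and then averaging. I take the inequality in Lemma \ref{lemma:ma1}~\ref{item:14a} to be $\abs{x^a}\geq \sqrt{2\,\varepsilon^a_3}\,\abs{x}$, as it must be for dimensional consistency.

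\emph{Homogeneity and item \ref{item:14a}.} Each $h_{c,\varepsilon}$ is homogeneous of degree $1$, so the argument of $\Theta$ is also homogeneous of degree $1$. Hence $m_a(\cdot,\bar\varepsilon)$ is homogeneous of degree $1$, and averaging in $\bar\varepsilon$ preserves this, giving \ref{item:10a}.

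For \ref{item:14a} I compare with $c=a_{\min}$, for which $x_{a_{\min}}=x$. Take $a\neq a_{\min}$ and $\abs{x^a}\geq\sqrt{2\varepsilon^a_3}\abs{x}$. Then $\abs{x_a}\leq (1-2\varepsilon^a_3)^{1/2}\abs{x}\leq (1-\varepsilon^a_3)\abs x$, so
\[
h_{a,\varepsilon_a}(x)\leq (1+\varepsilon_a)(1-\varepsilon_a)\abs{x}<\abs{x}<h_{a_{\min},\varepsilon_{a_{\min}}}(x).
\]
Thus $\Theta=0$ for every admissible $\bar\varepsilon$, and $m_a(x)=0$.

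\emph{Smoothness, item \ref{item:11a}.} I write $\Theta\big(h_a-\max_{c\neq a}h_c\big)=\prod_{c\neq a}\Theta\big((1+\varepsilon_a)\abs{x_a}-(1+\varepsilon_c)\abs{x_c}\big)$ and integrate out each $\varepsilon_c$, $c\neq a$, separately. Let $\Phi_c(s)=\int_{-\infty}^s\varphi_c$, which is smooth and equals $1$ for $s\ge \varepsilon^c_3$. This yields
\[
m_a(x)=\int \varphi_a(\varepsilon_a)(1+\varepsilon_a)\abs{x_a}\prod_{c\neq a}\Phi_c\Big(\tfrac{(1+\varepsilon_a)\abs{x_a}}{\abs{x_c}}-1\Big)\,\d\varepsilon_a ,
\]
where the factor is read as $1$ when $x_c=0$. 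Near a point where $x_c=0$ but $x_a\neq0$, the ratio is large, so $\Phi_c\equiv1$ locally and that factor is smooth there. Near points where $x_a=0$ (possible only if $a\neq a_{\min}$), the $a_{\min}$-factor vanishes identically by the estimate proved for \ref{item:14a}. Hence the integrand is smooth in $x$ on $\bX\setminus\set{0}$, with $\varepsilon_a$ ranging over a compact set, and \ref{item:11a} follows.

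\emph{Items \ref{item:12a} and \ref{item:13a}.} These are the substantive part and the main obstacle. Fix $x\in\bX_b(\varepsilon^b_1)$, so that $\abs{x^b}<\sqrt{2\varepsilon^b_1}\abs{x}$. For $c\geq b$ we have $\bX_c\subseteq\bX_b$, hence $x_c=(x_b)_c$ and $h_c(x)=h_c(x_b)$. For $b\leq a$ this applies in particular to $c=a$.

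The work is for $c\not\geq b$. Then $e=b\vee c\gneq b$, and by \eqref{eq:3.2}, $\abs{x^e}\leq C(\abs{x^b}+\abs{x^c})$. Two cases arise for such $c$.
\begin{itemize}
\item If $\abs{x^c}$ is small, then $x$ lies near $\bX_e$.
\item Otherwise $h_c(x)$ is bounded by $(1-c'\epsilon^{\cdot})\abs x$, which is strictly below $h_b$ or below the relevant competitor.
\end{itemize}
The margins here come from the hierarchy of widths $\varepsilon^c_k=k\epsilon^{d_c}$ with $d_e<d_b$: the gaps $\varepsilon^c_2-\varepsilon^c_1$ dominate the errors $\sqrt{\varepsilon^b_1}$ once $\epsilon$ is small, because $\epsilon^{d_e}\gg\epsilon^{d_b/2}$ requires care and must be checked against the exponents.

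I would aim to show the following. For $b\not\leq a$, the winning index in $\max_c h_c$ is never $a$, since either $h_b$ or some $h_e$ with $e\geq b$ strictly exceeds $h_a$; this gives \ref{item:13a}. For $b\leq a$, the sign of $h_a-\max_{c\neq a}h_c$ is the same at $x$ and at $x_b$ for all admissible $\bar\varepsilon$; this gives \ref{item:12a}.

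I expect the delicate point to be calibrating these exponent comparisons, which is exactly where ``$\epsilon$ sufficiently small'' enters. If the direct estimate fails, I would fall back on the self-contained treatment in \cite{Sk2} and the covering property \eqref{eq:cover}.
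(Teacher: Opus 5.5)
Your arguments for items 1), 2) and 5) are correct; the product formula with $\Phi_c$ is a clean route to smoothness. The paper gives no proof of this lemma and relies on \cite{Ya1,Sk2}. Your outline for items 3) and 4), which you rightly call the substantive part, points in the right direction, but it is not a proof: you say what you would aim to show and defer to \cite{Sk2} if it fails.

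The sketch also has concrete defects. For $c\not\geq b$ it is false that $e=b\vee c\gneq b$: if $c\lneq b$ then $e=b$. The case $e=a_0$ (when $\bX_b\cap\bX_c=\set{0}$), where there is no competitor $h_e$, is not addressed. Your exponent worry is genuine if you use the linear bound $\abs{x}-\abs{x_e}\leq\abs{x^e}\lesssim\epsilon^{d_b/2}\abs{x}$. For instance, when $d_e=d_b-1$ and $d_b\geq 2$, one has $\epsilon^{d_e}\leq\epsilon^{d_b/2}$ and the comparison fails. The idea you are missing is that the deficit is quadratic: $\abs{x}-\abs{x_e}\leq\abs{x^e}^2/\abs{x}$. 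The relevant error is therefore $O(\epsilon^{d_b}+\epsilon^{d_c})$, not $O(\epsilon^{d_b/2})$.

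Both items follow from the following claim: for $x\in\bX_b(\varepsilon^b_1)$ and every admissible $\bar\varepsilon$,
\begin{equation*}
\max_{c\in\vA_1,\,c\not\geq b}h_{c,\varepsilon_c}(x)<\max_{c\in\vA_1,\,c\geq b}h_{c,\varepsilon_c}(x).
\end{equation*}
Given the claim, item 4) is immediate, since $a\not\geq b$. For item 3), take $a\geq b$. The condition $h_a\geq\max_{c\neq a}h_c$ reduces to $h_a\geq\max_{c\geq b,\,c\neq a}h_c$, both at $x$ and at $x_b$ (note $x_b\in\bX_b(\varepsilon^b_1)$). Since $h_c(x)=h_c(x_b)$ for $c\geq b$, the indicators agree at $x$ and $x_b$.

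To prove the claim, start from $h_b(x)>(1+2\epsilon^{d_b})(1-\epsilon^{d_b})\abs{x}>\abs{x}$.
\begin{itemize}
\item If $c\lneq b$, then $d_c\geq d_b+1$, so $h_c(x)\leq(1+3\epsilon^{d_c})\abs{x}<h_b(x)$ for small $\epsilon$.
\item If $c$ is incomparable with $b$ and $h_c(x)\leq\abs{x}$, there is nothing to prove.
\item If $c$ is incomparable with $b$ and $h_c(x)>\abs{x}$, then $\abs{x^c}^2\leq 2\varepsilon_c\abs{x}^2$. Together with $\abs{x^b}^2<2\varepsilon^b_1\abs{x}^2$ and \eqref{eq:3.2}, this gives $\abs{x^e}^2\leq C'(\epsilon^{d_b}+\epsilon^{d_c})\abs{x}^2$. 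For small $\epsilon$ this excludes $e=a_0$, where $x^e=x$. Here $e\geq b$ and $d_e\leq\min(d_b,d_c)-1$, since $\bX_e$ is strictly contained in both $\bX_b$ and $\bX_c$. The quadratic bound on $\abs{x_e}$ then yields
\begin{equation*}
h_e(x)\geq(1+2\epsilon^{d_e})\bigl(1-C'(\epsilon^{d_b}+\epsilon^{d_c})\bigr)\abs{x}>(1+3\epsilon^{d_c})\abs{x}\geq h_c(x).
\end{equation*}
\end{itemize}
This is exactly where ``$\epsilon$ sufficiently small'' enters, depending on the constant in \eqref{eq:3.2} and on the dimensions.
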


Following \cite{Ya1} we introduce for  $ x\in\mathbf X\setminus \set{0}$
and admissible vectors $\bar\varepsilon$
\begin{equation*}
   m_{a_0}(x,\bar\varepsilon)=\max_{a\in \vA_1} h_{a,\varepsilon_a}(x),  
\end{equation*} and let then
\begin{equation*}
  m_{a_0}(x)=\int_{\R^n} m_{a_0}(x, \bar \varepsilon)\,\prod_{c\in
    \vA_1}\, \varphi_c(\varepsilon_c)\,\d \bar\varepsilon.
\end{equation*}
\begin{lemma}\label{lemma:m1} The  function 
 $m_{a_0}:\mathbf X\setminus \set{0}\to (0,\infty)$ fulfils  the following
properties for any sufficiently small  $\epsilon>0$: 
\begin{enumerate}[i)]
\item\label{item:10b} $m_{a_0}$ is convex and homogeneous of degree $1$.
\item\label{item:11b} $m_{a_0}\in C^\infty (\bX\setminus \set{0})$.
\item\label{item:12b} If $b\in\vA_1$  and  $x\in \mathbf X_b(\varepsilon^b_1)$, then
  $m_{a_0}(x)=m_{a_0}(x_b)$.
\item\label{item:9b} $m_{a_0}=\sum_{a\in \vA_1}\,m_a$  (with all functions defined for the same $\epsilon$).
\item\label{item:13b} Let $a\in \vA_1$ and suppose $x\in\mathbf
  X_a(\varepsilon^a_1)\setminus \cup_{b\gneq
  a}\, \mathbf
  X_b(\varepsilon^b_3)$.  Then 
  \begin{equation*}
    m_{a_0}(x)=m_{a}(x)=C_a
    \abs{x_a};\quad C_a=\int_\R (1+\varepsilon)\,\varphi_a(\varepsilon)\,\d \varepsilon.
  \end{equation*}
\item\label{item:15b} The derivative of $m_{a_0}\in C^\infty
  (\bX\setminus \set{0})$ obeys 
\begin{align*}
   \quad \quad\nabla \parb{m_{a_0}(x)-\abs{x}}&\\=\sum_{a\in \vA_1}\,\int
   \,
 \d \varepsilon_a&
   \nabla \parb{{h_{a,\varepsilon_a}(x)}-\abs{x}}\,\varphi_a(\varepsilon_a)\\  &\prod_{\vA_1\ni b\neq a    }\,\parbb{\int\,\Theta\parb{ h_{a,\varepsilon_a}(x)-h_{b,\varepsilon_b}(x)}\varphi_b(\varepsilon_b)\, \d\varepsilon_b}. 
\end{align*} In particular there exists $C>0$ such that for all $x\in
\bX\setminus \set{0}$ 
\begin{equation*}
  \abs{\nabla \parb{m_{a_0}(x)-\abs{x}}}\leq C\sqrt{\epsilon}.
\end{equation*}
\end{enumerate} 
\end{lemma}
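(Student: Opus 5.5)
The plan is to work first at the level of the non-averaged functions $m_{a_0}(\cdot,\bar\varepsilon)=\max_{a\in\vA_1}h_{a,\varepsilon_a}$, and then pass every property through the average against $\prod_c\varphi_c(\varepsilon_c)\,\d\bar\varepsilon$. Where the combinatorics of the cones get heavy I will follow the self-contained account in \cite{Sk2}.

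\emph{Items \ref{item:10b} and \ref{item:9b}.} Each $h_{a,\varepsilon}(x)=(1+\varepsilon)\abs{\pi_a x}$ is a seminorm, so it is convex and homogeneous of degree $1$. A pointwise maximum of such functions keeps both properties, and so does an average with non-negative weights; this gives \ref{item:10b}. For \ref{item:9b}, fix $x\neq0$. For $a\neq c$ the tie set $\set{h_{a,\varepsilon_a}(x)=h_{c,\varepsilon_c}(x)}$ is $\bar\varepsilon$-null unless $x_a=x_c=0$. In that degenerate case neither $h_a$ nor $h_c$ is the maximum, because $h_{a_{\min},\varepsilon}(x)=(1+\varepsilon)\abs{x}>0$. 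Hence for a.e.\ $\bar\varepsilon$ exactly one index realizes the maximum, and
\[
m_{a_0}(x,\bar\varepsilon)=\sum_a h_{a,\varepsilon_a}(x)\,\Theta\parb{h_{a,\varepsilon_a}(x)-\max_{c\neq a}h_{c,\varepsilon_c}(x)} .
\]
Averaging this identity gives $m_{a_0}=\sum_a m_a$.

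\emph{Items \ref{item:11b} and \ref{item:15b}.} Where the maximizer is unique, $\nabla_x m_{a_0}(x,\bar\varepsilon)=\nabla h_{a,\varepsilon_a}(x)$ for that maximizer $a$. Since the maximum is Lipschitz, I can differentiate under the integral and obtain the stated formula. In it I subtract $\nabla\abs{x}$, using that the indicator factors sum to $1$ a.e., and I integrate out $\varepsilon_b$ for $b\neq a$, which turns each product factor into $\Phi_b\parb{(1+\varepsilon_a)\abs{x_a}/\abs{x_b}-1}$ with $\Phi_b$ the smooth distribution function of $\varphi_b$. This expression is smooth for the following reason. The $a$-th term can be nonzero only if $(1+\varepsilon_a)\abs{x_a}\geq(1+\varepsilon_{a_{\min}})\abs{x}$, which forces $\abs{x_a}\ge(1-C\epsilon^{d_a})\abs{x}$; in particular $x_a\neq0$ locally. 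Moreover, wherever $\abs{x_b}$ is small the factor $\Phi_b$ is identically $1$. So the gradient is smooth on $\bX\setminus\set0$, and smoothness of $m_{a_0}$ itself follows. The same lower bound $\abs{x_a}\ge(1-C\epsilon^{d_a})\abs{x}$ gives $\abs{x^a}\le C\sqrt{\epsilon}\abs{x}$ on the support of the $a$-th term. Consequently
\[
\abs[\big]{(1+\varepsilon_a)\hat{x}_a-\hat x}\le C\sqrt{\epsilon},\qquad \hat x_a=x_a/\abs{x_a},
\]
and this yields the gradient bound $\abs{\nabla(m_{a_0}-\abs{x})}\le C\sqrt{\epsilon}$.

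\emph{Items \ref{item:12b} and \ref{item:13b}.} These are the geometric core and the step I expect to be the main obstacle. Take $x\in\bX_b(\varepsilon^b_1)$ and compare the sizes of the parameters. For $c\lneq b$ we have $d_c>d_b$, so $\varepsilon_c\le3\epsilon^{d_c}\ll\epsilon^{d_b}$, and therefore
\[
h_{c,\varepsilon_c}(x)\le(1+\varepsilon_c)\abs{x}<(1+2\epsilon^{d_b})(1-\epsilon^{d_b})\abs{x}<h_{b,\varepsilon_b}(x).
\]
So such $c$ never realize the maximum. For $c\ge b$ we have $\bX_c\subseteq\bX_b$, hence $x_c=(x_b)_c$ and $h_c(x)=h_c(x_b)$. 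For $c$ incomparable with $b$, I would use \eqref{eq:3.2} together with $\abs{x^b}\le\sqrt{2\varepsilon^b_1}\abs{x}$. The aim is to show that either $h_c$ is strictly beaten at both $x$ and $x_b$, or it can be replaced by $h_{b\vee c}$; this is where the smallness of $\epsilon$ and the spacing $\varepsilon^c_3\ll\varepsilon^b_1$ enter, as in \cite{Sk2}. Once this is done, the maximum is a function of $x_b$ alone, and averaging gives \ref{item:12b}.

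For \ref{item:13b} the argument is the same comparison, now run for $x\in\bX_a(\varepsilon^a_1)$. Since $x\notin\bX_b(\varepsilon^b_3)$ for every $b\gneq a$, we get $\abs{x_b}\le(1-3\epsilon^{d_b})\abs{x}$, and hence $h_{b,\varepsilon_b}(x)<h_{a,\varepsilon_a}(x)$ for all admissible $\bar\varepsilon$. Thus $a$ is the unique maximizer for all admissible $\bar\varepsilon$, and averaging yields $m_{a_0}(x)=m_a(x)=C_a\abs{x_a}$.
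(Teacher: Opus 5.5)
Your treatment of i), ii), iv) and vi) is sound. The a.e.\ uniqueness of the maximizer, the differentiation under the integral, the factors $\Phi_b$, and the bound $\abs{x^a}\le C\sqrt{\epsilon}\abs{x}$ on the support of the $a$-th term all work.

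The gap is in iii), at exactly the step you identify as the core. For $c$ incomparable with $b$ you only state an aim and defer to \cite{Sk2}. Moreover, the mechanism you name, $\varepsilon^c_3\ll\varepsilon^b_1$, is false whenever $d_c\le d_b$. For example, two different pair decompositions in the three-body problem have $d_b=d_c$, so $\varepsilon^c_3=3\varepsilon^b_1$. Your argument for v) disposes of all $c\not\ge a$ by ``the same comparison'', so it inherits this gap.

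Here is how to close it. Let $y\in\set{x,x_b}$. Both points lie in $\bX_b(\varepsilon^b_1)$, so $\abs{y^b}^2<2\epsilon^{d_b}\abs{y}^2$ and $h_{b,\varepsilon_b}(y)>(1+2\epsilon^{d_b})(1-\epsilon^{d_b})\abs{y}>\abs{y}$. Let $c$ be incomparable with $b$ and put $e=b\vee c$, so $d_e<\min\set{d_b,d_c}$.

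\emph{Case $d_c>d_b$.} Your estimate for $c\lneq b$ applies verbatim and gives $h_{c,\varepsilon_c}(y)<h_{b,\varepsilon_b}(y)$.

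\emph{Case $d_c\le d_b$.} Suppose $h_{c,\varepsilon_c}(y)\ge h_{b,\varepsilon_b}(y)$. Then $(1+3\epsilon^{d_c})\abs{y_c}>\abs{y}$, so $\abs{y^c}^2<6\epsilon^{d_c}\abs{y}^2$. Combined with $\abs{y^b}^2<2\epsilon^{d_c}\abs{y}^2$ and \eqref{eq:3.2}, this gives $\abs{y^e}\le C\epsilon^{d_c/2}\abs{y}$. If $e=a_0$, this contradicts $\abs{y^{a_0}}=\abs{y}$ for small $\epsilon$. Otherwise $e\in\vA_1$, and since $d_e\le d_c-1$,
\[
h_{e,\varepsilon_e}(y)\ge(1+2\epsilon^{d_e})(1-C^2\epsilon^{d_c})\abs{y}>(1+3\epsilon^{d_c})\abs{y}>h_{c,\varepsilon_c}(y).
\]

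\emph{Conclusion.} For every $c\not\ge b$ and every admissible $\bar\varepsilon$,
\[
h_{c,\varepsilon_c}(y)<\max_{\vA_1\ni c'\ge b}h_{c',\varepsilon_{c'}}(y).
\]
Since $h_{c',\varepsilon_{c'}}(x)=h_{c',\varepsilon_{c'}}(x_b)$ for $c'\ge b$, iii) already holds before averaging.

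For v), run the same comparison with $b$ replaced by $a$. Together with your exclusion of $c'\gneq a$, it shows that $a$ is the unique maximizer for every admissible $\bar\varepsilon$, which gives $m_{a_0}(x)=m_a(x)=C_a\abs{x_a}$.
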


\subsection{Yafaev's  observables and some  commutator  properties}\label{subsec: Geometric considerations for a0=a{max}}
 We introduce smooth modifications of the functions $m_a$ from Lemma \ref{lemma:ma1}
and $m_{a_0}$ from Lemma \ref{lemma:m1}  by
multiplying them by a suitable factor, say specifically by the factor
$\chi_+(2|x|)$. We adapt these modifications and will (slightly
abusively) use  the same notation $m_a$ and $m_{a_0}$ for the smoothed out
versions.  We may then consider the corresponding first order
operators $M_a$ and $M_{a_0}$ from \eqref{eq:M_a0} as realized as self-adjoint
operators. 

In  our applications    the above Yafaev  observables  will be used to construct a suitable `phase-space 
 partition of unity'. In the present subsection we provide preliminary  
 computational  details on how to control their commutator with the
Hamiltonian $H$. Although not being treated explicitly, similar
arguments work for the  Hamiltonians $H_a$, $a\in\vA_1$.

 Thanks to   \eqref{eq:cover}  we can for  any $a\in\vA_1$ and  any  $\varepsilon, \delta_0\in (0,1)$  write
  \begin{equation*}
    \mathbf Y_a(\varepsilon)\subseteq\cup_{b\leq a} \cup_{\delta\in (0,\delta_0]} \,\mathbf Z_b(\delta).
  \end{equation*} 
  By compactness this   leads for
  any fixed $\varepsilon,\delta_0\in(0,1)$ to the existence of a finite
  covering
\begin{equation} \label{eq:deltaNBHb}
     \mathbf Y_a(\varepsilon)\subseteq\cup_{j\leq J} \,\,\mathbf Z_{b_j}(\delta_j),
  \end{equation} where  $\delta_1,\dots, \delta_J\in
(0,\delta_0]$ and  $b_1,\dots, b_J\leq a$; $J=J(a)$. Here and in the
following we prefer to  suppress
 the dependence of quantities on the given $a\in\vA_1$.

 We recall   that  the
functions $m_a$, $a\in \vA_1$, depend on a
positive parameter $\epsilon$ and  by Lemma  \ref{lemma:ma1}
\ref{item:13a}  fulfil 
\begin{equation}\label{eq:suppa}
  \supp m_a\subseteq 
\mathbf Y_a(\epsilon^{d});\quad d=\dim
\mathbf X.
\end{equation}
    Having  Lemmas \ref{lemma:ma1} and \ref{lemma:m1} at our
    disposal we also observe   that the (small) positive  parameter
    $\epsilon$ appears independently for  the two lemmas.  First we
    choose and consider it fixed by  the \emph{same}   small $\epsilon$ for the  lemmas,
    yielding in this way  the whole family  $(M_a)_{a\in \vA}$
    uniquely defined. Thanks to Lemma
\ref{lemma:m1} \ref{item:9b}  we can  then record the  identity  
     $M_{a_0}=\sum_{a\in \vA_1}\,M_a$,  which will  be  vital   in
     Subsection \ref{subsec:A prior  localization}. Prior to the implementation of the  identity we will demonstrate localization to the outgoing region, stated in terms of  $M_{a_0}$ (slightly modified). We devote most of Sections \ref{sec:Preliminary localization results} and \ref{sec:Localization to  outgoing  region} to this problem, while Section \ref{sec:Proof of {eq:Strength}} concerns related propagation estimates.
\subsubsection{Representation  of the commutators $\i[
p^2,M_a]$}\label{subsubsec: Controlling commutators}
We need addditional   applications of Lemma \ref{lemma:m1}, more
precisely for certain inputs $\epsilon_1,\dots,\epsilon_J\leq
\epsilon$ (with $J=J(a)$, 
$a\in\vA_1$)
     defined  as follows. 
We apply \eqref{eq:deltaNBHb} with
    $\varepsilon=\delta_0=\epsilon^{d}$
    (cf. \eqref{eq:suppa}). Since $\delta_j\leq \delta_0$, we can
    introduce positive $\epsilon_1,\dots,\epsilon_J\leq \epsilon$ by  the
    requirement $\epsilon_j^{d_{b_j}}=\delta_j$. The 
    new inputs $\epsilon=\epsilon_j$ in Lemma \ref{lemma:m1}  yield
    corresponding   functions, say denoted   $m_j$. In
    particular in the region $\bZ_{b_j}(\delta_j)$ the function $m_a$
    from Lemma \ref{lemma:ma1} only
    depends on $x_{b_j}$ (thanks to   Lemma
    \ref{lemma:ma1} \ref{item:12a} and the  property
    $\bX_{b_j}(\delta_j)\subseteq \bX_{b_j}(\epsilon^{d_{b_j}})$),
    while (thanks to Lemma \ref{lemma:m1} \ref{item:13b})
    \begin{equation}\label{eq:good0}
      m_j(x)=C_j\abs{x_{b_j}},\quad C_j=\int_\R (1+\varepsilon)\,\varphi_{b_j}(\varepsilon)\,\d
      \varepsilon.
    \end{equation}
    
Next  we introduce   
the vector-valued first order operators 
\begin{equation}\label{eq:Ffield}
  G_b=\chi_+(2\abs{x_b})\abs{x_b}^{-1/2} P(x_b)\cdot
  p_b,\text{ where } P(x)=I-\abs{x}^{-2}\ket{ x}\bra{ x}.
\end{equation}   (Alternatively introduced,   $P(x_b)$  projects orthogonally to $x_b$ in  $\bX_b$.)
 We  choose for the considered   $a\in\vA_1$ 
   a quadratic partition $\hat\xi_1,\dots, \hat\xi_J\in
C^\infty(\mathbf{S}^{a_0})$ (viz $\sum_j \,{\hat\xi_j}^2=1$)
subordinate to the covering \eqref{eq:deltaNBHb} (recalling the discussion before
    \eqref{eq:cover}). Then
we can write, using  the support properties   \eqref{eq:deltaNBHb} and \eqref{eq:suppa}, 
\begin{align*}
  m_a(x)=\sum_{j\leq J}\,\,m_{a,j}(x);\quad m_{a,j}(x)=\hat\xi^2_j( \hat x) m_a(x),\quad \hat x=x/\abs{x},
\end{align*} and from the previous discussion it follows  that 
\begin{align*}
  \chi^2_+(\abs{x})m_{a,j}(x)=\xi^2_j
  m_{a}(x_{b_j}),\quad \xi_j=\xi_j(x):=\hat\xi_j( \hat x)\chi_+(\abs{x}),
\end{align*} as well as 
\begin{align}\label{eq:Hes0}
  \begin{split}
 p\cdot\parb{\chi^2_+(\abs{x})\nabla^2m_a(x)} p&=\sum_{j\leq J}\,\,
                                 p\cdot\parb{\xi^2_j\nabla^2m_a(x_{b_j})}p\\&
=\sum_{j\leq J}\, \,G^*_{b_j}\parb{\xi^2_j\vG_j}G_{b_j};\quad 
                                                                  \vG_j=\vG_j(x_{b_j})\text{ 
                                                                  bounded}.   
  \end{split}
\end{align}

Used with appropriate momentum-localization,  the left-hand side and 
$\tfrac 14 \i[
p^2,M_a]$ coincide
up to order $-3$. However such
momentum-localization is not  provided by energy-localization (since our Hamiltonian is  time-depending),  which
will cause a complication in our applications.

By the convexity property of $m_j$ and \eqref{eq:good0} we can record the estimate  
\begin{align}\label{eq:Hes_est}
   Q(a,j)^*Q(a,j)\leq   p\cdot \parb{\chi^2_+(\abs{x})\nabla^2m_j(x)} p;\q Q(a,j):=\xi_jG_{b_j}.
\end{align} 
Here the right-hand side  should be  the `leading term' of  the commutator
$\tfrac 14 \i[
p^2,M_j]$, where $M_j$ is given by \eqref{eq:M_a0} for  the
modification of 
$m_j$ given by $\chi_+(2|x|)m_j(x)$. The estimate  will be  applied in Subsection \ref{subsec:Radiation condition integral estimates} to derive certain radiation condition bounds (although using there a slightly different regularization), which in turn will be vital for proving the asymptotic clustering result Proposition \ref{prop:negB}.

\section{Preliminary localization results}\label{sec:Preliminary localization results}

We consider the operator $M=M_{a_0}$ from
  Section \ref{sec: Yafaev type
  constructions} (given in terms of a fixed small $\epsilon>0$). It is convenient here and henceforth to  modify and  abbreviate the underlying  function
$m_{a_0}$  of  its  construction \eqref{eq:M_a0}. In the following
definition we use the strict homogeneous function $m_{a_0}$  from Lemma
\ref{lemma:m1}, introducing the  smooth modification 
\begin{subequations}
\begin{equation}\label{eq:ml}
  m(x):= f_{\rm cvx}(m_{a_0}(x)),
\end{equation} where   $f_{\rm cvx}\in C^\infty(\R_+)$ is taken  convex such that $f_{\rm cvx}(s)=1$ for $s\leq  1/4$ and $f_{\rm cvx}(s)=s+2/3$ for $s\geq
1/2$. Note that $m(x)\geq 1$  is a smooth convex function on $\bX$,  fixed as $m(x)=m_{a_0}(x)+2/3$ for $|x|\geq 1/2$.

Hence, more precisely and consistently with  \eqref{eq:M_a0},  we shall henceforth reserve the notation $M$  to mean the self-adjoint operator
\begin{equation}\label{eq:M0l}
  M=M_0:= 2\Re (\grad m\cdot p).
\end{equation} 
\end{subequations}
  Since  $m_{a_0}$ depends  on a small parameter
  $\epsilon>0$, also  the above quantities $m$ and $M$ do. However we suppress this dependence   for example to  avoid   below cumbersome notation like  $m_{\epsilon,t}$ or $(m_\epsilon)_t$, preferring  the simplified notation $m_t$.

   With  $\lambda\in (0,1)$ (soon to be taken  slightly smaller than $1$)   we then introduce 
\begin{equation}
  \label{eq:mtime}
  m_t(x)=t^{\lambda}m\parb{x/t^{\lambda}}\mand M_t=
  2\Re \parbb{\parb{\nabla  m}\parb{x/t^{\lambda}}\cdot p};\q t\geq1.
\end{equation} Clearly the quantities of (\ref{eq:mtime})  for $t=1$ agree with (\ref{eq:ml}) and (\ref{eq:M0l}) (and hence $M_0=M_1=(M_t)_{|t=1}$). We will only need the time-scaled quantities in  the present section and  in  Section \ref{sec:Localization to  outgoing  region}. 
\begin{subequations}

  Note (recalling $r:=\inp{x}$) that
  \begin{align}\label{eq:mbndss}
  m_t(x)&\geq \max \parb{m_{a_0}(x)+ 2/3,t^{\lambda}}\geq  2^{-1}\max \parb{r,t^{\lambda}},\\
   m_t(x)&\leq 3\max \parb{r,t^{\lambda}}, \label{eq:mbndss2} 
  \end{align} while the $x$-derivatives $\pa^\gamma_x m_t(x)$ obey
  \begin{equation}\label{eq:derx}
    \forall |\gamma|\geq 1 \,\exists C>0 \,\forall t\geq 1\,\forall x\in \bX:\q\abs[\big]{\pa^\gamma_x m_t(x)}\leq Cr^{1-|\gamma|}.
  \end{equation}
\end{subequations}

  Introducing  the Heisenberg derivative $\bD=\pa_t+\i [H(t),\cdot]$,  we record that 
\begin{align}\label{eq:Heis1}   
  \begin{split}
  \bD m_t&=M_t+\pa_t m_t=M_t+ \lambda t^{\lambda-1}\parb{m\parb{x/t^{\lambda}}- x/t^{\lambda}\cdot \parb{\nabla m}\parb{x/t^{\lambda}}}\\ &=
          M_t+\vO\parb{t^{\lambda-1} }.
          \end{split}                                                                                            
\end{align}
\begin{subequations} In turn  
\begin{align}
  \begin{split}
    \bD M_t&=4 t^{-\lambda}\parb{p-\tfrac  \lambda 4
      x/t}\cdot\parb{\nabla^2
      m}(x/t^{\lambda}) \parb{p-\tfrac  \lambda 4 x/t}\\&
    -\tfrac {\lambda^2}4 t^{\lambda-2} x/t^\lambda\cdot\parb{\nabla^2 m}(x/t^{\lambda})  x/t^\lambda\\&- t^{-3\lambda}\parb{\Delta^2 m}(x/t^{\lambda}) -
2\parb{\nabla  m}(x/t^{\lambda})\cdot \nabla  V
    \\ &=
         B_t^*B_t+R_t, \q \text{where}
    \\ & \q B_t=2t^{-\lambda/2}\sqrt{\parb{\nabla^2
      m}(x/t^{\lambda}) }\parb{p-\tfrac  \lambda 4 x/t},
    \label{eq:Heis2}                                                                                               \end{split}
\end{align} 
and the remainder term $R_t$ obeys  the bounds
 \begin{align}\label{eq:Rbnd1}  R_t&=\vO\parb{t^{\lambda-2} }+\vO\parb{t^{-3\lambda} }+\vO\parb{t^{-\lambda(1+2\mu)} },\\
  \label{eq:Rbnd2}
   R_t&=\vO\parb{r^{1-2/\lambda} }+\vO\parb{r^{-3} }+\vO\parb{r^{-1-2\mu} }\q (\text{uniformly in }t\geq 1).
\end{align} 
\end{subequations}
The middle  term $- t^{-3\lambda}\parb{\Delta^2 m}(x/t^{\lambda})$ of  $R(t)$ is a `quantum correction term', while the two other terms `agree with'  Classical Mechanics. In Subsection \ref{subsec:Bound in incoming region} we compute and bound  the  Heisenberg derivative $\bD\Psi$ of  an observable $\Psi$ built from  the quantities $m_t$ and $M_t$, see (\ref{eq:PS2}) and (\ref{eq:negHeis}). The computation  will demonstrate that the `leading order' of   $\bD\Psi$  agrees with the time-derivative of the analogous classical observable, as expected. Hence  for example the above middle  term will  only contribute by a sufficiently small term. 

  \subsection{An auxiliary Hamiltonian}\label{subsec:An auxiliary Hamiltonian}

We consider for $t\geq 1$ the  Hamiltonian
\begin{equation}\label{eq:widecheckH2}
  \wcH_t=\wcH_{t,\delta,\lambda} =m_t^{-\delta}H(t)m_t^{-\delta},\q \delta\in (1/2,3/5), \,2\lambda\delta>1,
\end{equation}
realized as a  self-adjoint operator on $\vH$ by the Friedrich extension
procedure.
More precisely we consider the closed quadratic form
\begin{align*}
  \inp{\psi, \wcH_t \psi}&= \sum_{j\leq d}\, \norm [\big]{p_j m_t^{-\delta} \psi }^2+\inp{\psi, V(t)\psi} \text{ on }\vH_{\rm qf};\\&\q \vH_{\rm qf}=\set{\psi\in L^2\mid
  p_j\psi \in L_{-\delta }^2 \text{ for all }j\leq  d}.
\end{align*}
Its operator domain is explicitly given as
\begin{align*}
  \vD( \wcH_t )=\set{\psi\in \vH_{\rm qf}\mid
  p^2\psi \in L_{-2\delta }^2}=\set{\psi\in L^2\mid
  p^2m_t^{-\delta} \psi \in L_{-\delta }^2} ,
\end{align*}
  and it follows by the standard mollifier technique  that the Schwartz space $\vS(\bX)$ is dense in $\vH_{\rm qf}$ and as well as in $\vD( \wcH_t )$. Moreover we may  consider the unbounded term  $m_t^{-\delta}p^2m_t^{-\delta}$  as a
  PsDO (i.e. a pseudodifferential operator, see \cite[Chapter XVIII]{H1} for a general account of PsDOs). The above assertions may be proved by commutation. Although we skip the details, note for example  the elliptic bound 
  \begin{align*}
    \sum_{j\leq d}\, \norm [\big]{p_j\parb{1+\kappa p^2}^{-1/2} m_t^{-\delta} \psi }^2\leq C \parb{\norm{\psi}^2+ {\norm{ m_t^{-2\delta}p^2\psi}^2}}.
  \end{align*} Here $C$ is independent of $\kappa>0$,  allowing us to let $\kappa\to 0$. 

  The parameter $\delta$ can  throughout the paper  be considered as 
fixed, possibly   for example as $\delta=4/7$. In the present section actually any $\delta\in (1/2,1)$ can be used (with $\lambda<1$ obeying   $\lambda> 1/(2\delta)$), see Remark \ref{remark:bound-incom-regi} \ref{item:2psibb} for a discussion.

The operator $M_t$ ia a  first order differential operator, and
consequently it has an  explicit flow. It is easily checked  (for
example by using the Gronwall inequality) that this 
flow  $\e^{\i \sigma M_t}$ leaves $\vS(\bX)$,  $\vD(\wcH_t)$  and the
spaces $L^2_s$ invariant. In particular $M_t$ is essentially self-adjoint   on $\vS(\bX)$ and
\begin{align*}
  M_t\psi&= \parb{2p\cdot \parb{\nabla  m}\parb{x/t^{\lambda}}+\i t^{-\lambda}(\Delta m)\parb{x/t^{\lambda}}}\psi \text{ on }\vD( M_t);\\
  &\vD( M_t)=\set{\psi\in L^2\mid p\cdot \parb{\nabla  m}\parb{x/t^{\lambda}}\psi\in L^2}.
  \end{align*} (Since $ M_t$ has a distributional action, say also denoted $ M_t$, alternatively written  $\vD( M_t)=\set{\psi\in L^2\mid M_t \psi\in L^2}$.)

We can  interpret the commutator 
$\i [\wcH_t, M_t]$ as an operator with domain $\vD(\wcH_t)$ and
specified as  the (strong) limit
\begin{equation}
  \label{eq:flow}
  -\i \ad ^1_{\wcH_t}(M_t)=\i \big [\wcH_t, M_t \big ]=\slim_{\sigma\to 0}\, \sigma^{-1}\parb{\e^{-\i \sigma M_t}\wcH_t \e^{\i \sigma M_t}-\wcH_t },
\end{equation} and then in turn by its formal expression (as computed
on $\vS(\bX)$). In particular $\i [\wcH_t, M_t]$  may be considered as
an $\wcH_t$-bounded operator.  (See \cite{GGM} for an abstract account
of  computing commutators this way.) In particular $(\wcH_t-z)^{-1}\in
\vL (\vD( M_t))$ for $\Im z \neq 0$. 

Furthermore we can record the  bounds, cf.  the convention 
\eqref{eq:1712022},
  \begin{equation}\label{eq:bndM}
  (\wcH_t-z)^{-1},\q p_jm_t^{-\delta}(\wcH_t-z)^{-1}=\vO(r^0);\q \Im z
  \neq 0,\q1\leq j\leq d.
\end{equation}  The boundedness on $L^2$ follows from the very definition of
$\wcH_t$. To show  the invariance  of $L^\infty$ we need to commute any
big power  $\inp{x}^s$ through the  resolvent $ (\wcH_t-z)^{-1}$. This
may  conveniently be done by first repeatingly commuting the bounded
approximation  $(\chi_-(|x|/n)\inp{x})^s$ through  resolvents to lower
the exponent step by step, and
then by taking  $n\to \infty $ using the observed boundedness on $L^2$. The resulting formula shows that $
\inp{x}^s(\wcH_t-z)^{-1}\inp{x}^{-s}$ is bounded. We can argue
similarly for the remaining cases of \eqref{eq:bndM}. 

In particular  the operator $M_t m_t^{-\delta}(\wcH_t-z)^{-1}$, 
viewed   as an
element of  $\vL (\vD( M_t),
\vH)$  for any given $\Im z \neq 0$,   extends to a bounded operator on $\vH$, and in fact $M_t (\wcH_t-z)^{-1}$ can unambiguously be viewed as  an operator of   order $\delta$.

 Very frequently the abstract parameter $\theta$ used below
\eqref{eq:1712022} will be  the time $t\geq 1$. To simplify the presentation, often  we  prefer  to  use  the
unified notation $\vO(r^s)$ in this case, even though the bounds
 are uniform in $t\geq 1$. This convention is already encountered above.

\begin{lemma}\label{lemma:expansion}
  \begin{subequations}
 Let  $\chi\in C^\infty(\R)$ be given with  compactly supported
  derivative and let  $f\in C_\c^\infty(\R)$. Then $f\parb{\wcH_t}$
  and $\chi(M_t)$  are of order $0$ (uniformly in $t\geq 1$) and the following
  expansions are valid   (with  remainder estimates  being uniform
  in $t\geq 1$).
  \begin{enumerate}[1)]
\item \label{item:co1}  For all $s\in \R$:
\begin{align}\label{eq:ExpsBreveH}
\comm[\big]{f\parb{\wcH_t},m_t^s}&=-\ad ^1_{\wcH_t}(m_t^s)
  f^{(1)}\parb{\wcH_t} + \vO(r^{s-2-2\delta}),\\
M_tf\parb{\wcH_t}-f\parb{\wcH_t}M_t&= \ad ^1_{\wcH_t}(M_t)
  f^{(1)}\parb{\wcH_t} + \vO(r^{-2-\delta}).\label{eq:ExpsBreveHb}
\end{align}
  \item \label{item:co2} For all $s\in \R$ and $K\in \N :$
 \begin{equation}\label{eq:Exps} 
  \comm[\big]{\chi(M_t),m_t^s}=\sum ^K_{k=1}\, \tfrac{(-1)^k} {k!}\ad ^k_{M_t}(m_t^s)
  \chi^{(k)}(M_t) + \vO(r^{s-K-1}).
\end{equation}
\item \label{item:co3} For $ k=0,1,2$ the operator  $\ad
  ^k_{M_t}(f\parb{\wcH_t}) =  \vO(r^{-k})$, and
\begin{align}\label{eq:ExpBreveH}
 \chi(M_t) f\parb{\wcH_t}=\sum ^2_{k=0}\, \tfrac{(-1)^k} {k!}\ad ^k_{M_t}(f\parb{\wcH_t})
  \chi^{(k)}(M_t) + \vO(r^{-2-\delta}).
\end{align}
 \end{enumerate}
   \end{subequations}
\end{lemma}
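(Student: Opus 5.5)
The plan is to obtain all three expansions from the Helffer--Sj\"ostrand formula \eqref{82a0}, combined with the resolvent bounds \eqref{eq:bndM} and the symbol bounds \eqref{eq:derx}. The first step is to show that $f(\wcH_t)$ and $\chi(M_t)$ are of order $0$, uniformly in $t\geq 1$.

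\textbf{Order zero.} For $f(\wcH_t)$ we use \eqref{82a0} with a compactly supported almost analytic extension. The bounds \eqref{eq:bndM} must then be made quantitative in $z$, in the form
\begin{equation*}
\norm{\inp{x}^{s}(\wcH_t-z)^{-1}\inp{x}^{-s}}\leq C_s\abs{\Im z}^{-N_s}.
\end{equation*}
This follows by the same commutation scheme described before the lemma. The scheme works because $\ad^1_{\wcH_t}(\inp{x}^s)\inp{x}^{-s}$ has the form $\vO(r^{-1})\,p\,m_t^{-2\delta}$ plus lower order terms, and $p_jm_t^{-\delta}(\wcH_t-z)^{-1}$ is bounded by $C\abs{\Im z}^{-1}$. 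Uniformity in $t$ comes from \eqref{eq:derx} and from $m_t\geq 2^{-1}r$ in \eqref{eq:mbndss}.

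For $\chi(M_t)$ we write $\chi=\chi(-\infty)+\tilde\chi$ (or split into the two ends) so that the derivative is compactly supported, and apply \eqref{82a0} to $\chi'$-type functions via the extended formula in \cite[Proposition C.2.2]{DG}. The weighted bounds follow because $\e^{\i\sigma M_t}$ is the unitary flow of the vector field $2(\nabla m)(x/t^\lambda)$. That field has bounded derivative by \eqref{eq:derx}, so Gronwall gives $\inp{x}^{s}\e^{\i\sigma M_t}\inp{x}^{-s}=\vO(\inp{\sigma}^{|s|})$ uniformly in $t$. An alternative route is to commute $\inp{x}^s$ through $(M_t-z)^{-1}$ using $\ad_{M_t}(\inp{x}^s)=\vO(r^{s})$.

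\textbf{Expansions.} For \eqref{eq:ExpsBreveH} and \eqref{eq:ExpsBreveHb} we expand
\begin{equation*}
[(\wcH_t-z)^{-1},T]=(\wcH_t-z)^{-1}\ad_{\wcH_t}(T)(\wcH_t-z)^{-1}
\end{equation*}
and commute $\ad_{\wcH_t}(T)$ once more through the right resolvent. This gives the leading term $\ad_{\wcH_t}(T)(\wcH_t-z)^{-2}$, which integrates to $-\ad^1_{\wcH_t}(T)f^{(1)}(\wcH_t)$, together with a remainder containing $\ad^2_{\wcH_t}(T)$. Order counting gives:
\begin{itemize}
\item $\ad^1_{\wcH_t}(m_t^s)$ is of order $s-1-2\delta$ times a first order $p$;
\item $\ad^2_{\wcH_t}(m_t^s)$ is, modulo resolvent-absorbable factors $p\,m_t^{-\delta}$, of order $s-2-2\delta$, since each commutation with $m_t^{-\delta}p^2m_t^{-\delta}$ gains $r^{-1-2\delta}$ when one derivative falls on a homogeneous function and the factors $pm_t^{-\delta}$ are absorbed.
\end{itemize}
Similarly $\ad^2_{\wcH_t}(M_t)$ is of order $-2-\delta$ modulo absorbable factors. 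Here $\ad^1_{\wcH_t}(M_t)$ contains $\nabla V$ and second derivatives of $m$ rescaled, and Condition~\ref{cond:smooth2wea3n12} gives $\abs{\nabla V}\lesssim r^{-1}$, $\abs{\nabla^2V}\lesssim r^{-2}$. For \eqref{eq:Exps} we use the standard Taylor expansion of $[\chi(M_t),m_t^s]$ to order $K$. Each $\ad^k_{M_t}(m_t^s)$ is multiplication by a function of order $s-k$, again by \eqref{eq:derx}, so the $(K+1)$-st remainder is $\vO(r^{s-K-1})$. For \eqref{eq:ExpBreveH} we first show $\ad^k_{M_t}(f(\wcH_t))=\vO(r^{-k})$ by iterating the resolvent formula: $\ad^k_{M_t}(\wcH_t)$ is of the same type as $\wcH_t$ but with an extra $r^{-k}$. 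We then expand $\chi(M_t)$ against $f(\wcH_t)$ to second order. The third order remainder needs $\ad^3_{M_t}(f(\wcH_t))=\vO(r^{-2-\delta})$; the $\delta$ is lost because $V$ is only $C^2$, so $\ad^3$ involves $\nabla^2V$ against one $pm_t^{-\delta}$.

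\textbf{Main obstacle.} The delicate point is the remainder bookkeeping in \eqref{eq:ExpsBreveHb} and \eqref{eq:ExpBreveH}. These have limited smoothness, since $V\in C^2$ with only $\partial_t^k$, $k+|\gamma|\leq2$. The unbounded operators $M_t$ and $\wcH_t$ must also be handled with domain care, using the flow definition \eqref{eq:flow} and $(\wcH_t-z)^{-1}\in\vL(\vD(M_t))$. I would handle this by never computing more than two genuine commutators with $V$. Any further commutator is written as $p\,m_t^{-\delta}$ times bounded weighted operators and absorbed by \eqref{eq:bndM}, with all estimates tracked to have polynomial growth in $\abs{\Im z}^{-1}$, uniformly in $t\geq1$.
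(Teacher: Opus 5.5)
Your route is the paper's. You use Helffer--Sj\"ostrand with one further commutation through a resolvent, leaving $\ad^2$ remainders. In \eqref{eq:ExpBreveH} you avoid a third commutator with $V$ by letting $M_t$ act on a resolvent, $m_t^{-\delta}M_t(\wcH_t-z)^{-1}=\vO(r^0)$, which is exactly where the $\delta$ is lost.

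\textbf{The gap: decay of $V$.} You assert that Condition~\ref{cond:smooth2wea3n12} gives $\abs{\nabla V}\lesssim r^{-1}$ and $\abs{\nabla^2V}\lesssim r^{-2}$. For $N\geq 3$ this is false. The condition controls $V_a$ only in $\abs{x^a}$, so $\nabla V$ and $\nabla^2 V$ are merely bounded along the collision planes and do not decay in $r=\inp{x}$.

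With only boundedness the key estimates fail. Let $h=(\nabla m)(x/t^{\lambda})\cdot\nabla V$, which is how $\nabla V$ enters $[M_t,\wcH_t]$.
\begin{itemize}
\item $\ad^2_{\wcH_t}(M_t)$ contains $m_t^{-\delta}\Re\parb{\nabla(m_t^{-2\delta}h)\cdot p}m_t^{-\delta}$. After absorbing $pm_t^{-\delta}$ this is only of order $-3\delta>-2-\delta$.
\item The $V$-part $m_t^{-2\delta}(\nabla m)(x/t^{\lambda})\cdot\nabla h$ of $\ad^2_{M_t}(\wcH_t)$ is only $\vO(r^{-2\delta})$, not $\vO(r^{-2})$.
\end{itemize}
So \eqref{eq:ExpsBreveHb}, the bound $\ad^2_{M_t}(f(\wcH_t))=\vO(r^{-2})$, and the remainder in \eqref{eq:ExpBreveH} would not follow. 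Only \eqref{eq:ExpsBreveH}, \eqref{eq:Exps} and the $\ad^1$ bounds are unaffected, the former thanks to $2\delta>1$.

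\textbf{The missing input} is the many-body geometry of Yafaev's function. Take $a\in\vA_3$; for $a=a_0$ the potential already decays in $\abs{x}$. By Lemma~\ref{lemma:m1}~\ref{item:12b}, $\pi^a\nabla m_{a_0}=0$ on the cone $\bX_a(\varepsilon^a_1)$, which is invariant under $x\mapsto x/t^{\lambda}$. Off this cone $\abs{x^a}\geq c\abs{x}$. Hence $h_a=(\nabla m)(x/t^{\lambda})\cdot\nabla V_a(x^a)$ vanishes on $\bX_a(\varepsilon^a_1)$, together with all its derivatives, and obeys $h_a=\vO(r^{-1})$, $\nabla h_a=\vO(r^{-2})$ uniformly in $t$. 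The same mechanism underlies the term $-2(\nabla m)(x/t^{\lambda})\cdot\nabla V$ in \eqref{eq:Rbnd2}. With this input your order counting goes through.

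\textbf{A minor point on $\chi(M_t)$.} Take $\chi$ with $\chi(+\infty)\neq\chi(-\infty)$, such as $\chi_+$. Subtracting $\chi(-\infty)$ does not give a decaying function, and the Helffer--Sj\"ostrand integral for $\chi(M_t)$ itself does not converge absolutely. Only the commutator formulas, which carry two resolvents, do. The paper therefore works with $\chi_n=\chi_-(\abs{\cdot}/n)\chi$, uses almost analytic extensions obeying \eqref{eq:Alm0} uniformly in $n$, and passes to the limit by dominated convergence. Your flow route also works. There you must use $\norm{\inp{x}^s\e^{\i\sigma M_t}\inp{x}^{-s}-\e^{\i\sigma M_t}}=\vO(\abs{\sigma})$ for $\abs{\sigma}\leq 1$, which follows from boundedness of $\nabla m$. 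This offsets the principal-value singularity of $\hat\chi$ at $\sigma=0$.
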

\begin{remarks*}
  The first  term   to
  the right in \eqref{eq:ExpsBreveH} is  of order $s-1-\delta$. The first term  to
  the right in \eqref{eq:ExpsBreveHb}  is  of
  order $-1$, cf. (\ref{eq:flow}). The left-hand side of  \eqref{eq:ExpsBreveHb} is a priori given as a quadratic form on $\vD(M_t)$, which thanks to (\ref{eq:flow}) extends to a bounded operator as demonstrated in   the below computation (\ref{eq:long}).
     The operators in
  \eqref{eq:Exps} and \eqref{eq:ExpBreveH} are of order $s-1$ and $0$, respectively. Note also that although the appearing factor $\chi(M_t)$ in
  \eqref{eq:Exps} and \eqref{eq:ExpBreveH} is  of order $0$, and the
  stated expansions are valid, this operator is not a good PsDO in any
  conventional sense.
\end{remarks*}
\begin{proof}[Proof of Lemma \ref{lemma:expansion}]
  By  repeated commutation using the 
  representation \eqref{82a0} and bounded approximations of $\inp{x}$,
  for example given by $\chi_-(|x|/n)\inp{x}$ for large  $n\in \N$ as
  used for  \eqref{eq:bndM}, it is easily    proven  by
  \eqref{eq:bndM} and its proof that $f\parb{\wcH_t}$ has order $0$.

  For the  assertion \ref{item:co1}   we similarly     compute 
  \begin{align}\label{eq:T99}
    \begin{split}
    \comm[\big]{f\parb{\wcH_t}&,m_t^s}+\ad ^1_{\wcH_t}(m_t^s)
  f^{(1)}\parb{\wcH_t} \\&= 
   \int _{\C}(\wcH_t -z)^{-1} \ad ^2_{\wcH_t}(m_t^s)(\wcH_t -z)^{-2}\,\mathrm d\mu_{f}(z),  
    \end{split}
 \end{align} where $\ad ^2_{\wcH_t}(m_t^s) (\wcH_t -\i)^{-1}$ has order $s-2-2\delta$. This shows \eqref{eq:ExpsBreveH}.

 As quadratic forms  on $\vD(M_t)$ we have (with limits taken in the weak sense) 
 \begin{align*}
   M_t f\parb{\wcH_t}&=\lim_{\sigma\to 0}\, \i\sigma^{-1}\parb{\e^{-\i \sigma M_t}-I}f\parb{\wcH_t},\\
   f\parb{\wcH_t}
   M_t &=\lim_{\sigma\to 0}\, f\parb{\wcH_t}\i \sigma^{-1}\parb{\e^{-\i \sigma M_t}-I},
 \end{align*} respectively. In combination with (\ref{eq:flow}) we  compute as in (\ref{eq:T99}), using below the notation $T(\sigma)=\e^{-\i \sigma M_t }T\e^{\i \sigma M_t}$,
 \begin{align}\label{eq:long}
   \begin{split}
    &\comm[\big]{f\parb{\wcH_t},M_t } =
      -\i \slim_{\sigma\to 0}\, \sigma^{-1}\parb{f\parb{\wcH_t(\sigma)}-f\parb{\wcH_t}}\\
&=\i \slim_{\sigma\to 0}\, \int _{\C}(\wcH_t(\sigma) -z)^{-1}  \sigma^{-1} \parb{\wcH_t(\sigma)-\wcH_t}(\wcH_t -z)^{-1}\,\mathrm d\mu_{f}(z) \\
&=  \int _{\C}(\wcH_t -z)^{-1}  \ad ^1_{\wcH_t}(M_t)(\wcH_t -z)^{-1}\,\mathrm d\mu_{f}(z) \\&=-\ad ^1_{\wcH_t}(M_t)
      f^{(1)}\parb{\wcH_t}+\int _{\C}(\wcH_t -z)^{-1} \ad ^2_{\wcH_t}(M_t)(\wcH_t -z)^{-2}\,\mathrm d\mu_{f}(z),  
   \end{split}
 \end{align}  Since  $(\wcH_t -\i)^{-1}\ad ^2_{\wcH_t}(M_t) (\wcH_t -\i)^{-1}$ has order $-2-\delta$,  (\ref{eq:ExpsBreveHb})  follows.

  For the assertion \ref{item:co2} it is convenient to use \eqref{82a0} to compactly
  supported approximations of $\chi$  like $\chi_n=\chi_-(|\cdot|/n)\chi$. It follows that
  $\chi_n(M_t)$ has order $0$. Moreover the  validity of \ref{item:co2}  is   easily checked  in this case. Note that $\ad ^k_{M_t}(m_t^s)$ is a multiplication operator of order $s-k$.  To finish the proof of \ref{item:co2} we take almost analytic extensions of the approximating functions with uniform bounds on the densities of the corresponding measures $d\mu_{\chi_n}(z)=\pi^{-1} \bar\pa \wt {\chi_n}(z) \d u\d v$. More precisely this means  a construction such that pointwise $\bar\pa \wt {\chi_n}(z)\to \bar\pa \wt {\chi}(z)$ and that the bounds 
  \begin{align}\label{eq:Alm0}
    \forall N\in \N:\q\abs{\bar\pa \tilde{f}(z)}\leq C_N \inp{z}^{-1-N}\abs{\Im z}^N
  \end{align} hold uniformly for the family of extended functions $\tilde{f}=\wt {\chi_n}$,  $n\in \N$. 
  This  allows us to let $n\to \infty$ using Lebesgue's
  dominated convergence theorem. In particular the  limiting procedure argument shows  that $\chi(M_t)$  has  order $0$ as well as \ref{item:co2} for $\chi(M_t)$. 

  For the assertion 
  \ref{item:co3} we proceed similarly and compute
  \begin{align*}
    &\comm[\big]{\chi(M_t) ,f\parb{\wcH_t}} 
      \\
    &=  \int _{\C}(M_t -z)^{-1}  \ad ^1_{M_t}(f\parb{\wcH_t} )(M_t -z)^{-1}\,\mathrm d\mu_{\chi}(z) \\
    &= -\ad ^1_{M_t}(f\parb{\wcH_t})
  \chi'(M_t) +\int _{\C}(M_t -z)^{-1}  \ad ^2_{M_t}(f\parb{\wcH_t} )(M_t -z)^{-2}\,\mathrm d\mu_{\chi}(z). 
 \end{align*}  It follows from \eqref{eq:long} and its proof that $\ad ^1_{M_t}(f\parb{\wcH_t})$ and $\ad ^2_{M_t}(f\parb{\wcH_t})$ have  order $-1$  and  $-2$, respectively.  This  suggests that with one more commutation to the left in the integral,  the expansion (\ref{eq:ExpBreveH}) would  hold  with an error of order $-3$.  However, since we do not  have assumptions on derivatives of the potential of  order bigger than two, this  commutation   only yields   an error of  order $-2-\delta$, as stated in (\ref{eq:ExpBreveH}).
\end{proof}

     We need  the following version of
the representation formula \eqref{82a0} now for the unitary operator
$U=U(1)$.
\begin{lemma}\label{lemma:Helffer}      Let $g\in C^\infty_{\c}(\T\setminus
  \set {1})$. Let $U$ be a unitary operator on $\vH$. Then
  \begin{align}\label{82a02}
  \begin{split}
 g(U) 
&=
\int _{\C}(U -z)^{-1}\,\mathrm d\mu_g(z)\, \text{ with}\\
&\mathrm d\mu_g(z)=\pi^{-1}(\bar\partial\tilde g)(z)\,\mathrm du\mathrm dv;\quad 
z=u+\i v.   
  \end{split}
\end{align} Here $\tilde{g}$ denotes  a  compactly supported almost analytic extension of
$g$. 
      \end{lemma}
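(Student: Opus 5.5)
The plan is to reduce the statement to a Cauchy integral formula on the circle, followed by an application of the spectral theorem. Since $g\in C^\infty_\c(\T\setminus\set{1})$, we may view $g$ as a smooth function on $\T$ vanishing near $1$. We extend it to a compactly supported function $\tilde g\in C^\infty_\c(\C)$ which agrees with $g$ on $\T$ and is almost analytic near $\T$, in the sense that $\abs{\bar\partial\tilde g(z)}\leq C_N\abs{\abs{z}-1}^N$ for all $N$. A concrete construction: write $z=\rho\e^{\i\theta}$ and let $G(\theta)=g(\e^{\i\theta})$, a smooth $2\pi$-periodic function. Set
\begin{equation*}
\tilde g(\rho\e^{\i\theta})=\eta(\log\rho)\sum_{k\leq N_0}\tfrac{G^{(k)}(\theta)}{k!}(-\i\log\rho)^k,
\end{equation*}
or use the standard Borel-type series with cutoffs $\eta(\lambda_k\log\rho)$. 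Here $\eta\in C^\infty_\c(\R)$ equals $1$ near $0$. In the variable $w=\theta-\i\log\rho$ (locally $z=\e^{\i w}$), this is the usual almost analytic extension of $G$, and $\bar\partial_z$ differs from $\bar\partial_w$ only by a smooth nonvanishing factor. It follows that $\bar\partial\tilde g$ vanishes to infinite order on $\T$ and is compactly supported in an annulus. The hypothesis $1\notin\supp g$ is not really needed for this step, but we keep it since it lets us take $\tilde g$ supported away from a ray through $1$, matching the paper's later use.

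Next we apply the scalar Cauchy--Pompeiu formula. For $\zeta\in\T$ and a compactly supported $C^1$ function $\tilde g$,
\begin{equation*}
g(\zeta)=\tilde g(\zeta)=-\tfrac1\pi\int_\C\frac{\bar\partial\tilde g(z)}{z-\zeta}\,\d u\,\d v=\tfrac1\pi\int_\C(\zeta-z)^{-1}\bar\partial\tilde g(z)\,\d u\,\d v .
\end{equation*}
This is exactly the scalar version of \eqref{82a02}. The integral converges absolutely because $\abs{z-\zeta}^{-1}$ is locally integrable in $\R^2$. Moreover, the integrand is uniformly bounded, since $\abs{\bar\partial\tilde g(z)}\leq C\abs{\abs z-1}\leq C\abs{z-\zeta}$.

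Finally we pass to the operator $U$. By the spectral theorem $U=\int_\T\zeta\,\d E(\zeta)$. For $z\notin\T$ we have $\norm{(U-z)^{-1}}=\dist(z,\T)^{-1}=\abs{\abs z-1}^{-1}$, so $\norm{(U-z)^{-1}}\,\abs{\bar\partial\tilde g(z)}\leq C$. Hence the right-hand side of \eqref{82a02} is a Bochner integral, convergent in operator norm, over the compact support of $\tilde g$; the unit circle has Lebesgue measure zero and may be ignored. For $\varphi,\psi\in\vH$ we then compute
\begin{equation*}
\inp{\varphi,\textstyle\int(U-z)^{-1}\d\mu_g(z)\,\psi}=\int_\C\int_\T(\zeta-z)^{-1}\,\d\inp{\varphi,E(\zeta)\psi}\,\d\mu_g(z).
\end{equation*}
Fubini applies because the integrand is bounded by $C\abs{\d\inp{\varphi,E\psi}}\times 1_{\supp\tilde g}$. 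Interchanging the integrals and using the scalar identity gives $\int_\T g(\zeta)\,\d\inp{\varphi,E(\zeta)\psi}=\inp{\varphi,g(U)\psi}$.

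The only step requiring care is the construction of $\tilde g$ with vanishing of $\bar\partial\tilde g$ at $\T$ to at least first order, which is what is needed for absolute convergence. Transferring the real-line construction through the local biholomorphism $w\mapsto\e^{\i w}$ handles this.
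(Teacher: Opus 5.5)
Your proof is correct, but you construct $\tilde g$ by a different route than the paper.

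\textbf{The paper's construction.} The paper pulls $g$ back by the Cayley transform to $h(x)=g\parb{(x-\i)/(x+\i)}\in C^\infty_\c(\R)$ and takes a standard compactly supported almost analytic extension $\tilde h$ on the line. It then sets $\tilde g=\tilde h\circ\eta$ with the M\"obius map $\eta(z)=-\i(z+1)/(z-1)$, so that $\dbar\tilde g=(\dbar\tilde h)(\eta(z))\overline{\eta'(z)}$. The final step is just a citation of the Cauchy--Pompeiu formula in H\"ormander.

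\textbf{Your construction.} You instead lift through the covering map $w\mapsto\e^{\i w}$ and extend the periodic function $G(\theta)=g(\e^{\i\theta})$ periodically.

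\textbf{What each route buys.} The Cayley route is where the hypothesis $1\notin\supp g$ is actually used. The point $1$ corresponds to $x=\pm\infty$, so $h$ has compact support only if $g$ vanishes near $1$. In return, the paper imports the real-line construction and its estimates verbatim. Your route needs no condition at $1$, so it gives the formula for every $g\in C^\infty(\T)$. The cost is modest: you must check periodicity, compact support in an annulus, and $\abs{\log\rho}\asymp\abs{\abs{z}-1}$ there. In the paper's applications $g$ vanishes near $\vT_\p$, which always contains $1$ (the eigenvalue of $U^{a_{\min}}(1)=1$ on $\vH^{a_{\min}}=\C$), so either construction suffices.

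\textbf{Your second half.} The scalar Cauchy--Pompeiu identity with $\abs{\dbar\tilde g(z)}\le C\abs{z-\zeta}$, Bochner integrability, and Fubini against the spectral measure spell out what the paper leaves to the citation. This part also shows the formula holds for any compactly supported $C^1$ extension with $\dbar\tilde g=\vO(\dist(z,\T))$, not just the one you construct. This matches the lemma's phrasing for an arbitrary almost analytic extension.

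One small correction: $\norm{(U-z)^{-1}}=\dist(z,\sigma(U))^{-1}$, which is only $\le\dist(z,\T)^{-1}$, not equal to it. You only use the upper bound, so nothing breaks.
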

   \begin{proof} 
 The function $\tilde g$  can  be constructed by  the Cayley transform as
follows. Consider $h\in C^\infty_{\c}(\R)$ given by $h(x)
=g\parb{(x-\i)/(x+\i)}$. Take a compactly supported   almost analytic
extension $\t h$ of $h$, and define then  $\tilde{g}(z)=\t
h(\eta(z))$, where $\eta(z)=-\i(z+1)/(z-1)$.  Since $\eta$ is analytic 
it follows that $\dbar\eta=0$,   and we can then compute 
\begin{equation*}
 \dbar\tilde{g}(z)=\pa_{\bar z} \t h\parb{\eta(z)}=\parb{\dbar \t h}\parb{ \eta(z)}\overline{\eta'(z)}.
\end{equation*}
 In particular it follows that for any $N\in\N$
 \begin{equation*}
   \abs{\dbar\tilde{g}(z)}\leq C_N \abs{\dist(z, \T)}^N.
 \end{equation*} Hence indeed $\tilde{g}$ is a  compactly supported almost analytic extension of
 $g$, and \eqref{82a02} follows (cf. \cite[(3.1.11)]{H1}). 
\end{proof}
\begin{corollary} 
  \label{cor:an-auxil-hamilt} Let $g\in C^\infty_{\c}(\T\setminus \vT_\p)$ and $\psi\in \vS(\bX)$. Then 
  \begin{equation*}
    [1,\infty) \ni t\to x^\alpha \pa_x^\beta U(t) g(U(1))\psi \in \vH
  \end{equation*} are  well-defined continuous maps for any  multiindices $\alpha$ and $\beta$  with $|\alpha+\beta|\leq 2$.
  \end{corollary}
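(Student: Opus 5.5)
The plan is to reduce the statement to two facts: $g(U(1))$ preserves a weighted Sobolev space of second order, and the dynamics $U(t)$ acts continuously on that space for $t\in[1,\infty)$. The natural space is
\begin{equation*}
  \vH^2_2:=\set{\phi\in L^2\mid x^\alpha\pa_x^\beta\phi\in L^2 \text{ for all } |\alpha+\beta|\leq 2},
\end{equation*}
with norm $\sum_{|\alpha+\beta|\leq 2}\norm{x^\alpha\pa_x^\beta\phi}$. Since $\psi\in\vS(\bX)\subseteq\vH^2_2$, it suffices to show $g(U(1))\psi\in\vH^2_2$ and that $t\mapsto U(t)\phi\in\vH^2_2$ is continuous for every $\phi\in\vH^2_2$. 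Note that $\vT_\p$ is not really needed here; any $g$ supported away from the point $1$ would do after a rotation $U\to \e^{\i\theta}U$, so that Lemma \ref{lemma:Helffer} applies.

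\textbf{Step 1 (propagation of weights by $U(t,s)$).} Use Yosida's theorem, which already gives continuity of $(t,s)\mapsto\inp{p}^{\kappa}U(t,s)\phi$ for $\kappa\le2$. Add weights by commuting bounded approximations $(\chi_-(|x|/n)\inp{x})^k$, $k\le2$, through the dynamics. Formally,
\begin{equation*}
  \tfrac{\d}{\d t}\,U(s,t)\, x_j\, U(t,s)=U(s,t)\,2p_j\,U(t,s),
\end{equation*}
and $\i[H,x_jx_k]$ is first order in $p$ with linear coefficients in $x$. A Gronwall argument on the finite-dimensional family of observables $x^\alpha p^\beta$, $|\alpha+\beta|\le2$, which is closed under commutation with $H(t)$ up to bounded-coefficient terms thanks to Condition \ref{cond:smooth2wea3n12}\ref{item:BoundsatInfinity}, then yields
\begin{equation*}
  \norm{U(t,s)\phi}_{\vH^2_2}\le C\e^{C|t-s|}\norm{\phi}_{\vH^2_2}.
\end{equation*}
Here $\abs{\nabla V}$ is bounded and $|x||\nabla V|$ is bounded, so the terms $x\cdot\nabla V$ are harmless. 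The cutoff $n$ is removed by monotone convergence, as in the proof of \eqref{eq:bndM}.

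\textbf{Step 2 (continuity).} Strong continuity in $\vH^2_2$ follows from this uniform bound together with continuity in $L^2$ on the dense subspace $L^2_\infty\cap\inp{p}^{-2}L^2$, via a standard $\epsilon/3$ argument. In that argument, continuity in the stronger norm on a dense set comes from the Duhamel formula and the bounds above.

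\textbf{Step 3 ($g(U(1))$ preserves $\vH^2_2$).} By Lemma \ref{lemma:Helffer}, $g(U(1))=\int(U(1)-z)^{-1}\,\d\mu_g(z)$. For $|z|\neq1$, write the resolvent as a Neumann series in $U(1)$ or $U(1)^{-1}$. Step 1 gives $\norm{U(1)^{\pm k}}_{\vL(\vH^2_2)}\le C^{k}$, which is too weak to sum near $|z|=1$; this is the main obstacle. I would try to handle it by commuting instead. Let $T$ be any weight-derivative operator of order at most $2$. Then
\begin{equation*}
  [T,(U(1)-z)^{-1}]=-(U(1)-z)^{-1}[T,U(1)](U(1)-z)^{-1},
\end{equation*}
where $[T,U(1)]$ is bounded from $\vH^2_2$ into a lower-order space by Step 1. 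Each commutation costs one factor $|\dist(z,\T)|^{-1}$, which is absorbed by the almost analyticity bound $|\dbar\tilde g|\le C_N\dist(z,\T)^N$. An induction on $|\alpha+\beta|$ then concludes.
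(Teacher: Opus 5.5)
Your route is essentially the paper's. The paper represents $g(U(1))$ by Lemma \ref{lemma:Helffer} and commutes bounded approximations of $x^\alpha\pa_x^\beta$ through $(U(1)-z)^{-1}$, computing $[T,U(1)]$ with the Duhamel formula \eqref{eq:Duhamel}. It absorbs the resulting powers of $\dist(z,\T)^{-1}$ by almost analyticity, and gets continuity from Yosida's theorem for $\alpha=0$ and from Duhamel for the weights. Your Gronwall bound in Step 1 makes explicit what the paper omits.

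One step does not work as written. In Step 3 you assert that $[T,U(1)]$ is of lower order and then induct on $|\alpha+\beta|$. This holds for $T=p^\beta$, since $\i[H,p_j]=-\pa_jV$ is bounded. It fails as soon as $T$ contains a weight, because by \eqref{eq:Duhamel}
\begin{equation*}
  [x_j,U(1)]=U(1)\int_0^1 U(s)^{-1}\,2p_j\,U(s)\,\d s ,
\end{equation*}
which has the same total order as $x_j$. So bounding $\norm{x_j(U(1)-z)^{-1}\phi}$ requires $\norm{p\,(U(1)-z)^{-1}\phi}$ at the same level. An estimate of the form $N\leq \dist(z,\T)^{-1}(N_0+CN)$ closes only far from $\T$, which is exactly where it is not needed.

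The fix is to induct lexicographically in $(|\alpha|,|\beta|)$. The commutator $\i[H,x^\alpha p^\beta]$ either trades one $x$ for one $p$, via $\i[p^2,x_j]=2p_j$. Or it replaces a $p$ by some $\pa^\gamma V$ with $|\gamma|\leq 2$, which is bounded by Condition \ref{cond:smooth2wea3n12}. So treat $p_j$ and $p_jp_k$ first, then $x_j$ and $x_jp_k$, then $x_jx_k$. Each step costs a fixed number of factors $\dist(z,\T)^{-1}$. As in Step 1, you also need bounded approximations of $T$ here, e.g. $p_j(1+\kappa p^2)^{-1}$ and $\chi_-(|x|/n)x_j$, since $(U(1)-z)^{-1}\phi\in\vD(T)$ is not known a priori.

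Some smaller points:
\begin{itemize}
\item Your claim that $|x||\nabla V|$ is bounded is false for many-body potentials, since $V_a$ decays only in $x^a$. It is not needed: $x_j\pa_kV$ is a bounded function times $x_j$, which stays inside your family.
\item The $\epsilon/3$ argument in Step 2 is vague, and with $V$ only $C^2$ there is no obvious dense set with better propagation. It is also unnecessary. Write $x^\alpha U(t)\phi=U(t)x^\alpha\phi+[x^\alpha,U(t)]\phi$ using \eqref{eq:Duhamel}; the integrand is locally bounded by Step 1, so continuity follows directly. For $\alpha=0$, Yosida's theorem already suffices.
\item The same lexicographic structure shows that the growth in Step 1 is polynomial in $t$, not exponential. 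With such bounds your original Neumann series already gives $\norm{(U(1)-z)^{-1}}_{\vL(\vH^2_2)}\leq C\dist(z,\T)^{-M}$. Likewise, $\sum_k \hat g(k)U(1)^k$ converges in $\vL(\vH^2_2)$ for any $g\in C^\infty(\T)$. The ``main obstacle'' you identify is therefore an artefact of the crude Gronwall bound.
\end{itemize}
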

  \begin{proof} For $\alpha=0$ the result follows from \eqref{82a02}, Yosida's theorem and the familiar Duhamel 
    formula (for example applied to the approximations of the momentum components $p_j $ given as  $T=p_j /(1+\kappa p^2)$ with  $\kappa>0$ small)
    \begin{equation}
      \label{eq:Duhamel}
      [T,U(\sigma)]=U(\sigma)\parb{U(\sigma)^{-1}TU(\sigma)-T}=U(\sigma)\int_0^\sigma\, U(s)^{-1}\i[H(s),T]U(s)\,\d s.
      \end{equation} By using  bounded approximations of $x^\alpha $ the general case can be treated similarly. The details are omitted.
  \end{proof}

\subsection{Bound in the very high energy region}\label{subsec:Bound in high energy region}
\begin{lemma}
  \label{lemmaHighEnergy} For any  $\psi\in\vH $ the evolution $\psi(t)=U(t)\psi$ obeys
  \begin{subequations}
  \begin{align}\label{eq:energyBND}
    \lim_{t\to \infty}\, \norm{\chi_+(\wcH_t)\psi(t)}&=0,\\
    \lim_{t\to \infty}\, \norm{\parb{I-\chi_-(\wcH_t)}\psi(t)}&=0.\label{eq:CONs}
\end{align}  
  \end{subequations}
\end{lemma}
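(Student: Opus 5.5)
The key point is that the assumption $2\lambda\delta>1$ in \eqref{eq:widecheckH2} beats the at most linear growth of the kinetic energy along the evolution. The weight $m_t^{-\delta}\leq C t^{-\lambda\delta}$ (by \eqref{eq:mbndss}) damps $\inp{p^2}_{\psi(t)}=\vO(t)$ down to $\vO(t^{1-2\lambda\delta})\to 0$.

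\textbf{Step 1 (reduction).} Since all operators involved have norm at most $1$ and $U(t)$ is unitary, an $\epsilon/3$ argument reduces the claims to $\psi$ in a dense set. I take $\psi\in \inp{p}^{-2}L^2(\bX)$, for instance $\psi\in\vS(\bX)$. Next, $\chi_+(s)$ and $1-\chi_-(s)$ both take values in $[0,1]$ and vanish for $s\leq 4/3$. Hence both operators in \eqref{eq:energyBND} and \eqref{eq:CONs} have squares bounded by $F(\wcH_t>4/3)$, and it suffices to show $\inp{F(\wcH_t>4/3)}_{\psi(t)}\to 0$.

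\textbf{Step 2 (operator inequality).} The potential $V(t)$ is bounded uniformly in $t$ by Condition \ref{cond:smooth2wea3n12}. Therefore
\begin{equation*}
\wcH_t\geq -c_t,\qquad c_t:=\norm{V}_\infty\sup_x m_t^{-2\delta}\leq C t^{-2\lambda\delta}.
\end{equation*}
Since $\wcH_t+c_t\geq 0$, the spectral theorem gives
\begin{equation*}
F(\wcH_t>4/3)\leq \frac{\wcH_t+c_t}{4/3+c_t}\leq \tfrac34\parb{\wcH_t+c_t}.
\end{equation*}
I would then bound, as quadratic forms,
\begin{equation*}
\wcH_t+c_t\leq \sum_j\, m_t^{-\delta}p_j^2m_t^{-\delta}+2c_t .
\end{equation*}
By \eqref{eq:derx} the commutator $[p_j,m_t^{-\delta}]$ is bounded by $Cr^{-1}m_t^{-\delta}\leq C t^{-\lambda\delta}$, and $m_t^{-\delta}\leq Ct^{-\lambda\delta}$. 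Combining these gives
\begin{equation*}
\inp{F(\wcH_t>4/3)}_{\psi(t)}\leq C t^{-2\lambda\delta}\parb{\norm{p\psi(t)}^2+\norm{\psi}^2}.
\end{equation*}

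\textbf{Step 3 (linear energy growth, the main step).} For $\psi\in\inp{p}^{-2}L^2$, the function $e(t)=\inp{\psi(t),H(t)\psi(t)}$ satisfies
\begin{equation*}
e'(t)=\inp{\psi(t),(\pa_tV)(t)\psi(t)},
\end{equation*}
which is bounded uniformly in $t$ by Condition \ref{cond:smooth2wea3n12} \ref{item:BoundsatInfinity} with $k=1$, $\gamma=0$. Hence $e(t)\leq e(0)+C|t|$, and then
\begin{equation*}
\norm{p\psi(t)}^2\leq e(t)+\norm{V}_\infty\norm{\psi}^2=\vO(t).
\end{equation*}
Inserted into Step 2, this yields $\vO(t^{1-2\lambda\delta})\to0$, which proves both \eqref{eq:energyBND} and \eqref{eq:CONs}.

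The obstacle I expect is justifying the energy identity rigorously, since it needs differentiability of $t\mapsto\inp{\psi(t),H(t)\psi(t)}$. I would use the regularity from Yosida's theorem, namely continuity of $\inp{p}^{2}U(t)\psi$ for $\psi\in\inp{p}^{-2}L^2$ together with \eqref{eq:solSchr}. The derivative is computed in integrated form on $[0,t]$, where the terms $\pm\i\inp{H\psi(t),H\psi(t)}$ cancel. If needed, I would regularize with $T=p^2(1+\kappa p^2)^{-1}$ through the Duhamel formula \eqref{eq:Duhamel} and let $\kappa\to0$. This route uses only that $\nabla V$ is bounded to control $\i[V,T]$, and the resulting bound $\norm{p\psi(t)}^2\leq C(1+t)$ is all that is required.
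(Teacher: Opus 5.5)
Your argument is correct, and it takes a different and more elementary route than the paper.

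\textbf{The paper's route.} The paper works with the rescaled Hamiltonian $\widehat H_t=2^{2\delta+1}t^{-2\lambda\delta}H(t)$ in two steps.
\begin{enumerate}
\item It shows $\norm{\chi_+(\widehat H_t)\psi(t)}\to0$ using the propagation observable $\Psi=t^{2\lambda\delta}f(\widehat H_t)$, $f(s)=(s-1)\chi_+^2(s)$. It proves $\bD\Psi\le Ct^{-\lambda\delta}$ via the Helffer--Sj\"ostrand formula, where the error terms come from the commutators $[\widehat H_t,\pa_tV]$.
\item It transfers this to $\wcH_t$ through the operator-norm bound $\norm{\chi_+(\wcH_t)\chi_-(\widehat H_t)}\le Ct^{-\lambda\delta}$, again obtained by commutation.
\end{enumerate}

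\textbf{Your route.} You replace the comparison step by the Chebyshev-type form bound $F(\wcH_t>4/3)\le\tfrac34(\wcH_t+c_t)\le Ct^{-2\lambda\delta}(p^2+1)$. You replace the propagation estimate by the first-moment identity $\tfrac{\d}{\d t}\inp{H(t)}_{\psi(t)}=\inp{\pa_tV(t)}_{\psi(t)}$.

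\textbf{Comparison.} The mechanism is the same in both: boundedness of $\pa_tV$ allows at most linear growth of the energy, which loses against the scale $t^{2\lambda\delta}$ because $2\lambda\delta>1$.
\begin{itemize}
\item Your version needs no functional calculus and no commutator estimates. It uses only boundedness of $V$, $\pa_tV$ and $\nabla m_t$, together with the Yosida regularity of $U(t)$ on $\inp{p}^{-2}L^2$. That regularity does justify the energy identity directly, with the $\pm\i\norm{H\psi(t)}^2$ cancellation as you describe.
\item The paper's version controls spectral cutoffs of $H(t)$ rather than a moment. It gives a state-independent operator bound in the comparison step, and it is written in the commutator framework reused in Lemma \ref{lemmaNegForbidden}.
\end{itemize}
For the lemma as stated, your proof is shorter.

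\textbf{An error in your fallback remark.} One claim in your last paragraph is wrong, although the main argument does not depend on it. Suppose you regularize only the kinetic energy, $T=p^2(1+\kappa p^2)^{-1}$, and control $\i[V,T]$ by $\norm{\nabla V}_\infty$. This gives $\tfrac{\d}{\d t}\inp{T}\lesssim\inp{T}^{1/2}$, hence only $\norm{p\psi(t)}^2=\vO(t^2)$. This is the quantum analogue of the classical bound $|p(t)|\le|p(0)|+Ct$. It is useless here, since $2\lambda\delta<6/5<2$.

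The linear bound genuinely requires the cancellation between $p^2$ and $V(t)$ in the full energy, that is, boundedness of $\pa_tV$. If you want to regularize, regularize $T+V(t)$ instead. The leftover term $\i[V,T-p^2]$ then tends to $0$ as $\kappa\to0$, uniformly on compact time intervals, because $s\mapsto\inp{p}^2\psi(s)$ is continuous. The direct computation you describe first already suffices, so no regularization is needed.
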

\begin{proof} 
  Note that \eqref{eq:CONs} is a consequence of \eqref{eq:energyBND}. To prove \eqref{eq:energyBND} it suffices (by density) to check the estimate for any fixed  $\psi\in \vD(p^2)$.
  \Step {I} Introducing the  Hamiltonian  
\begin{equation*}
  \widehat H_t=2^{2\delta+1}t^{-2\lambda\delta}H(t),
\end{equation*} we will show that
\begin{equation}\label{eq:widehatBig}
  \norm{\chi_+(\widehat H_t)\psi(t)}\to 0.
\end{equation}

We introduce
$f\in C^\infty(\R)$  by the recipe    $f(s)=(s-1)\chi_+^2(s)$ and then the observable 
\begin{equation*}
  \Psi=t^{2\lambda\delta}f(\widehat H_t).
\end{equation*} The Heisenberg derivative $\bD=\pa_t+\i [H(t),\cdot]$ of this observable  is approximate negative. More precisely, 
 recalling    the
generic notation
$\inp{T}_{\varphi}=\inp{\varphi,T\varphi}$, we  can compute $\tfrac{\d}{\d t}\inp{\Psi}_{\psi(t)}=\inp{\bD \Psi}_{\psi(t)}$  with 
 \begin{equation}\label{eq:NegHat}
   \bD \Psi\leq Ct^{-\lambda\delta};\q t\geq 1.
 \end{equation} 
 To prove (\ref{eq:NegHat}) we first note that
 \begin{equation*}
   \bD \widehat H_t=-2\lambda\delta t^{-1 }\widehat H_t +2^{2\delta+1}t^{-2\lambda\delta}(\pa_ t V) .
 \end{equation*} 
 Taking  a suitable almost analytic  extension $\t f$  of $f$ obeying 
 \begin{align}\label{eq:Almost}
    \forall N\in \N:\q\abs{\bar\pa \tilde{f}(z)}\leq C_N \inp{z}^{-N}\abs{\Im z}^N
 \end{align} we then use  (\ref{82a0}) and obtain  to leading order
 \begin{align*}
   \bD \Psi &\approx 2\lambda\delta t^{-1 } \Psi- t^{2\lambda\delta-1 }\sq {f'}(\widehat H_t)\parbb{2\lambda\delta \widehat H_t -2^{2\delta+1}t^{1-2\lambda\delta}(\pa_ t V) }\sq {f'}(\widehat H_t)\\
            &\leq  - t^{2\lambda\delta-1 }\sq {f'}(\widehat H_t)\parb{2\lambda\delta  -2^{2\delta+1}t^{1-2\lambda\delta}(\pa_ t V) }\sq {f'}(\widehat H_t)\\
   &\leq  - t^{2\lambda\delta-1 } \parb{2\lambda\delta  -Ct^{1-2\lambda\delta}}f'(\widehat H_t).
 \end{align*} Since  $2\lambda\delta>1$ the right-hand side is
 eventually  non-positive.  The error in the  above approximation (indicated by $\approx$) is a sum of two terms. The first one is  given by 
 \begin{align*}
   &2^{2\delta+1}\int _{\C}(\widehat H_t -z)^{-1}  [\widehat H_t ,(\pa_ t V)](\widehat H_t -z)^{-2}\,\mathrm d\mu_{f}(z)\\
   =-\i \tfrac {2^{4\delta+3}}{t^{2\lambda\delta}}&\int _{\C}(\widehat H_t -z)^{-1}  \Re\parb {p\cdot(\nabla\pa_ t V)}(\widehat H_t -z)^{-2}\,\mathrm d\mu_{f}(z)= \vO( t^{-\lambda\delta}).
 \end{align*} Here we used the uniform bound $ t^{-\lambda\delta}\norm{p(\widehat H_t -z)^{-1}  }\leq C |\Im z|^{-1}\inp{ z}^{1/2}$ and  (\ref{eq:Almost}).
 The second one is given by
 \begin{align*}
   &2^{2\delta+1}[(\pa_ t V), \sq {f'}(\widehat H_t)]\sq {f'}(\widehat H_t)\\
   =-\i \tfrac {2^{4\delta+3}}{t^{2\lambda\delta}}&\int _{\C}(\widehat H_t -z)^{-1}  \Re\parb {p\cdot(\nabla\pa_ t V)}(\widehat H_t -z)^{-1}\,\mathrm d\mu_{\sq {f'}}(z)\,\sq {f'}(\widehat H_t)= \vO( t^{-\lambda\delta}).
 \end{align*} Here we estimated similarly using a representation of $\sq {f'}$ in terms of an  almost analytic  extension obeying \eqref{eq:Alm0}. Hence the total error is on the form $\vO( t^{-\lambda\delta})$, and (\ref{eq:NegHat}) follows.

 Noting  that $\Psi\geq 3^{-1}t^{2\lambda\delta} \chi^2_+(\widehat H_t)$, clearly (\ref{eq:widehatBig})   follows from  (\ref{eq:NegHat}) by integration.
\Step {II} 
Thanks to (\ref{eq:widehatBig})  it remains to show that
\begin{equation*}
  \norm{\chi_+(\wcH_t)\chi^2_-(\widehat H_t)\psi(t)}\to 0.
\end{equation*} This will be done by showing the `energy bound'
\begin{equation}\label{eq:energyBND2}
  \norm{\chi_+(\wcH_t)\chi_-(\widehat H_t)}\leq C t^{-\lambda\delta}.
\end{equation} 
 We first use  (\ref{eq:mbndss}) and  (\ref{eq:derx})
estimating  as
\begin{subequations}
\begin{align}\label{eq:wc1}
  \wcH_t\leq 2^{-1} \widehat H_t+ &C_1t^{-2\lambda\delta},\\
  \wcH_t(\widehat H_t-\i)^{-1}&=\vO(t^0),\label{eq:wc2}\\
                                           (\widehat H_t-\i)^{-1} [\wcH_t,\widehat H_t] &(\widehat H_t-\i)^{-1}=\vO(t^{-2\lambda\delta- \lambda(1-\delta)}).\label{eq:wc3}
\end{align}
\end{subequations}
Then we apply   these bounds, \eqref{82a0}  and repeated commutation,     estimating  as follows for any $\varphi \in \vH$:
  \begin{align*} 
  &\tfrac{1}3 \norm {\chi_+(\wcH_t)\chi_-(\widehat H_t)\varphi}^2\\
&\leq \inp{\wcH_t-I
  }_{\chi_+(\wcH_t)\chi_-(\widehat H_t)\varphi}\\
&\leq \inp{\wcH_t-I}_{ \chi_-(\widehat H_t)\chi_+(\wcH_t)\varphi}+C_2t^{-2\lambda\delta}\norm{\varphi}^2 \q(\text{by }(\ref{eq:wc2})\mand (\ref{eq:wc3})\text{ applied twice})\\&
\leq (C_1+C_2)t^{-2\lambda\delta} \norm{\varphi}^2 \qq(\text{by }(\ref{eq:wc1}).
 \end{align*}
 Hence  (\ref{eq:energyBND2}) follows with $C^2=3(C_1+C_2)$, and   \eqref{eq:energyBND} is proven.
\end{proof}

\subsection{Bound in the incoming region}\label{subsec:Bound in incoming region}
We  derive a preliminary propagation estimate for  $M_t$, stated as  (\ref{eq:PS30}). (This study will be completed in  Section \ref{sec:Localization to  outgoing  region}.) Motivated by
\eqref{eq:CONs} and Lemma \ref{lemma:expansion} we introduce 
  real functions $f,\tilde f, \wc f \in C_\c^\infty (\R)$   such that
  \begin{subequations}
\begin{equation}\label{eq:fes}
  f\parb{\wcH_t}=\chi_-\parb{\wcH_t},\q \tilde{ f}\parb{\wcH_t}=\chi_-\parb{\wcH_t/2},\q \wc f\parb{\wcH_t}=
\tilde{ f}\parb{\wcH_t} \wcH_t \tilde{ f}\parb{\wcH_t}.
\end{equation} These functions are henceforth considered as  fixed and
independent of $t\geq 1$. (Note that  $C+\wcH_t\geq 1$ for some $C\geq
1$  being independent of $t\geq 1$.)   Obviously we can assume that
$\tilde{ f}\succ f$, and in the  presence of a  factor  of  $f(\wcH_t)$
we are   lead to   consider 
 the following 'partial regularization' of $M_t$. We introduce  for
 each $t\geq 1$
\begin{align*}
  \wt M_t\psi&= M_t \tilde{ f}\parb{\wcH_t}^2\psi-\big [M_t ,\tilde{ f}\parb{\wcH_t}\big  ]\tilde{ f}\parb{\wcH_t} \psi\text{ on }\vD(\wt  M_t);\\
  &\vD( \wt M_t)=\set{\psi\in L^2\mid M_t \tilde{ f}\parb{\wcH_t}^2\psi \in L^2}.
\end{align*} By Lemma \ref{lemma:expansion} \ref{item:co1} the second term is bounded.  We can  (unambiguously)  view $\wt M_t$ as  an operator of  (uniform) order $\delta$, cf. \eqref{eq:bndM}. We show below   that  $\wt M_t$ is self-adjointness with core $\vS(\bX)$.
 Note that formally  
\begin{equation}\label{eq:Mtilde}
  \wt M_t=\tilde{ f}\parb{\wcH_t}M_t  \tilde{ f}\parb{\wcH_t}.
\end{equation}
Thanks to  Lemma \ref{lemma:expansion} \ref{item:co1}  we can record that $\vD(  M_t)\subseteq \vD( \wt M_t)$   and that for any  $\psi\in \vD(M_t)$,  indeed $\wt M_t\psi= \tilde{ f}\parb{\wcH_t}M_t  \tilde{ f}\parb{\wcH_t}\psi$.

The minimum action of $\wt M_t$, say denoted by $\wt M_{t,\min}$ or for convenience below just by $M_{\min}$, has domain given by $\vS(\bX)$. Note that $\wt M_t =M^*_{\min}$. Since $\vS(\bX)$ is dense in $\vD(  M_t)$ and $\wt M_t$ is closed, it follows that $\vD(  M_t)\subseteq \vD(  \overline{M_{\min}})\subseteq \vD( \wt M_t)$. We claim that in fact the closure
\end{subequations}
\begin{align}
  \label{eq:closure} \overline{M_{\min}}=\wt M_t.
\end{align} 
 To see this  we compute for any  $\psi\in \vD(\wt M_t)$
 \begin{align*}
  \wt M_t\psi=\lim_{n\to \infty}\, \chi_-(r/n)\wt M_t\psi=\lim_{n\to \infty}\, \wt M_t\chi_-(r/n) \psi.
\end{align*} By  the standard mollifier technique (applied to $\chi_-(r/n) \psi$) we  then obtain
that $\vS(\bX)$ is a core for $\wt M_t$, as wanted.

Finally using (\ref{eq:closure}) it follows that $\wt M_t=\overline{M_{\min}}=(M^*_{\min})^*=\parb{\wt M_t}^*$. We   conclude     that  indeed $\wt M_t$ is essentially self-adjoint  on $\vS(\bX)$.

    \begin{remarks} \label{remarks:tilde}
      \begin{enumerate}[i)]
  \item\label{item:tilde1}
    We need $\wt M_t$ rather than just $ M_t$ to prove
    the below assertion (\ref{eq:PS30}). The  reason  is  that there is no  way of
    bounding $ (t^{\epsilon_1} M_t -z)^{-1}(\pa_t M_t)(t^{\epsilon_1}
    M_t -z)^{-1}$, not even with an external momentum cut-off like
    $\tilde{ f}\parb{\wcH_t}$. Hence we can not use  the same method
    as the one applied  below for  (\ref{eq:PS3}).
    \item\label{item:tilde2}
      It is easily checked that the operator  $\chi_-(t^{\epsilon_1}\wt M_t)$ in   (\ref{eq:PS3}) (with   $\epsilon_1>0$ small) is an operator of zero order, cf. Lemma \ref{lemma:expansion}. In fact repeated commutation shows   that for any $s\in \R$ 
      \begin{equation}\label{eq:ExpsB} 
  \comm[\big]{\chi_-(t^{\epsilon_1}\wt M_t),m_t^s}= \vO(r^{s+\epsilon_1/\lambda-1}).
\end{equation}
      \end{enumerate}
  \end{remarks}
  \begin{lemma}
  \label{lemmaNegForbidden} For all sufficiently small  $\epsilon_1>0$ (determined only by $\lambda$ and $\delta$) and for all    $\psi\in \vH_{\ac}$ the following bounds hold for $\psi(t)=U(t)\psi$.
  \begin{subequations}
   \begin{align}\label{eq:PS30}
  &\lim_{t\to \infty}\,\norm{\chi_-(t^{\epsilon_1}M_t)\psi(t)}= 0,\\
\label{eq:PS3}
  &\lim_{t\to \infty}\,\norm{\chi_-(t^{\epsilon_1}\wt M_t)\psi(t)}= 0.
\end{align}
  \end{subequations}
\end{lemma}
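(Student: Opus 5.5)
We prove \eqref{eq:PS3} first and deduce \eqref{eq:PS30} from it. By density, Theorem \ref{thm:LocalDecay} and Lemma \ref{lemmaHighEnergy}, it suffices to take $\psi=g(U(1))\psi$ with $g\in C^\infty_\c(\T\setminus\vT_\p)$ and $\psi\in\vS(\bX)$. By \eqref{eq:CONs} we may also insert the factor $f(\wcH_t)$, that is, replace $\psi(t)$ by $f(\wcH_t)\psi(t)$ up to $o(1)$. Corollary \ref{cor:an-auxil-hamilt} provides the regularity needed to differentiate expectation values.

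The main step is a propagation observable built from the incoming-region functional. We take
\begin{equation*}
\Psi_t=f(\wcH_t)\,m_t^{1/2}\,h\parb{t^{\epsilon_1}\wt M_t}\,m_t^{1/2}f(\wcH_t),\qquad h(s)=\int_s^\infty\chi_-^2(\sigma)\,\d\sigma ,
\end{equation*}
or a variant with a weight $t^{-\kappa}$ replacing $m_t$. The exact weight would be fixed once the orders are balanced, and $h$ could be smoothed via Remarks \ref{remarks:chiSMooTH}. The point is that $-h'=\chi_-^2\ge0$ lives on the incoming region $t^{\epsilon_1}\wt M_t\lesssim -1$.

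We then compute $\bD\Psi_t$ and split it into three groups of terms.

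1. The derivative falling on $h$ gives $-t^{\epsilon_1}\chi_-\,\bD\wt M_t\,\chi_-$ together with $\epsilon_1 t^{-1}(t^{\epsilon_1}\wt M_t)h'$. The second term is nonpositive-signed favourably because $\wt M_t<0$ on $\supp h'$. For the first, \eqref{eq:Heis2} gives $\bD M_t=B_t^*B_t+R_t\ge R_t$ modulo the regularization by $\tilde f$. The $\tilde f$-commutators are handled by Lemma \ref{lemma:expansion} \ref{item:co1}, \ref{item:co3}, and these errors are $\vO(r^{-1-\delta})$ or better.

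2. The derivative falling on $m_t^{1/2}$ gives, by \eqref{eq:Heis1}, $\tfrac12 m_t^{-1/2}(M_t+\vO(t^{\lambda-1}))$. Sandwiched between factors localizing to $\wt M_t\le -t^{-\epsilon_1}$, this is negative to leading order. This is the classical reason incoming states cannot persist.

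3. The derivatives of $f(\wcH_t)$ and of the time-dependence of $\wt M_t$ are controlled by Lemma \ref{lemma:expansion}, \eqref{eq:ExpsB} and the energy-bound arguments of Lemma \ref{lemmaHighEnergy}. The condition $2\lambda\delta>1$ enters here.

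All remaining errors are of the form $\vO(t^{c\epsilon_1})\,\vO(r^{-1-\mu'})$ with $\mu'>0$, or $\vO(t^{-1-})$. The first kind is integrable along $\psi(t)$ by \eqref{eq:LocalDecay} with $s>\tfrac12$, provided $\epsilon_1$ is small compared with $\lambda,\delta,\mu$. The second kind is trivially integrable.

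Integrating $\tfrac{\d}{\d t}\inp{\Psi_t}_{\psi(t)}$ yields two things. First, the positive incoming term $\int t^{-1}\norm{m_t^{1/2}\ldots\chi_-(t^{\epsilon_1}\wt M_t)\ldots\psi(t)}^2\d t$ is finite. Second, $\inp{\Psi_t}_{\psi(t)}$ has a limit. Because $m_t\ge t^\lambda/2$, we have $\norm{\chi_-(t^{\epsilon_1}\wt M_t)\psi(t)}^2\lesssim t^{-\lambda}\inp{\Psi_t}$ plus errors, which already suggests decay. To get a genuine pointwise limit, we argue as in the proof of Lemma \ref{lemma:scatDef}: the quantity $\phi(t)=\norm{\chi_-(t^{\epsilon_1}\wt M_t)f(\wcH_t)\psi(t)}^2$ satisfies $\int^\infty \phi(t)\,t^{-1}\d t<\infty$. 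A second Heisenberg-derivative computation, using the same bounds, shows that $\tfrac{\d}{\d t}\phi\le C t^{-1}+(\text{integrable})$. A Tauberian-type argument on the scale $\log t$ then gives $\phi(t)\to0$.

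To pass from $\wt M_t$ to $M_t$ and obtain \eqref{eq:PS30}, we write $M_t f(\wcH_t)=\wt M_t f(\wcH_t)+\vO(r^{-1})$, which holds since $\tilde f\succ f$. Combining this with the commutator expansions of Lemma \ref{lemma:expansion} and \eqref{eq:decayInx2}-type local decay (Lemma \ref{lemma:scatDef}), we get $\chi_-(t^{\epsilon_1}M_t)f(\wcH_t)-\chi_-(t^{\epsilon_1}\wt M_t)f(\wcH_t)\to0$ along $\psi(t)$. Here the error $t^{\epsilon_1}\vO(r^{-1})$ is handled by splitting into the regions $r<t^{2\epsilon_1}$, where local decay applies, and $r\ge t^{2\epsilon_1}$, where the error is small.

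The main obstacle is the second group of terms together with $B_t^*B_t$: positivity must be extracted with the regularized $\wt M_t$ (cf. Remarks \ref{remarks:tilde} \ref{item:tilde1}), where the time derivative of $\wt M_t$ involves $\pa_t\wcH_t$. This forces the order bookkeeping with $\delta\in(1/2,3/5)$ and a small $\epsilon_1$, and that is the step I would work out first.
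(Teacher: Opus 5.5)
Your overall strategy is the paper's: a weighted observable localized by $\chi_-(t^{\epsilon_1}\wt M_t)$, a Heisenberg derivative that is nonpositive up to terms integrable by \eqref{eq:LocalDecay}, and then $m_t\ge t^\lambda/2$ to turn boundedness of $\inp{\Psi_t}_{\psi(t)}$ into decay. However, the observable you propose does not have the needed property, and the sign bookkeeping behind it is wrong.

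Recall that $\chi_-=1$ on $(-\infty,4/3]$ and $\chi_-=0$ on $[5/3,\infty)$. With $s=t^{\epsilon_1}\wt M_t$, the cutoff therefore localizes to $s<5/3$, which contains the outgoing strip $0<s<5/3$; it does not localize to $s\lesssim -1$. Also $\supp h'=(-\infty,5/3]$. This causes three failures.

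(i) The term $\epsilon_1t^{-1}s\,h'(s)=-\epsilon_1t^{-1}s\chi_-^2(s)$ is $\ge 0$ for $s\le 0$. Even on your own reading, $\wt M_t<0$ and $h'\le 0$ give a nonnegative product. After sandwiching by $m_t^{1/2}$ it has size $\epsilon_1t^{\epsilon_1-1}m_t\abs{\wt M_t}$, which cannot be controlled at large $r$ without a maximal velocity bound.

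(ii) With total weight $m_t$ (exponent $1$), $\bD m_t$ leaves no decaying factor. By \eqref{eq:Heis1} and convexity of $m$, $\pa_t m_t$ is positive and of exact order $t^{\lambda-1}$, so it contributes a positive term $\gtrsim t^{\lambda-1}h(s)$. The $M_t$-part contributes $\Re\parb{\wt M_t h(s)}$, which is positive of size $t^{-\epsilon_1}$ on $0<s<5/3$. Neither term is $\vO(t^{-1_+})+\vO(r^{-1_+})$.

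(iii) The remainder $R_t=\vO(r^{-1-2\mu})$ of \eqref{eq:Heis2} becomes $t^{\epsilon_1}\vO(r^{-2\mu})$, which \eqref{eq:LocalDecay} cannot absorb.

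A weight $t^{-\kappa}$ instead removes the decay mechanism altogether, and since $h$ is unbounded, $\Psi_t$ is unbounded. So neither $\int^\infty t^{-1}\phi\,\d t<\infty$ nor your Tauberian step has a basis.

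The missing idea is to keep the bounded profile $\chi_-^2$ and use a \emph{small} weight, as the paper does:
\begin{equation*}
\Psi=f(\wcH_t)\chi_-(t^{\epsilon_1}\wt M_t)\,m_t^{\epsilon_2}\,\chi_-(t^{\epsilon_1}\wt M_t)f(\wcH_t),\qquad 0<\epsilon_2<\epsilon_1 .
\end{equation*}
Every derivative of the $\chi_-$ factors now carries $\chi_-'\le 0$, supported in $[4/3,5/3]$ where $s>0$. Hence both the $\epsilon_1t^{-1}s$-term and the $B_t^*B_t$-term are nonpositive. Using $t^{\epsilon_1}\lesssim m_t^{\epsilon_1/\lambda}$, the remainder $R_t$ contributes $r^{\epsilon_1/\lambda}\bigl(\vO(r^{1-2/\lambda+\epsilon_2})+\vO(r^{-1-2\mu+\epsilon_2})\bigr)=\vO(r^{-1_+})$ once $\epsilon_1<2(1-\lambda)$. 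The only positive leading terms come from $\bD m_t^{\epsilon_2}$ and $\bD\wcH_t$. They are bounded by $Ct^{-\epsilon_1}r^{\epsilon_2-1}+t^{\lambda-1}\vO(r^{\epsilon_2-1})+\vO(r^{\epsilon_2-2\delta})$. This is $\vO(t^{-1_+})+\vO(r^{-1_+})$ by Young's inequality, precisely because $\epsilon_2<\epsilon_1$ and $\epsilon_2$ is small, cf. \eqref{eq:Young}. A rigorous version also replaces $H(t)$ by $m_t^\delta\wc f(\wcH_t)m_t^\delta$ and symmetrizes the $\ad_{M_t}$-expansions, cf. Remark \ref{remark:bound-incom-regi} \ref{item:2psibb}.

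This yields $\bD\Psi\le\vO(t^{-1_+})+\vO_{\unif}(r^{-1_+})$, so $\inp{\Psi}_{\psi(t)}$ is bounded. Then $m_t^{\epsilon_2}\ge(t^\lambda/2)^{\epsilon_2}$ gives $\norm{\chi_-(t^{\epsilon_1}\wt M_t)f(\wcH_t)\psi(t)}^2=\vO(t^{-\lambda\epsilon_2})$ directly, with no Tauberian argument.

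Finally, do not pass between \eqref{eq:PS3} and \eqref{eq:PS30} via local decay. Lemma \ref{lemma:scatDef} holds only for $\vH^+_{\ener}$, and its $\vH_{\ac}$-version, Proposition \ref{lemma:scatDef22}, is deduced from the present lemma, so that route is circular. It is also unnecessary: $[M_t,\tilde f(\wcH_t)]=\vO(t^{-\lambda})$ in norm, so \eqref{eq:T1II} gives $\parb{\chi_-(t^{\epsilon_1}\wt M_t)-\chi_-(t^{\epsilon_1}M_t)}f(\wcH_t)=\vO(t^{\epsilon_1-\lambda})$.
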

\begin{proof}  The assertions \eqref{eq:PS30} and  \eqref{eq:PS3} are equivalent. Indeed by \eqref{eq:CONs} be can freely replace $\psi(t)$ by
  $f(\wcH_t)\psi(t)$.
Note  then (here assuming that $\epsilon_1<\lambda$) that
\begin{subequations}
 \begin{equation}\label{eq:T1I}
   T:=\parb{\chi_-(t^{\epsilon_1}\wt M_t)-\chi_-(t^{\epsilon_1} M_t)}f(\wcH_t)=\vO(t^{\epsilon_1-\lambda})=\vO(t^{-0_+}),
 \end{equation} which by \eqref{82a0} is an easy consequence of the represention
 \begin{equation}\label{eq:T1II}
   T=\int _{\C}(t^{\epsilon_1} M_t -z)^{-1} t^{\epsilon_1}\parb{M_t-\wt M_t}(t^{\epsilon_1}\wt M_t -z)^{-1}f(\wcH_t)\,\mathrm d\mu_{\chi_{-}}(z)
 \end{equation} and the facts that $f=\tilde{ f}f$ and $[M_t, \t f(\wcH_t)]=\vO(t^{-\lambda})$ (cf. (\ref{eq:ExpsBreveHb})).
  \end{subequations}
  Thus it suffices  to   show \eqref{eq:PS3}.

  For \eqref{eq:PS3} we can assume (by density) that
$\psi=g(U(1))\br\psi$  with  
$\br\psi\in \vS(\bX)$ and   $g\in C^\infty_{\c}(\T\setminus \vT_\p)$.

Consider for small $\epsilon_1> \epsilon_2>0$
the time-dependent observable
\begin{subequations}
\begin{equation}
  \label{eq:PS2}
  \Psi=f(\wcH_t)\chi_-(t^{\epsilon_1}\wt M_t)m_t^{\epsilon_2}\chi_-(t^{\epsilon_1}\wt M_t)f(\wcH_t);\q t\geq 1.
\end{equation}  More precisely we consider  in  Steps I-IV below this  quantity $\Psi$ under the 
conditions $2\lambda\delta>1$, $\epsilon_1< 2(1-\lambda)$ and
$\epsilon_2$ taken sufficiently small, cf. \eqref{eq:Young} and
(\ref{eq:smalle1}). The complete  proof of \eqref{eq:PS3} (given in Steps  V and  VI) requires   additional smallness of  $\epsilon_1$, see for example \eqref{eq:S2} and  (\ref{eq:mainS3}). We record that $m_t^{-\epsilon_2/2}\Psi m_t^{-\epsilon_2/2}$ is of order zero uniformly in $t\geq 1$.

Now by using \eqref{eq:mbndss2}  and  Corollary
\ref{cor:an-auxil-hamilt} we can compute and estimate (to be elaborated on in several steps below) $\tfrac{\d}{\d t}\inp{\Psi}_{\psi(t)}=\inp{\bD \Psi}_{\psi(t)}$  with
 \begin{equation}\label{eq:negHeis}
   \bD \Psi=\pa_t \Psi+\i [H(t),\Psi]\leq  \vO(t^{-1_+})+\vO_{\unif}(r^{-1_+}) .
 \end{equation}
\end{subequations} 
  Thanks to \eqref{eq:LocalDecay} and integration,  \eqref{eq:negHeis}  yields that  $\inp{\Psi}_{\psi(t)}$ is bounded in time. Using then 
  \eqref{eq:mbndss} and \eqref{eq:CONs}, clearly (\ref{eq:PS3}) follows.

  To establish \eqref{eq:negHeis}  let us as a motivation first
  extract the `leading order'  contribution from $\bD \Psi$ (agreeing
  with Classical Mechanics) and  disregard  `quantum commutation
  errors'. We will in Steps I-IV see that the leading order term
  agrees with \eqref{eq:negHeis}, devoting  Steps  V and  VI  to a complete proof of \eqref{eq:negHeis} using in particular various notation from the previous steps.
 \Step {I} We write (using here and throughout Steps I-IV  the approximately  equal symbol $\approx$ intuitively)
 \begin{align}\label{eq:Leib}
   \begin{split}
   \bD \Psi&\approx 2 \Re\parbb{\parb{\bD \wcH_t}f'(\wcH_t)\chi_-(t^{\epsilon_1}\wt M_t)m_t^{\epsilon_2}\chi_-(t^{\epsilon_1}\wt M_t)f(\wcH_t)}\\
           &+  2 \Re\parbb{f(\wcH_t)\parb{\bD (t^{\epsilon_1}\wt M_t)}\chi'_-(t^{\epsilon_1}\wt M_t)m_t^{\epsilon_2}\chi_-(t^{\epsilon_1}\wt M_t)f(\wcH_t)}\\
           &+   f(\wcH_t)\chi_-(t^{\epsilon_1}\wt M_t)\parb{\bD m_t^{\epsilon_2}}\chi_-(t^{\epsilon_1}\wt M_t)f(\wcH_t)=:T_1+T_2+T_3,  
   \end{split}
 \end{align} and then in turn, using \eqref{eq:Heis1} and \eqref{eq:Heis2},
 \begin{subequations}
 \begin{align}\label{eq:1Heis}\bD \wcH_t&\approx -2\delta m_t^{ -1}\wcH_t\parb{\wt M_t+ \vO(t^{\lambda-1})}+m_t^{-2\delta}\vO(r^{0}),\\\label{eq:2Heis}
     \bD (t^{\epsilon_1}\wt M_t)&\approx
     \epsilon_1t^{-1}\parb{t^{\epsilon_1}\wt M_t}+
     t^{\epsilon_1}\parb{B_t^*B_t+R_t},\\
   \bD m_t^{\epsilon_2}&= \epsilon_2 m_t^{(\epsilon_2-1)/2}\parb{\wt M_t+ \vO(t^{\lambda-1})}m_t^{(\epsilon_2-1)/2} .\label{eq:3Heis}
 \end{align}  
\end{subequations} Note that in (\ref{eq:1Heis})-(\ref{eq:3Heis}) we do not distinguish between $ M_t$ and $\wt M_t$, which of course is motivated by the presence of factors of  $f(\wcH_t)$ in (\ref{eq:Leib}) and the property $\tilde{ f}\succ f$.
 Let us in the following Steps II-IV  identify the terms
obtained by substituting (\ref{eq:1Heis})-(\ref{eq:3Heis}) into the respective terms $T_1$-$T_3$ of \eqref{eq:Leib}. As to be demonstrated they are all  essentially negative in the precise sense of  \eqref{eq:negHeis}.

 \Step {II}
 For $T_1$ we introduce 
 \begin{align}\label{eq:barf}
   \bar f(s)=\sq{-\chi_-'(s)s\chi_-(s)},
 \end{align} leading with  (\ref{eq:1Heis}) to 
\begin{align}\label{eq:Leib1}
   \begin{split}
     T_1&\approx 4\delta {\bar
       f(\wcH_t)m_t^{(\epsilon_2-1)/2}\chi_-(t^{\epsilon_1}\wt M_t)\parbb{\wt M_t+
         \vO(t^{\lambda-1}}\chi_-(t^{\epsilon_1}\wt M_t)m_t^{(\epsilon_2-1)/2}\bar
       f(\wcH_t)}\\ & \qq \qq + \vO(r^{\epsilon_2-2\delta})\\
        &\leq 8\delta t^{-\epsilon_1}\bar f(\wcH_t)m_t^{\epsilon_2-1}\bar f(\wcH_t)+ t^{\lambda-1}\vO(r^{
       \epsilon_2-1})+ \vO(r^{\epsilon_2-2\delta})\\
     &=  t^{-\epsilon_1} \vO(r^{\epsilon_2-1})+ t^{\lambda-1}\vO(r^{
       \epsilon_2-1})+
     \vO(r^{\epsilon_2-2\delta})\\
 &=  \vO(t^{-1_+})+\vO(r^{-1_+}).
    \end{split}
 \end{align} In the last step we used the familar estimate  $ab\leq p^{-1}a^p+q^{-1} b^q$, $ p^{-1}+q^{-1}=1$.  For the middle term this leads to  
 \begin{equation*}
   t^{\lambda-1}r^{ \epsilon_2-1} \leq  p^{-1}t^{(\lambda-1)p} +q^{-1}r^{
     (\epsilon_2-1)q} 
 \end{equation*} with $p>1$ chosen so  big and $\epsilon_2>0$ so small that 
 \begin{equation}\label{eq:Young}
     (1-\lambda)p>1 \mand (1-\epsilon_2)q>1.
 \end{equation}  We argue similarly for the first term.

 \Step {III} For $T_2$ we introduce 
 \begin{align*}
   \bar \chi_1(s)=\bar f(s)=\sq{-\chi_-'(s)s\chi_-(s)}\mand \bar \chi_2(s)=\sq{-\chi_-'(s)\chi_-(s)},
 \end{align*} leading with  (\ref{eq:2Heis})  and \eqref {eq:Rbnd2} to 
\begin{align}\label{eq:Leib2}
   \begin{split}
     T_2&\approx -2\epsilon_1t^{-1}f(\wcH_t)\bar \chi_1(t^{\epsilon_1}\wt M_t) 
       m_t^{\epsilon_2} \bar \chi_1(t^{\epsilon_1}\wt M_t) f(\wcH_t)\\
&-2t^{\epsilon_1}f(\wcH_t)\bar \chi_2(t^{\epsilon_1}\wt M_t) 
       m_t^{\epsilon_2/2}\parbb{B_t^*B_t+R_t}m_t^{\epsilon_2/2}
       \bar \chi_2(t^{\epsilon_1}\wt M_t) f(\wcH_t)\\
        &\leq r^{\epsilon_1/\lambda}\parbb{\vO\parb{r^{1-2/\lambda+\epsilon_2} }+\vO\parb{r^{-1-2\mu+\epsilon_2} }}\\
        &= \vO(r^{-1_+}).
    \end{split}
\end{align} In the last step we needed  that
\begin{equation}\label{eq:smalle1}
  \epsilon_1< 2(1-\lambda),
\end{equation} and also that $\epsilon_2>0$ is taken sufficiently small. (We omitted the quantum correction term $\vO(r^{-3})$  of $R_t$.)
 \Step {IV} For $T_3$ we introduce 
 \begin{align*}
   \bar \chi_3(s)={s\chi^2_-(s)},
 \end{align*} leading with  (\ref{eq:3Heis})  to 
 \begin{align}
     T_3&= \epsilon_2 f(\wcH_t)\chi_-(t^{\epsilon_1}\wt M_t){m_t^{(\epsilon_2-1)/2}\parbb{\wt M_t+ \vO(t^{\lambda-1})}m_t^{(\epsilon_2-1)/2}}\chi_-(t^{\epsilon_1}\wt M_t)f(\wcH_t)\nonumber\\&=  \epsilon_2 t^{-\epsilon_1}f(\wcH_t)m_t^{(\epsilon_2-1)/2}\bar\chi_3(t^{\epsilon_1}\wt M_t)m_t^{(\epsilon_2-1)/2}f(\wcH_t)+ \vO(r^{ \epsilon_2-2})+t^{\lambda-1} \vO(r^{ \epsilon_2-1})\nonumber\\
&\leq t^{-\epsilon_1}\vO(r^{\epsilon_2-1})+ \vO(r^{ \epsilon_2-2}) + t^{\lambda-1} \vO(r^{ \epsilon_2-1}) \label{eq:Leib3}\\
 &=  \vO(t^{-1_+})+\vO(r^{-1_+}).\nonumber
 \end{align} Here  the term $\vO(r^{ \epsilon_2-2})$ is a  quantum correction related to \eqref{eq:Exps} and Remark \ref{remarks:tilde}
 \ref{item:tilde2}  (which presently could be omitted).

 By combining the bounds from  Steps II-IV we obtain the estimate \eqref{eq:negHeis} up to `quantum correction terms', as desired.

 To proceed and   give a complete proof of \eqref{eq:negHeis}  we
 divide the problem into two parts,  analyzing the terms $\pa_t \Psi$ and $\i [H(t),\Psi]$  separately. 
 The principal tool is   \eqref{82a0}, to be used tacitly.
 \Step {V} 
 We treat  the `easy case' of $\pa_t \Psi$, more precisely we show that this term conforms with \eqref{eq:negHeis} up to a single term that  will be included and ultimately treated in Step VI. First we split
\begin{align}\label{eq:Leib8}
   \begin{split}
   \pa_t  \Psi&= 2 \Re\parbb{\parb{\pa_t  f(\wcH_t)}\chi_-(t^{\epsilon_1}\wt M_t)m_t^{\epsilon_2}\chi_-(t^{\epsilon_1}\wt M_t)f(\wcH_t)}\\
           &+  2 \Re\parbb{f(\wcH_t)\parb{\pa_t  \chi_-(t^{\epsilon_1}\wt M_t) }m_t^{\epsilon_2}\chi_-(t^{\epsilon_1}\wt M_t)f(\wcH_t)}\\
           &+   f(\wcH_t)\chi_-(t^{\epsilon_1}\wt M_t)\parb{\pa_t m_t^{\epsilon_2}}\chi_-(t^{\epsilon_1}\wt M_t)f(\wcH_t)=:S_1+S_2+S_3.  
   \end{split}
\end{align} We can compute $S_1$,  $S_2$ and $S_3$ separately and compare with  the corresponding `classical expressions' (defined explicitly below, although partially encountered in the previous steps), say denoted by $S^{\rm clas}_1$, $S^{\rm clas}_2$ and $S^{\rm clas}_3$, respectively.

Motivated by  Step II  we let
\begin{align*}
   \begin{split}
     S^{\rm clas}_1&= 2\bar
       f(\wcH_t)m_t^{\epsilon_2/2}\chi_-(t^{\epsilon_1}\wt M_t)2\delta (\pa_t m_t)m_t^{-1}\chi_-(t^{\epsilon_1}\wt M_t) m_t^{\epsilon_2/2}\bar
                     f(\wcH_t)\\&\qq + 2\Re\parb{f'(\wcH_t)m_t^{\epsilon_2-2\delta} (\pa_t V(t))\chi^2_-(t^{\epsilon_1}\wt M_t)f(\wcH_t)}.
     \end{split}
 \end{align*} Thanks to Lemma \ref{lemma:expansion} and Remark \ref{remarks:tilde}
 \ref{item:tilde2} we can record that
 \begin{align*}
   \begin{split}
     S^{\rm clas}_1= t^{\lambda-1}\vO(r^{
       \epsilon_2-1})+
     \vO(r^{\epsilon_2-2\delta}).
   \end{split}
 \end{align*}

We compute by commutation
\begin{align*}
  &\pa_t  f(\wcH_t)\\&=-\int _{\C}(\wcH_t -z)^{-1}\Re\parbb{ 2{(\pa_t m_t^{-\delta})H(t)m_t^{-\delta}+m_t^{-\delta} (\pa_t V(t))m_t^{-\delta}}}(\wcH_t -z)^{-1}\,\mathrm d\mu_f(z)\\
  &=\int _{\C}(\wcH_t -z)^{-1}\Re
    \parbb{2\delta(\pa_t m_t)m_t^{-1}\wcH_t+m_t^{-2\delta}\vO(r^0)}(\wcH_t -z)^{-1}\,\mathrm d\mu_f(z)\\
  &=-2\delta\Re\parbb{(\pa_t m_t)m_t^{-1}\wcH_t f'(\wcH_t)}+\vO(r^{-2\delta})\\
  &-\delta \int _{\C}(\wcH_t -z)^{-1}m_t^{-\delta}\comm[\big]{p^2, (\pa_t m_t)m_t^{-1}}m_t^{-\delta}\wcH_t(\wcH_t -z)^{-2}\,\mathrm d\mu_f(z)\\
  &+\delta \int _{\C}(\wcH_t -z)^{-2}\wcH_tm_t^{-\delta}\comm[\big]{p^2, (\pa_t m_t)m_t^{-1}}m_t^{-\delta}(\wcH_t -z)^{-1}\,\mathrm d\mu_f(z)\\
  &=-2\delta\Re\parbb{(\pa_t m_t)m_t^{-1}\wcH_t f'(\wcH_t)}+\vO(r^{-2\delta}).
\end{align*} Here we used the  expression for $\pa_t m_t$ from  \eqref{eq:Heis1}, calculating 
\begin{equation*}
 \comm[\big]{p^2, (\pa_t m_t)m_t^{-1}}=-\i2 \Re\parb{p\cdot \nabla \parb{(\pa_t m_t)m_t^{-1}}}=\Re\parb{p\cdot \vO(r^{-2})}, 
\end{equation*} and then bounding by \eqref{eq:bndM}.

 When  substituted in $S_1$   various commutation yields, cf.  Lemma \ref{lemma:expansion} and Remark \ref{remarks:tilde}
      \ref{item:tilde2},  that
\begin{align*}
  S_1-S^{\rm clas}_1=\vO(r^{\epsilon_1/\lambda+\epsilon_2-2})+
    \vO(r^{\epsilon_2-2\delta})=\vO(r^{-1_+}).
\end{align*} Note for example that
\begin{align*}
  &\Re\parb{(\pa_t m_t)m_t^{-1}\chi_-(t^{\epsilon_1}\wt M_t)  m_t^{\epsilon_2}\chi_-(t^{\epsilon_1}\wt M_t)}\\&= m_t^{\epsilon_2/2}\chi_-(t^{\epsilon_1}\wt M_t)(\pa_t m_t)m_t^{-1}\chi_-(t^{\epsilon_1}\wt M_t) m_t^{\epsilon_2/2}+\vO(r^{\epsilon_1/\lambda+\epsilon_2-2}).
\end{align*}
In particular it follows that  $S_1=  \vO(t^{-1_+})+\vO(r^{-1_+})$, conforming  with \eqref{eq:negHeis}.

Motivated by Step III we let
\begin{align}\label{eq:ST}
  \begin{split}
  S^{\rm clas}_2&=-2\epsilon_1t^{-1}f(\wcH_t)\bar \chi_1(t^{\epsilon_1}\wt M_t) 
       m_t^{\epsilon_2} \bar \chi_1(t^{\epsilon_1}\wt M_t) f(\wcH_t)+4\lambda 
                 t^{\epsilon_1-1}S;\\
                S&=T\Re \parbb{x/t^\lambda\cdot (\nabla^2
                   m )(x/t^{\lambda})  p} T^*, \\& \q T=f(\wcH_t)\bar \chi_2(t^{\epsilon_1}\wt M_t) 
       m_t^{\epsilon_2/2} \t f(\wcH_t),\q T^*=
  \t f(\wcH_t)m_t^{\epsilon_2/2}
    \bar \chi_2(t^{\epsilon_1}\wt M_t) f(\wcH_t).
      \end{split}
\end{align} Clearly the first term of $S^{\rm clas}_2$ is negative, while the second does not have a sign and it is `too big' when  comparing with \eqref{eq:negHeis} (it will be  treated in Step VI). We claim that  
\begin{align}\label{eq:S2}
  \begin{split}
&S_2-S^{\rm clas}_2\\&=t^{\epsilon_1-1}\vO\parb{r^{\epsilon_2-1}}
                            +
t^{2\epsilon_1-1}\vO\parb{r^{\epsilon_2+\delta
                            -1}}\\&\qq \qq + \vO(r^{-2\delta +\epsilon_1/\lambda+\epsilon_2})+\vO(r^{-2+\delta +2\epsilon_1/\lambda+\epsilon_2})\\
                       &=  \vO(t^{-1_+})+\vO(r^{-1_+}).    
  \end{split}
\end{align} Obviously the second bound  is a consequence of the first one  for sufficiently small $\epsilon_1>\epsilon_2>0$. In turn the first bound   is roughly obtained by arguing as  above. In particular  the first  and second  terms essentially  arise from commutation and symmetrization as done above.  However   there is an additional issue
(reflected by the presence of the third and fourth terms) in that the operator
$\wt M_t$ contains factors  of $\t f(\wcH_t)$ and the time-derivative of
those are not included  in $S^{\rm clas}_2$. (We included only the
contribution from $\t f(\wcH_t)\parb{\pa_t   (t^{\epsilon_1}M_t )}\t f(\wcH_t)$.) Hence we
need to check that  the contributions  from  $\parb{\pa_t  \t
  f(\wcH_t)}M_t\t f(\wcH_t)$ and its adjoint 
expression conform with the error terms of \eqref{eq:negHeis}. To do this we  need the  following    slight
refinement of the above calculation (used with $f$ replaced by $\t f$) 
\begin{align}\label{eq:tildeDeriv}
  \begin{split}
  \pa_t  \t f(\wcH_t)&=\Re\parbb{\parbb{-2\delta(\pa_t
      m_t)m_t^{-1}\wcH_t +(\pa_t V(t))m_t^{-2\delta}}\t
    f'(\wcH_t)}\\&\qq\qq+\vO(r^{0})m_t^{-\delta -2} +\vO(r^0)m_t^{-3\delta}. 
  \end{split}
\end{align} Inserted into $\parb{\pa_t  \t f(\wcH_t)}M_t\t f(\wcH_t)$,
and then in turn into $S_2$,  the second and the third terms   of
(\ref{eq:tildeDeriv}) contribute with a term on the form
$\vO(r^{-2\delta +\epsilon_1/\lambda+\epsilon_2})$. Here we use that
$m_t^{-\delta}M_t\t f(\wcH_t)=\vO(r^0) $, cf. \eqref{eq:bndM}. For the
first term we use after commutation that $f\t f'=0$. Hence the total
contribution to  $S_2$ from this term involves commutators improving
its decay. More precisely the total additional error term arising this way is on the form
$\vO(r^{-2+\delta +2\epsilon_1/\lambda+\epsilon_2})$. We have justified
\eqref{eq:S2} and  conclude that   $S_2= S^{\rm clas}_2+\vO(t^{-1_+})+\vO(r^{-1_+})$. Hence, comparing with our goal of proving \eqref{eq:negHeis},  we can record that
\begin{align}\label{eq:TIME}
  \begin{split}
 & S_2\leq  4\lambda 
                 t^{\epsilon_1-1}S +\vO(t^{-1_+})+\vO(r^{-1_+}).  
  \end{split}
\end{align}

 Finally we  let
 \begin{align*}
   S^{\rm clas}_3=S_3=f(\wcH_t)\chi_-(t^{\epsilon_1}\wt M_t)\parb{\pa_t m_t^{\epsilon_2}}\chi_-(t^{\epsilon_1}\wt M_t)f(\wcH_t).
 \end{align*} As in Step IV we can bound $S_3= t^{\lambda-1} \vO(r^{ \epsilon_2-1}) =  \vO(t^{-1_+})+\vO(r^{-1_+})$.

 In conclusion we  record  that except for the first term to the right in
 \eqref{eq:TIME},  the exact contribution from $\pa_t \Psi$  in \eqref{eq:negHeis}  conforms with the estimate.

 \Step {VI} It remains to see  how  the term $\i
 [H(t),\Psi]$ in \eqref{eq:negHeis}  in combination with (\ref{eq:TIME}) conforms with the
 estimate. This will finish the proof of  \eqref{eq:negHeis}.

Our first task is to justify the computation
\begin{equation}\label{eq:posComm2}
  \i[H(t),\Psi]=\i \big [m_t^\delta\wc f\parb{\wcH_t} m_t^\delta,\Psi\big ]+\vO(r^{-1_+}).
\end{equation} It is convenient in the remaining part of the proof to abbreviate  as 
\begin{equation*}
  m_t=m, \q M_t=M, \q \wt M_t=\wt M , \q \wcH_t=\wcH\,\mand \,H(t)=H,
\end{equation*}
although  this  notational  simplification  is somewhat
abusive (the previous meaning was  $m=m_{t=1}$ and $M=M_{t=1}$). All estimates below will be uniform in $t\geq 1$.

To obtain  \eqref{eq:posComm2}  we claim that  
\begin{equation}\label{eq:comH}
  m^\delta\wc f\parb{\wcH}  m^\delta\Psi=H \tilde{ f}^2\parb{\wcH}\Psi+\vO(r^{-1_+})=H\Psi+\vO(r^{-1_+}).
\end{equation} In fact using \eqref{eq:ExpsBreveH} and \eqref{eq:ExpsBreveHb}
\begin{align*}
  Hm^{-\delta}[&\tilde{f}^2\parb{\wcH},m^\delta]f\parb{\wcH}=Hm^{-\delta}
  (\tilde{f}^2)^{(1)}\parb{\wcH}\big [{\wcH},m^\delta\big ]f\parb{\wcH} +\vO(r^{-2})\\
&=Hm^{-\delta}
   (\tilde{f}^2)^{(1)}\parb{\wcH}\big [ -\i \delta m^{-(\delta+1)/2} \wt M m^{-(\delta+1)/2},f\parb{\wcH}\big ]
  +\vO(r^{-2})\\&= \vO(r^{-2})= \vO(r^{-(1+\epsilon_2)_+}).
\end{align*} This shows \eqref{eq:comH},  and we conclude  \eqref{eq:posComm2}.

It remains to compute and estimate the first term to the right in \eqref{eq:posComm2}. It may be  expanded as   a sum of terms  $T_1$,  $T_2$ and
$T_3$  (being different from those of \eqref{eq:Leib})
\begin{align*}
  \i\big[m^\delta\wc f\parb{\wcH} m^\delta,\Psi\big]&= T_1+T_2+T_3;\\
T_1&= \i \big[m^\delta,\Psi\big]\wc f\parb{\wcH} m^\delta,\\
T_2&=T_1^*= m^\delta\wc f\parb{\wcH} \i \big [m^\delta,\Psi\big],\\
T_3&=  m^\delta  \i\big [\wc f\parb{\wcH} ,\Psi\big ]  m^\delta. 
\end{align*}
For $T_1$ this leads similarly to  
  computing 
  \begin{align*}
    \i \big[m^\delta,\Psi\big]&=\i
                        \big[m^\delta,f(\wcH)\chi_-(t^{\epsilon_1}\wt M)m^{\epsilon_2}\chi_-(t^{\epsilon_1}\wt M)f(\wcH) \big]=S_1+S_2+S_3+S_4+S_5;\\
&S_1=\i \big[m^\delta,f\parb{\wcH}\big]\chi_-(t^{\epsilon_1 }\wt M)m^{\epsilon_2}\chi_-(t^{\epsilon_1}\wt M)f\parb{\wcH},\\
&S_2=S_1^*=f\parb{\wcH}\chi_-(t^{\epsilon_1}\wt M)m^{\epsilon_2}\chi_-( t^{\epsilon_1 }\wt M)\i
                        \big [m^\delta,f\parb{\wcH} \big ],\\
&S_3=f\parb{\wcH}\Re{\parb{\i
  \big[m^\delta,\chi^2_-(t^{\epsilon_1 }\wt M)\big]m^{\epsilon_2}}}f\parb{\wcH},\\
      &S_4=f\parb{\wcH}\Re{\parb{\chi_-(t^{\epsilon_1}\wt M)\big[m^{\epsilon_2},\i
        \big[m^\delta,\chi_-(t^{\epsilon_1 }\wt M)\big]\big]}}f\parb{\wcH},\\
    &S_5=f\parb{\wcH}\Re{\parb{\i
  \big[m^\delta,\chi_-(t^{\epsilon_1 }\wt M)\big]\big[m^{\epsilon_2},\chi_-(t^{\epsilon_1}\wt M)\big ]}}f\parb{\wcH},
  \end{align*} By \eqref{eq:ExpsBreveH}
  \begin{align*}
    S_1=
  -\delta f'(\wcH) m^{-(\delta+1)/2}\wt M m^{-(\delta+1)/2}\chi_-(t^{\epsilon_1 }\wt M)m^{\epsilon_2}\chi_-(t^{\epsilon_1}\wt M)f\parb{\wcH}+\vO(r^{\epsilon_2-\delta-2}).
  \end{align*} Thanks to \eqref{eq:ExpsBreveH} and  \eqref{eq:ExpsB} we see that  its contribution to $T_1$ is on the form 
  \begin{align*}
    -\delta f'\big(\wcH\big) m^{\epsilon_2-1}\wt M
    \chi^2_-(t^{\epsilon_1 }\wt M)f\parb{\wcH}\wc f\parb{\wcH}  +\vO(r^{-1_+}).
  \end{align*} Similarly $S_2$  contributes  to $T_1$ by
 \begin{align*}
    -\delta f\parb{\wcH}\chi^2_-(t^{\epsilon_1 }\wt M)\wt M
    m^{\epsilon_2-1}f'\big(\wcH\big) \wc f\parb{\wcH}  +\vO(r^{-1_+}).
 \end{align*}

 By a slight modification of \eqref{eq:Exps}, applied
  with $s=\delta$ and  $K=2$, the operator  $S_3$  is the real part of the expression 
\begin{align}\label{eq:mainS3}
  \begin{split}
    -2{\delta} t^{\epsilon_1} &f\parb{\wcH}|\nabla m|^2
    m^{\delta-1}(\chi^2_-)'(t^{\epsilon_1 }\wt M)
    m^{\epsilon_2}f\parb{\wcH} \\& \,-\tfrac {\i}2 t^{2\epsilon_1} f\parb{\wcH}\ad
  ^2_{\wt M}(m^\delta)(\chi^2_-)^{(2)}(t^{\epsilon_1 }\wt M) m^{\epsilon_2}f\parb{\wcH} \\&\q\q\q\q   +\vO(r^{3\epsilon_1/\lambda+ \epsilon_2+\delta-3})+\vO(r^{2\epsilon_1/\lambda+ \epsilon_2-2}).  
  \end{split}
\end{align}  The last term needs explanation. It arises from ${\i[m^\delta,  \t
  f(\wcH)]}M\t f(\wcH)$  and its adjoint 
expression  whose contributions are  handled by    \eqref{eq:ExpsBreveH} and \eqref{eq:ExpsBreveHb}. Our application is similar to how we above used \eqref{eq:tildeDeriv}.  Hence after commutation (with an error specified by the  last term) we use  the fact that $f\t f'=0$. Note also that for deriving    the expression (\ref{eq:mainS3})  the related property $f\t f=f$ is used. In conclusion our  calculus shows that indeed  the real part of \eqref{eq:mainS3} is a valid formula for  $S_3$.

Substituting into $T_1$ amounts to multiplying  by the factor $\wc f\parb{\wcH}m^{\delta}$ from the right,  which raises the order of the error by $\delta$. The third and fourth terms of \eqref{eq:mainS3} conform with the error terms of \eqref{eq:negHeis}. Since  the second  term of (\ref{eq:mainS3}) tends to be purely imaginary it also  only contributes with  such error terms. Hence it is only the real part of the first   term of (\ref{eq:mainS3}) 
 that needs further examination.

For the same reason the terms $S_4$  and $S_5$    contribute to $T_1$  with the errors of  \eqref{eq:negHeis}.   Taking also  into account that $T_2=T_1^*$  we finally   conclude  after  symmetrization (using Lemma \ref{lemma:expansion} and \eqref{eq:ExpsB}) that
\begin{align*}
  T_1+&T_2=-4\delta\Re \parbb{ f'\big(\wcH\big) m^{\epsilon_2-1}\wt M \chi^2_-(t^{\epsilon_1 }\wt M)f\parb{\wcH}\wc f\parb{\wcH}\\
&\quad\quad\quad+t^{\epsilon_1} f\parb{\wcH}\Re \parb{|\nabla m|^2
    m^{\delta-1}(\chi^2_-)'(t^{\epsilon_1 }\wt M)
    m^{\epsilon_2}}f\parb{\wcH} \wc f\parb{\wcH}m^\delta}   +\vO(r^{-1_+})\\
&=4\delta m^{(\epsilon_2-1)/2}\bar f(\wcH) \,
  \wt M\chi^2_-(t^{\epsilon_1 }\wt M)\,\bar f(\wcH)
  m^{(\epsilon_2-1)/2}\\
&\quad\quad\quad -4\delta t^{\epsilon_1} \Re \parbb{ f\parb{\wcH}(\chi^2_-)'(t^{\epsilon_1 }\wt M)|\nabla m|^2
    m^{2\delta+\epsilon_2-1}f\parb{\wcH} \wcH} +\vO(r^{-1_+}).
\end{align*}
The first term is bounded from above by $C t^{-\epsilon_1}m^{\epsilon_2-1}$, which conforms with \eqref{eq:negHeis}.

The second term will cancel
with a term in $T_3$ computed using the following ingredients.
\begin{align*}
   \i \big[\wc f\parb{\wcH} ,\Psi\big]&=f\parb{\wcH}\i
                        \big[\wc f\parb{\wcH} ,\chi_-(t^{\epsilon_1}\wt M)m^{\epsilon_2}\chi_-(t^{\epsilon_1}\wt M) \big]f\parb{\wcH}\\
&=-2t^{\epsilon_1 }f\parb{\wcH}\bar \chi_2(t^{\epsilon_1}\wt M_t) m^{\epsilon_2/2} {\i
                        \big[\wc f\parb{\wcH},
  \wt M\big]m^{\epsilon_2/2}}\bar \chi_2(t^{\epsilon_1}\wt M_t) f\parb{\wcH}\\& \qq  +f\parb{\wcH}\chi_-(t^{\epsilon_1}\wt M)\i
                        \big[\wc f\parb{\wcH} ,m^{\epsilon_2}\big]\chi_-(t^{\epsilon_1}\wt M) f\parb{\wcH}+\vO(r^{-(1+2\delta)_+}).
\end{align*} Here we used \eqref{eq:ExpBreveH} for  $\wc f$ (replacing $f$ there by $\wc f$)  and with
 $\chi(M_t)$ there replaced by $\chi_-(t^{\epsilon_1}\wt M) $. We also used that $\i\,\ad ^2_{\wt M}(\wc f\parb{\wcH})=\vO(r^{-2 })$ 
 is purely imaginary (resulting in improved decay when symmetrizing).
 
In turn 
\begin{align*}
\i
                        \big[\wc f\parb{\wcH},
  \wt M\big]&=R_1+R_2 +R_3+\vO(r^{-(1+2\delta+\epsilon_2)_+});\\
&R_1= \tilde{f}'(\wcH) \i\big[\wcH,\wt M\big]\wcH\tilde{f}(\wcH) +\tilde{f}(\wcH) \wcH\i\big[\wcH,\wt M\big]\tilde{f}'(\wcH) , \\
& R_2=   \tilde{f}^2(\wcH)
  \i[m^{-\delta}, M]H m^{-\delta}\tilde{f}^2(\wcH) +\tilde{f}^2(\wcH)
  m^{-\delta}H \i[m^{-\delta}, M]\tilde{f}^2(\wcH) ,\\
&R_3=   \tilde{f}^2(\wcH)  m^{-\delta}\i[H, M]m^{-\delta}\tilde{f}^2(\wcH),
\end{align*} and
\begin{align*}
\i
                        \big[\wc f\parb{\wcH},
  m^{\epsilon_2}\big]&=R_4+R_5 +\vO(r^{-(1+2\delta)_+});\\
&R_4= \tilde{f}'(\wcH) \i\big[\wcH,m^{\epsilon_2} \big]\wcH\tilde{f}(\wcH) +\tilde{f}(\wcH) \wcH\i\big[\wcH,m^{\epsilon_2}\big]\tilde{f}'(\wcH) , \\
&R_5=   \tilde{f}(\wcH)  m^{-\delta}\i[H,m^{\epsilon_2}]m^{-\delta}\tilde{f}(\wcH).
\end{align*}

The contributions  from $R_1$ and $R_4$  to $T_3$ are on the form  $\vO(r^{-1_+})$
(since $f\tilde{f}'=0$), and  the contribution from $R_2$ to $T_3$ 
indeed cancels with the above second term of  $T_1+T_2$. 

After a
symmetrization the   contribution  from $R_3+R_5$ to $T_3$
is  on the form 
\begin{align*}
  &-2t^{\epsilon_1 }{f}\big(\wcH\big)\bar \chi_2(t^{\epsilon_1}\wt M_t)\,
    m^{\epsilon_2/2}\,\tilde{f}^2\big(\wcH\big)\i [p^2, M]\tilde{f}^2\big(\wcH\big)\,m^{\epsilon_2/2} \,\bar \chi_2(t^{\epsilon_1}\wt M_t)f\parb{\wcH}\\&+
  \epsilon_2m^{(\epsilon_2-1)/2}{f}\big(\wcH\big)\,
    \chi_-(t^{\epsilon_1}\wt M)  \wt M\chi_-(t^{\epsilon_1}\wt M) f\parb{\wcH}m^{(\epsilon_2-1)/2}+\vO(r^{-1_+}).
\end{align*}
The middle  term is bounded from above by $C
t^{-\epsilon_1}m^{\epsilon_2-1}$ (which conforms with \eqref{eq:negHeis}). 

For the first term we can  replace $\i [p^2, M]$ with $4t^{-\lambda}{p\cdot (\nabla^2
  m )(x/t^{\lambda}) p}$ and replace each  factor of $\tilde{f}^2\big(\wcH\big)$ by a (single)  factor
of $\tilde{f}\big(\wcH\big)$. When combining the resulting expression with 
                \eqref{eq:TIME}, recalling  the operators $S$ and $T$ from \eqref{eq:ST}  and using  as well  the operator $B=B_t$ in \eqref{eq:Heis2}, we finally  obtain (by completing the square)  the upper bounds
                \begin{align}\label{eq:FinalNeg}
                  \begin{split}
                  &\bD \Psi \leq -2t^{\epsilon_1}TB^*BT^*+\vO(t^{-1_+})+\vO_{\unif}(r^{-1_+})\\
                  &\qq \leq \vO(t^{-1_+})+\vO_{\unif}(r^{-1_+}).   
                  \end{split}
                \end{align}
                We have justified
 \eqref{eq:negHeis}, and  therefore   \eqref{eq:PS3}
 is proven.
\end{proof}
\begin{remarks}\label{remark:bound-incom-regi}
  \begin{enumerate}[i)]
  \item\label{item:1psi} In comparison with (\ref{eq:Leib2}) note the
    appearance of   factors of $\tilde{f}\big(\wcH_t\big)$ in \eqref{eq:FinalNeg}. Locally uniformly in $t\geq 1$  we can bound
    \begin{equation*}
      A:=m_t^{\epsilon_2/2}\tilde{f}\big(\wcH_t\big)B_t^*B_t\tilde{f}\big(\wcH_t\big)m_t^{\epsilon_2/2}=\vO(r^{\epsilon_2+2\delta-1}),
    \end{equation*}
    and no better. In particular $A$  is not a bounded operator. We
    can approximate it  by a  sequence of operators
    $A_n\nearrow A$, for example  taken as  
    \begin{equation*}
      A_n=m_t^{\epsilon_2/2}\tilde{f}\big(\wcH_t\big)B_t^*\chi^2_-(r/n)B_t\tilde{f}\big(\wcH_t\big)m_t^{\epsilon_2/2};\q n\in \N.
    \end{equation*} This kind of approximation will be   useful for deriving integral propagation estimates in the bounded case  (which  roughly corresponds  to letting   $\epsilon_2=0$ in the above proof).  See Corollary \ref{cor:smoothEst9} and its proof, and  see also Remarks \ref{remarks:intepretation} and the proof of Corollary \ref{cor:smoothEst} for a closely related procedure.

Although it is possible to extract integral propagation estimates in the unbounded case considered  in the above  somewhat complicated proof of Lemma \ref{lemmaNegForbidden} (in the proof we just disregarded negative terms), we are not going to need them.   Rather we are in the coming sections (in particular from Section \ref{sec:Proof of {eq:Strength}} and onwards)  going to need  various related estimates with simplified proofs. Indeed the  scheme of proof is essentially the same. To avoid boring repetition of arguments we will prefer only to elaborate on    different aspects of the   proofs.   
  \item \label{item:2psi} It is easily checked that the right-hand side of (\ref{eq:FinalNeg}) can be replaced by
    \begin{equation*}
      \vO(t^{-1-\varepsilon})+\vO(t^{-2\varepsilon})\vO_{\unif}(r^{-1})
    \end{equation*}
    for some $\varepsilon>0$ (depending on the involved parameters).
    \item \label{item:2psibb} Although we (possibly) fixed $\delta=4/7$, and hence   close to $1/2$, this is not needed for  the above proof of Lemma \ref{lemmaNegForbidden} (it  is needed later). In fact the natural limit for the above proof is $\delta< 1$ (and not $\delta<3/5$). This is thanks to a quantum symmetrization effect. Hence formally calculating the error term of
      \begin{align*}
        \i[p^2,\chi_-(M_t)]+ \sq{-\chi'_-(M_t)}\i[p^2,M_t]\sq{-\chi'_-(M_t)}\approx 0,
      \end{align*}
  it is  expressed in terms of $\ad^3_{M_t}(p^2) \approx p\cdot \vO(r^{-3})p$. Writing
\begin{align*}
  p\cdot \vO(r^{-3})p=\parb{p m_t^{-\delta}}\cdot m_t^{2\delta}\vO(r^{-3})\parb{m_t^{-\delta}p},
\end{align*} we see that when sandwiched between factors of $f(\wcH_t)$ the effective order of the error is $2\delta-3<-1$, which counts for  `integrability'. Note that without the symmetrization the error is expressed  in terms of $\ad^2_{M_t}(p^2)\approx \vO(r^{2\delta-2})$ which is insufficient for integrability  for any  $ \delta >1/2$.
\end{enumerate} 
\end{remarks}

\subsection{Proof of Proposition  \ref{lemma:scatDef22}}\label{subsec:Proof of ref{lemma:scatDef22}} 
We use   \eqref{eq:PS30} and the property
\begin{equation}\label{eq:minimalV}
  F(|x|< R)\chi_+(t^{\epsilon_1}M_t)=0\text{  for all large } t,
\end{equation}  estimating  for \eqref{eq:decayInx2}   as 
\begin{align*}
  \lim_{t\to \infty}\, \norm{F(|x|< R)\psi(t)}&= \lim_{t\to \infty}\, \norm{F(|x|< R)\parb{\chi^2_-(t^{\epsilon_1}M_t)+\chi^2_+(t^{\epsilon_1}M_t)}\psi(t)}\\
  &\leq   \lim_{t\to \infty}\, \norm{\chi_-(t^{\epsilon_1}M_t)\psi(t)}=0.
\end{align*} 

Since (\ref{eq:minimalV}) holds with $R=t^\lambda/8$ and $\lambda$ can be taken arbitrarily close to $1$ (possibly requiring $\epsilon_1>0$ taken small), also \eqref{eq:decayInx22} holds.
\qed

   \section{Localization to  the strictly outgoing  region}\label{sec:Localization to  outgoing  region}
   We recall that the constructions of Subsection \ref{subsec:
  Homogeneous Yafaev type functions} depend on a small positive
parameter $\epsilon$. As in Section \ref{sec:Preliminary localization results} this is fixed, and we are going to use the
(derived) quantities  (\ref{eq:fes}) and (\ref{eq:Mtilde}) as well as the quadratic form $B^*_tB_t$ of  (\ref{eq:Heis2}). In addition we will use the function $\bar f$ introduced in \eqref{eq:barf} (it will appear similarly).

   \begin{corollary}\label{cor:smoothEst9}  
     Let $g\in C^\infty_{\c}(\T\setminus \vT_\p)$ and  $\chi\in \vF_+$   be given. Then,  using the notation
 $\psi(t)=U(t)g(U(1))\psi$ for  any $\psi\in \vH$,  the following bounds hold.
 \begin{subequations}
 \begin{align}\label{eq:Smoothbnd19}\begin{split}
 \forall \,\psi\in \vH&:\quad \int_{1}^\infty
\norm[\big]{Q_{1} \psi(t)}^2\,\d t \leq C\|\psi\|^2 ;\\
Q_{1}=&Q_{1,t}=2\sqrt{\wt M_t\chi\parb{\wt M_t}}\,\bar f\big(\wcH_t\big)
        \,m_t^{-1/2}.     
        \end{split}
\end{align} 
\begin{align}\label{eq:Smoothbnd29}
  \begin{split}
  \forall \,\psi\in \vH&:\quad \int_{1}^\infty
\norm[\big]{Q_{2}\psi(t)}^2\,\d t \leq C\|\psi\|^2 ;\\
Q_{2}=Q_{2,t}&=B_t\,\tilde{f}(\wcH_t)\sqrt{\chi'}\parb{\wt M_t}f\parb{\wcH_t}.  
  \end{split}
\end{align}  
 \end{subequations}
\end{corollary}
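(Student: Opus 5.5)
We prove both bounds with one bounded propagation observable, following the proof of Lemma \ref{lemmaNegForbidden} with the unbounded weight $m_t^{\epsilon_2}$ and the time scaling $t^{\epsilon_1}$ both removed (so formally $\epsilon_1=\epsilon_2=0$). Let $X=\int_{-\infty}^{\cdot}\chi(s)\,\d s$; this function grows only linearly, and it need not lie in $\vF_+$. To keep everything bounded, take $\chi_1\in\vF_+$ with $\chi_1=\chi$ near $\supp\chi$ shifted appropriately. More simply, use the observable
\begin{equation*}
  \Phi=\Phi_t=f(\wcH_t)\,\chi\parb{\wt M_t}\,f(\wcH_t).
\end{equation*}
It is bounded uniformly in $t\geq1$, since $\chi$ is bounded (being in $\vF_+$, it is nondecreasing with compactly supported derivative). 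Take $\psi$ in the dense set $g(U(1))\vS(\bX)$, where Corollary \ref{cor:an-auxil-hamilt} justifies the computation $\tfrac{\d}{\d t}\inp{\Phi}_{\psi(t)}=\inp{\bD\Phi}_{\psi(t)}$. The target is a lower bound
\begin{equation}\label{eq:targetLow}
  \bD\Phi\geq \tfrac12 Q_{1}^*Q_{1}+ c\,Q_{2}^*Q_{2} -\vO(t^{-1_+})-\vO_{\unif}(r^{-1_+}).
\end{equation}
This is the sign opposite to \eqref{eq:negHeis}, because $\chi$ is increasing where $\chi_-$ is decreasing. Integrating \eqref{eq:targetLow} from $1$ to $T$ bounds $\int_1^T\norm{Q_j\psi(t)}^2\,\d t$ by $2\norm{\Phi}\norm{\psi}^2$ plus the integrated errors. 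The $\vO(t^{-1_+})$ error integrates to $C\norm{\psi}^2$. The $\vO_{\unif}(r^{-1_+})$ error is controlled by Theorem \ref{thm:LocalDecay} (with $r=0$, $s>1/2$) applied to $g(U(1))\psi$. The bounds then extend to all $\psi$ by density.

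The leading terms of $\bD\Phi$ are the analogues of Steps II--IV and VI in the proof of Lemma \ref{lemmaNegForbidden}. The $f(\wcH_t)$-derivatives use \eqref{eq:1Heis} and the function $\bar f$ of \eqref{eq:barf}. They produce
\begin{equation*}
  4\delta\, m_t^{-1/2}\bar f(\wcH_t)\,\wt M_t\chi(\wt M_t)\,\bar f(\wcH_t)m_t^{-1/2}=Q_1^*Q_1\cdot\delta
\end{equation*}
up to commutator errors, a positive term since $s\chi(s)\geq0$. The derivative of $\chi(\wt M_t)$ is computed as in Steps V--VI. The $\i[p^2,M_t]$ part, combined with the $\pa_t M_t$ part ($S$ in \eqref{eq:ST}), completes the square into
\begin{equation*}
  f\sqrt{\chi'}(\wt M_t)\tilde f\,B_t^*B_t\,\tilde f\sqrt{\chi'}(\wt M_t)f,
\end{equation*}
which is $Q_2^*Q_2$ after commuting $\sqrt{\chi'}(\wt M_t)$ past $\tilde f(\wcH_t)$ via Lemma \ref{lemma:expansion}. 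The remaining pieces are handled as before. The $-\tfrac{\lambda^2}{4}t^{\lambda-2}$ term and the $\vO(t^{\lambda-1})$ terms give $\vO(t^{-1_+})+\vO(r^{-1_+})$ by Young's inequality, as in \eqref{eq:Young}. The term $R_t$ gives $\vO(r^{-1_+})$ by \eqref{eq:Rbnd2}. The cancellation between the $R_2$-type term and the $T_1+T_2$ term in Step VI goes through verbatim.

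The main obstacle is Step VI: rigorously replacing $\i[H(t),\cdot]$ by $\i[m^\delta\wc f(\wcH)m^\delta,\cdot]$ and controlling the quantum errors. These are now of order at most $r^{-2\delta}$ and $r^{\delta-2}$ (no $\epsilon_1/\lambda$ losses), so they are integrable because $\delta>1/2$. The symmetrization of Remark \ref{remark:bound-incom-regi} \ref{item:2psibb} is needed for the $\ad^2$ terms. A technical point is that $Q_2^*Q_2$ is unbounded, as in Remark \ref{remark:bound-incom-regi} \ref{item:1psi}. We handle it by proving \eqref{eq:Smoothbnd29} for the truncations $B_t\mapsto\chi_-(r/n)B_t$, with the resulting commutator errors uniformly integrable, and then letting $n\to\infty$ by monotone convergence.
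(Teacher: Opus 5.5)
Your proposal is correct and follows the paper's proof essentially verbatim. The paper uses the same bounded observable $f(\wcH_t)\chi(\wt M_t)f(\wcH_t)$ and obtains $\bD\Psi=\delta Q_1^*Q_1+Q_2^*Q_2$ plus integrable errors by rerunning the proof of Lemma \ref{lemmaNegForbidden} with $\epsilon_1=\epsilon_2=0$ (the sign now being positive); it then concludes by integration, \eqref{eq:LocalDecay} and the truncation/monotone convergence argument of Remark \ref{remark:bound-incom-regi} \ref{item:1psi}, and your opening detour with $\int_{-\infty}^{\cdot}\chi$ is unnecessary and can simply be deleted.
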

\begin{proof} By density and by using Remark
  \ref{remark:bound-incom-regi} \ref{item:1psi} and    Lebesgue's
  dominated and monotone convergence theorems  we can for the
  bounds  \eqref{eq:Smoothbnd19} and \eqref{eq:Smoothbnd29}  assume that $\psi\in \vS(\bX)$.  (Alternatively we may proceed as in the proof of Corollary \ref{cor:smoothEst} given in Subsection \ref{subsec:Preliminary integral estimates}.)

  Introducing  
 the  uniformly  bounded family of observables
\begin{subequations}
\begin{equation}
  \label{eq:PS19}
  \Psi=\Psi_t=f(\wcH_t)\chi\parb{\wt M_t}f(\wcH_t),\q t\geq 1,
\end{equation}  we can compute
\begin{align}
  \label{eq:posBND9}
   \exists \, \varepsilon >0:\qq \bD \Psi= \delta Q_{1}^*Q_{1}+ Q_{2}^*Q_{2}+\vO(t^{-1-\varepsilon})+ t^{-2\varepsilon}\vO_{\unif}(r^{-1}).
\end{align} 
\end{subequations}

In fact \eqref{eq:posBND9} follows by mimicking the proof of Lemma \ref{lemmaNegForbidden}  (noting also  Remark \ref{remark:bound-incom-regi} \ref{item:2psi}). In the present case, comparing with  (\ref{eq:PS2}), $\epsilon_1=\epsilon_2=0$ and we have replaced the product of the factors of $\chi_-$ by $\chi$,  clearly  simplifying  the computation, cf. Remark \ref{remark:bound-incom-regi} \ref{item:1psi}. We leave out the details of this computation.
Note  that in comparison with \eqref{eq:negHeis} the Heisenberg derivative \eqref{eq:posBND9}  is essentially positive (while  in \eqref{eq:negHeis} it is essentially  negative).

Clearly the bounds \eqref{eq:Smoothbnd19} and \eqref{eq:Smoothbnd29}
  for  $\psi\in \vS(\bX)$  follow by integration of $\tfrac{\d}{\d
    t}\inp{\Psi}_{\psi(t)}=\inp{\bD \Psi}_{\psi(t)}$  using the
  boundedness of $\Psi$, \eqref{eq:posBND9} and \eqref{eq:LocalDecay}.
  \end{proof}

  The factor $g\in C^\infty_{\c}(\T\setminus \vT_\p)$ in Corollary
  \ref{cor:smoothEst9} was
  used to treat the fourth term of \eqref{eq:posBND9}  by 
\eqref{eq:LocalDecay}.  However even without this localization there is still some non-trivial application of \eqref{eq:posBND9}. This is the  following  smoothing type result (recall that $Q_2$ is not bounded).
\begin{corollary}
  \label{cor:local-strictly-genG} Let  $\chi\in \vF_+$   be given.  There exists $C>0$ such that for any  subinterval $[a,b]\subseteq [1,\infty)$   the following bounds hold for  $\varphi(t)=U(t)\varphi$ (and with $Q_{j}=Q_{j,t}$, $j=1,2$,  given as in \eqref{eq:posBND9}).
  \begin{subequations}
 \begin{align}\label{eq:Smoothbnd19G2}
   \forall \,\varphi\in \vH&:\quad \delta \int_{a}^b
                           \norm[\big]{Q_{1} \varphi(t)}^2\,\d t \leq \parb{C(b-a)+\sup \chi}\|\varphi\|^2 ,\\
   \forall \,\varphi\in \vH&:\quad \int_{a}^b
\norm[\big]{Q_{2}\varphi(t)}^2\,\d t \leq \parb{C(b-a)+\sup \chi}\|\varphi\|^2 .\label{eq:Smoothbnd29G2}
        \end{align} 
 \end{subequations}
  \end{corollary}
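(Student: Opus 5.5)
We integrate the Heisenberg derivative identity \eqref{eq:posBND9} for the bounded observable $\Psi_t=f(\wcH_t)\chi\parb{\wt M_t}f(\wcH_t)$ from \eqref{eq:PS19}, now over a finite interval $[a,b]$ and without any spectral localization $g(U(1))$. By density (as in the proof of Corollary \ref{cor:smoothEst9}, using Remark \ref{remark:bound-incom-regi} \ref{item:1psi} and the monotone convergence theorem for the unbounded $Q_2^*Q_2$) we may take $\varphi\in\vS(\bX)$, so that Corollary \ref{cor:an-auxil-hamilt} (whose proof works equally without $g$ on finite time intervals, via Yosida's theorem) justifies the computation $\tfrac{\d}{\d t}\inp{\Psi_t}_{\varphi(t)}=\inp{\bD\Psi_t}_{\varphi(t)}$.

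Integrating \eqref{eq:posBND9} from $a$ to $b$ gives
\begin{align*}
\delta\int_a^b\norm{Q_1\varphi(t)}^2\,\d t+\int_a^b\norm{Q_2\varphi(t)}^2\,\d t
=\inp{\Psi_b}_{\varphi(b)}-\inp{\Psi_a}_{\varphi(a)}-\int_a^b\inp{E_t}_{\varphi(t)}\,\d t,
\end{align*}
where $E_t=\vO(t^{-1-\varepsilon})+t^{-2\varepsilon}\vO_{\unif}(r^{-1})$ is the remainder. Since $0\le f\le1$ and $\chi\ge0$, we have $0\le\Psi_t\le\sup\chi$, so the boundary terms contribute at most $\sup\chi\,\|\varphi\|^2$ (the negative term is simply dropped). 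For the remainder, the key point is that $\vO_{\unif}(r^{-1})$ denotes an operator of order $-1$ uniformly in $t\ge1$, hence in particular bounded on $L^2$ uniformly in $t$. Thus $\norm{E_t}\le C$ uniformly for $t\ge1$, and unitarity of $U(t)$ gives $\abs{\int_a^b\inp{E_t}_{\varphi(t)}\d t}\le C(b-a)\|\varphi\|^2$. Both terms on the left being nonnegative, each is bounded by $\parb{C(b-a)+\sup\chi}\|\varphi\|^2$, which is \eqref{eq:Smoothbnd19G2} and \eqref{eq:Smoothbnd29G2}. The constant $C$ comes only from the uniform operator bounds on the remainder in \eqref{eq:posBND9}, so it is independent of $a,b$.

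The main obstacle is justifying \eqref{eq:posBND9} for general $\varphi$, without the local decay estimate \eqref{eq:LocalDecay}: the identity must be understood as quadratic forms with the unbounded $Q_2^*Q_2$. We therefore prove the bounds first with $B_t$ replaced by $\chi_-(r/n)B_t$ (the approximations $A_n$ of Remark \ref{remark:bound-incom-regi} \ref{item:1psi}), which keeps all quantities bounded for Schwartz data. We then let $n\to\infty$ by monotone convergence and finally extend to all $\varphi\in\vH$ by density together with Fatou's lemma.
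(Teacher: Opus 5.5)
Your proposal is correct and follows the paper's proof. The paper likewise observes that the remainder in \eqref{eq:posBND9} is uniformly bounded for $t\ge1$, so that $\bD\Psi\ge\delta Q_1^*Q_1+Q_2^*Q_2-C$, and then integrates over $[a,b]$ using $0\le\Psi\le\sup\chi$; your added regularization and density details match those used for Corollary \ref{cor:smoothEst9}.
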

  \begin{proof} From \eqref{eq:posBND9}  it follows that
    \begin{align}\label{eq:bndQ12}
   \bD \Psi\geq  \delta Q_{1}^*Q_{1}+ Q_{2}^*Q_{2}-C.
    \end{align} Since $0\leq {\Psi}\leq \sup \chi$,  integrating $\inp{\bD\Psi_t}_{\varphi(t)}$ yields the bounds  with this $C$.
    \end{proof}
\begin{lemma}\label{lemma:asympOBser}
For any $h\in C _\c (\R)$ and $\psi\in \vH_{\ac}$ there exists the   limit  
\begin{equation}
  \label{eq:stro}
  h(M^+) \psi:=\vHlim_{t\to \infty} \,U(t)^{-1}h\parb{\wt M_t}\psi(t) \in\vH_{\ac};\q \psi(t)=U(t)\psi.
\end{equation} We can write
\begin{equation}\label{eq:resolution0}
  \inp{\psi, h(M^+)\psi}=\int_{\R}\, h(x)\,\vE^+_\psi(\d  x)
\end{equation} in terms of  a (unique) resolution of the identity $\vE^+(\cdot)$ on  $\R$ (taking values in the orthogonal projections on $\vH_{\rm ac}$). For any Borel subset  $B\subseteq \R$ the orthogonal projection $\vE^+(B)$ commutes with the mo\-nodromy operator   
$U(1)$. The support $\supp (\vE^+)\subseteq[0,\infty)$, and on the subspace
$\vH^+:= \vE^+([0,\infty))\vH_{\rm ac}$   one obtains  a   positive operator $M^+$  by taking $h(\lambda)=\lambda$  in (\ref{eq:resolution0}) (hence $\vE^+(\cdot)$ is the spectral resolution of resolution of $M^+$). Finally  this  subspace $\vH^+$ includes  $\vH^+_{\ener}$.
\end{lemma}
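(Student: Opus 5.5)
The plan is a Cook/Kato argument built on the integral estimates of Corollary \ref{cor:smoothEst9}. First I would reduce to a convenient class of functions and states. By density and uniform boundedness ($\norm{h(\wt M_t)}\le\sup|h|$) it suffices to take $\psi=g(U(1))\br\psi$ with $g\in C^\infty_\c(\T\setminus\vT_\p)$ and $\br\psi\in\vS(\bX)$. Such vectors are dense in $\vH_{\ac}$, since $\vT_\p$ is closed and countable and carries no absolutely continuous measure. By \eqref{eq:CONs} we may also insert $f(\wcH_t)$ on both sides, i.e. consider $f(\wcH_t)h(\wt M_t)f(\wcH_t)\psi(t)$.

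Next I would restrict the class of $h$. By Remarks \ref{remarks:chiSMooTH} \ref{item:invarScatsmooth}, together with the fact that $\chi_-(t^{\epsilon_1}\wt M_t)\psi(t)\to0$ from Lemma \ref{lemmaNegForbidden}, it is enough to treat $h=\chi\in\vF_+$ plus constants. Functions supported in $(-\infty,0]$ are negligible by \eqref{eq:PS3}: for $h$ supported in $(-\infty,-\kappa]$ one has $h(\wt M_t)=h(\wt M_t)\chi_-^2(t^{\epsilon_1}\wt M_t)$ for large $t$. Uniform approximation then passes from $\vF_+$ to general $h\in C_\c(\R)$.

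For $h=\chi\in\vF_+$ I would use the observable $\Psi_t=f(\wcH_t)\chi(\wt M_t)f(\wcH_t)$ of \eqref{eq:PS19} and the bound \eqref{eq:posBND9}. To prove that the limit exists I would estimate, for $t_2>t_1$ and $\varphi\in\vH$ arbitrary,
\[
\inp{U(t_2)\varphi,\Psi_{t_2}\psi(t_2)}-\inp{U(t_1)\varphi,\Psi_{t_1}\psi(t_1)}=\int_{t_1}^{t_2}\inp{U(t)\varphi,\bD\Psi_t\,\psi(t)}\,\d t .
\]
The step I expect to be hardest is bounding this integral by $o(1)\norm{\varphi}$. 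The problem is that \eqref{eq:posBND9} is only stated as an identity for the quadratic form, so I need the operator form of $\bD\Psi_t$ as a sum $\delta Q_1^*Q_1+Q_2^*Q_2+R$ with controlled error $R$. Granting that, Cauchy--Schwarz in $t$ gives
\[
\Big(\int_{t_1}^{t_2}\norm{Q_jU(t)\varphi}^2\,\d t\Big)^{1/2}\Big(\int_{t_1}^{t_2}\norm{Q_j\psi(t)}^2\,\d t\Big)^{1/2}.
\]
The first factor is bounded by $\parb{C(t_2-t_1)+\sup\chi}^{1/2}\norm{\varphi}$ from Corollary \ref{cor:local-strictly-genG}, which grows, so I would apply this on unit intervals only. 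On the $\psi$ side I would use the tail of \eqref{eq:Smoothbnd19}, \eqref{eq:Smoothbnd29}, and also the fact that local decay holds for the $\varphi$ side after localizing $\varphi$ by $g$, using $[g(U(1)),\cdot]$ asymptotically.

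A cleaner route I would try first avoids the $\varphi$ side: prove that $\inp{\Psi_t}_{\psi(t)}$ converges, which follows from integrability of $\bD\Psi$ via \eqref{eq:Smoothbnd19}, \eqref{eq:Smoothbnd29} and \eqref{eq:LocalDecay}. Polarization and the algebra property then give convergence of $\norm{\chi(\wt M_t)\psi(t)}^2$ for products, and combined with a Cauchy criterion this yields strong convergence of $U(t)^{-1}\chi(\wt M_t)\psi(t)$. If needed I would fall back on the weak-limit-plus-norm argument: weak convergence together with convergence of norms gives strong convergence.

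Once the limits exist, the structural statements follow in order. The maps $h\mapsto h(M^+)$ are positive, multiplicative (limits of products, using that the commutators with $f(\wcH_t)$ are $\vO(t^{-\lambda})$) and $*$-preserving, so Riesz--Markov yields the projection-valued measure $\vE^+$. Commutation with $U(1)$ follows from $U(t+1)=U(t)U(1)$ and $\wt M_{t+1}-\wt M_t\to0$ in a suitable resolvent sense, since $t^{\lambda}$ is slowly varying. The support condition $\supp\vE^+\subseteq[0,\infty)$ is the statement of \eqref{eq:PS30}. Finally, for $\psi\in\vH^+_{\ener}$, \eqref{eq:BndEnergy} shows that $\psi(t)$ concentrates where $p^2$ is bounded. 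Since $|\nabla m|\le C$, $M_t$ is then bounded on such states, so $\vE^+([0,K])\psi\to\psi$ as $K\to\infty$, i.e. $\psi\in\vH^+$.
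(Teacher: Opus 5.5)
Your reductions (density, inserting $f(\wcH_t)$, passing to constants plus $\chi\in\vF_+$ via \eqref{eq:PS3} and Remarks \ref{remarks:chiSMooTH}, and the observable $\Psi_t$ of \eqref{eq:PS19} with \eqref{eq:posBND9}) follow the paper, as do your structural consequences. Your argument for $[\vE^+(B),U(1)]=0$ also works as a static alternative to the paper's Step VI: it uses $U(t+1)=U(t)U(1)$, a shift of the limit variable, and $\wt M_{t+1}-\wt M_t\to0$. The gap is the central step, namely existence of the \emph{strong} limit.

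The ``cleaner route'' you would try first cannot give it. Convergence of $\inp{\Psi_t}_{\psi(t)}$, polarization, and convergence of $\norm{\chi(\wt M_t)\psi(t)}^2$ only produce a weak limit $A$ of $A_t=U(t)^*\Psi_tU(t)$ (tested against $\vH_{\ac}$) and a weak limit $B$ of $A_t^2$. Then $\norm{A_t\psi-A\psi}^2\to\inp{\psi,B\psi}-\norm{A\psi}^2$, and this need not vanish because weak limits are not multiplicative: multiplication by $\cos(tx)$ on $L^2([0,1])$ has $A=0$ and $B=1/2$. Put differently, you have no control of the two-time term $\inp{U(t_2)^*\Psi_{t_2}\psi(t_2),U(t_1)^*\Psi_{t_1}\psi(t_1)}$, so the fallback ``weak limit plus norms'' has the same hole. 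What is really needed is a bound on $\int_{t_1}^{t_2}\inp{U(t)\varphi,\bD\Psi\,\psi(t)}\,\d t$ that is $o(1)\norm{\varphi}$ uniformly in $\varphi$.

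Your first route aims at exactly this, but as described it does not close. Chopping $[t_1,t_2]$ into unit intervals and using Corollary \ref{cor:local-strictly-genG} on each gives $\norm{\varphi}\sum_n\parb{\int_n^{n+1}\norm{Q_j\psi(t)}^2\,\d t}^{1/2}$. The tails of \eqref{eq:Smoothbnd19} and \eqref{eq:Smoothbnd29} do not control this sum.

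What is missing is the concrete mechanism behind ``localizing $\varphi$''. The paper picks $\tilde g\succ g_1\succ g$ in $C^\infty_\c(\T\setminus\vT_\p)$ and first shows $(1-\tilde g(U(1)))U(t)^*\Psi_t\psi(t)\to0$. Since $\psi=g_1(U(1))\psi$ and $(1-\tilde g)g_1=0$, this is a commutator with $g_1(U(1))$. Via $U(t)=U(t-[t])U(1)^{[t]}$ it reduces to \eqref{eq:BBB0}. Stone--Weierstrass then replaces $g_1(U(1))$ by $U(1)^k$, and $U(1)^{-k}U(t-[t])^*\Psi_tU(t-[t])U(1)^k-\Psi_t$ equals $\int_0^\sigma U(s)^*(\bD\Psi)_{s+[t]-k}U(s)\,\d s+(\Psi_{[t]-k}-\Psi_t)$ with $\sigma=t-[t]+k$ \emph{bounded}. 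Only on such bounded intervals is Corollary \ref{cor:local-strictly-genG} applied to the unlocalized side, paired with tails of Corollary \ref{cor:smoothEst9} on the $\psi$ side.

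The Cauchy criterion is then run for $\tilde g(U(1))U(t)^*\Psi_t\psi(t)$ against $U(t)\tilde g(U(1))\varphi$. Here both factors have global bounds from \eqref{eq:Smoothbnd19}, \eqref{eq:Smoothbnd29} and \eqref{eq:LocalDecay}. This gives strong convergence and also places the limit in $\ran \tilde g(U(1))\subseteq\vH_{\ac}$, a property your proposal asserts but never derives.
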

\begin{proof}  First we show that there exists the  limit
  \begin{equation}
  \label{eq:stro0}
  h(M^+) \psi=\vHlim_{t\to \infty} \,U(t)^{*}h\parb{\wt M_t}\psi(t).
\end{equation}
This will be in three steps.
\Step {I} We reduce the problem to showing the well-definedness of the below limit \eqref{eq:stro2}. By density we can assume that $\psi=g(U(1))\br\psi$  with  
$g\in C^\infty_{\c}(\T\setminus \vT_\p)$ and $\br\psi\in \vH$ (or possibly $\br\psi\in \vS(\bX)$).  Similarly we can assume that  $h$ is 
  real-valued, smooth and constant in a neighbourhood of zero. We
  decompose 
  \begin{equation*}
    h-h(0)=\parb{h-h(0)}1_{(-\infty, 0)}  +\parb{h-h(0)}1_{[0,\infty)}=:h_-+h_+.
  \end{equation*}
By \eqref{eq:PS3} it suffices to show the existence of the limit
\begin{equation*}
  h_+(M^+) \psi=\vHlim_{t\to \infty} \, U(t)^{*}h_+\parb{\wt M_t}\psi(t).
\end{equation*}
  By invoking Remark
\ref{remarks:chiSMooTH} \ref{item:invarScatsmooth} it  suffices in turn to show the existence of the limit
\begin{equation*}
 \vHlim_{t\to \infty} \,U(t)^{*}\chi\parb{\wt M_t} \psi(t) ; \q \chi\in \vF_+.
\end{equation*} In terms of the observable $\Psi_t$ from (\ref{eq:PS19}) we need to show, thanks to  Lemma \ref{lemmaHighEnergy} and a   commutation (cf. \eqref{eq:ExpBreveH}), that there exists
\begin{align}\label{eq:stro2}
  \chi(M^+) \psi=\lim_{t\to \infty}\,
    U(t)^{*}\Psi_t \psi(t).
\end{align}

\Step {II} So let $\psi$ be given as above and let $\chi\in \vF_+$. 
Pick    functions $\tilde{g}, g_1\in C^\infty_{\c}(\T\setminus
\vT_\p)$ with $\t
  g\succ g_1 \succ g$.
  We will now   prove that
  \begin{align}\label{eq:BBB00}
    \lim_{t\to \infty}\,\norm[\big]{\parb{1-\tilde{g}(U(1))}U(t)^{*}\Psi_t \psi(t) }=0.
  \end{align}

  Recalling  \eqref{eq:groupLike}
  it  suffices to show
  that 
  \begin{align}\label{eq:BBB0}
    \lim_{t\to \infty}\,\norm[\Big]{
   \parbb{U(t-[t])^*\Psi_t U(t-[t])g_1(U(1))- g_1(U(1))\Psi_t }\psi([t])}=0.
  \end{align}

 We give   a proof of (\ref{eq:BBB0}) using for $g_1$ only the property that $g_1\in C(\T)$. In fact 
by the
  Stone--Weierstrass theorem
  we can  replace the factors of  $g_1(U(1))$ by an arbitrary integer power $U(1)^k$. Letting  $\sigma:=t-[t]+k$ we write (using in the last step that $H(s)=H(s+[t]-k)$) 
\begin{align*}
  U(1)^{-k}&U(t-[t])^*\Psi_t U(t-[t])U(1)^k-\Psi_t \\
  &= U(\sigma)^*\Psi_{\sigma+[t]-k} U(\sigma)-\Psi_{[t]-k}    +\parb{\Psi_{[t]-k} -\Psi_t}\\
  &= \int_0^\sigma\, U(s)^*{(\bD \Psi)_{s+[t]-k}}U(s)\,\d \s +\vO(t^{-0_+}).
\end{align*}   We need to show that
\begin{align*}
    \lim_{t\to \infty}\,\norm[\Big]{
   \int_0^\sigma\, U(s)^*{(\bD \Psi)_{s+[t]-k}}U(s)\psi([t])\,\d \s} =0.
\end{align*} By \eqref{eq:posBND9}  it suffices to show that
\begin{align}\label{eq:BBB03}
    \lim_{t\to \infty}\,\norm[\Big]{
   \int_0^\sigma\, U(s)^*Q^*_{j, s+[t]-k}Q_{j, s+[t]-k}\psi(s+[t])\,\d \s} =0;\q j=1,2.
\end{align} We can bound the integral by first taking inner product with any $\varphi\in \vH$ and then apply  \cas and  Corollaries \ref{cor:smoothEst9} and \ref{cor:local-strictly-genG} (as well as   affine  changes of variables). We can bound the integral this way as $o(t^0)\norm{\varphi}$ as $t\to \infty$. This  proves  (\ref{eq:BBB03}) and  therefore also \eqref{eq:BBB00}.

  \Step {III} Now  we can  show the existence of the limit \eqref{eq:stro2}. 
    Thanks to  Step II we are left with proving 
the existence of the limit
\begin{align}\label{eq:varphiLimi5}
  \psi_\infty=\lim_{t\to \infty}\,
    \t g(U(1))U(t)^{*}\Psi_t \psi(t);\q \psi(t)=U(t)\psi.
\end{align} Recall that $\psi=g(U(1))\br\psi$  with  
$\br\psi\in \vH$.

We consider 
 the evolution
$\wt\varphi(t)=U(t)\t g(U(1))\varphi$ for  any given $\varphi \in \vS(\bX)$. We 
verify 
the  Cauchy condition,   first  writing  for any big $t_2>t_1>1$
\begin{align*}
  \inp[\big]{\wt\varphi(t_2), \Psi_{t_2}\psi(t_2)}- \inp[\big]{\wt\varphi(t_1),
  \Psi_{t_1}\psi(t_1)}=\int^{t_2}_{t_1}\,
  \tfrac{\d} {\d t}\inp{\wt\varphi(t), \Psi_{t}\psi(t)}\,\d t.
\end{align*} By \eqref{eq:posBND9}, \caS, Corollary \ref{cor:smoothEst9} and \eqref{eq:LocalDecay}  
 we can bound the integral to the right 
as $o(t_1^0)\norm{\varphi}$ as $t_1\to \infty$, uniformly in $t_2>t_1$
and  
$\varphi\in \vS(\bX)$. This yields the well-definedness of the limiting vector $\psi_\infty\in \vH$ in 
\eqref{eq:varphiLimi5}. We have justified \eqref{eq:stro0}, and by using \eqref{eq:BBB00}   also \eqref{eq:stro} follows.
\Step {IV}  
From the well-definedness of the  construction \eqref{eq:stro} it follows easily  that  the  map $ C _\c (\R)\ni h \to h(M^+)\in \vL(\vH_{\ac})$ is a continuous $*$-homomorphism and therefore it can be represented  (cf. [Ru, Theorem 12.22]) as
\begin{equation}\label{eq:resolution}
  \inp{\psi, h(M^+)\psi}=\int_{\R}\, h(x)\,\d \vE^+_\psi(x)
\end{equation} by  a unique resolution of the identity $\vE^+(\cdot)$ on  $\R$.

The support $\supp (\vE^+)\subseteq[0,\infty)$ thanks to Lemma \ref{lemmaNegForbidden}, and  hence 
$\vE^+(\cdot)$  is the spectral resolution of a positive  operator $M^+$ on $\vH^+= \vE^+([0,\infty))\vH_{\rm ac}$. 

\Step {V} We show that $\vH^+_{\ener}\subseteq \vH^+$, i.e. that 
the projection $\vE^+(\R)$ acts as  the identity operator on $\vH^+_{\ener}$.
  Consider the increasing sequence of functions $ h_n\in  C _\c (\R)$ given by $h_n(s)=\chi_-(|s|/n)$. We need to show that
\begin{equation}\label{eq:nconv}
  \inp{\psi, h_n(M^+)\psi}\to \norm{\psi}^2.
\end{equation} We compute and estimate as 
\begin{align*}
  \inp[\big]{\psi,  \parb{I-h_n(M^+)}\psi}&= \lim_{t\to \infty}\,\inp[\big]{U(t)\psi,  \parb{I-h_n \parb{\wt M_t}}U(t)\psi} \\
  &\leq  \limsup_{t\to \infty}\,\abs[\big]{\inp[\big]{F(p^2>E)U(t)\psi,  \parb{I-h_n \parb{\wt M_t}}U(t)\psi}} \\& \qq +\limsup_{t\to \infty}\,\abs[\big]{\inp[\big]{F(p^2< E)U(t)\psi,  \parb{I-h_n \parb{\wt M_t}}U(t)\psi}}. 
\end{align*} The first term can be taken arbitrarily small  uniformly in $n$ by picking a big $E$.  For fixed $E$ we note the uniform bounds 
\begin{equation*}
  \norm [\big]{F(p^2< E)\parb{I-h_n \parb{\wt M_t}}}\leq n^{-1}\norm [\big]{F(p^2< E)\wt M_t)}\leq n^{-1}C\norm [\big]{F(p^2< E){\inp{p}}},
\end{equation*} leading to 
\begin{equation*}
  \limsup_{t\to \infty}\norm [\big]{F(p^2< E)\parb{I-h_n \parb{\wt M_t}}}\to 0\text{ for }n\to \infty.
\end{equation*} We have shown (\ref{eq:nconv}).

\Step {VI} 
We show that the orthogonal projections $\vE^+(B)$ commute  with the mo\-nodromy operator   
$U(1)$. By the relation \eqref{eq:resolution} it suffices to show that $U(1)$ commute  with the operators $h(M^+)$, and therefore in turn that for any $\chi\in \vF_+$ and $\psi\in \vH_{\ac}$
\begin{equation}\label{eq:comU}
  [\chi(M^+),U(1)]\psi  =\lim_{\N \ni t\to \infty}\,
    [U(t)^{*}\Psi_t U(t),U(1)]\psi =0.
\end{equation}
 Noting   the validity of \eqref{eq:BBB0} with $g_1(z)=z$ and for $\psi(t)=U(t)\psi$, the result \eqref{eq:comU} follows.  
\end{proof}

\begin{lemma}\label{lemma:vanishingKernel} The positive operator $M^+ $ has  vanishes kernel, i.e.  $\vE^+(\set{0})={0}$.
\end{lemma}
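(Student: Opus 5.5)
We need to show that $\vE^+(\set{0})=0$, i.e. that $\lim_{\kappa\to 0}\inp{\psi,h_\kappa(M^+)\psi}=0$ for $h_\kappa(s)=\chi_-(|s|/\kappa)$ and every $\psi\in\vH_{\ac}$. By density and the commutation of $\vE^+$ with $U(1)$ (Lemma \ref{lemma:asympOBser}), it suffices to take $\psi=g(U(1))\br\psi$ with $g\in C^\infty_\c(\T\setminus\vT_\p)$, so that the local decay estimate \eqref{eq:LocalDecay} and Corollary \ref{cor:smoothEst9} are available. By \eqref{eq:PS3} and Lemma \ref{lemmaHighEnergy}, we have $\inp{\psi,h_\kappa(M^+)\psi}=\lim_{t\to\infty}\norm{h_\kappa(\wt M_t)^{1/2}f(\wcH_t)\psi(t)}^2$, and the problem reduces to showing that the mass of $\psi(t)$ in the region $0\le \wt M_t\lesssim\kappa$ is asymptotically small uniformly as $\kappa\to0$.

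The idea is to exploit the positive term $Q_1^*Q_1$ in \eqref{eq:posBND9}. By \eqref{eq:Smoothbnd19}, $\int_1^\infty\norm{Q_1\psi(t)}^2\,\d t<\infty$ with $Q_1\approx 2\sqrt{\wt M_t\chi(\wt M_t)}\bar f(\wcH_t)m_t^{-1/2}$. Since $m_t\geq t^\lambda/2$ and $m_t\lesssim \max(r,t^\lambda)$, I would first combine the minimal velocity bound \eqref{eq:decayInx22} with a maximal-velocity type control. The energy cutoff $f(\wcH_t)$ together with $m_t^{-\delta}pm_t^{-\delta}$ only bounds $|p|\lesssim m_t^{\delta}$, which is weak. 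So instead I would use a different observable. I would take $\Psi=f(\wcH_t)\chi_\kappa(\wt M_t)f(\wcH_t)$, where $\chi_\kappa\in\vF_+$ satisfies $\chi_\kappa(s)=s$ for $0\le s\le \kappa$ and is bounded by $2\kappa$. Its Heisenberg derivative is $\geq \delta Q_1^*Q_1$ plus integrable errors, and $Q_1^*Q_1\geq c\, m_t^{-1}\wt M_t\,\mathbf 1$ on the relevant region. A cleaner alternative is to take the observable $m_t^{-1}$ weighted by $\chi_-(\wt M_t/\kappa)$: then $\bD(\cdots)\approx -m_t^{-2}\wt M_t(\cdots)$, whose sign is favourable, and the mass in $\{\wt M_t<\kappa\}$ forces $m_t$ to grow no faster than $\kappa t$. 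Combined with $m_t\geq t^{\lambda}/2$ and the lower bound $M_t\gtrsim$ growth of $m_t$ (via \eqref{eq:Heis1}: $\tfrac{\d}{\d t}\inp{m_t}\approx\inp{M_t}$), this yields that the state sits in $\{r\lesssim \kappa t\}$.

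For the final step I would show a \emph{sharp} minimal velocity statement: for $\psi\in\vH_{\ac}$, $\lim_{\kappa\to0}\limsup_t\norm{F(|x|<\kappa t)\psi(t)}=0$. This is the main obstacle, since Proposition \ref{lemma:scatDef22} only gives $t^\lambda$ with $\lambda<1$. To get it, I would repeat the proof of Lemma \ref{lemmaNegForbidden} with $\lambda=1$ and without the factor $t^{\epsilon_1}$, using $\chi_-(\wt M_t/\kappa)$. The classical terms $T_1,T_3$ then contribute $O(\kappa)\, t^{-1}$ times positive quantities rather than integrable ones, and these should be absorbed by the time-derivative term $-t^{-1}\bar\chi_1^2$ (the analogue of the first term of $S_2^{\rm clas}$) at the price of a $\kappa$-dependent constant. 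The \eqref{eq:LocalDecay}-controlled errors remain integrable, which gives a bound of the form $\limsup_t\norm{\chi_-(\wt M_t/\kappa)\psi(t)}^2\le C\kappa^{\theta}\norm{\psi}^2$. I expect that making the error terms $\vO(r^{1-2/\lambda})$ from $R_t$ in \eqref{eq:Rbnd2} harmless at $\lambda=1$ (they become $\vO(r^{-1})$, non-integrable) is the hardest point. I would handle them using $B_t^*B_t\geq0$ together with the smallness of $\epsilon$ in Lemma \ref{lemma:m1}, and would fall back on Corollary \ref{cor:local-strictly-genG} if that fails.
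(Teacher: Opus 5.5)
Your proposal has a genuine gap. You reduce the lemma to showing that the mass of $f(\wcH_t)\psi(t)$ in $\{0\le \wt M_t\lesssim\kappa\}$ is small uniformly in large $t$ as $\kappa\to0$. That is essentially \eqref{eq:out1}, which the paper derives \emph{from} this lemma in Corollary \ref{cor:OutEst9}. You then want to get it from two ingredients: a linear-scale minimal velocity bound $\lim_{\kappa\to0}\limsup_t\norm{F(|x|<\kappa t)\psi(t)}=0$, which is not available (Proposition \ref{lemma:scatDef22} stops at scale $t^\lambda$, $\lambda<1$), and a heuristic link between small $\wt M_t$ and $|x|\lesssim\kappa t$, which is not proved.

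Your sketch for the linear-scale bound breaks exactly where you suspect. At $\lambda=1$ the term $-\tfrac{\lambda^2}{4}t^{\lambda-2}(x/t^\lambda)\cdot(\nabla^2m)(x/t^\lambda)(x/t^\lambda)$ of $R_t$ in \eqref{eq:Heis2} enters $T_2$ with a \emph{positive} sign. It has size $\kappa^{-1}t^{-1}$ on $\{|x|\lesssim t\}$, so it is not integrable. It is not of the form $B_t^*B_t$, being precisely what was split off to complete the square, so $B_t^*B_t\ge0$ cannot absorb it, and it does not shrink with $\epsilon$. Similarly, the $\vO(t^{\lambda-1})$ terms coming from $\pa_t m_t$ in \eqref{eq:1Heis} and \eqref{eq:3Heis} become $\vO(1)$, not $O(\kappa)$. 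The negative term $-2\epsilon_1t^{-1}f(\wcH_t)\bar\chi_1(\cdot)m_t^{\epsilon_2}\bar\chi_1(\cdot)f(\wcH_t)$ in $S^{\rm clas}_2$, into which you want to absorb these, comes from differentiating the factor $t^{\epsilon_1}$. You have removed that factor, so there is nothing to absorb into.

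The missing idea is to use the kernel property as a boundary condition at $t=+\infty$ and integrate \emph{backwards}. No uniform-in-$\kappa$ or linear-velocity statement is needed.
\begin{enumerate}[1)]
\item Take $\psi=g(U(1))\br\psi$ with $\br\psi\in\ker M^+$. Since $\vE^+$ commutes with $U(1)$, $\psi\in\ker M^+$, so $\chi_+(RM^+)\psi=0$ for every $R>1$. Hence $\inp{\chi_+(R\wt M_s)}_{f(\wcH_s)\psi(s)}\to0$ as $s\to\infty$.
\item By \eqref{eq:posBND9} with $\chi=\chi_+(R\,\cdot)$, keeping track of the $R$-dependence, and using \eqref{eq:LocalDecay}, the derivative of this expectation is nonnegative up to errors whose integral over $[t,\infty)$ is $\vO(R^3t^{-\varepsilon})$.
\item Integrating from $\infty$ back to $t$ gives $\inp{\chi_+(R\wt M_t)}_{f(\wcH_t)\psi(t)}\le CR^3t^{-\varepsilon}$.
\item Choose $R=t^{\epsilon_1}$ with $3\epsilon_1<\varepsilon$. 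This forces $\norm{\chi_-(t^{\epsilon_1}\wt M_t)f(\wcH_t)\psi(t)}\to\norm{\psi}$.
\item But \eqref{eq:PS3}, for $\epsilon_1$ small enough, says this norm tends to $0$. Hence $\psi=0$.
\end{enumerate}
Your observable $f\chi_\kappa(\wt M_t)f$ was essentially the right one. Integrating its derivative forward only yields integrability of $Q_1$, which gives no smallness. The point is to anchor at the vanishing limit at $+\infty$ and let the scale $R^{-1}$ shrink with $t$ into the region already excluded by Lemma \ref{lemmaNegForbidden}.
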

\begin{proof}
Consider any $\psi=g(U(1))\br\psi$  with  
$g\in C^\infty_{\c}(\T\setminus \vT_\p)$ and $\br\psi\in \ker (M^+)=\ran(\vE^+(\set{0})$. We need  to show that $\psi=0$. Since also $\psi\in \ker (M^+)$, it follows  that 
for any $R>1$
 \begin{equation}
   \label{eq:intInf}
   0-\inp{\chi_+\parb{ R\wt M_t}}_{f(\wcH_t)\psi(t)}=\int_t^\infty\, \tfrac{\d}{\d s}\inp{\chi_+\parb{R \wt M_s}}_{f(\wcH_s))\psi(s)}\,\d s;\q \psi(t)=U(t)\psi.
 \end{equation} Here the time-derivative to the right conforms with \eqref{eq:posBND9} for almost all $s\geq 1$,
   and by the proof of \eqref{eq:posBND9} with $\chi=\chi_+(R\cdot)$ it follows that the integral has a lower bound on the form
 $-CR^3 t^{-\varepsilon}$. Taking  $R=t^{\epsilon_1}$, $3\epsilon_1 < \varepsilon$, yields
 \begin{equation}\label{eq:IS}
     \lim_{t\to \infty} \,\norm{\chi_-\parb{t^{\epsilon_1}\wt M_t}f(\wcH_t)\psi(t)}^2=\lim_{t\to \infty} \,\norm{f(\wcH_t)\psi(t)}^2=\norm{\psi}^2.
   \end{equation} The combination of (\ref{eq:PS3}) with this $\epsilon_1$ (possibly taken smaller if needed)  and  
   \eqref{eq:IS} leads to the conclusion
   that  $\psi=0$.\end{proof}

 Recall that $M_0=M_1=(M_t)_{|t=1}$.

 \begin{corollary}\label{cor:OutEst9}  
For any $\psi\in \vH_{\ac}$ the evolution $\psi(t)=U(t)\psi$ obeys
  \begin{subequations}
\begin{align}
  \label{eq:out1}
  \lim_{\kappa\searrow 0}\,\limsup_{t\to \infty }\,\norm{\chi_-( M_0/\kappa)\psi(t)}&=0,\\
  \label{eq:out2}
  \lim_{\kappa\searrow 0}\,\limsup_{t\to \infty }\,\norm{\parb{I-\chi_+( M_0/\kappa)} \psi(t)}&=0.
\end{align}   
  \end{subequations}
 \end{corollary}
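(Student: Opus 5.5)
Since $\chi_-^2+\chi_+^2=1$ and both lie in $[0,1]$, we have $\norm{(I-\chi_+(T))\varphi}\le\norm{(1-\chi_+^2)(T)\varphi}^{1/2}\norm{\varphi}^{1/2}$ and $(1-\chi_+^2)=\chi_-^2\le\chi_-$. Hence \eqref{eq:out2} follows from \eqref{eq:out1}, and it suffices to prove \eqref{eq:out1}. By density and unitarity we may take $\psi=g(U(1))\br\psi$ with $g\in C^\infty_\c(\T\setminus\vT_\p)$. By \eqref{eq:CONs} we may insert $f(\wcH_t)$, i.e. replace $\psi(t)$ by $f(\wcH_t)\psi(t)$ up to $o(t^0)$.

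The plan is to transfer the information contained in the asymptotic observable $M^+$ (Lemmas \ref{lemma:asympOBser} and \ref{lemma:vanishingKernel}) to the fixed-time operator $M_0$. Fix $\kappa>0$ and set $h_\kappa(s)=\chi_-(s/\kappa)$ for $s\ge0$, extended to a function in $C_\c(\R)$ (equal to $1$ near $(-\infty,0]$ and cut off far out). By \eqref{eq:stro}, $\limsup_t\norm{h_\kappa(\wt M_t)\psi(t)}^2\le\inp{\psi,h_\kappa^2(M^+)\psi}$, where the negative part is handled by \eqref{eq:PS3}. Since $h_\kappa^2\le 1_{(-\infty,2\kappa)}$ on $[0,\infty)$, this is bounded by $\norm{\vE^+([0,2\kappa))\psi}^2$. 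This tends to $\norm{\vE^+(\set0)\psi}^2=0$ as $\kappa\to0$, by Lemma \ref{lemma:vanishingKernel} together with $\supp\vE^+\subseteq[0,\infty)$. So \eqref{eq:out1} holds with $\wt M_t$ in place of $M_0$.

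The main step is to compare $\chi_-(\wt M_t/\kappa)$ with $\chi_-(M_0/\kappa)$ on $f(\wcH_t)\psi(t)$. First, $\chi_-(\wt M_t/\kappa)f(\wcH_t)-\chi_-(M_t/\kappa)f(\wcH_t)=\vO(t^{-\lambda})$ for fixed $\kappa$, exactly as in \eqref{eq:T1I}--\eqref{eq:T1II}. Next we compare $M_t$ with $M_0$: by construction $(\nabla m)(x/t^\lambda)=(\nabla m)(x)$ wherever $|x|\ge t^\lambda/2$, since $m$ is affine-homogeneous there ($m=m_{a_0}+2/3$ and $\nabla m_{a_0}$ is homogeneous of degree $0$). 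So $M_t-M_0$ is supported in $\set{|x|\lesssim t^\lambda}$. By Proposition \ref{lemma:scatDef22} (minimal velocity bound \eqref{eq:decayInx22}, applied with some $\lambda'\in(\lambda,1)$), $\norm{F(|x|<Ct^{\lambda})\psi(t)}\to0$. Writing
\[
\chi_-(M_t/\kappa)-\chi_-(M_0/\kappa)=\int_\C (M_t/\kappa-z)^{-1}\kappa^{-1}(M_0-M_t)(M_0/\kappa-z)^{-1}\,\d\mu_{\chi_-}(z)
\]
and applying it to $f(\wcH_t)\psi(t)$, we need to commute the localization $F(|x|<Ct^\lambda)$ (smoothed, say $\chi_-(2|x|/(Ct^\lambda))$) through $(M_0/\kappa-z)^{-1}$ and $f(\wcH_t)$. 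The commutators are $\vO(t^{-\lambda})$ times powers of $\abs{\Im z}^{-1}$ and $\kappa^{-1}$, because $M_0$ is a first-order operator with bounded coefficient gradients. Moreover $(M_0-M_t)$ times the momentum cutoff $f(\wcH_t)$ is bounded by $\vO(t^{\lambda\delta})$ on $|x|\lesssim t^\lambda$, via \eqref{eq:bndM}. I expect this term to be the main obstacle, since the bound $\vO(t^{\lambda\delta})$ is not small. The fix I would try first is to place the spatial cutoff directly next to $\psi(t)$: $(M_0-M_t)=(M_0-M_t)\tilde\chi(|x|/t^\lambda)$ with $\tilde\chi$ compactly supported, and only a commutator of $\tilde\chi$ with the resolvent, of size $\vO(t^{-\lambda})$, is needed to reach $\tilde\chi\psi(t)$. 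The growth factor can be absorbed by also using the energy bound from Lemma \ref{lemmaHighEnergy}: on $\ran f(\wcH_t)$, $|p|\lesssim m_t^{\delta}\sim t^{\lambda\delta}$ there. Combined with $\norm{\tilde\chi\psi(t)}\to0$, we need a rate, or else we replace $M_t$ by $M_{t'}$ with $t'$ chosen so that $t'^{\lambda}\ll t^{\lambda'}$.

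Alternatively, and more safely, we can avoid rates entirely. We write $M_0f(\wcH_t)=\wt M_t f(\wcH_t)+(M_0-M_t)f(\wcH_t)+\vO(t^{-\lambda})$, and estimate $\norm{(M_0-M_t)f(\wcH_t)\psi(t)}$ by $C\norm{\inp{p}\tilde\chi(|x|/t^\lambda)f(\wcH_t)\psi(t)}$. Using Theorem \ref{thm:LocalDecay} with $r>0$ in time average, together with the $t^{\epsilon}$-scale flexibility of Lemma \ref{lemmaNegForbidden}, this gives the convergence along $t\to\infty$ after letting $\kappa\searrow0$ last. Combining both steps yields \eqref{eq:out1}.
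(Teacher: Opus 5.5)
Your reduction of \eqref{eq:out2} to \eqref{eq:out1} and your first step are correct. Bounding $\inp{\psi,h_\kappa^2(M^+)\psi}$ by $\vE^+_\psi([0,2\kappa))$ is a slightly more direct use of Lemma \ref{lemma:vanishingKernel} than the paper's route through $\psi=h(M^+)\br\psi$ with $h\in C^\infty_\c(\R_+)$, and it needs no separate treatment of $\psi\notin\vH^+$. The overall architecture is the paper's: $M^+$, vanishing kernel, then transfer from $\wt M_t$ to $M_0$ via the minimal velocity bound.

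The gap is in the step you call the main one: you never actually prove that $\norm{\parb{\chi_-(M_t/\kappa)-\chi_-(M_0/\kappa)}\psi(t)}\to0$. Your first attempt stalls on a growth factor $\vO(t^{\lambda\delta})$ and concludes that a rate is needed, and no rate is available. Your ``safer'' alternative does not work either. \eqref{eq:LocalDecay} is only a time-integrated bound, with $\inp{p}^r$ for $r<1/2$ and a weight $\inp{x}^{-s}$ that does not dominate $\tilde\chi(|x|/t^\lambda)$ uniformly in $t$. So it cannot control $\norm{\inp{p}\tilde\chi(|x|/t^\lambda)f(\wcH_t)\psi(t)}$ pointwise as $t\to\infty$.

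The missing observation is that $M_0-M_t$ never needs to be bounded on its own. It only occurs sandwiched between resolvents, and
\[
\kappa^{-1}(M_t/\kappa-z)^{-1}(M_0-M_t)(M_0/\kappa-z)^{-1}=(M_t/\kappa-z)^{-1}-(M_0/\kappa-z)^{-1}
\]
has norm at most $2\abs{\Im z}^{-1}$. With this, even your inner cut-off version closes, with no momentum localization and no rate.

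The paper argues more cleanly by localizing in the outer region, where the two operators coincide. Since $(M_t-M_0)\chi_+(|x|/t^\lambda)=0$,
\[
(M_t/\kappa-z)^{-1}\chi_+(|x|/t^\lambda)-\chi_+(|x|/t^\lambda)(M_0/\kappa-z)^{-1}=\kappa^{-1}(M_t/\kappa-z)^{-1}\comm{\chi_+(|x|/t^\lambda),M_0}(M_0/\kappa-z)^{-1},
\]
and the same identity holds with $M_t$ replaced by $M_0$ on both sides. Subtracting the two identities and using $\comm{\chi_+(|x|/t^\lambda),M_0}=\vO(t^{-\lambda})$ gives
\[
\parb{\chi_-(M_t/\kappa)-\chi_-(M_0/\kappa)}\chi_+(|x|/t^\lambda)=\vO(t^{-\lambda})
\]
for each fixed $\kappa$. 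Meanwhile $\norm{\parb{I-\chi_+(|x|/t^\lambda)}\psi(t)}\to0$ by \eqref{eq:decayInx22} applied with some $\lambda'\in(\lambda,1)$. This is exactly \eqref{eq:similarLOC}. Combined with \eqref{eq:T1I} for the passage from $\wt M_t$ to $M_t$, it completes your argument.
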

 \begin{proof}
   It suffices to show \eqref{eq:out1}.
   \Step {I} We show \eqref{eq:out1} in the  case $\psi\in \vH^+$.
   Thanks to Lemma \ref{lemma:vanishingKernel} we can assume that $\psi\in \ran (\vE^+(\R_+))$ and therefore that $\psi=h( M^+)\br\psi$ for some  $h\in C^\infty_{\c}(\R_+)$ and some $\br\psi\in \vH_{\ac}$. A priori the operator $ M^+$ depends on a parameter $\lambda<1$ close to $1$. However, it actually does not, which is a consequence of the fact that 
   \begin{equation}\label{eq:similarLOC}
     \lim_{t\to \infty}\,\norm{h\parb{ \wt M_t}\br\psi(t)- h( M_0)\br\psi(t)}=0,\q \br\psi(t)=U(t)\br\psi.
   \end{equation} The latter is in turn a consequence of  \eqref{eq:decayInx22} as follows: We can in \eqref{eq:similarLOC}  replace $\wt M_t$ by $ M_t$, cf.     \eqref{eq:CONs}, \eqref{eq:T1I} and \eqref{eq:T1II}, and then we use    that
     \begin{align*}
       \lim_{t\to \infty}\,\norm[\big]{\parb{I-\chi_+\parb{|x|/t^{\lambda}}}\br\psi(t)}&=0,\\
       \parb{h(M_t)- h( M_0)}\chi_+\parb{|x|/t^{\lambda}}&=\vO(t^{-0_+}),
     \end{align*} where the second assertion  follows from a   represention as in  \eqref{eq:T1II} and  commutation using that
     \begin{equation*}
       \parb{ M_t-  M_0}\chi_+\parb{|x|/t^{\lambda}}=0.
     \end{equation*}
     
     Since the function $h$ in \eqref{eq:similarLOC} is supported in $\R_+$,  it follows that  for  all $\kappa>0$ small enough   $\chi_-( \cdot/\kappa)h=0$ and therefore also that  
     \begin{equation*}
       \lim_{t\to \infty }\,\norm{\chi_-( M_0/\kappa)\psi(t)}=0;\q \psi=h( M^+)\br\psi.
     \end{equation*} We have proven \eqref{eq:out1}.

     \Step {II} We show \eqref{eq:out1} in the general case,  $\psi\in \vH_{\ac}$. The arguments in Step I yield that 
     \begin{equation*}
       \lim_{t\to \infty }\,\norm{\chi_-( M_0/\kappa)\chi_+( \wt M_t)\psi(t)}=0;\q \kappa<1/2,\,\psi(t) =U(t)\psi.
     \end{equation*} By Lemma \ref{lemmaNegForbidden} we can then assume that $\psi=h( M^+)\psi$ for some  $h\in C^\infty_{\c}(\R)$. In particular this  $\psi\in \vH^+$,  and hence Step I applies.
\end{proof}
\begin{remarks}\label{remarks:f-outg-regi} In the next section and throughout the paper we are going to use \eqref{eq:out2} as well as a slightly different auxiliary Hamiltonian than $\wcH_t= \wcH_{t,\delta,\lambda}$ (the one given in \eqref{eq:widecheckH2}). This is denoted by $\wcH(t)=\wcH_\delta(t)$ and defined by  \eqref{eq:widecheckH}. By commutation it follows easily that for any
   $\lambda_1\in (\lambda, 1)$
     \begin{align*}
       \parbb{f(\wcH_t)- f\parb{\wcH(t)}}\chi_+\parb{|x|/t^{\lambda_1}}=\vO(t^{-0_+}).
     \end{align*} Consequently, using again \eqref{eq:decayInx22}, we can freely replace the localization factor  $f(\wcH_t)$ by the factor $f\parb{\wcH(t)}$, yielding in particular  the following refinement of \eqref{eq:CONs} and \eqref{eq:out2}.
     \begin{equation}
  \label{eq:out2b}
  \forall \psi\in \vH_{\ac}:\q\lim_{\kappa\searrow 0}\,\limsup_{t\to \infty }\,\norm[\Big]{\parbb{I-\chi_+( M_0/\kappa)f\parb{\wcH(t)}} U(t)\psi}=0. 
\end{equation}

The bound (\ref{eq:out2b})  will be the outset for further analysis in this  paper. In particular we will   henceforth  not need  the time-scaled quantities of (\ref{eq:mtime}).
\end{remarks}

\section{Preliminary propagation estimates}\label{sec:Proof of {eq:Strength}}

We recall the quantities 
$m$ and  $M=M_0$ given by
\eqref{eq:ml}  and \eqref{eq:M0l}. In this section and henceforth we
shall not use the more general quantities $m_t$ and  $M=M_t$ from
(\ref{eq:mtime}). Rather    we will   only need the time one  versions 
\begin{subequations}
 \begin{align}\label{eq:9m}
   m(x)&= f_{\rm cvx}(m_{a_0}(x)),\\
   \label{eq:92}
  M&= M_0=2\Re (\grad m\cdot p),
\end{align} where   as before $f_{\rm cvx}\in C^\infty(\R_+)$ is taken  convex such that $f_{\rm cvx}(s)=1$ for $s\leq  1/4$ and $f_{\rm cvx}(s)=s+2/3$ for $s\geq
1/2$. We recall that the small positive
parameter $\epsilon$  in the construction of $m_{a_0}$ is considered as fixed.
\end{subequations}

We consider the auxiliary Hamiltonian
\begin{equation}\label{eq:widecheckH}
  \wcH=\wcH(t)=\wcH_\delta(t) =m^{-\delta}H(t)m^{-\delta},\q \delta\in (1/2,3/5),
\end{equation}
realized as a self-adjoint operator by the Friedrich extension
procedure. This is  not the same as the auxiliary Hamiltonian $\wcH_t= \wcH_{t,\delta, \lambda}$ given in \eqref{eq:widecheckH2}, but clearly similar. 
We prefer to omit the dependence of $\wcH_\delta(t) $ on 
$\delta$ and  $t$,  and just  use the shorthand  notation $\wcH$. Our analysis,   henceforth not using the operator $\wcH_t$,  will  entirely be  based on (\ref{eq:9m})-(\ref{eq:widecheckH}) and the localization result \eqref{eq:out2b}. We will freely use the analogues of 
(\ref{eq:fes}) and \eqref{eq:barf}, viz  real-valued  time-independent functions $f,\tilde f, \wc f , \bar f\in C_\c^\infty (\R)$ obeying 
\begin{align}\label{eq:fesb}
  \begin{split}
 f\parb{\wcH}=\chi_-\parb{\wcH},&\q \tilde{ f}\parb{\wcH}=\chi_-\parb{\wcH/2},\q \wc f\parb{\wcH}=
  \tilde{ f}\parb{\wcH} \wcH \tilde{ f}\parb{\wcH},\\
  &\bar f\parb{\wcH}=\sq{-\chi_-'\parb{\wcH}{\wcH}\chi_-\parb{\wcH}}.   
  \end{split}
\end{align} 

\subsection{Integral estimates}\label{subsec:Preliminary integral estimates}
Our  proof of   Lemma \ref{lemmaNegForbidden} reveals   useful smoothness estimates similar to  Corollary \ref{cor:smoothEst9}, now to be stated  entirely (hence somewhat  simplified)  in terms of the quantities  (\ref{eq:9m})-(\ref{eq:fesb}). The following result is  somewhat supplementary to the localization result \eqref{eq:out2b}.
\begin{corollary}\label{cor:smoothEst}  
 Let $g\in C^\infty_{\c}(\T\setminus \vT_\p)$ and  $\kappa>0$ be given. Then,  using the notation
 $\psi(t)=U(t)g(U(1))\psi$ for  any $\psi\in \vH$,  the following bounds hold.
 \begin{subequations}
 \begin{align}\label{eq:Smoothbnd1}\begin{split}
 \forall \,\psi\in \vH&:\quad \int_{-\infty}^\infty
\norm[\big]{Q_{1,\kappa} \psi(t)}^2\,\d t \leq C\|\psi\|^2 ;\\
Q_{1,\kappa}=&Q_{1,\kappa}(t)=2\sqrt{M\chi^2_+(M/\kappa)}\,\bar f\big(\wcH\big)
  \,m^{-1/2}.     
                        \end{split}
\end{align} 
\begin{align}\label{eq:Smoothbnd2}
  \begin{split}
  \forall \,\psi\in \vH&:\quad \int_{-\infty}^\infty
\norm[\big]{Q_{2,\kappa}\psi(t)}^2\,\d t \leq C\|\psi\|^2 ;\\
Q_{2,\kappa}=Q_{2,\kappa}(t)&=2\sqrt{p\cdot {\nabla^2m(x)} p}\,\tilde{f}(\wcH)\sqrt{(\chi^2_+)^{(1)}(M/\kappa)}f\parb{\wcH}.  
  \end{split}
\end{align}  
 \end{subequations}
\end{corollary}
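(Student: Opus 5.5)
The plan is to follow the scheme of Corollary \ref{cor:smoothEst9}, now with the time-independent quantities \eqref{eq:9m}--\eqref{eq:fesb}: time-scaling is switched off ($\lambda$-dependence removed, $\epsilon_1=\epsilon_2=0$). I would use the bounded observable
\begin{equation*}
  \Psi(t)=f\parb{\wcH}\chi^2_+(M/\kappa)f\parb{\wcH},\q 0\le \Psi\le 1,
\end{equation*}
where $\wcH=\wcH(t)$ depends on $t$ only through $V(t)$, and where $M$ is regularized as $\wt M=\tilde f(\wcH)M\tilde f(\wcH)$ wherever needed, exactly as in \eqref{eq:Mtilde}. The aim is an analogue of \eqref{eq:posBND9}, namely
\begin{equation*}
  \bD\Psi=\delta\,Q_{1,\kappa}^*Q_{1,\kappa}+Q_{2,\kappa}^*Q_{2,\kappa}+\vO_{\unif}(r^{-1_+}),
\end{equation*}
with remainder uniform in $t\in\R$. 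Since the quantities no longer involve $t^\lambda$, there is no $\vO(t^{-1_+})$ term, and the estimate holds on the whole line.

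The computation has three parts.

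\textbf{The term $\pa_t\Psi$.} Here only $\pa_tV$ enters, through $\pa_t f(\wcH)$ and $\pa_t\tilde f(\wcH)$. By the analogue of \eqref{eq:tildeDeriv}, this contributes $m^{-2\delta}\vO(r^0)=\vO(r^{-1_+})$, since $2\delta>1$.

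\textbf{The commutator $\i[H,\Psi]$.} Following Step VI of the proof of Lemma \ref{lemmaNegForbidden}, I replace $H$ by $m^\delta\wc f(\wcH)m^\delta$ as in \eqref{eq:posComm2}. I then expand $T_1+T_2+T_3$.
\begin{itemize}
\item The $\i[m^\delta,f(\wcH)]$ pieces produce $4\delta\, m^{-1/2}\bar f(\wcH)\,M\chi_+^2(M/\kappa)\,\bar f(\wcH)m^{-1/2}$. Commuting $m^{-1/2}$ past $\chi_+$ via \eqref{eq:Exps} turns this into $\delta Q_{1,\kappa}^*Q_{1,\kappa}$ up to $\vO(r^{-2})$. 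The sign changes relative to Step VI because $f'$ now multiplies $\chi_+^2$ and $\bar f^2=-\chi_-'\,s\,\chi_-$.
\item The cross term from $\i[m^\delta,\chi_+^2]$ cancels against the $R_2$-part of $\i[\wc f(\wcH),M]$, exactly as in Step VI.
\item The $R_3$-part gives $\kappa^{-1}$ times a symmetrized expression of $\tilde f\,\i[p^2,M]\,\tilde f$ weighted by $(\chi_+^2)'(M/\kappa)$. Using $\i[p^2,M]=4p\cdot\nabla^2m\,p+\vO(r^{-3})$, this yields $Q_{2,\kappa}^*Q_{2,\kappa}$, after absorbing $\kappa^{-1}$ into the constant $C$. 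The potential term $\nabla m\cdot\nabla V=\vO(r^{-1-2\mu})$ goes into the error.
\item The $R_1$ and $R_4$ parts vanish modulo errors, by $f\tilde f'=0$.
\end{itemize}

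\textbf{Main obstacle.} I expect the main obstacle to be the quantum error bookkeeping in the term involving the square roots of $(\chi^2_+)'(M/\kappa)$. As explained in Remark \ref{remark:bound-incom-regi} \ref{item:2psibb}, this term must be symmetrized so that the error is governed by $\ad^3_M(p^2)$, sandwiched between factors $f(\wcH)$, and hence is of effective order $2\delta-3<-1$. This is where $\delta<1$ and the second-order regularity of $V$, via \eqref{eq:ExpBreveH}, are used.

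\textbf{Conclusion.} With the formula for $\bD\Psi$ in hand, I integrate $\tfrac{\d}{\d t}\inp{\Psi}_{\psi(t)}=\inp{\bD\Psi}_{\psi(t)}$ over $[-T,T]$. I use $\|\Psi\|\le1$, and \eqref{eq:LocalDecay} with $r=0$ and some $s\in(1/2,(1+\varepsilon)/2)$ to control $\int\inp{\vO(r^{-1_+})}_{\psi(t)}\d t\le C\|\psi\|^2$. Since $Q_{j,\kappa}^*Q_{j,\kappa}\ge0$, both integrals are bounded by $C\|\psi\|^2$ uniformly in $T$.

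Unboundedness of $Q_{2,\kappa}$ is handled as in Remark \ref{remark:bound-incom-regi} \ref{item:1psi}. I first take $\psi\in\vS(\bX)$, so that Corollary \ref{cor:an-auxil-hamilt} justifies differentiation. I then approximate by $\chi_-(r/n)$-cutoffs inserted between the two factors, and pass to the limit by monotone convergence. General $\psi$ follows by density.
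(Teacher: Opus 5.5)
Your proposal is correct and is essentially the paper's argument. You compute the Heisenberg derivative of $f(\wcH)\chi^2_+(M/\kappa)f(\wcH)$ as in Step VI of the proof of Lemma~\ref{lemmaNegForbidden} with $\epsilon_1=\epsilon_2=0$, obtain $\delta Q_{1,\kappa}^*Q_{1,\kappa}+\kappa^{-1}Q_{2,\kappa}^*Q_{2,\kappa}$ plus $\vO(r^{-1_+})$ errors controlled by \eqref{eq:LocalDecay}, and handle the unboundedness of $Q_{2,\kappa}$ by $\chi_-(r/n)$-cutoffs and monotone convergence. The differences are cosmetic: the paper inserts the cutoff directly into the observable, $f(\wcH)\chi_-(r/n)\chi^2_+(M/\kappa)\chi_-(r/n)f(\wcH)$, and so needs no separate Schwartz-plus-density step. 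Also, contrary to your aside, the $\bar f$-term carries the same sign as in Step VI; there it was merely bounded above because $s\chi_-^2(t^{\epsilon_1}s)\le Ct^{-\epsilon_1}$, whereas here $s\chi_+^2(s/\kappa)\ge 0$.
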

\begin{remarks}\label{remarks:intepretation}
  While it is easy to see that $ Q_{1,\kappa}$ is bounded, an
  interpretation is needed for $Q_{2,\kappa}$. The proofs of  Lemma \ref{lemmaNegForbidden} and Corollary \ref{cor:smoothEst9} 
   already involve such interpretation.  The proof of Corollary
   \ref{cor:smoothEst} given below   involves the same interpretation in that we 
  consider $p\cdot \nabla^2m(x) p$ as the  limiting
  form of the increasing regularized forms 
  $p\cdot\parb{\chi^2_-(r/n)\nabla^2m}p $, $n\in \N$.
   Now 
  $4\inp[\big]{p\cdot \parb{\chi^2_-(r/n)\nabla^2m}
      p}_{\wt\psi}$ with  $\wt \psi:=\tilde{f}(\wcH)\sqrt{(\chi^2_+)^{(1)}(M/\kappa)}f(\wcH)
  \psi(t)$
  is a well-defined expression. The proof  shows that the bound \eqref{eq:Smoothbnd2} is
  correct 
  with this replacement uniformly in $n$, so by   letting  $n\to \infty$ using the   Lebesgue monotone convergence
  theorem  we 
 conclude
  finiteness of the integrand  $\norm[\big]{Q_{2,\kappa}\psi(t)}^2$
  for almost all $t\in \R$. We will later find it convenient  to regularize  $Q_{2,\kappa}$
  by the weight 
  $W=\inp{p}^{1/3}$, thus  using the fact that $Q_{2,\kappa}W^{-1}$ is bounded,
  see Corollary \ref{cor:smoothEst2b}.
\end{remarks} 
\begin{proof}[\bf Proof of Corollary \ref{cor:smoothEst}]
  We  first compute the
  time-derivative of the bounded function 
  \begin{equation}\label{eq:phiSigma}
    \phi(t)=\phi_\kappa(t)=\inp[\big]{f\big(\wcH (t)\big)\chi^2_+(M/\kappa)f\big(\wcH(t)\big)}_{\psi(t)},\q t\in\R,
  \end{equation} by mimicking   the proof of Corollary \ref{cor:smoothEst9},    then  expand   and
  check  integrability as before. More precisely, we proceed this way for the regularization
\begin{equation*}
    \phi_n(t)=\phi_{\kappa,n}(t)=\inp[\big]{f\big(\wcH
      \big)\chi_-(r/n)\chi^2_+(M/\kappa)\chi_-(r/n)f\big(\wcH\big)}_{\psi(t)},\q
    n\in \N,
  \end{equation*}  claiming that as a result of integrating the derivative on any
  interval $[-T, T]$: 
  \begin{align*}
    \lim_{n\to \infty}\int_{-T}^T
\parbb{\delta\norm[\big]{\chi_-(r/n)Q_{1,\kappa} \psi(t)}^2+\kappa^{-1}\norm[\big]{\chi_-(r/n)Q_{2,\kappa}(t)\psi(t)}^2}\,\d t \leq C\|\psi\|^2 
  \end{align*} with the constant $C$ being independent of $T>0$.
  
In fact 
most  terms  either vanish in the limit  $n\to \infty$  or they are
handled by 
  \eqref{eq:LocalDecay}. There are only two terms of the expanded
  derivative not treatably this
  way, but they add exactly up to the above  non-negative
  integrand. This proves the above bound, from which two applications
  of the   Lebesgue monotone convergence
  theorem complete the proof of the lemma. 

Note that for the limiting integrand 
$\delta \norm[\big]{Q_{1,\kappa}(t)\psi(t)}^2+
  \kappa^{-1}\norm[\big]{Q_{2,\kappa}(t)\psi(t)}^2$  the first
  term is well-defined for all $t$, while the second one is
  well-defined only for almost all $t$, cf. Remarks
  \ref{remarks:intepretation}. Note also that the  limiting procedure  shows  that $\phi$ is absolutely continuous.
\end{proof}

For states in $\vH^+_{\ener}$ (to be considered in Section \ref{sec:Asymptotic clustering}) we can supplement the localization result \eqref{eq:out2b} by   cutoffs in energy of the form
  $\chi_-(M/K)$ with  large  $K>0$. This motivates the following bounds similar  to (and generalizing) (\ref{eq:Smoothbnd2}).

\begin{corollary}\label{cor:smoothEst2}  Let $g\in
  C^\infty_{\c}(\T\setminus \vT_\p)$,  $\chi\in \vF_+$ and $K>2\kappa>0$  be given. Then, using the notation
 $\psi(t)=U(t)g(U(1))\psi$ for  any $\psi\in \vH$,  the following bounds hold.
 \begin{subequations}
\begin{align}\label{eq:Smoothbnd30}
  \begin{split}
  \forall \,\psi\in \vH&:\quad \int_{-\infty}^\infty
\norm[\big]{Q_{2,\chi}\psi(t)}^2\,\d t \leq C\|\psi\|^2 ;\\
Q_{2,\chi}=&Q_{2,\chi}(t)=2\sqrt{p\cdot \nabla^2m(x) p}\,\tilde{f}\big(\wcH\big)\sq{\chi'}(M)f\parb{\wcH},  
  \end{split}
\end{align}  and hence in particular,
\begin{align}\label{eq:Smoothbnd3}
  \begin{split}
  \forall \,\psi&\in \vH:\quad \int_{-\infty}^\infty
\norm[\big]{Q_{2,K}\psi(t)}^2\,\d t \leq C\|\psi\|^2 ;\\
Q_{2,K}&=Q_{2,K}(t)=2\sqrt{p\cdot \nabla^2m(x)
  p}\,\tilde{f}\big(\wcH\big)\sqrt{-(\chi^2_-)^{(1)}(M/K)}f\parb{\wcH}. 
  \end{split}
\end{align}  
\begin{align}\label{eq:Smoothbnd4}
  \begin{split}
  \forall \,\psi\in \vH&:\quad \int_{-\infty}^\infty
\norm[\big]{Q_{2,\kappa,K}\psi(t)}^2\,\d t \leq C\|\psi\|^2 ;\\
Q_{2,\kappa,K}&=Q_{2,\kappa,K}(t)=2\sqrt{p\cdot \nabla^2m(x) p}\,\tilde{f}\big(\wcH\big)\chi_{\kappa,K}(M)f\parb{\wcH}.  
  \end{split}
\end{align}  
 \end{subequations}
\end{corollary}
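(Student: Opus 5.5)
The plan is to rerun the proof of Corollary \ref{cor:smoothEst}, with the factor $\chi^2_+(M/\kappa)$ in \eqref{eq:phiSigma} replaced by a general $\chi\in\vF_+$. So I would consider the bounded function
\begin{equation*}
  \phi(t)=\inp[\big]{f\big(\wcH\big)\chi(M)f\big(\wcH\big)}_{\psi(t)},\qquad \psi(t)=U(t)g(U(1))\psi .
\end{equation*}
As before, I would work with the regularization that inserts the factors $\chi_-(r/n)$, which avoids domain problems and handles the interpretation of $\sqrt{p\cdot\nabla^2 m\,p}$ as in Remarks \ref{remarks:intepretation}. The Heisenberg derivative is computed exactly as in the proofs of Lemma \ref{lemmaNegForbidden} and Corollary \ref{cor:smoothEst9}, now with $\epsilon_1=\epsilon_2=0$ and the time-independent $m$, $M$. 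It has the form
\begin{equation*}
  \bD\Psi=\delta\, Q_1^*Q_1+\tfrac14 Q_{2,\chi}^*Q_{2,\chi}+\vO_{\unif}(r^{-1_+}),
\end{equation*}
with $Q_1=2\sqrt{M\chi(M)}\,\bar f(\wcH)m^{-1/2}$.

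The origin of the positive terms is as follows. The term $Q_1^*Q_1$ comes from the commutation of $f(\wcH)$ with $H$, combined with the cancellation mechanism of Step VI, in particular the $R_2$ cancellation. The term with $Q_{2,\chi}$ comes from $\i[p^2,M]=4p\cdot\nabla^2m\,p+\vO(r^{-3})$, sandwiched as $\tilde f(\wcH)\sqrt{\chi'}(M)f(\wcH)$. Both terms are nonnegative because $\chi\ge0$, $\chi'\ge0$ and $\chi$ is supported in $\R_+$, so that $M\chi(M)\ge0$. Since the model is time-periodic rather than time-scaled, there are no $\pa_t m_t$ contributions.

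Integrating over $[-T,T]$ then uses three facts: $0\le\phi\le\sup\chi$, the error terms are controlled by \eqref{eq:LocalDecay} with $r=0$ (this is where $g$ is needed), and the terms that vanish as $n\to\infty$ drop out. This gives a $T$-uniform bound on $\int\norm{Q_{2,\chi}\psi(t)}^2\d t$. Letting $n\to\infty$ and $T\to\infty$ by monotone convergence proves \eqref{eq:Smoothbnd30}.

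The main technical point is that $\chi(M)$ for $\chi\in\vF_+$ is unbounded at infinity only in the sense that $\chi$ tends to a constant, while $\chi'$ is compactly supported. The expansions of Lemma \ref{lemma:expansion}\ref{item:co2},\ref{item:co3} (taken at $t=1$) then apply verbatim. The requirement $\sqrt{\chi'}\in C^\infty_\c$ is exactly what is needed to factor the derivative term into a square with symmetrized errors of order $2\delta-3<-1$, as in Remark \ref{remark:bound-incom-regi}\ref{item:2psibb}. I would check this symmetrization carefully; beyond that I expect no new obstacle.

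The two remaining bounds follow from \eqref{eq:Smoothbnd30} by choosing $\chi$.

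\textbf{Bound \eqref{eq:Smoothbnd3}.} Take $\chi(s)=\int_{-\infty}^s\chi^2_{\kappa,K}$, which lies in $\vF_+$ as recorded after Definitions \ref{def:chi}. Since $\chi'=\chi^2_{\kappa,K}$, \eqref{eq:Smoothbnd30} gives \eqref{eq:Smoothbnd4} directly.

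\textbf{Bound \eqref{eq:Smoothbnd4}.} Note that $-(\chi_-^2)'(\cdot/K)=(\chi_+^2)'(\cdot/K)$, since $\chi_+^2+\chi_-^2=1$. So the corresponding symbol is $\chi(s)=\chi_+^2(s/K)$, which is in $\vF_+$ by \eqref{eq:chi1}–\eqref{eq:chi2}, because $\sqrt{(\chi_+^2)'}=\sqrt{2\chi_+}\sqrt{\chi_+'}$ is smooth. Up to the harmless factor $K^{-1}$, which is absorbed into $C$, this gives \eqref{eq:Smoothbnd3}.
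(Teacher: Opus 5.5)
Your proposal is correct and matches the paper's proof. Both use the observable $f(\wcH)\chi(M)f(\wcH)$ with the regularization from Corollary~\ref{cor:smoothEst}, the identity $\bD\Psi=\delta Q_1^*Q_1+Q_{2,\chi}^*Q_{2,\chi}+\vO(r^{-1_+})$, and the same two choices of $\chi\in\vF_+$ for the special cases; the paper takes $K\chi_+^2(\cdot/K)$ to avoid the factor $K^{-1}$.

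There are two harmless slips. The coefficient of $Q_{2,\chi}^*Q_{2,\chi}$ is $1$, not $\tfrac14$, because the factor $2$ in $Q_{2,\chi}$ already accounts for $\i[p^2,M]\approx 4p\cdot\nabla^2m\,p$. Your headings for \eqref{eq:Smoothbnd3} and \eqref{eq:Smoothbnd4} are interchanged relative to what each paragraph actually proves.
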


\begin{proof}
Consider   
the  bounded function
\begin{subequations}
  \begin{align}\label{eq:BaspsI}
    \phi(t)=\inp[\big]{\Psi}_{\psi(t)};\q \Psi= f\big(\wcH (t)\big)
    \chi(M)f\big(\wcH(t)\big),\q t\in\R.
  \end{align} The form is similar to the function $\phi$ of
  \eqref{eq:phiSigma}, and we can mimic the proof of Corollary
  \ref{cor:smoothEst},  regularizing and concluding  that  $\phi$ is absolutely continuous with  integrable derivative $\phi'$. We compute
   $\phi'=\inp[\big]{\bD\Psi}_{\psi(t)}$ with 
    \begin{align}\label{eq:BasicComput}
      \begin{split}
      \bD \Psi&=   \delta Q_{1}^*Q_{1}+ Q_{2}^*Q_{2}+\vO(r^{-1_+});\\
      Q_{1}&=Q_{1,\chi}=2\sqrt{M\chi(M)}\,\bar f\big(\wcH\big) \,m^{-1/2},\q  Q_{2}=Q_{2,\chi}, 
      \end{split}
\end{align}  
\end{subequations} 
In particular  the derivative  is to leading order  a sum  af two
  non-negative terms, and one of them is $
\norm[\big]{Q_{2,\chi}\psi(t)}^2$. This  shows \eqref{eq:Smoothbnd30}.

Since $ K\chi^2_+(\cdot/K)\in \vF_+$, clearly \eqref{eq:Smoothbnd3} is a
special case of \eqref{eq:Smoothbnd30}. (Alternatively, note that  \eqref{eq:Smoothbnd3} is the same as \eqref{eq:Smoothbnd2}
with $\kappa=K$.)

 For \eqref{eq:Smoothbnd4} we
apply \eqref{eq:Smoothbnd30} to $\chi\in \vF_+$ defined
by $ \chi 
(t)=\int_{-\infty}^t\,\chi^2_{\kappa,K}(s)\, \d s$.
\end{proof}

  In the proof of Corollary \ref{cor:smoothEst2}  the  factor $g\in C^\infty_{\c}(\T\setminus \vT_\p)$ is needed  to treat the third  term  of \eqref{eq:BasicComput}  by 
\eqref{eq:LocalDecay}.  Without this localization there is still some non-trivial application of (\ref{eq:BasicComput}). This is the  following  smoothing type result, completely similar to Corollary \ref{cor:local-strictly-genG} (with similar proof).
\begin{corollary}
  \label{cor:local-strictly-genG5} Let  $\chi\in \vF_+$   be given.  There exists $C>0$ such that for any real numbers  $a<b$   the following bounds hold for  $\varphi(t)=U(t)\varphi$, $\varphi\in \vH$
  and with $Q_{j}$, $j=1,2$,  given as in \eqref{eq:BasicComput}.  
  \begin{subequations}
 \begin{align}\label{eq:Smoothbnd19G25}
   \forall \,\varphi\in \vH&:\quad \delta \int_{a}^b
                           \norm[\big]{Q_{1} \varphi(t)}^2\,\d t \leq \parb{C(b-a)+\sup \chi}\|\varphi\|^2 ,\\
   \forall \,\varphi\in \vH&:\quad \int_{a}^b
\norm[\big]{Q_{2}\varphi(t)}^2\,\d t \leq \parb{C(b-a)+\sup \chi}\|\varphi\|^2 .\label{eq:Smoothbnd29G25}
        \end{align} 
 \end{subequations}
\end{corollary}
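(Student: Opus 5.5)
The plan is to copy the proof of Corollary \ref{cor:local-strictly-genG}, now using the time-independent observable of \eqref{eq:BaspsI}. For a given $\chi\in\vF_+$ take $\Psi=\Psi(t)=f\big(\wcH(t)\big)\chi(M)f\big(\wcH(t)\big)$. This is a uniformly bounded family with $0\leq\Psi\leq\sup\chi$, since $0\le f\le 1$ and $\chi\geq 0$. From the computation \eqref{eq:BasicComput} we have
\begin{equation*}
  \bD\Psi=\delta Q_1^*Q_1+Q_2^*Q_2+\vO(r^{-1_+}).
\end{equation*}
The remainder $\vO(r^{-1_+})$ is an operator of negative order, uniformly in $t$, so in particular it is a bounded operator with norm at most some $C$ independent of $t$. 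Hence, as forms,
\begin{equation*}
  \bD\Psi\geq\delta Q_1^*Q_1+Q_2^*Q_2-C,
\end{equation*}
which is the analogue of \eqref{eq:bndQ12}. The point is that we now use the local decay estimate \eqref{eq:LocalDecay} only in this crude form, as a uniform bound. Consequently no spectral localization $g(U(1))$ is needed, and the bounds hold for every $\varphi\in\vH$.

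Next we integrate. For $\varphi(t)=U(t)\varphi$ and $a<b$ we write
\begin{equation*}
  \inp{\Psi(b)}_{\varphi(b)}-\inp{\Psi(a)}_{\varphi(a)}=\int_a^b\inp{\bD\Psi}_{\varphi(t)}\,\d t
  \geq \int_a^b\Big(\delta\norm{Q_1\varphi(t)}^2+\norm{Q_2\varphi(t)}^2\Big)\d t-C(b-a)\norm{\varphi}^2.
\end{equation*}
The left-hand side is at most $\sup\chi\,\norm{\varphi}^2$, because $\Psi\geq 0$ and $\Psi\leq \sup\chi$. Rearranging gives \eqref{eq:Smoothbnd19G25} and \eqref{eq:Smoothbnd29G25} simultaneously.

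The main obstacle is justifying the identity $\tfrac{\d}{\d t}\inp{\Psi}_{\varphi(t)}=\inp{\bD\Psi}_{\varphi(t)}$ for arbitrary $\varphi$, given that $Q_2$ is unbounded (cf. Remarks \ref{remarks:intepretation}). We handle this in three steps.

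First we take $\varphi\in\vS(\bX)$. We use the regularization $\chi_-(r/n)\chi(M)\chi_-(r/n)$ from the proof of Corollary \ref{cor:smoothEst}, and we use Corollary \ref{cor:an-auxil-hamilt} (with $g\equiv1$, using only Yosida's theorem and Duhamel's formula \eqref{eq:Duhamel}) to differentiate. The regularization errors vanish as $n\to\infty$. In the $Q_2$ term, $\sqrt{p\cdot\nabla^2m\,p}$ is replaced by its regularized form, which increases in $n$; we then pass to the limit by monotone convergence.

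Second, the constant $C$ obtained this way depends only on the operator norm of the remainder. It therefore does not depend on $n$, on $\varphi$, or on $a,b$.

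Finally, we extend the bounds to all $\varphi\in\vH$ by density. The left-hand sides are lower semicontinuous under $L^2$-convergence of $\varphi$, by Fatou's lemma applied to the regularized integrands followed by monotone convergence in $n$. This gives the bounds for general $\varphi$, with the same constant.
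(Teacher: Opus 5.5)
Your proposal is correct and is essentially the paper's argument. The paper proves this corollary exactly as Corollary \ref{cor:local-strictly-genG}: it bounds the $\vO(r^{-1_+})$ remainder in \eqref{eq:BasicComput} by a constant, giving $\bD\Psi\geq \delta Q_1^*Q_1+Q_2^*Q_2-C$, and then integrates over $[a,b]$ using $0\leq\Psi\leq\sup\chi$. Your regularization and density discussion makes explicit what the paper leaves implicit via the proofs of Corollaries \ref{cor:smoothEst9} and \ref{cor:smoothEst}; one small correction is that \eqref{eq:LocalDecay} plays no role here at all, since the remainder is controlled simply by its operator norm.
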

\begin{remark}
  \label{remark:integral-estimates6} Since the above $Q_1$  is bounded, the most important assertion is the second bound (\ref{eq:Smoothbnd29G25}). In fact we will only need    the following  slightly distorted version of (\ref{eq:Smoothbnd29G25}): Suppose that $\chi(M)$ is replaced by  $\chi(M_1)$, where $M_1$ is constructed as  $M$ but in terms of a possibly different $m$, say denoted $m_1$. Define (with $ \wcH =m^{-\delta}H(t)m^{-\delta}$ as before)
  \begin{equation*}
    Q_2=Q_{2,\chi,m,m_1}=2\sqrt{p\cdot \nabla^2m_1(x) p}\,\tilde{f}\big(\wcH\big)\sq{\chi'}(M_1)f\parb{\wcH}.
    \end{equation*} 
    Then for some $C>0$ it holds that
    \begin{equation}
      \label{eq:GenQ}
      \forall \,\varphi\in \vH:\quad \int_{a}^b
\norm[\big]{Q_{2,\chi,m,m_1}\varphi(t)}^2\,\d t \leq \parb{C(b-a)+\sup \chi}\|\varphi\|^2 .
\end{equation}   
This follows from using $\Psi= f\big(\wcH (t)\big)
\chi(M_1)f\big(\wcH(t)\big)$, leading to the  simplified analogue of (\ref{eq:bndQ12}) stated as $\bD \Psi\geq  Q_{2}^*Q_{2}-C$, and we can proceed as in the proof of Corollary \ref{cor:local-strictly-genG}.
\end{remark}

\subsection{A square root problem, an alternative proof of
  \eqref{eq:Smoothbnd4}}\label{subsec:A complicated proof}

The above proof of \eqref{eq:Smoothbnd4}  does not generalize to the
extent that we will need later. In this subsection  we give  a
different proof. Although it is considerably more complicated, its
scheme 
 will be useful for generalizations. In general it is difficult to
 commute operators through the square root of a positive 
 operator, this may be called the 'square root singularity
problem'. In  our case it may appear useful  to commute a concrete 
operator (presently of the form $h(M)$ for some $h\in
C_\c^\infty (\R)$) through 
$\sqrt{4p\cdot \nabla^2m(x) p}$ (or possibly alternatively, through $2\sqrt{\nabla^2
      m(x) }p$)  and estimate
the commutator, however this
does not seem doable. Rather we shall regularize this square root
before computing and estimating the commutator. Our regularization
takes  advantage of the full strength
of the integral local decay estimate (\ref{eq:LocalDecay}), viz
(\ref{eq:LocalDecay}) with the explicitly included smoothing factor. The same method
will be used in Subsection \ref{subsec:integral
  estimates} for other examples.

We introduce the auxiliary operator
\begin{align}\label{eq:aux}
  \begin{split}
  P=P_{\rho,W}(t) =\tilde{f}\big(\wcH\big) W^{-1}&\parb{p\cdot \nabla^2m(x)p}W^{-1}\tilde{f}\big(\wcH\big)+r^{-2\rho};\\ &W=\inp{p}^{1/3},\q\rho\in [\delta, (6-3\delta)/7).  
  \end{split}
\end{align} By interpolating the analogue of (\ref{eq:bndM}) for
$\wcH$  and using that $\delta<  3/5$ it follows  that $P$ is a positive  bounded 
operator. See Lemma \ref{lemma:Pweights} for more refined properties.  Like 
 for $\delta$, also the
parameter $\rho$ can  henceforth be  considered  as fixed. For later use,
note that
\begin{equation}\label{eq:constntLimit}
  7\rho+3\delta<6.
\end{equation} The reader may   consider  the parameters
$\delta$ and $\rho$ as constants chosen `sufficiently close' to $1/2$,
but larger. In fact it suffices throughout the paper to take  $\rho=\delta=4/7$.

 We need the following version of \eqref{eq:Smoothbnd30}.

\begin{corollary}\label{cor:smoothEst2b}
  Let $g\in C^\infty_{\c}(\T\setminus \vT_\p)$ and  
  $\chi\in \vF_+$ be given. Then, using the notation
 $\psi(t)=U(t)g(U(1))\psi$ for any  $\psi\in \vH$,  the following bound holds.
\begin{align}\label{eq:Smoothbnd30b}
  \begin{split}
  \forall \,\psi\in \vH&:\quad \int_{-\infty}^\infty
\norm[\big]{Q_{2,\chi, \rho, W}\,\psi(t)}^2\,\d t \leq C\|\psi\|^2 ;\\
Q_{2,\chi, \rho,W}&=Q_{2,\chi, \rho, W}(t)=2\sqrt{P}\,\sq{\chi'}(M)f\parb{\wcH}W.  
  \end{split}
\end{align}
\end{corollary}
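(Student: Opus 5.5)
The plan is to derive \eqref{eq:Smoothbnd30b} from \eqref{eq:Smoothbnd30}, reducing it to an operator inequality between $P$ and the quadratic form $p\cdot\nabla^2m(x)p$. The remainder $r^{-2\rho}$ will be absorbed using the local decay estimate \eqref{eq:LocalDecay} with $r>0$. Write $\varphi(t)=\sq{\chi'}(M)f(\wcH)\psi(t)$, so that
\begin{equation*}
  \tfrac14\norm{Q_{2,\chi,\rho,W}\psi(t)}^2=\inp{P}_{W\varphi(t)}
  =\inp[\big]{p\cdot \nabla^2m(x)p}_{\tilde f(\wcH)\varphi(t)}+\norm{r^{-\rho}W\varphi(t)}^2 ,
\end{equation*}
modulo a commutator of $W$ with $\tilde f(\wcH)$. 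Interpreted via the regularization in Remarks \ref{remarks:intepretation}, the first term is exactly $\tfrac14\norm{Q_{2,\chi}\psi(t)}^2$ from \eqref{eq:Smoothbnd30}. Its time integral is therefore bounded by $C\norm{\psi}^2$.

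The first step is to handle the commutator. From $W^{-1}\tilde f(\wcH)W=\tilde f(\wcH)+W^{-1}[\tilde f(\wcH),W]$, the error operator $W^{-1}[\tilde f(\wcH),W]$ is computed via \eqref{82a0}. The commutator $[\wcH,W]$ involves $m^{-\delta}[p^2,W]$-type terms and derivatives of $m^{-\delta}$. Using the analogue of \eqref{eq:bndM} for $\wcH$, this error is of order $-1-\delta$ (at least order $-1$), with momentum factors controlled by $\tilde f(\wcH)$. After sandwiching with $\sqrt{\nabla^2m}\,p$, whose size is $r^{-1/2}$ times the momentum, the resulting cross terms are bounded by $\norm{\inp{p}^{r'}r^{-s}\varphi(t)}^2$ with some $s>1/2$ and some $r'<1/2$. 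Here one uses $\delta<3/5$ and the fact that $f(\wcH)$ localizes $m^{-\delta}p$, which converts powers of $p$ into powers of $r^{\delta}$. Cauchy--Schwarz then splits each cross term into a multiple of the main term plus such a weighted term.

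The second step treats the term $\norm{r^{-\rho}W\varphi(t)}^2=\norm{r^{-\rho}\inp{p}^{1/3}\sq{\chi'}(M)f(\wcH)\psi(t)}^2$. The operator $\sq{\chi'}(M)f(\wcH)$ must be commuted to the right of the weight $r^{-\rho}\inp{p}^{1/3}$ and through $g(U(1))$, after which \eqref{eq:LocalDecay} applies with exponents $r=1/3<1/2$ and $s=\rho>1/2$. The commutation of $r^{-\rho}$ through $\sq{\chi'}(M)f(\wcH)$ uses Lemma \ref{lemma:expansion}, namely \eqref{eq:Exps} and \eqref{eq:ExpsBreveH} adapted to time-independent $m$; this leaves errors of lower order in $r$. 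The factor $\inp{p}^{1/3}$ is commuted similarly, using that $f(\wcH)$ bounds $\inp{p}^{1/3}$ by $r^{\delta/3}$ times a bounded operator. Note that $f(\wcH)$ does not commute with $U(t)$; this is why the factor must be moved to the left of the weights rather than past the propagator. Since $f(\wcH)$ and $\sq{\chi'}(M)$ are bounded of order $0$, one writes
\begin{equation*}
  r^{-\rho}W\sq{\chi'}(M)f(\wcH)=B\,\inp{p}^{1/3}r^{-\rho}+\text{lower order},
\end{equation*}
with $B$ uniformly bounded in $t$. The lower order terms are of the form $\vO(r^{-\rho-1+\epsilon})\inp{p}^{1/3}$ or better, and are again controlled by \eqref{eq:LocalDecay}.

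The main obstacle is the first step, where the weight $W$ must be moved past $\tilde f(\wcH)$ and past $p\cdot\nabla^2m\,p$ while keeping the error terms weighted enough (order $<-1$ in $r$, effective momentum order $<1/2$) for \eqref{eq:LocalDecay}. This is where the constraints $\rho<(6-3\delta)/7$ and \eqref{eq:constntLimit} should enter, balancing $r^{\delta}$ per unit of momentum against the available decay. I would first try to bound the commutator errors as $\vO(r^{-1-\delta})$ multiplied by $\inp{p}^{2/3}$ localized by $\tilde f(\wcH)$, and then verify via interpolation that the exponents satisfy the required inequalities.
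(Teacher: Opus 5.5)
Your overall strategy is the paper's: reduce to \eqref{eq:Smoothbnd30}, dispose of the $r^{-2\rho}$ part of $P$ by \eqref{eq:LocalDecay}, and control the commutator errors by \eqref{eq:LocalDecay} again. But your opening identity is wrong. In $Q_{2,\chi,\rho,W}=2\sqrt P\,\sqrt{\chi'}(M)f(\wcH)W$ the weight $W$ stands at the far right and acts directly on $\psi(t)$. So $\tfrac14\norm{Q_{2,\chi,\rho,W}\psi(t)}^2=\inp{P}_{\Phi(t)}$ with $\Phi(t)=\sqrt{\chi'}(M)f(\wcH)W\psi(t)$, not $\inp{P}_{W\varphi(t)}$ with $\varphi(t)=\sqrt{\chi'}(M)f(\wcH)\psi(t)$. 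You are bounding the different operator $\sqrt P\,W\sqrt{\chi'}(M)f(\wcH)$, and the discrepancy $\sqrt P\,[W,\sqrt{\chi'}(M)f(\wcH)]$ is never estimated.

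With the correct vector the difficulty moves. The $r^{-2\rho}$ term is essentially immediate: only $r^{-\rho}$ has to be commuted through the order-zero factors $\sqrt{\chi'}(M)f(\wcH)$, and then \eqref{eq:LocalDecay} applies with $\inp{p}^{1/3}$ and $s=\rho>1/2$. The main term, however, equals $\tfrac14\norm{Q_{2,\chi}\psi(t)}^2$ only up to the contribution of $S_\chi W\psi(t)$, where
\begin{equation*}
  S_\chi=\sqrt{p\cdot \nabla^2m(x) p}\,\big[W^{-1},\tilde f(\wcH)\sqrt{\chi'}(M)f(\wcH)\big].
\end{equation*}
So $W^{-1}$ has to be commuted through the whole product, not just through $\tilde f(\wcH)$.

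The ingredient your proposal lacks is the bound $[W^{-1},\sqrt{\chi'}(M)]=\vO(r^{-1})$. It follows from \eqref{82a0} and PsDO calculus, because $[M,W^{-1}]$ is a PsDO whose symbol has size $r^{-1}\inp{\xi}^{-1/3}$. Lemma \ref{lemma:expansion}, which you invoke when you say $\inp{p}^{1/3}$ is commuted ``similarly'', contains no statement about commutators with functions of $p$ such as $W$. Combined with $[W^{-1},\tilde f(\wcH)]=(\wcH-\i)^{-1}\vO(r^{-1})$ (and likewise for $f(\wcH)$), this gives $S_\chi^*S_\chi=\vO_\unif(r^{2\delta-3})$. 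Then \eqref{eq:LocalDecay} with $\inp{p}^{1/3}$ and $s=3/2-\delta>1/2$ concludes.

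Two smaller corrections. First, $[p^2,W]=0$, so the nontrivial part of $[\wcH,W]$ is $m^{-\delta}[V,W]m^{-\delta}=\vO(r^{-2\delta})$. The $m^{-\delta}$ weights are indispensable here, since $[V,W]$ has no global decay for many-body $V$ (cf.\ Remark \ref{remark:square-root-problem}). Second, the constraint \eqref{eq:constntLimit} plays no role in this corollary, which only uses $\delta<1$ and $\rho>1/2$. It enters later, in the square-root commutator estimate of Step V of the alternative proof of \eqref{eq:Smoothbnd4}.
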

\begin{proof} By \eqref{eq:Smoothbnd30}
  \begin{align*}
  \forall \,\psi\in \vH&:\quad \int_{-\infty}^\infty
\norm[\big]{Q_{2,\chi}W^{-1} W\psi(t)}^2\,\d t \leq C\|\psi\|^2 ;\\
Q_{2,\chi}=&Q_{2,\chi}(t)=2\sqrt{p\cdot \nabla^2m(x) p}\,\tilde{f}\big(\wcH\big)\sq{\chi'}(M)f\parb{\wcH}.  
  \end{align*} We use  (\ref{eq:LocalDecay}) to treat the term $r^{-2\rho}$
    in $P$, and it remains  to show that
  \begin{align*}
    \int_{-\infty}^\infty
\norm[\big]{\sqrt{p\cdot \nabla^2m(x) p}\,W^{-1} \tilde{f}\big(\wcH\big)\sq{\chi'}(M)f\parb{\wcH}W\psi(t)}^2\,\d t \leq C\|\psi\|^2. 
  \end{align*} Hence by commutation  it suffices to show that
  \begin{align*}
    \int_{-\infty}^\infty
&\norm[\big]{S_\chi W\psi(t)}^2\,\d t \leq C\|\psi\|^2;\\& S_\chi=S_\chi(t)=\sqrt{p\cdot \nabla^2m(x) p}\,\big [W^{-1} ,\tilde{f}\big(\wcH\big)\sq{\chi'}(M)f\parb{\wcH}\big ]. 
    \end{align*}  By  \eqref{82a0} and the calculus of PsDOs we deduce  that  $[W^{-1},\sq{\chi'}(M)]=\vO(r^{-1})$. Since also $[W^{-1},\tilde{f}\big(\wcH\big)]=\big(\wcH-\i \big)^{-1}\vO(r^{-1})$,  it follows that  
    \begin{equation}\label{eq:1TstarT}
       S_\chi^*S_\chi=  \vO_\unif(r^{2\delta-1-2})=\vO_\unif(r^{-1_+}).
    \end{equation} We  conclude by (\ref{eq:LocalDecay}).
 \end{proof}

 \begin{remark}\label{remark:square-root-problem}
   We showed  above that   \eqref{eq:Smoothbnd30} implies
   \eqref{eq:Smoothbnd30b} by using  (\ref{eq:LocalDecay})  with the
   factor $\inp{p}^r:=
   W=\inp{p}^{1/3}$. Thanks to \eqref{eq:1TstarT} in fact  \eqref{eq:Smoothbnd30} and 
   \eqref{eq:Smoothbnd30b} may be considered as  equivalent bounds. 
   Since the commutators $[V,p_j]$  do not possess   a global weight,  proving   \eqref{eq:Smoothbnd30b} 
   more directly  does not seem
   possible.
 \end{remark}

 Using    the square root formula for $P$,
\begin{align}\label{eq:sqrt}
 \sqrt P=\pi^{-1}\int_0^\infty\, \tau^{-1/2} P(P+\tau)^{-1}\, \d \tau, 
\end{align} we now embark on studying certain  commutators with
$\sqrt{P}$.

\begin{lemma}\label{lemma:Pweights} The operator $P$ fulfils the following properties for all $s,\sigma\in \R$,  uniformly in $t\in \R$ and $\tau>0$. 
  \begin{subequations}
  \begin{align}\label{eq:Pwei0} P&=\vO_\unif(r^{4\delta/3-1}),\\\label{eq:Pwei1}
    \quad r^{s-\sigma}Pr^{-s}&=r^{-\sigma}P +\vO_\unif(r^{\delta/3-2-\sigma}),\\
(P+\tau)^{-1}&=\vO_\unif(r^{2\rho} ).
               \label{eq:Pwei4}\\
\quad r^{s-\sigma}(P+\tau)^{-1}r^{-s}&=r^{-\sigma}(P+\tau)^{-1}+\vO_\unif(r^{4\rho+\delta/3-2-\sigma} ),\label{eq:Pwei44}\\
     \sqrt P &=\vO_\unif(r^{2\delta/3-1/2}),\label{eq:Pwei33}\\
    \quad r^{s-\sigma}\sqrt Pr^{-s}&=r^{-\sigma}\sqrt P
+\vO_\unif(r^{\delta/3+2\rho-2 -\sigma}). 
\label{eq:Pwei2}
    \end{align}
 \end{subequations}
  \end{lemma}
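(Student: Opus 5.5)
We view $P$ as $P_0+r^{-2\rho}$ with $P_0=\tilde f(\wcH)W^{-1}(p\cdot\nabla^2m\,p)W^{-1}\tilde f(\wcH)$ and treat the six bounds in pairs: first the bounds on $P$ itself, then those on its resolvent, then those on the square root via \eqref{eq:sqrt}.

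\textbf{Bounds on $P$.} The analogue of \eqref{eq:bndM} for $\wcH$ gives $p_jm^{-\delta}(\wcH-z)^{-1}=\vO(r^0)$, hence $p\,\tilde f(\wcH)=\vO(r^{\delta})$, while $\tilde f(\wcH)=\vO(r^0)$. Interpolating these shows that $W^{-1}\tilde f(\wcH)$ gains the factor $\inp{p}^{-1/3}$ and that $p_jW^{-1}\tilde f(\wcH)=\vO(r^{2\delta/3})$. Combining this with $\nabla^2m=\vO(r^{-1})$ gives $P_0=\vO(r^{4\delta/3-1})$. The term $r^{-2\rho}$ is smaller, since $-2\rho\le -2\delta<4\delta/3-1$, so \eqref{eq:Pwei0} follows.

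For \eqref{eq:Pwei1} we write $r^{s-\sigma}Pr^{-s}-r^{-\sigma}P=r^{-\sigma}[r^{s},P]r^{-s}$ and commute $r^s$ through each factor of $P_0$. The commutators $[r^s,\tilde f(\wcH)]$ come from \eqref{eq:ExpsBreveH} and are of order $s-1-\delta$. The commutator $[r^s,W^{-1}]$ is a PsDO of order $s-1$ with a gain $\inp{p}^{-4/3}$. The commutator $[r^s,p\cdot\nabla^2m\,p]$ is $\Re(p\cdot\vO(r^{s-2}))$. Each term loses one power of $r$ relative to $P_0$ and keeps only one unbalanced $p$. Counting the weights this way gives $\vO(r^{\delta/3-2})$ uniformly.

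\textbf{Resolvent bounds.} Since $P\ge r^{-2\rho}$ and $P$ is bounded, we expect \eqref{eq:Pwei4}. The $L^2$ bound is $\norm{(P+\tau)^{-1}}\le\sup r^{2\rho}$, which is not finite, so the right statement is the weighted one: $r^{-2\rho}(P+\tau)^{-1}$ is bounded in every $L^2_s$.

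We obtain this by commuting weights with \eqref{eq:Pwei1}:
\begin{equation*}
r^{s}(P+\tau)^{-1}r^{-s}=(P+\tau)^{-1}+(P+\tau)^{-1}[P,r^{s}]r^{-s}\,r^{s}(P+\tau)^{-1}r^{-s},
\end{equation*}
and then solve for $r^{s}(P+\tau)^{-1}r^{-s}$, in the same way as in the proof of \eqref{eq:bndM}, using bounded approximations $\chi_-(r/n)r$ followed by $n\to\infty$. The quadratic-form bound $\norm{r^{-\rho}(P+\tau)^{-1/2}}\le1$ supplies the weight $r^{2\rho}$. Iterating the commutation once more gives \eqref{eq:Pwei44}: there are two resolvent factors, each contributing $r^{2\rho}$, around a commutator of order $\delta/3-2$, which produces the order $4\rho+\delta/3-2$.

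\textbf{Square root bounds.} Here we use \eqref{eq:sqrt} and split the $\tau$-integral at $\tau=r$-dependent scales, handled through the operator inequality rather than pointwise. For \eqref{eq:Pwei33} the simplest route is interpolation: $\sqrt P$ is a self-adjoint positive operator with $P=\vO(r^{4\delta/3-1})$, and the Heinz inequality applied to weighted spaces should yield order $2\delta/3-1/2$.

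For \eqref{eq:Pwei2} we insert the commutator formula
\begin{equation*}
[r^s,P(P+\tau)^{-1}]=\tau(P+\tau)^{-1}[r^s,P](P+\tau)^{-1}
\end{equation*}
into \eqref{eq:sqrt}, using \eqref{eq:Pwei44} to move weights. For small $\tau$ the bound $\norm{\tau(P+\tau)^{-1}}\le 1$ with \eqref{eq:Pwei4} controls the resolvents. For large $\tau$ we use $\norm{(P+\tau)^{-1}}\le\tau^{-1}$, so that $\int\tau^{-1/2}\tau\cdot\tau^{-2}\,\d\tau$ converges. Balancing the two regimes turns the factor $r^{4\rho}$ into $r^{2\rho}$, which gives the order $\delta/3+2\rho-2$.

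\textbf{Main obstacle.} The hard step is the uniform-in-$\tau$ control in \eqref{eq:Pwei2} and, more generally, making the weighted resolvent estimates rigorous. I expect that step to be the main work; the rest reduces to bookkeeping of orders.
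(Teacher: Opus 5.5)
Your overall route is the paper's: \eqref{eq:Pwei0}--\eqref{eq:Pwei1} come from the PsDO calculus and $[r^s,\tilde f(\wcH)]=\vO(r^{s-1-\delta})$, the weighted resolvent bounds come from $P\ge r^{-2\rho}$ by commutation, and \eqref{eq:Pwei2} comes from \eqref{eq:sqrt} split at $\tau=1$. The gap is in the step you flag yourself.

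You cannot ``solve'' $X=R+RCX$ (with $R=(P+\tau)^{-1}$, $X=r^sRr^{-s}$, $C=[P,r^s]r^{-s}$) by inverting $I-RC$, because $RC$ is not small uniformly in $\tau$. The analogy with \eqref{eq:bndM} is also misleading. There the resolvent is $L^2$-bounded, while here only $\norm{r^{-\rho}Rr^{-\rho}}\le1$ is available, so every resolvent costs $r^{2\rho}$. The iteration closes because one commutation gains $r^{\delta/3-2}$ against this cost, a net gain $r^{-\eta}$ with $\eta=2-2\rho-\delta/3>0$. This is exactly where the upper bound on $\rho$ (cf.\ \eqref{eq:constntLimit}) enters, and your proposal never checks it. Concretely, set $B_s=r^{s-\rho}Rr^{-s-\rho}$ and write $[P,r^s]=r^{-\rho}A_s r^{s-\rho-\eta}$, where $A_s$ is of order $0$ by \eqref{eq:Pwei1}. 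Then
\[
B_s=B_0+B_0A_s\,r^{s-\rho-\eta}Rr^{-s-\rho},\qquad s>0.
\]
For $0<s\le\eta$ the last factor equals $r^{s-\eta}B_0r^{-s}$, with bounded multipliers. For $s>\eta$ it equals $B_{s-\eta}r^{-\eta}$. Induction in $s$, together with the mirror argument for $s<0$, gives \eqref{eq:Pwei4}. Then $r^{\eta}(B_s-B_0)=B_\eta\,\vO(r^0)\,B_s$ gives \eqref{eq:Pwei44}.

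Two further steps do not work as stated.

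For \eqref{eq:Pwei33}, the Heinz inequality only gives $L^2$-boundedness of $r^{1/2-2\delta/3}\sqrt P$. The paper's $T^*T$ argument, $\norm{r^{\gamma}\sqrt P}^2=\norm{r^{\gamma}Pr^{\gamma}}$ with $\gamma=1/2-2\delta/3$, gives the same. Since $P$ is not symmetric on $L^2_s$, boundedness on every $L^2_s$ has to come from combining this with \eqref{eq:Pwei2}. So \eqref{eq:Pwei2} must be proved first, as the paper does.

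For $\tau>1$ in \eqref{eq:Pwei2}, you cannot move the unbalanced weights using \eqref{eq:Pwei44} as a black box. Its error is uniform in $\tau$ but carries no factor $\tau^{-1}$, which leaves the non-integrable integrand $\tau^{-1/2}$. Instead, commute once by hand, $[r^a,R]=R[P,r^a]R$. Keep the outer $R$ for the factor $\tau^{-1}$ and apply \eqref{eq:Pwei4} to the inner one. This gives $\norm{r^{s+\delta/3-2}Rr^{\eta-s}}\le C\tau^{-1}$ and hence an integrand $\vO(\tau^{-3/2})$.
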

  \begin{proof} \Step{I} We reduce the proof to  showing
    \eqref{eq:Pwei44} and \eqref{eq:Pwei2}. First note  that $\big [r^s, p\cdot \parb{\chi^2_+(\abs{x})\nabla^2m(x)}p\big ]=0$ and that uniformly in $t\in \R$
    \begin{equation*}
      \big [r^s, \tilde{f}(\wcH) \big ] =\vO_\unif(r^{s-1-\delta}),
    \end{equation*} cf. \eqref{eq:ExpsBreveH}. Combined with the
    calculus of PsDOs this yields   \eqref{eq:Pwei0} and  
    \eqref{eq:Pwei1}.

    \begin{subequations} 
 By  \eqref{eq:Pwei0}
   and  the familiar 
    $T^*T$-argument 
    \begin{align} \label{eq:B18} 
      r^{1/2-2\delta/3}\sqrt P  \text{  is bounded uniformly in }t,
    \end{align} and similarly we can record that 
    \begin{align} 
      \label{eq:B1}
      &\sup_{\tau>0}\,\tau\norm {(P+\tau)^{-1} }\leq 1,\\
     \label{eq:B20}
      &\sup_{\tau>0}\,\norm
        {\sqrt P(P+\tau)^{-1/2}}\leq 1,\\\label{eq:B2}
      &\sup_{\tau>0}\,\norm
        {\parb{r^{-2\rho}+\tau}^{1/2}(P+\tau)^{-1/2}}\leq 1,\\
&\sup_{\tau>0}\,\norm {r^{-\rho}(P+\tau)^{-1}r^{-\rho}}\leq 1.\label{eq:B3}
    \end{align} 
\end{subequations}

The assertion \eqref{eq:Pwei33} then follows from  \eqref{eq:B18}  and
\eqref{eq:Pwei2}.
Similarly the assertion \eqref{eq:Pwei4} follows  from \eqref{eq:B3} and \eqref{eq:Pwei44}.
    Thus it remains to show   \eqref{eq:Pwei44} and \eqref{eq:Pwei2}.

    \Step{II} We here show  \eqref{eq:Pwei44}. Let
    $\eta=2-2\rho-\delta/3$, and introduce for   all
    $s\in \R$   the operators  $B_s=r^{s-\rho}(P+\tau)^{-1}
    r^{-s-\rho}$. 
It suffices to show that
    \begin{equation}
      T_s:=B_s-B_0=\vO_\unif(r^{-\eta} ),
    \end{equation}
    In turn it suffices to show   that for all  $s\in \R$ the
    operators $r^{\eta}T_s$ are 
   bounded (uniformly in $t$ and $\tau$).
     Obviously we can assume that $s\neq 0$.

 For the case $s>0$ we  compute
\begin{align}\label{eq:resolvenComm}
  \begin{split}
   B_s&=r^{-\rho} (P+\tau)^{-1}
    r^{-\rho}\parb{ I +r^{\rho}\big
                          [P,r^{s}\big ](P+\tau)^{-1} r^{-s-\rho}}
                                              \\
&=r^{-\rho} (P+\tau)^{-1}
    r^{-\rho} \parb{
  I+\vO_{\unif}(r^{\rho +\delta/3-2 +s}) (P+\tau)^{-1} r^{-s-\rho}}. 
  \end{split}
\end{align}
\begin{subequations}
  Since $\rho +\delta/3-2 +s=s-\rho-\eta$ we can iterate this,
gaining at each step a factor $r^{-\eta}$, and hence
deduce (using \eqref{eq:B3}) that $B_s$ is (uniformly) bounded and therefore in turn  that also 
\begin{equation}\label{eq:unifB1}
  r^{\eta}T_s=r^{\eta}r^{-\rho} (P+\tau)^{-1}
  r^{-\rho} \vO_{\unif}(r^{-\eta})B_s=B_{\eta}\vO_{\unif}(r^{0})B_s
\end{equation}
is   (uniformly) bounded for $s>0$, as wanted. (Here we used 
that also 
  $B_{\eta}$ is   bounded.)

The case  $s<0$ is similar. In \eqref{eq:resolvenComm} we  pull the factor $r^{-s}$ to the left  (rather than pulling $r^{s}$ to the right), yielding again that  $B_s$ is (uniformly)  bounded. In particular we know at this point \eqref{eq:Pwei4}. 
The analogue of \eqref{eq:unifB1} then reads
\begin{equation}\label{eq:unifB2}
 r^{\eta}T_s=\parb{r^{\eta}B_s \vO_{\unif}(r^{-\eta})}\parb{r^{-\rho} (P+\tau)^{-1}
  r^{-\rho}}=\vO_{\unif}(r^{0}),
\end{equation}
  and therefore it follows that  $r^{\eta}T_s$ is   (uniformly) bounded for $s<0$ also. We have  shown  \eqref{eq:Pwei44}.
\end{subequations}

\Step{III}
We  finally show  \eqref{eq:Pwei2}.   We can assume that
$\sigma=0$,  and letting
\begin{equation*}
  T_s=r^s\sqrt Pr^{-s}-\sqrt P,
\end{equation*}
 it suffices to show that for all $s\in \R$ 
\begin{equation}\label{eq:Tbnddd}
  T_sr^{\eta} \text{ is (uniformly) bounded}; \quad \eta= 2-2\rho-\delta/3.
\end{equation}
 Using \eqref{eq:sqrt}  we  compute    
\begin{align}\label{eq:exSqrt}
  \begin{split}
  T_sr^{\eta}&= \int_0^\infty\, \tfrac{\tau ^{1/2}}\pi (P+\tau)^{-1}\big [r^s,P\big ](P+\tau)^{-1} r^{\eta-s}\, \d \tau\\
 &=\int_0^\infty\, \tfrac{\tau ^{1/2}}\pi
   (P+\tau)^{-1}\vO_{\unif}(r^{\delta/3-2 +s}) (P+\tau)^{-1}
   r^{\eta-s}\, \d \tau;\\&\quad\quad\quad\quad 
  \end{split}
\end{align} 

Let us split the integral in \eqref{eq:exSqrt} as $\int_0^1\cdots  \d \tau+\int_1^\infty\cdots  \d \tau$. 
 We first estimate 
\begin{equation}\label{eq:elem}
  \norm {\tau ^{1/2}(P+\tau)^{-1}}\leq 
  \tau^{-1/2}, 
\end{equation} and then thanks to  \eqref{eq:Pwei4}
\begin{align*}
  \norm {
     r^{\delta/3-2 +s}(P+\tau)^{-1} r^{\eta-s}} = \norm {
     r^{-2\rho-\eta+s}(P+\tau)^{-1} r^{\eta-s}}\leq C_1 ,
\end{align*}   yielding  the finiteness of the first integral. By \eqref{eq:Pwei4} and at most  one  
 commutation like in \eqref{eq:resolvenComm}
\begin{align*}
  \norm {
     r^{\delta/3-2 +s}(P+\tau)^{-1} r^{\eta-s}} = \norm {
     r^{\delta/3-2 +s}(P+\tau)^{-1} r^{2-\delta/3 -s-2\rho}} \leq C_2
   \tau ^{-1},
\end{align*}  yielding with \eqref{eq:elem} the finiteness of the
second  integral.

We have  shown \eqref{eq:Tbnddd} and therefore  also  
  \eqref{eq:Pwei2}.
\end{proof} 

 Below we introduce   a more `robust' procedure  for proving   
   \eqref{eq:Smoothbnd4} than the one
   used  before.
\begin{proof}[\textbf {Alternative proof of \eqref{eq:Smoothbnd4}}]
  \Step {I} We set up a new scheme of proof. 
   Recalling the notation 
$\psi(t)=U(t)g(U(1))\psi$ for $\psi\in \vS(\bX)$ (or for $\psi\in \vH$), we first remark that  
the
  time-derivative of the  function 
  \begin{align*}
    \phi(t)=\inp[\big]{f\big(\wcH (t)\big)\chi^2_{\kappa,K}(M)f\big(\wcH(t)\big)}_{\psi(t)}
  \end{align*} is integrable. In fact using the notation
  \eqref{eq:phiSigma}, it follows that $\phi(t)=\phi_\kappa(t)-\phi_K(t),$
   and we already showed that $\phi'_\kappa,\phi'_K\in
  L^1(\R,\d t)$ in agreement with the bounds
  \eqref{eq:Smoothbnd1}
  and  \eqref{eq:Smoothbnd2}. 

Now      we consider  the
  time-derivative of the bounded function 
\begin{align}\label{eq:pro7}
  \begin{split}
   &\br{ \phi}(t)=\inp{\Psi}_{\psi(t)};\\
  \Psi&=f\parb{\wcH}M\chi^2_{\kappa,K}(M)f\parb{\wcH}\\&=f\parb{\wcH}\chi_+(M/\kappa)\chi_-(M/K)M\chi_-(M/K)\chi_+(M/\kappa)f\parb{\wcH},  
  \end{split}
\end{align} compute, expand   and check the  integrability of   terms. The extension from $\psi\in \vS(\bX)$ to an arbitrary $\psi\in \vH$ is explained in the proof of Corollary \ref{cor:smoothEst9} (alternatively the more direct scheme of the proof of Corollary \ref{cor:smoothEst} applies).  We claim that after  expansion and
  symmetrization (to be elaborated on in Steps II-VI) all terms except a single one are treatably   by
  \eqref{eq:Smoothbnd1} and \eqref{eq:Smoothbnd2}
   as well as by   \eqref{eq:Smoothbnd3}, cf. the above
  remark. Essentially  only one new term arises, and this amounts to   the
  contribution from the commutator with the appearing  middle factor of
  $M$ in ${\Psi}$. However $\i [p^2,M]$ has a  `content of positivity'  given by $4{
   p\cdot \nabla^2m(x) p}$  and
 leads to  the `new term' $  \norm[\big]{Q_{2, \kappa,K}(t)\psi(t)}^2$  in the
 derivative of $\br{\phi}$. Consequently \eqref{eq:Smoothbnd4} would follow.

 \Step {II}
We  check that indeed the computation, expansion and
symmetrization procedure outlined in Step I is doable. Using the scheme of proof of Lemma \ref{lemmaNegForbidden},  the contribution  from $ \tfrac {\d}{\d
  t}f\parb{\wcH}$ is estimated by \eqref{eq:LocalDecay}  and the
commutator  $\i [H,\Psi]$ can  be replaced by $\i [m^\delta\wc
f\parb{\wcH} m^\delta,\Psi]=\i [m^\delta\tilde{ f}(\wcH)\wcH \tilde{
  f}(\wcH)m^\delta,\Psi]$.  By the product rule there are seven terms
in the expansion of the commutator, commuting separately through the seven factors  of   $\Psi$. 
The contribution from the middle factor 
\begin{align*}
  S_4=f\parb{\wcH}\chi_{\kappa,K}(M)\i \big[m^\delta\wc f\parb{\wcH} m^\delta,M\big]\chi_{\kappa,K}(M)f\parb{\wcH}
\end{align*} is the wanted 'new term' mentioned in Step I. This does not need
symmetrization.

The sum of the other terms takes the form
\begin{equation*}
  S_1+S_2+S_3+ S_5+S_6+S_7= (S_1+\text{hc})+(S_2+\text{hc}) +(S_3+\text{hc}),
\end{equation*} where $S_1+\text{hc}=S_1+S_1^*=S_1+S_7$ and similarly
for $S_2$ and $S_3$.

In Steps III-V below we treat the contribution from $S_2+\text{hc}$ to
the time-integral,
and in Step VI  we treat the remaining cases  defined for
$S_3+\text{hc}$ (doable in a simlarly way) and  $S_1+\text{hc}$ (doable in a much simpler  way).

\Step {III} 
\noindent\underline{$S_2+\text{hc}$:}
 By definition 
\begin{align*}
  S_2+\text{hc}=f\parb{\wcH}\i \big[m^\delta\wc f\parb{\wcH}
  m^\delta,\chi_+(M/\kappa)\big]M\chi^2_-(M/K)\chi_+(M/\kappa)f\parb{\wcH} +\text{hc}.
\end{align*} After  computations  similar to some appearing in 
the proofs of Lemma \ref{lemmaNegForbidden} and Corollary
\ref{cor:smoothEst2b}   the main part is identified, to be proven
below, as
\begin{align}\label{eq:Qintro1}
  \begin{split}
 \tfrac 2 \kappa Wf\parb{\wcH}&B_\kappa\tilde{f}\big(\wcH\big)\,W^{-1} \parb{p\cdot \nabla^2m(x) p}W^{-1}\tilde{f}\big(\wcH\big)B_\kappa h(M)
                                                                                                                                                             f\parb{\wcH}W+\text{hc};\\
    & B_\kappa=\sqrt{(\chi^2_+)^{(1)}(M/\kappa)},\q\q h(s)=\chi_+(2s/\kappa)s\chi^2_-(s/K).    
  \end{split} 
\end{align} In terms of the positive operator $P=P_{\rho,W}$ from
\eqref{eq:aux} this expression is on the form
\begin{align*}
 \tfrac 2 \kappa
 W\parb{f\parb{\wcH}B_\kappa \,P_{\rho,W}\,B_\kappa h(M)f\parb{\wcH}+\text{hc} +\vO_\unif(r^{-1_+})}W.
\end{align*}

First, it follows by commutation (skipping here elements from Step VI in the
proof of  Lemma \ref{lemmaNegForbidden}) 
\begin{align*}
  \begin{split}
 S_2+\text{hc}=\tfrac 2 \kappa f\parb{\wcH}B_\kappa\tilde{f}\big(\wcH\big)\,\parb{p\cdot \nabla^2m(x) p}\tilde{f}\big(\wcH\big)B_\kappa h(M)
f\parb{\wcH}+\text{hc}+\vO_\unif(r^{-1_+}).
  \end{split} 
\end{align*}
Second,  we argue
that  the essential  part of this expression in turn is identified as 
\begin{align}\label{eq:Qintro2}
  \begin{split}
 \tfrac 2 \kappa Wf\parb{\wcH}B_\kappa\tilde{f}\big(\wcH\big)\,W^{-1} \parb{p\cdot \nabla^2m(x) p}\tilde{f}\big(\wcH\big)B_\kappa h(M)
f\parb{\wcH}+\text{hc}.
  \end{split} 
\end{align} This amounts to estimating 
\begin{align*}
 \tfrac 2 \kappa W\big
 [f\parb{\wcH}B_\kappa \tilde{f}\big(\wcH\big),\,W^{-1}\big
 ] \parb{p\cdot \nabla^2m(x) p}\tilde{f}\big(\wcH\big)B_\kappa h(M)
f\parb{\wcH}+\text{hc},
\end{align*} and therefore by commutation (cf. Lemma
\ref{lemma:expansion})   and   using  notation from
\eqref{eq:Smoothbnd2},
\begin{align*}
  \begin{split}
 &\tfrac 2 \kappa W\big
 [f\parb{\wcH}B_\kappa \tilde{f}\big(\wcH\big),\,W^{-1}\big
 ] h(M)\tilde{f}\big(\wcH\big) \sq {p\cdot \nabla^2m(x) p}\\&\q\q\q\q \q\q\q\q\q\q\q\q \sq {p\cdot \nabla^2m(x) p}\tilde{f}\big(\wcH\big)B_\kappa 
f\parb{\wcH}+\text{hc}
\\&=  \kappa^{1} W S^*_{\chi_+^2,h} Q_{2, \kappa}
+\text{hc};\\&\q\q\q
 S_{\chi_+^2,h}=\sq {p\cdot \nabla^2m(x)
  p}\,\tilde{f}\big(\wcH\big)h(M)  \big [W^{-1} ,\tilde{f}\big(\wcH\big)B_\kappa f\parb{\wcH}\big ].
  \end{split} 
\end{align*} The last expression  is treatable by \eqref{eq:Smoothbnd2},
a slight modification  of \eqref{eq:1TstarT} and by (\ref{eq:LocalDecay}),  yielding the bound
$C\norm {\psi}^2$ of  its  contribution  to the 
time-integral, so indeed the essential  part is   given by
\eqref{eq:Qintro2}. 

Third, to obtain \eqref{eq:Qintro1} from \eqref{eq:Qintro2}, it
suffices to estimate 
\begin{align}\label{eq:Qintro3}
  \begin{split}
 \tfrac 2 \kappa
 Wf\parb{\wcH}&B_\kappa \tilde{f}\big(\wcH\big)\,W^{-1}
 \sq
 { p\cdot \nabla^2m(x) p}\\&\q\q\q \sq
 { p\cdot \nabla^2m(x) p}\Big [ W^{-1}, \tilde{f}\big(\wcH\big)B_\kappa h(M)
f\parb{\wcH}\Big ]W +\text{hc}.
  \end{split} 
\end{align} By  repeating the above  argument,  using
\eqref{eq:Smoothbnd30b} and 
   a slight modification  of \eqref{eq:1TstarT}, indeed
 it follows that 
 the time-integral of the  contribution from \eqref{eq:Qintro3} 
 is bounded by $C\norm {\psi}
^2$. We have derived the main  term \eqref{eq:Qintro1}  as wanted.

\Step {IV}  We continue our study of $S_2+\text{hc}$. 
In terms of the positive operator $P=P_{\rho,W}$ from
\eqref{eq:aux} and its square root $Q=Q_{\rho,W}=\sqrt{ P_{\rho,W}}$
 the expression \eqref{eq:Qintro1}  can be written as $\tfrac 2 \kappa
 WTW$, where 
\begin{align}\label{eq:Tsquared}
  \begin{split}
  T=
  &f\parb{\wcH}B_\kappa \,Q^2\,B_\kappa h(M)f\parb{\wcH}+\text{hc} +\vO_\unif(r^{-1_+})\\
=&2
  f\parb{\wcH}B_\kappa \,Qh(M)Q B_\kappa f\parb{\wcH}+
  \parbb{f\parb{\wcH}B_\kappa \,QS
  +\text{hc}}
           +\vO_\unif(r^{-1_+});\\
&  \qq\qq  S=S_{\kappa, \rho,W}=\big [Q B_\kappa \,,h(M)\big
       ]f\parb{\wcH}=\big [Q \,,h(M)\big
       ]B_\kappa f\parb{\wcH}.  
  \end{split}
\end{align} 
  Obviously the first term  is treatable  by Corollary \ref{cor:smoothEst2b}, and
 we only need to control the middle  term involving the operator $S$. We decompose 
\begin{align*}
  S&=Q^{-1}R_1B_\kappa f\parb{\wcH}+Q^{-1}R_2B_\kappa f\parb{\wcH};\\
&R_1=2^{-1} \big [P,M\big ] h'(M),\qq R_2=Q\big [Q,h(M)\big
       ]-2^{-1} \big [P,M\big ] h'(M).
\end{align*} The contribution from $Q^{-1}R_1B_\kappa f\parb{\wcH}$ is
easy: We substitute it 
 in the middle  term of \eqref{eq:Tsquared} 
and use 
the cancellation $ QQ^{-1}=I$. This  leads  to the expression 
\begin{align*}
  2^{-1}
  f\parb{\wcH}&B_\kappa \, \parbb{\big
           [P,M\big ] h'(M)-h'(M)\big [P,M \big ]}
          B_\kappa f\parb{\wcH}, 
\end{align*} which is  on the
form $\vO_\unif(r^{4\delta/3 -3})=\vO_\unif(r^{-1_+})$, cf. Lemma
\ref{lemma:expansion} (and its proof).

For the  contribution from  $Q^{-1}R_2B_\kappa f\parb{\wcH}$ in \eqref{eq:Tsquared} it suffices to show
that 
\begin{align}\label{eq:Rtwo}
  \forall \,\psi\in \vH:\quad \int_{-\infty}^\infty
\norm[\big]{r^\rho R_2 B_\kappa f\parb{\wcH}W\psi(t)}^2\,\d t \leq C\|\psi\|^2. 
\end{align}
\begin{subequations}

   Now to address  \eqref{eq:Rtwo} 
   we  compute using  \eqref{eq:sqrt}
   \begin{align}\label{eq:Tcomh}
    \begin{split}
   Q\big [Q,h(M)\big
     ]&= 2^{-1}[P,h(M)] \\& \qq +\int_0^\infty\, \tfrac{\tau ^{1/2}}{\pi} \sqrt
    P (P+\tau)^{-2}\ad^2_P(h(M))(P+\tau)^{-1}\, \d \tau.   
    \end{split}
   \end{align} Note that for any $\varepsilon>0$  
   \begin{align*}
 \pi^{-1}\int_0^\infty\, \tau^{1/2} F(P>\varepsilon)\sqrt P(P+\tau)^{-2}\, \d \tau=2^{-1}F(P>\varepsilon), 
   \end{align*} which implies that (\ref{eq:Tcomh}) is correct when multiplied by the projection  $F(P>\varepsilon)$ from the left. Since $\ker (P)=\set{0}$,  we then obtain (\ref{eq:Tcomh}) by letting $\varepsilon \to 0$.

   In turn   
    using  that
  $\ad^k_M(P)=\vO_\unif(r^{4\delta/3-k-1})$, $k=0,1,2$,  it follows  
  that 
  \begin{equation}\label{eq:Pcomh}
    2^{-1}[P,h(M)]  
= 2^{-1}\big [P,M\big ] h'(M)+\vO_\unif(r^{4\delta/3-3}).
  \end{equation}  
\end{subequations}  By substituting   \eqref{eq:Pcomh} in
\eqref{eq:Tcomh} we arrive at the formula 
\begin{align*}
  R_2= \int_0^\infty\, \tfrac{\tau ^{1/2}}{\pi} \sqrt
  P (P+\tau)^{-2}\ad^2_P(h(M))(P+\tau)^{-1}\, \d \tau+\vO_\unif(r^{4\delta/3-3}).
\end{align*} The second  term is on the form $\vO_\unif(r^{-2\rho}) $. Writing similarly  the first  term   on the form
$r^{-\rho}Rr^{-\rho}$,  we compute 
 \begin{equation*}
 R=\int_0^\infty\, {\tau ^{1/2}} r^{\rho}\sqrt
  P (P+\tau)^{-2}\vO_\unif(r^{8\delta/3-4})(P+\tau)^{-1}r^{\rho}\, \d \tau,
\end{equation*} and  hence  it remains for \eqref{eq:Rtwo}   to show that 
\begin{equation}\label{eq:intsqrtB}
 \sup_{t\in \R}\,\norm[\Big]{\int_0^\infty\, {\tau ^{1/2}} r^{\rho}\sqrt
  P (P+\tau)^{-2}\vO_\unif(r^{8\delta/3-4})(P+\tau)^{-1}r^{\rho}\, \d
  \tau}_{\vL (\vH)}< \infty.
\end{equation}

\Step {V} We prove
\eqref{eq:intsqrtB}, finishing our treatment of the term $S_2+\text{hc}$. Our proof is  based on Lemma \ref{lemma:Pweights} and its proof.
If we  `ignore' the first term of $P$ (considering  $P\approx r^{-2\rho}$) and hence also
ignore commutation errors,  the bound
\eqref{eq:intsqrtB} should be a `consequence' of the simplified bound
\begin{equation*}
  \sup_{r\geq 1}\, \int_0^\infty\, {\tau ^{1/2}} r^{2\rho+8\delta/3-4}(r^{-2\rho}+\tau)^{-5/2}\, \d \tau<\infty,
\end{equation*} which   in turn  is valid since by \eqref{eq:constntLimit} the number 
$\eta:=4-4\rho-8\delta/3>0$.

To  proceed rigorously we need to worry about certain
commutations. Let us start with examining the `leading contribution'
arising by replacing   $r^{\rho}\sqrt P$ by $\sqrt P r^{\rho}$ (the
error is  controlled below by \eqref{eq:Pwei2}).   Then we split the integral
 as $\int_0^1\cdots  \d \tau+\int_1^\infty\cdots  \d \tau$ and  first check  the
 (uniform) integrability on $(0,1)$ of 
\begin{align*}
   \phi(\tau):=\tau ^{1/2}\norm { \sqrt P r^{\rho} 
(P+\tau)^{-2}\vO_\unif (r^{8\delta/3-4})(P+\tau)^{-1}r^{\rho} }.
\end{align*}  
We observe  the following versions  of \eqref{eq:resolvenComm}.
\begin{subequations}
\begin{align}\label{eq:resolvenComm2}
  \begin{split}
   &r^{\rho}(P+\tau)^{-1} r^{-\rho}\\&=(P+\tau)^{-1}
   \parb{ I +\big
                          [P,r^{\rho}\big ](P+\tau)^{-1} r^{-\rho}}
                                              \\
&=(P+\tau)^{-1}
  \parb{ I+\vO_\unif(r^{\rho +\delta/3-2 }) (P+\tau)^{-1} r^{-\rho}}\\
    &=:(P+\tau)^{-1}
    \parb{ I+A},
  \end{split}
\end{align}
\begin{align}\label{eq:resolvenComm23}
  \begin{split}
   r^{-\rho}(P+\tau)^{-1} r^{\rho}&=
                                    \parb{ I+r^{-\rho}(P+\tau)^{-1} \vO_\unif(r^{\rho +\delta/3-2 }) }(P+\tau)^{-1}\\
    &=
    \parb{ I+A^*}(P+\tau)^{-1},
  \end{split}
\end{align}
 \end{subequations}
yielding first  with  \eqref{eq:B20} and \eqref{eq:B2},  and in the last
step  with \eqref{eq:Pwei4}, 
\begin{align*}
   &\phi(\tau)\\&\leq C_1 \tau ^{1/2}\norm {\parb{r^{-2\rho}+\tau}^{-1/2} \parb{ I+A}r^{\rho}
  (P+\tau)^{-1}\vO_\unif(r^{8\delta/3-4+\rho})\parb{ I+A^*}(P+\tau)^{-1}}\\
  &\leq C_2 \norm {\parb{r^{-2\rho}+\tau}^{-1/2} \parb{ I+A}r^{\rho}
    (P+\tau)^{-1}\vO_\unif(r^{8\delta/3-4+\rho})\parb{ I+A^*}\parb{r^{-2\rho}+\tau}^{-1/2}}\\
  &\leq C_3 \norm {r^{\rho}\parb{ I+A}r^{\rho}
    (P+\tau)^{-1}\vO_\unif(r^{8\delta/3-4+\rho})\parb{ I+A^*}r^{\rho}}\\
  &\leq C_4 \norm {r^{\rho}\parb{ I+A}r^{-\rho}\parbb{r^{2\rho}
    (P+\tau)^{-1}\vO_\unif(r^{8\delta/3-4+2\rho})}r^{-\rho}\parb{ I+A^*}r^{\rho}}\\
   &\leq C_5 \norm {  r^{2\rho}
     (P+\tau)^{-1}\vO_\unif(r^{-2\rho-\eta})};\q \eta=4-4\rho-8\delta/3.
  \end{align*}   By using 
  \eqref{eq:resolvenComm2} with $\rho$ replaced by $2\rho$ and
  \eqref{eq:Pwei4} (along the same lines),  and in addition an interpolation of
  \eqref{eq:Pwei4} and \eqref{eq:B1} (using that $0<\eta< 2\rho$),  we can  in turn  bound
  \begin{equation}
    \label{eq:kappaB}
    \norm {  r^{2\rho} (P+\tau)^{-1}\vO_\unif(r^{-2\rho-\eta})}\leq C_1\norm {  P+\tau)^{-1}r^{-\eta}}\leq C_2 \tau^{-(1-\eta/2\rho)}.
  \end{equation} Clearly  \eqref{eq:kappaB}  yields  that
  $\int_0^1\,{\phi(\tau)}\,  \d \tau <\infty$  and hence a  
   finite contribution from  $\int_0^1\cdots  \d \tau$,  as wanted.  By  the second estimation of $\phi(\tau)$  given above
  (and again by using 
  \eqref{eq:Pwei4} and 
  \eqref{eq:resolvenComm2})  
  we can also estimate
  \begin{equation*}
    \int_1^\infty\,{\phi(\tau)}\,  \d \tau \leq \int_1^\infty\,C\tau^{-1/2}\tau^{-1}\tau^{-1/2}\,  \d \tau =C<\infty.
  \end{equation*}

 It remains to check the above replacement of $r^{\rho}\sqrt P$ by
 $\sqrt P r^{\rho}$. By \eqref{eq:Pwei2} (applied with  $s=\rho$ and
 $\sigma=0$) we need to show the  integrability of 
\begin{align*}
   \br \phi(\tau):=\tau ^{1/2}\norm { r^{\br\rho} 
(P+\tau)^{-2}\vO_\unif (r^{8\delta/3-4})(P+\tau)^{-1}r^{\rho} };\q \br\rho=\delta/3+3\rho-2.
\end{align*}  By using
  \eqref{eq:resolvenComm2} with $\rho$ replaced by $\br
  \rho$ we estimate as above (with a similar $A$)
\begin{align*}
   \br \phi(\tau)&\leq C_1 \norm {(P+\tau)^{-1} \parb{ I+A}r^{\br\rho}
  (P+\tau)^{-1}\vO_\unif(r^{8\delta/3-4+\rho})\parb{
                   I+A^*}\parb{r^{-2\rho}+\tau}^{-1/2}}\\
&\leq C_2 \norm {(P+\tau)^{-1} \vO_\unif(r^0) 
  (P+\tau)^{-1}r^{\br\rho+8\delta/3-4+2\rho}}\\
  &= C_2\norm {(P+\tau)^{-1} \vO_\unif(r^0) 
    (P+\tau)^{-1}r^{-2\rho}r^{7\rho+3\delta-6}}\\
  &= \norm {(P+\tau)^{-1} r^{-\br \eta}\vO_\unif(r^0)};\q \br \eta=6-3\delta-7\rho.
  \end{align*} Arguing as in \eqref{eq:kappaB}  (using  that $0<\br \eta< 2\rho$, cf. \eqref{eq:constntLimit}) yields 
  a finite contribution from  $\int_0^1\cdots  \d \tau$, and for $\tau>1$ the  estimation shows that $\br \phi (\tau)\leq C \tau^{-2}$ yielding also    a finite contribution from  $\int_1^\infty\cdots  \d \tau$.

   We have verified \eqref{eq:intsqrtB} and therefore proven \eqref{eq:Rtwo}.

\Step {VI}  We treat the contributions from 
$S_3+\text{hc}$ and  $S_1+\text{hc}$ to
the time-integral. 
\noindent\underline{$S_3+\text{hc}$:} By definition 
\begin{align*}
  S_3+\text{hc}=f\parb{\wcH}\chi_+(M/\kappa)\i \big[m^\delta\wc f\parb{\wcH}
  m^\delta,\chi_-(M/K)\big]M\chi_{\kappa,K}(M)f\parb{\wcH} +\text{hc}.
\end{align*}  This term can be treated similarly to $S_2+\text{hc}$,
now using \eqref{eq:Smoothbnd3} rather than  \eqref{eq:Smoothbnd2}
after commutation. Obviously the difficult term involving the commutator
\begin{equation*}
  \Big [Q\sqrt{-(\chi^2_-)^{(1)}(M/K)},\br h(M) \Big
       ]f\parb{\wcH},\q Q=Q_{\rho,W},\q \br h(s)=\chi_-(s/2K)s\chi^2_+(s/\kappa),
\end{equation*}
 can be  treated as before.

\

\noindent\underline{$S_1+\text{hc}$:} We compute, cf. the proof of
Corollary  \ref{cor:smoothEst}, 
\begin{align*}
   &S_1+\text{hc}=\i \big [ m^\delta\wc f\parb{\wcH}
  m^\delta,f\parb{\wcH}\big ] M\chi^2_{\kappa,K}(M)f\parb{\wcH}
                  +\text{hc}\\
&=4\delta  m^{-1/2}\bar f\big(\wcH\big) \,{M^2\chi^2_{\kappa,K}(M)}\,\bar f\big(\wcH\big) m^{-1/2}+\vO_\unif(r^{-1_+}).
\end{align*} Obviously \eqref{eq:Smoothbnd1} applies.
\end{proof}

\subsection{Generalized integral
  estimates}\label{subsec:integral
  estimates}

Recalling  that $m$ and $M$ depend on a small $\epsilon>0$, we  let 
$\epsilon_0=\epsilon$ and  $m_0=m$. This is consistent with our previous convention $M_0=M$.  Suppose now
we are additionally  given
a finite number of   small positive numbers $\epsilon_1,\dots, \epsilon_I$ and  corresponding functions   $m_1,\dots, m_I$ and 
operators  $M_1,\dots, M_I$  defined as for  $m_0$ and $M_0$ just by
replacing $\epsilon_0$ by   $\epsilon_1,\dots, \epsilon_I$ in the constructions.  In Subsection \ref{subsec:Radiation condition
  integral estimates} we introduce  a concrete example  of this setup for which  the below Corollary  \ref{cor:smoothEst3}  (in the disguised form of  Lemma  \ref{lemma:concretLem}) will be  important for us in Subsection \ref{subsec:Proof
  of Proposition}.

Introduce  then 
 the operator  
\begin{subequations}
  \begin{equation}\label{eq:minus1}   E_{-1}=f\parb{\wcH},  
  \end{equation}
  and let for $i=0, 1,\dots, I$ and given 
$K>2\kappa>0$   
\begin{align}\label{eq:tildeT}
    E_i&=\chi_{i}(\bM)f\parb{\wcH};\\
    \chi_i(\bM)&=\chi_{\kappa,K}(M_i)\chi_{\kappa,K}(M_{i-1})\cdots \cdots\chi_{\kappa,K}(M_0).\label{eq:barM2}
  \end{align}  
\end{subequations} Note that $\wcH$ and $f\parb{\wcH}$ are  defined by
\eqref{eq:widecheckH} and \eqref{eq:fesb} in terms of $m_0=m$ (as  considered in \eqref{eq:Smoothbnd4} in 
combination with the factor $\chi_0(\bM)=\chi_{\kappa,K}(M_0)$).

\begin{corollary}\label{cor:smoothEst3}
  Let $g\in C^\infty_{\c}(\T\setminus \vT_\p)$,  $\chi\in
  \vF_+$,  $K>2\kappa>0$   and  any small
 $\epsilon_0, \epsilon_1\dots ,\epsilon_I >0$  be given. Then, using the notation
 $\psi(t)=U(t)g(U(1))\psi$ for  any $\psi\in \vH$,  the following
 bounds  hold.
 \begin{subequations}
 \begin{align}\label{eq:Smoothbnd50}
  \begin{split}
  \forall \psi\in \vH\,\, \forall &i=0,\dots, I:\q\int_{-\infty}^\infty
\norm[\big]{Q_{2,i}\psi(t)}^2\,\d t \leq C\|\psi\|^2 ;\\
Q_{2,i}&=Q_{2,i}(t)=2\sqrt{p\cdot \nabla^2m_i(x)
  p} )\tilde{f}\big(\wcH\big)\sq{\chi'}(M_i)  E_{i-1}.   
  \end{split} 
\end{align}  
\begin{align}\label{eq:Smoothbnd5}
  \begin{split}
  \forall \psi\in \vH\,\, \forall &i=0,\dots, I:\q\int_{-\infty}^\infty
\norm[\big]{Q'_{2,i}\psi(t)}^2\,\d t \leq C\|\psi\|^2 ;\\
&\qq\qq Q'_{2,i}=Q'_{2,i}(t)=2\sqrt{p\cdot \nabla^2m_i(x)
  p} \tilde{f}\big(\wcH\big)  E_i.  
  \end{split}
\end{align}  
 \end{subequations}  
\end{corollary}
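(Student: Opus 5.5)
We argue by induction on $i=0,1,\dots,I$, proving \eqref{eq:Smoothbnd50} and \eqref{eq:Smoothbnd5} together at each level. The base case $i=0$ is already available: \eqref{eq:Smoothbnd50} for $i=0$ is \eqref{eq:Smoothbnd30} with $E_{-1}=f(\wcH)$, and \eqref{eq:Smoothbnd5} for $i=0$ is \eqref{eq:Smoothbnd4}. For the inductive step we assume the bounds for all indices $<i$. We also use the analogues of \eqref{eq:Smoothbnd1} for $M_j$ localized by $E_{j-1}$; these are bounded terms and are controlled in the same way, using the convexity of $m_j$ and \eqref{eq:LocalDecay}.

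To get \eqref{eq:Smoothbnd50} at level $i$ we use the bounded observable
\begin{equation*}
  \Psi=E_{i-1}^*\,\chi(M_i)\,E_{i-1}.
\end{equation*}
We compute $\bD\Psi$ as in Lemma \ref{lemmaNegForbidden} and Corollary \ref{cor:smoothEst2}. The time derivative of $f(\wcH)$ and the replacement of $H$ by $m^\delta\wc f(\wcH)m^\delta$ cost $\vO_\unif(r^{-1_+})$, which is integrable by \eqref{eq:LocalDecay}. The commutator with the middle factor $\chi(M_i)$ gives the positive term $Q_{2,i}^*Q_{2,i}$. Here $\i[p^2,M_i]$ has leading part $4p\cdot\nabla^2 m_i\,p$, and the mismatch between $\wcH$, which is built from $m_0$, and $m_i$ is handled exactly as in Remark \ref{remark:integral-estimates6}. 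We also get a nonnegative term $\delta Q_1^*Q_1$ from the factors $f(\wcH)$. The remaining terms are commutators of $m^\delta\wc f(\wcH)m^\delta$ with the factors $\chi_{\kappa,K}(M_j)$, $j<i$, inside $E_{i-1}$. Each of them has the form of \eqref{eq:Qintro1}, namely
\begin{equation*}
  \sqrt{p\cdot\nabla^2m_j\,p}\,\tilde f(\wcH)\,B\,E_{j-1}\quad\text{with}\quad B=\sqrt{(\chi_+^2)'(M_j/\kappa)}\ \text{or}\ \sqrt{-(\chi_-^2)'(M_j/K)},
\end{equation*}
paired against a bounded operator built from the other factors. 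These terms are therefore controlled by the induction hypothesis \eqref{eq:Smoothbnd50} at level $j$, applied with $\chi=\kappa\chi_+^2(\cdot/\kappa)$ and $\chi=K\chi_+^2(\cdot/K)$ (both in $\vF_+$), together with the Cauchy--Schwarz inequality. Once \eqref{eq:Smoothbnd50} holds at level $i$, we obtain \eqref{eq:Smoothbnd5} at level $i$ by taking $\chi(s)=\int_{-\infty}^s\chi_{\kappa,K}^2$, exactly as \eqref{eq:Smoothbnd4} was obtained from \eqref{eq:Smoothbnd30}.

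The main obstacle is the step just described for $j<i$. The square-root factor $\sqrt{p\cdot\nabla^2m_j\,p}$ does not sit next to $E_{j-1}$; it is separated from it by the factors $\chi_{\kappa,K}(M_{j+1}),\dots,\chi_{\kappa,K}(M_i)$ on the right. We therefore have to commute these factors through the square root, which is precisely the square root problem of Subsection \ref{subsec:A complicated proof}. We handle it with the scheme of the alternative proof of \eqref{eq:Smoothbnd4}. We replace the square root by $Q=\sqrt{P}$ with $P=P_{\rho,W}$ and $W=\inp{p}^{1/3}$, first passing to \eqref{eq:Smoothbnd30b}-type bounds for each $M_j$ via \eqref{eq:1TstarT} and \eqref{eq:LocalDecay}. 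We then split $[Q,h(M_k)]$ into $Q^{-1}R_1+Q^{-1}R_2$ as in \eqref{eq:Tsquared}. The $R_1$ part cancels after symmetrization and leaves an $\vO_\unif(r^{-1_+})$ remainder. The $R_2$ part is bounded through \eqref{eq:Tcomh} and Lemma \ref{lemma:Pweights}, which yields the analogue of \eqref{eq:intsqrtB}.

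What has to be checked is that $\ad^k_{M_k}(P)=\vO_\unif(r^{4\delta/3-k-1})$ for $k\le 2$ still holds when $P$ is built from $m_j$ and $M_k$ from $m_k$ with $k\neq j$. This requires only symbol-type estimates, since each $m_k$ is homogeneous of degree one at infinity. With these estimates the $\tau$-integrals converge under \eqref{eq:constntLimit}, as in Step V of the alternative proof. We iterate this commutation once for each of the finitely many factors between the square root and $E_{j-1}$.
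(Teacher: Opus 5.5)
Your proposal is correct and follows essentially the paper's own argument. The shared ingredients are: induction on $i$ with the observable $E_{i-1}^*\chi(M_i)E_{i-1}$; the wanted term $Q_{2,i}^*Q_{2,i}$ coming from the middle factor, with the $m_0$ versus $m_i$ mismatch handled as in Remark~\ref{remark:integral-estimates6}; the commutators with $\chi_{\kappa,K}(M_j)$, $j<i$, controlled by the induction hypothesis for $\chi_+^2(\cdot/\kappa)$ and $\chi_+^2(\cdot/K)$ in the regularized form \eqref{eq:altBnd}; and the resulting commutators of $\sqrt{P_j}$ with the intervening factors (the paper's \eqref{eq:sqrjjj}) treated by the scheme of the alternative proof of \eqref{eq:Smoothbnd4}.

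There are two small slips, neither affecting the argument. First, the string separating the square root from $E_{j-1}$ is $\chi_{\kappa,K}(M_{j+1})\cdots\chi_{\kappa,K}(M_{i-1})\chi(M_i)\chi_{\kappa,K}(M_{i-1})\cdots\chi_{\kappa,K}(M_{j+1})$, not a string ending at $\chi_{\kappa,K}(M_i)$. Second, the term produced by commuting with $f(\wcH)$ is no longer literally $\delta Q_1^*Q_1\geq 0$ once $E_{i-1}^*\cdots E_{i-1}$ sits in between; it is still bounded via \eqref{eq:Smoothbnd1} and the Cauchy--Schwarz inequality, because $\chi_{\kappa,K}(M_0)$ localizes $M_0$ to a compact subset of $(0,\infty)$.
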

\begin{proof}[Outline of  proof] The bounds \eqref{eq:Smoothbnd5} is a special case of the bounds
  \eqref{eq:Smoothbnd50}, cf.  the proof of Corollary
  \ref{cor:smoothEst2}.  

The proof of \eqref{eq:Smoothbnd50} is by induction in $i$. By
  \eqref{eq:Smoothbnd30}  we know \eqref{eq:Smoothbnd50} for $i=0$.
So suppose $i\in \set{1\dots, I}$ and that  \eqref{eq:Smoothbnd50} is  known for
the indices $0,\dots, i-1$. Then we need  to verify \eqref{eq:Smoothbnd50} 
for this   $i$.

Motivated by the proof of 
\eqref{eq:Smoothbnd30}  and our `Alternative proof of
  \eqref{eq:Smoothbnd4}'  we   consider the   time-derivative of  the
bounded function
\begin{equation} \label{eq:barM_i}
   \phi_i(t)=\inp[\big]{f\big(\wcH (t)\big)\chi_{i-1}(\bM)^*\chi(M_{i})\chi_{i-1}(\bM)f\big(\wcH(t)\big)}_{\psi(t)}.
   \end{equation}
The main contribution from the middle factor $\chi(M_{i})$, 
\begin{align*}
  \inp[\big]{f\big(\wcH \big)\chi_{i-1}(\bM)^*\sq{\chi'}(M_i)\i \big[m^\delta\wc
  f\parb{\wcH}
  m^\delta,M_i\big]\sq{\chi'}(M_i)\chi_{i-1}(\bM)f\big(\wcH\big)}_{\psi(t)},
\end{align*} is the wanted term. The contributions  from commutation
with the other
factors   are treated by
induction after 
symmetrization. We circumvent   the (dangerous) square root singularity 
problem   by using versions of the operator $ P=P_{\rho,W}(t) $ used
in the  alternative proof of \eqref{eq:Smoothbnd4}. Note that the bound \eqref{eq:Smoothbnd50} with $i$ replaced
by any $j$ fulfilling $0\leq
j\leq i-1$  is  equivalent to
the following bound:
\begin{align}\label{eq:altBnd}
\begin{split}
  &\int_{-\infty}^\infty
\norm[\big]{Q_{2,j,\rho,W}\psi(t)}^2\,\d t \leq C\|\psi\|^2 ;\\
Q_{2,j,\chi, \rho,W}&=Q_{2,j,\chi, \rho,W}(t)=2
\sqrt{P_j}\sq{\chi'}(M_j)  E_{j-1}W,\\
P_j=P_{j,\rho,W}(t) &=\tilde{f}\big(\wcH\big) W^{-1}\parb{p\cdot 
\nabla^2m_j(x)p}W^{-1}\tilde{f}\big(\wcH\big)+r^{-2\rho}.   
  \end{split}
\end{align} Here 
$P_0$ and $Q_{2,0,\chi, \rho,W}$ agree  with \eqref{eq:aux} and
$Q_{2,\chi, \rho,W}$ from  in Corollary \ref{cor:smoothEst2b},
respectively. Hence the  stated equivalence for $j=0$ follows from 
Remark \ref{remark:square-root-problem}. The general case $0\leq
j\leq i-1$ follows
similarly by commutation.

The induction step mimic our     alternative proof
of \eqref{eq:Smoothbnd4} using \eqref{eq:altBnd} and commutation.  Note
that previously  the complication arose
from dealing with the commutator $[\sq P_0 ,h(M_0)]$. Now we need 
similarly to deal with the commutators 
\begin{align}\label{eq:sqrjjj}
  \big [\sq P_j , \chi_{\kappa,K}(M_{j+1}))\cdots
  \chi_{\kappa,K}(M_{i-1})\chi(M_i)\chi_{\kappa,K}(M_{i-1})\cdots
  \chi_{\kappa,K}(M_{j+1}) \big ]
\end{align} for $j\leq i-2$ and 
 the commutator $ \big [\sq P_j , \chi(M_i)\big ]$
  for $j= i-1$.  Note that this reduction comes about by  mimicking the previous proof using  the induction
  hypothesis and 
  \eqref{eq:altBnd} with  $\chi=\chi_1=\chi^2_+(\cdot/\kappa)$ and 
  with $\chi=\chi_2=\chi^2_+(\cdot/K)$ (since $(\chi^2_{\kappa,K})' =\chi_1'-\chi_2'$). These  bounds are   analogues  of \eqref{eq:Smoothbnd2} and \eqref{eq:Smoothbnd30b}, respectively. Again the needed momentum-localization is provided by the
factors of $f\big(\wcH \big)$, and in fact we can deal with (\ref{eq:sqrjjj}) as in  the alternative proof
of \eqref{eq:Smoothbnd4}. Hence the pattern of proof is the same. We leave out  the details of proof.
\end{proof}

\section{Asymptotic clustering of states in
  $\vH^+_{\ener}$}\label{sec:Asymptotic clustering}
 The main result of this section reads as follows.
\begin{proposition}\label{prop:negB} For 
  any $\psi\in \vH^+_{\ener}$  and   $\varepsilon >0$ there exists a family of states
  $\set[[big]{\varphi_{a,\varepsilon}\in \vH\mid a\in \vA_1}$ such that 
   \begin{equation}
\label{eq:scatdeepsilon}
      \limsup _{t\to \infty}\, \norm[\Big]{U(t)\psi-\sum_{a\in \vA_1}
        U_a(t)\varphi_{a,\varepsilon}}\leq \varepsilon .
   \end{equation} 
   \end{proposition}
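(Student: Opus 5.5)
We would follow Yafaev's scheme \cite{Ya1} in the time-periodic setting. First fix $\psi\in\vH^+_{\ener}$ and $\varepsilon>0$. By density, invariance of $\vH^+_{\ener}$ under $U(1)$ and $g(U(1))$ (Remarks \ref{remarks:n-body-short}), we may take $\psi=g(U(1))\psi$ with $g\in C^\infty_\c(\T\setminus\vT_\p)$. By \eqref{eq:out2b} and the finite energy property \eqref{eq:BndEnergy}, we can choose $\kappa>0$ small and $K$ large such that
\begin{equation*}
\limsup_{t\to\infty}\norm[\big]{\psi(t)-\chi_{\kappa,K}(M_0)f\parb{\wcH(t)}\psi(t)}\leq \varepsilon/2 .
\end{equation*}
The energy cutoff $\chi_-(M/K)$ is controlled by $p^2$-localization. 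Next we use $M_0=\sum_{a\in\vA_1}M_a$ (Lemma \ref{lemma:m1} \ref{item:9b}) together with the channel localization operators $M_a$. The idea is to construct, for each $a\in\vA_1$, observables of the form
\begin{equation*}
\Phi_a(t)=f\parb{\wcH}\chi_{i}(\bM)^*\,\parb{M_a\text{-type factor}}\,\chi_i(\bM)f\parb{\wcH},
\end{equation*}
with $\sum_a\Phi_a\approx \chi_{\kappa,K}(M_0)^2$ on the relevant states. Then $\varphi_{a,\varepsilon}:=\lim_{t\to\infty}U_a(t)^*\Phi_a(t)\psi(t)$ is the candidate.

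The key step is the existence of these limits. We would use Cook-type differentiation: $\frac{\d}{\d t}U_a(t)^*\Phi_a(t)U(t)=U_a(t)^*\parb{\bD_a\Phi_a+\i\Phi_a(V-V^a)}U(t)$, where $\bD_a=\pa_t+\i[H_a(t),\cdot]$ acts from the left. We then pair with $U_a(t)\tilde\varphi$, using Cauchy--Schwarz. This needs two ingredients. First, the interaction term $(V-V^a)\Phi_a$ must be integrable. This should follow from the support property Lemma \ref{lemma:ma1} \ref{item:13a}, which forces $|x^b|\gtrsim|x|$ for $b\not\leq a$ on $\supp m_a$, from \ref{item:shortl} giving decay $r^{-1-2\mu}$, and from the local decay estimate \eqref{eq:LocalDecay} (used with $s=(1+2\mu)/2>1/2$) for $U(t)$. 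The corresponding bound for $U_a$ is supplied by a propagation estimate for $H_a$, or handled by applying the argument symmetrically. Second, the commutator terms $\bD\Phi_a$ must factor as $Q^*Q'$, where both factors are controlled by integral estimates. Here the Hessian terms $p\cdot\nabla^2 m_a\,p$ are represented as in \eqref{eq:Hes0} by $G_{b_j}^*\xi_j^2\vG_jG_{b_j}$ and dominated via \eqref{eq:Hes_est} by $p\cdot\nabla^2m_j\,p$ for the auxiliary Yafaev functions $m_j$ built with the parameters $\epsilon_j$. These are exactly the configurations covered by Corollary \ref{cor:smoothEst3}, with $M_1,\dots,M_I$ taken as these $M_j$'s. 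Remaining errors are $\vO(r^{-1_+})$ and are handled by \eqref{eq:LocalDecay}, as in the proofs of Corollaries \ref{cor:smoothEst} and \ref{cor:smoothEst2}.

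We expect the main obstacle to be this second ingredient, and specifically the square root problem when commuting the cutoffs $\chi_{\kappa,K}(M_i)$ and $h(M_a)$ past $\sqrt{p\cdot\nabla^2 m_j\,p}$. Analogous radiation-condition integral estimates are needed for the free-channel side $U_a$. We would first try to regularize with $P_{j,\rho,W}$ from \eqref{eq:aux} and apply Lemma \ref{lemma:Pweights}, exactly as in the alternative proof of \eqref{eq:Smoothbnd4}. Finally, the choice $\Phi_a\to$ limit gives $\norm{U(t)\psi-\sum_aU_a(t)\varphi_{a,\varepsilon}}\leq\varepsilon/2+o(1)$, using $\sum_a\Phi_a\psi(t)=\chi^2_{\kappa,K}(M_0)f\parb{\wcH}\psi(t)+o(1)$. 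This identity comes from $M_0=\sum_aM_a$ together with a functional-calculus partition on the localized states.
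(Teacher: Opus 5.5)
There is a genuine gap on the channel side of your Cook argument. Your overall architecture does match the paper's: localization by $E_I$, splitting $\chi_+(2M_0/\kappa)$ via $M_0=\sum_aM_a$ with regularized $M_a$, a Cauchy argument against the channel evolution using Corollaries \ref{cor:smoothEst}, \ref{cor:smoothEst3} and Lemma \ref{lemma:concretLem}, and the $P_{j,\rho,W}$ regularization for the square root problem.

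To get norm convergence of $U_a(t)^*\Phi_a(t)\psi(t)$ you must bound $\int_{t_1}^{t_2}\inp{U_a(t)\tilde\varphi,(\cdots)\psi(t)}\,\d t$ by $o(t_1^0)\norm{\tilde\varphi}$ uniformly in $\tilde\varphi\in\vH$. Every term needs an integrated estimate for $U_a(t)\tilde\varphi$:
\begin{itemize}
\item the interaction term $(V-V^a)\xi_a=\vO(r^{-1-2\mu})$ and all $\vO(r^{-1_+})$ remainders need \eqref{eq:LocalDecay} for $U_a$;
\item the commutator terms need the $H_a$-versions of Corollaries \ref{cor:smoothEst}--\ref{cor:smoothEst3}.
\end{itemize}
These hold only for $U_a(t)\tilde g(U_a(1))\varphi$ with $\tilde g$ supported away from $\vT_\p$. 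They do not hold uniformly on $\vH$, because thresholds of $U_a(1)$ obstruct them. The unlocalized substitutes, Corollary \ref{cor:local-strictly-genG5} and Remark \ref{remark:integral-estimates6}, grow linearly in the length of the time interval and are useless on $[t_1,\infty)$. ``Applying the argument symmetrically'' does not rescue this, since norm convergence requires the supremum over all $\tilde\varphi$. As stated, your argument only yields $\lim_{t\to\infty}\tilde g(U_a(1))U_a(t)^*\Phi_a(t)\psi(t)$.

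The missing idea is to show that nothing is lost by inserting $\tilde g(U_a(1))$. In other words, the channel observable must asymptotically intertwine the monodromy operators $U(1)$ and $U_a(1)$. This is the paper's Lemma \ref{lemma:locIN}, and it is where the periodic structure is used.
\begin{itemize}
\item Take $\tilde g\succ g_1\succ g$ and write $U(t)=U(t-[t])U(1)^{[t]}$, and likewise for $U_a$.
\item Reduce by Stone--Weierstrass to integer powers $U(1)^k$, $U_a(1)^k$.
\item Write the resulting difference as an integral of the Heisenberg derivative $\bD^a_1S_a$ over a time interval of length at most $k+1$.
\end{itemize}
On such bounded intervals no spectral localization is needed:
\begin{itemize}
\item the $U_a$-side factors are controlled by Corollary \ref{cor:local-strictly-genG5}, Remark \ref{remark:integral-estimates6} and \eqref{eq:GenQ9};
\item the $\psi$-side factors are controlled by the tails of the global integral estimates, which tend to zero;
\item the non-integrable $\vO_\unif(r^{-0_+})$ errors vanish by the pointwise decay of Proposition \ref{lemma:scatDef22}.
\end{itemize}
You also need that pointwise decay, unstated, for the $o(1)$ in your partition identity $\sum_a\Phi_a\psi(t)=\chi^2_{\kappa,K}(M_0)f(\wcH)\psi(t)+o(1)$. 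This step gives $\psi(t)\approx\sum_aU_a(t-[t])\tilde g(U_a(1))S_a([t])\psi([t])$. Only then does the Cauchy argument with $\tilde\varphi_a(t)=U_a(t)\tilde g(U_a(1))\varphi$ close and define $\varphi_{a,\varepsilon}$.

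A minor point: the Cook derivative is $U_a^*\parb{\bD_a\Phi_a-\i\Phi_a(V-V^a)}U$, so your sign on the interaction term is reversed.
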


   \begin{remark}\label{remark:asymptClust} The constructed
     vectors $\varphi_{a,\varepsilon}\in \vH^+_{\ener}(H_a)$:
   From  \eqref{eq:varphiLimi} it follows  that $\varphi_{a,\varepsilon}\in
\vH_{\ac}(H_a)$,  and by \eqref{eq:BBB}  and \eqref{eq:varphiLimi} that 
\begin{align}\label{eq:varphiLimi2}
  \varphi_{a,\varepsilon}= \lim_{t\to \infty}\,
    U_a(t)^*S_a(t)U(t)\psi.
\end{align} Here $S_a(t)$ is  defined in \eqref{eq:approxINT} for a
fixed  $\psi$ from a dense subset of $ \vH^+_{\ener}$. 
The representation
 \eqref{eq:varphiLimi2} yields that indeed $\varphi_{a,\varepsilon}\in
\vH^+_{\ener}(H_a)$, for example seen by using the commutation  property $[\chi(p^2/E), S_a(t)]=\vO_{\unif}(r^{-0_+})$. 
     \end{remark}

The property \eqref{eq:scatdeepsilon} may be referred to as
\emph{asymptotic clustering}, which is consistent with this terminology  used in time-independent $N$-body 
scattering theory, see for example \cite[Section 6.11]{DG}. The
proof is given in Subsection \ref{subsec:Proof
  of Proposition}. In Subsection \ref{subsec:Radiation condition
  integral estimates}  we derive various  integral
  estimates using Subsections \ref{subsec: Geometric considerations
    for a0=a{max}} and \ref{subsec:integral
  estimates}. In Subsection  \ref{subsec:A prior  localization}  we derive initial localization of states in
  $\vH^+_{\ener}$.
\subsection{Radiation condition  bounds}\label{subsec:Radiation condition integral estimates}
As in Subsections \ref{subsec: Geometric considerations
    for a0=a{max}} and \ref{subsec:integral
  estimates}  we first fix $\epsilon=\epsilon_0>0$. We recall  that for each $a\in\vA_1$ we picked in Subsection
\ref{subsubsec: Controlling commutators} certain small positive numbers $\epsilon_1,\dots,
\epsilon_J$ for some $J=J(a)\in \N$. Next, upon also varying  $a\in\vA_1$,  this leads overall to 
 families  of functions $m_i$'s and    operators $M_i$'s conforming  with the outset of Subsection \ref{subsec:integral
  estimates}. In particular we use the same notation $\epsilon_0, \epsilon_1,\dots, \epsilon_I$, $m_0, m_1,\dots, m_I$ and $M_0, M_1,\dots, M_I$, i.e. we use  the  index $i\in \set{0,\dots, I}$. Although first to  be used in Subsection \ref{subsec:A prior  localization}, let us note that
\begin{equation}\label{eq:DECO}
  M_0\chi_+(|x|)=\sum_{a\in \vA_1}\,M_a\chi_+(|x|),
\end{equation}
 where the operators $M_a$ are given as in Subsection \ref{subsec: Geometric considerations
   for a0=a{max}} (in terms of $\epsilon$).

We will apply Corollary
\ref{cor:smoothEst3} to  the above  setting. In particular this involves the constructions 
(\ref{eq:minus1})-\eqref{eq:barM2}  for fixed parameters
$K>2\kappa>0$.

 If $(a,j)$ with the above  convention is given the
label $i\in \set{1,\dots, I}$,  it   follows  from \eqref{eq:Hes_est}   (recalling that  
$Q(a,j)=\xi_jG_{b_j}$ is defined previously in Subsection \ref{subsubsec: Controlling commutators})  that
\begin{align*}
  E_i^*\tilde{f}\big(\wcH\big) Q(a,j)^*Q(a,j)\tilde{f}\big(\wcH\big)  E_i 
\leq 4^{-1}  Q'_{2,i}(t)^*Q'_{2,i}(t),
\end{align*}  or  stated alternatively
in terms of the abbreviation  $ Q_i:=Q(a,j)$ as 
\begin{align}\label{eq:Hes_estb} 
  E_i^*\tilde{f}\big(\wcH\big) Q_i^*Q_i\tilde{f}\big(\wcH\big)  E_i 
\leq 4^{-1}  Q'_{2,i}(t)^*Q'_{2,i}(t).
\end{align} 
\begin{subequations}
  
We recall the setting of \eqref{eq:Smoothbnd5}  in Corollary
\ref{cor:smoothEst3}:  For $\psi\in \vH$  and fixed  $g\in
C^\infty_{\c}(\T\setminus \vT_\p)$ we consider the evolution $\psi(t)=U(t)g(U(1))\psi$.

\begin{lemma}\label{lemma:concretLem} Under the conditions of
  \eqref{eq:Smoothbnd5} with  $M_0, M_1,\dots, M_I$ chosen as
  explained above (in particular fixing $ E_1, \dots, E_I$), it follows 
  with the  labelling convention of \eqref{eq:Hes_estb} 
  that for any $\psi\in \vH $ and  $ i=1,\dots, I$:
    \begin{equation}\label{eq:Smoothbnd6}
  \int_{-\infty}^\infty
\norm[\big]{Q_i\tilde{f}\big(\wcH\big) E_i \psi(t)}^2\,\d t \leq C\|\psi\|^2.  
\end{equation}  
\begin{equation}\label{eq:Smoothbnd7}
  \int_{-\infty}^\infty
\norm[\big]{Q_i\tilde{f}\big(\wcH\big) E_I\psi(t)}^2\,\d t \leq C\|\psi\|^2.  
\end{equation}  
  \end{lemma}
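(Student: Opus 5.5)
The first bound \eqref{eq:Smoothbnd6} follows directly from \eqref{eq:Hes_estb} and \eqref{eq:Smoothbnd5}. For fixed $i\in\set{1,\dots,I}$, taking expectations of \eqref{eq:Hes_estb} in $\psi(t)$ gives
\begin{equation*}
\norm[\big]{Q_i\tilde f\big(\wcH\big)E_i\psi(t)}^2\leq 4^{-1}\norm[\big]{Q'_{2,i}(t)\psi(t)}^2 .
\end{equation*}
This must be read in the regularized sense of Remarks \ref{remarks:intepretation}: replace $p\cdot\nabla^2m_i(x)p$ by $p\cdot\parb{\chi_-^2(r/n)\nabla^2m_i}p$ and $Q_i$ correspondingly, then let $n\to\infty$ by monotone convergence. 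The inequality then holds for almost every $t$. Integrating in $t$ and applying \eqref{eq:Smoothbnd5} of Corollary \ref{cor:smoothEst3} gives \eqref{eq:Smoothbnd6}.

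For \eqref{eq:Smoothbnd7} I would reduce to \eqref{eq:Smoothbnd6} by writing $E_I$ in terms of $E_i$. By \eqref{eq:tildeT} and \eqref{eq:barM2},
\begin{equation*}
E_I=\chi_{\kappa,K}(M_I)\cdots\chi_{\kappa,K}(M_{i+1})E_i=:L_iE_i .
\end{equation*}
The task is to commute the bounded factor $L_i$, which has order zero by Lemma \ref{lemma:expansion}, to the left of $Q_i\tilde f(\wcH)$. So I split
\begin{equation*}
Q_i\tilde f\big(\wcH\big)E_I=L_iQ_i\tilde f\big(\wcH\big)E_i+\big[Q_i\tilde f\big(\wcH\big),L_i\big]E_i .
\end{equation*}
The first term is handled by \eqref{eq:Smoothbnd6}, since $\norm{L_i}\leq1$.

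The main obstacle is the commutator term, because $Q_i=\xi_jG_{b_j}$ is an unbounded first-order operator. Its symbol $\xi_j\chi_+(2|x_{b_j}|)|x_{b_j}|^{-1/2}P(x_{b_j})p_{b_j}$ has order $1/2$ in $x$ and order $1$ in $p$. Commuting it with $\chi_{\kappa,K}(M_k)$ (via \eqref{82a0} and the fact that $\ad_{M_k}$ lowers the $x$-order by one) and with $\tilde f(\wcH)$ (via Lemma \ref{lemma:expansion}) produces operators of the form $\vO(r^{-1/2})\,p$ times bounded factors, plus lower-order terms. The factor $p$ is controlled by the momentum localization $f(\wcH)$ inside $E_i$, which by \eqref{eq:bndM} costs $m^{\delta}$. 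So the commutator term has order $\delta-1/2$ only after an $r^{-1/2}$ is accounted for; more precisely, its square is $\vO_{\unif}(r^{2\delta-1-2})\cdot r$-type and should be $\vO_\unif(r^{-1_+})$, as in \eqref{eq:1TstarT}. I would carry out this order count carefully, symmetrizing as in Corollary \ref{cor:smoothEst2b} if necessary. I expect
\begin{equation*}
\big(\big[Q_i\tilde f(\wcH),L_i\big]E_i\big)^*\big[Q_i\tilde f(\wcH),L_i\big]E_i=\vO_\unif(r^{-1_+}) .
\end{equation*}
The local decay estimate \eqref{eq:LocalDecay}, with $s>1/2$ and $r=0$, then bounds its time integral by $C\norm{\psi}^2$. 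If the crude count falls short, the fallback is to keep one factor $W=\inp{p}^{1/3}$ and use \eqref{eq:LocalDecay} with $r=1/3$, as done in the proof of Corollary \ref{cor:smoothEst2b}.
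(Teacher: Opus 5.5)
Your proposal follows the paper's proof: \eqref{eq:Smoothbnd6} comes directly from \eqref{eq:Hes_estb} and \eqref{eq:Smoothbnd5}. For \eqref{eq:Smoothbnd7} you write $E_I=\chi_{\kappa,K}(M_I)\cdots\chi_{\kappa,K}(M_{i+1})E_i$, commute this bounded product to the left of $Q_i\tilde f(\wcH)$, and dispose of the commutator with \eqref{eq:LocalDecay}.

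There is one slip in your order count. Because of the factor $\abs{x_{b_j}}^{-1/2}$, and since $\abs{x_{b_j}}\sim\abs{x}$ on $\supp\xi_j$, the operator $Q_i$ has order $-1/2$ in $x$, not $1/2$. Hence $Q_i\tilde f(\wcH)=\vO(r^{\delta-1/2})$, and the commutator term is $\vO(r^{\delta-3/2})$, as in the paper. Its square is $\vO(r^{2\delta-3})=\vO(r^{-1_+})$, so no $W$-fallback is needed.
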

\end{subequations}  
  \begin{proof}
    The bounds \eqref{eq:Smoothbnd6} follow immediately from \eqref{eq:Smoothbnd5}
    and \eqref{eq:Hes_estb}.

For \eqref{eq:Smoothbnd7}, the result is a  trivial consequence of
\eqref{eq:Smoothbnd6} if $i=I$. If $i< I$  we write 
\begin{equation*}
  E_I= \chi_{\kappa,K}(M_I)\cdots \cdots\chi_{\kappa,K}(M_{i+1})E_i,
\end{equation*} and then 
\begin{align*}
  Q_i\tilde{f}\big(\wcH\big) E_I&= \parb{\chi_{\kappa,K}(M_I)\cdots
                                        \cdots\chi_{\kappa,K}(M_{i+1})}Q_i\tilde{f}\big(\wcH\big)
                                        E_i\\
 &\qq \qq + \big [ Q_i\tilde{f}\big(\wcH\big), \chi_{\kappa,K}(M_I)\cdots
   \cdots\chi_{\kappa,K}(M_{i+1}) \big ] E_i\\
 &= \vO(r^{0})Q_i\tilde{f}\big(\wcH\big)
                                        E_i  + \vO(r^{\delta- 3/2}).
\end{align*} We conclude by 
\eqref{eq:Smoothbnd6} and \eqref{eq:LocalDecay}. 
  \end{proof}

\subsection{A priori  localization of states in
  $\vH^+_{\ener}$}\label{subsec:A prior  localization}
Consider in the following an arbitrary   $\psi\in \vH^+_{\ener}$ obeying $\psi=g(U(1))\psi$ for some  $g\in C^\infty_{\c}(\T\setminus
\vT_\p)$  (henceforth fixed). Since the set of such 
vectors is  dense
in  $\vH^+_{\ener}$, we are  lead   to `localize' $\psi$. Our final result is  stated  below  in Lemma \ref{lemma:locIN} for the corresponding evolution 
$\psi(t)=U(t)\psi$.

We choose $K>1$ large and $\kappa>0$
 small allowing us, thanks to (\ref{eq:out2b}) (localizing  $M_0$ as well as  $M_1,\dots, M_I$ applied similarly),   to localize $\psi(t)$ uniformly in large   $t>1$ 
 as $\psi(t)\approx E_I^*E_I\psi(t)$  (using  here 
 the factor $E_I$ appearing in 
\eqref{eq:Smoothbnd7}). In addition to (\ref{eq:out2b}) we used  that $\psi\in \vH^+_{\ener}$ to  obtain
 the   localization by the factors of $\chi_-(M_i/K)$, $i=0,\dots, I$. More precisely,  for any  given
$\varepsilon>0$, it follows  that 
for all  small $\kappa> 0$  and
  all large $K>1$ 
   \begin{equation*}
  \limsup_{t\to \infty}\,\norm[\big]{\psi(t)- E_I^*E_I\psi(t)}\leq \varepsilon.
\end{equation*} We will henceforth (including  here Section \ref{sec:Existence of
  channel wave operators}) use a similar  meaning of the
symbol $\psi(t) \approx \psi_1(t)$ for given $\vH$-valued functions  $\psi(\cdot)$ and $\psi_1(\cdot)$.

We need a further  initial  localization in
terms of a  
`channel phase space partition of unity' allowing us to simplify the
evolution. Note that a  factor of $\chi_+(2M_0/\kappa)$ can freely be
inserted (after a commutation) as $E_I^*E_I\to E_I^*
\chi_+(2M_0/\kappa)E_I $. Recalling  \eqref{eq:DECO} we are then lead to
write
\begin{equation}\label{eq:DECPm}
  \chi_+(2M_0/\kappa)\chi_+(|x|)= \sum_{a\in \vA_1}\,h_\kappa(M_0)M_a\chi_+(|x|);\q h_\kappa(t)= \chi_+(2t/\kappa)t^{-1 }.
\end{equation}

The formula (\ref{eq:DECPm}) will be the key to construct a proper
 phase space partition of unity.  The   appearance  of the  factors $M_a$, $a\in \vA_1$,   leads  to the possibility of replacing   $H$ by
$H_a$ on the corresponding parts of the phase space.   To be used below (as well as in Section \ref{sec:Existence of
  channel wave operators})  their   position-space
localization is    conveniently  made explicit as follows.
 Choose for each $a\in\vA_1$ a function  $\hat\xi_a\in
C^\infty(\mathbf{S}^{a_0})$  such that $\hat \xi_a=1$ on
$\mathbf{S}^{a_0}\cap\mathbf  Y_a(\epsilon^{d})\cap \set[\big]{|\omega^a|\leq
\sqrt{6\ \epsilon^{d_a}}}$ and $\hat\xi_a=0$ on
$(\mathbf{S}^{a_0}\setminus  \mathbf Y_a(\epsilon^{d}/2))\cup \set[\big]{|\omega^a|>
2\sqrt{6\ \epsilon^{d_a}}}$.  
 Let  $\xi_a=\xi_a(x)=\hat\xi_a(\hat
x)\chi_+(4\abs{x})$.  It then follows from Lemma
\ref{lemma:ma1} \ref{item:13a}  and  \ref{item:14a} that the operator $M_a$ fulfills   
\begin{align}\label{eq:partM}
  M_a= M_a\xi_a= \xi_aM_a.
\end{align} 

Let us record the  property  
\begin{align}\label{82a09}
 \forall a\in \vA_1:\q f(\wcH)\xi_a-\xi_a f\parb{\wcH_a}=\vO(r^{-1-\min\set{2\mu,\delta}}).
\end{align} 
In fact,  by \eqref{82a0}
\begin{align*}
  &f(\wcH)\xi_a-\xi_a f\parb{\wcH_a} 
=
  \int _{\C}\parb{(\wcH-z)^{-1}\xi_a-\xi_a(\wcH_a-z)^{-1}}\,\mathrm d\mu_f(z)\\
  &\, =
    \int _{\C} (\wcH-z)^{-1}\parb{\xi_a\wcH_a-\wcH\xi_a}(\wcH_a-z)^{-1}\,\mathrm d\mu_f(z)
    \\
  &\, =
    \int _{\C} (\wcH-z)^{-1}\vO(r^{-1-\min\set{2\mu,\delta}})(\wcH_a-z)^{-1}\,\mathrm d\mu_f(z)=\vO(r^{-1-\min\set{2\mu,\delta}}).
\end{align*}

We will below use \eqref{eq:DECPm}-\eqref{82a09} in combination.
 However, since  the  factors
$M_a$ are not bounded  we need to  regularize them before we 
 can implement  \eqref{eq:DECPm}. 

\subsubsection{Channel phase-space partition for $\psi(t)$}\label{subsubsec:A
  partition of unity H}
We introduce the function
\begin{equation*}
  \chi_{-L,L}(t)=
\chi_-(-t/L)\chi_-(t/L);\q  L>1.
\end{equation*} 
 To  regularize 
$M_a $ we first consider the further
localization 
\begin{equation}
  \label{eq:dec1}
  \psi(t)\approx E_I^*  \chi_+(2M_0/\kappa)\tilde{f}\big(\wcH\big)  \chi_{-L,L}(M_{a_1})\cdots \chi_{-L,L}(M_{a_{\#\vA_1}})E_I\psi(t) . 
\end{equation} In the last product we have labelled the elements of
$\vA_1$ in an arbitrary fashion as $a_1, \dots ,a_{\#\vA_1}$,
and we have  chosen
$L>1$ large to meet the requirement of our use of  the
symbol $\approx$ (using again that $\psi\in \vH^+_{\ener}$).  

Next we insert \eqref{eq:DECPm} in the expression to the right in 
\eqref{eq:dec1}, leading to 
\begin{equation*}
  \psi(t)\approx \sum_{a\in \vA_1}\, E_I^*  h_\kappa(M_0) M_a\tilde{f}\big(\wcH\big)\chi_{-L,L}(M_{a_1})\cdots \chi_{-L,L}(M_{a_{\#\vA_1}})\tilde{f}\big(\wcH\big) E_I\psi(t). 
\end{equation*}  We have omitted  factors of  $\chi_+(|x|)$, which is legitimate thanks to Proposition \ref{lemma:scatDef22}. For each $a\in \vA_1$ the corresponding term
to the right  contains the factor $\chi_{-L,L}(M_{a})$, which
after a commutation makes  the appearing factor $M_a$   bounded. We
have proven that 
\begin{align*} 
  \psi(t)&\approx \sum_{a\in \vA_1}\, E_I^* R_aE_I\psi(t);\\ R_a=R_a(t)&=h_\kappa(M_0) \parb{M_a\chi_{-L,L}(M_{a})}\tilde{f}\big(\wcH\big)\prod_{ \vA_1\ni b\neq a}\chi_{-L,L}(M_b)  
\end{align*} with the same product  as before  except
that the factor  $\chi_{-L,L}(M_{a})$ is moved to the left `to meet
$M_a$'. 
By Proposition \ref{lemma:scatDef22} the commutation error vanishes as $t\to \infty$. 

Next we 
replace the  factor ${f}\big(\wcH\big) $ in the   factor $ E^*_I
$ by
$f\parb{\wcH_a}$ by first writing $M_a=\xi_aM_a$ (cf.\eqref{eq:partM})
and then moving  the factor $\xi_a$
to the left  using eventually \eqref{82a09}. Hence
\begin{subequations}
\begin{align} \label{eq:approxINT}
  \psi(t)\approx \sum_{a\in \vA_1}\,S_a\psi(t);\,\, S_a=  S_a(t)={f}\big(\wcH_a\big)
  \chi_{I}(\bM)^*R_a \chi_{I}(\bM)f\parb{\wcH} .
\end{align} 

Recalling  that $g\in C^\infty_{\c}(\T\setminus
\vT_\p)$ we  pick any   function $\tilde{g} \in C^\infty_{\c}(\T\setminus
\vT_\p)$ with $\t
g\succ g$. We can then 
refine this  localization  as follows (see the details of proof below).  
\begin{align} \label{eq:psidecPh}
  \psi(t)\approx \sum_{a\in \vA_1}\,U_a(t-[t])\t
                      g(U_a(1))U_a(t-[t])^*S_a(t)  U(t)g(U(1))\psi.
\end{align}  
\end{subequations}
Hence we are lead to consider for $\kappa> 0$ small and $K,L>1$  large the `partition of unity'
\begin{align}\label{eq:ParUnit}
  \sum_{a\in \vA_1}\, U_a(t-[t])\t g(U_a(1))U_a(t-[t])^*S_a(t).
  \end{align} 
We have almost shown the following result.
\begin{subequations} 
\begin{lemma}\label{lemma:locIN} Let  (as above)  $g,\t g \in C^\infty_{\c}(\T\setminus
\vT_\p)$ with $\t g\succ
g$ and let  $\psi
=g(U(1))\psi \in \vH^+_{\ener}$. Then for any  $\varepsilon >0$ 
 there exist a small $\kappa> 0$  and
  large $K,L>1$    such that for the corresponding  operators $S_a(t)$ (given as in \eqref{eq:approxINT}) and with
  $\psi(t)=U(t)\psi$,
\begin{align}
  \label{eq:dec2}
  \limsup_{t\to \infty}\,\norm[\Big]{\psi(t)- \sum_{a\in \vA_1}\,
    U_a(t-[t])\t g(U_a(1))U_a(t-[t])^*S_a(t)\psi(t)}\leq \varepsilon,
  \\
\limsup_{t\to \infty}\,\norm[\Big]{\psi(t)- \sum_{a\in \vA_1}\,
    U_a(t-[t])\t g(U_a(1))S_a([t])\psi([t])}\leq \varepsilon.  \label{eq:dec22}
\end{align} 
\end{lemma}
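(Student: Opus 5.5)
Since \eqref{eq:approxINT} is already established ("we have almost shown"), the proof of Lemma \ref{lemma:locIN} reduces to justifying the insertion of the factors $U_a(t-[t])\t g(U_a(1))U_a(t-[t])^*$ in front of $S_a(t)$, i.e.\ to \eqref{eq:psidecPh}, and then to passing from \eqref{eq:dec2} to \eqref{eq:dec22}. I would fix $\varepsilon>0$ and first choose $\kappa,K,L$ so that \eqref{eq:approxINT} holds with error $\varepsilon$. It then suffices to show, for each fixed $a\in\vA_1$,
\begin{equation*}
  \lim_{t\to\infty}\norm[\big]{\parb{1-\t g(U_a(1))}U_a(t-[t])^*S_a(t)\psi(t)}=0 ,
\end{equation*}
since $U_a(t-[t])$ is unitary.

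For this I would mimic Step II of the proof of Lemma \ref{lemma:asympOBser}. Write $\psi(t)=U(t-[t])U(1)^{[t]}g(U(1))\psi$, pick $g_1$ with $\t g\succ g_1\succ g$, and note that $(1-\t g)g_1=0$. The claim then reduces to an intertwining statement:
\begin{equation*}
  \norm[\Big]{\parbb{U_a(t-[t])^*S_a(t)U(t-[t])\,g_1(U(1))-g_1(U_a(1))\,U_a(t-[t])^*S_a(t)U(t-[t])}\psi([t])}\to 0 .
\end{equation*}
By Stone--Weierstrass it suffices to replace $g_1$ by integer powers $U(1)^k$ and $U_a(1)^k$. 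Using periodicity \eqref{eq:periodicU} and setting $\sigma=t-[t]+k$, the difference becomes
\begin{equation*}
  \int_0^\sigma U_a(s)^*\parb{\bD_a S_a}_{s+[t]-k}U(s)\,\d s+\parb{S_a([t]-k)-S_a(t)},
\end{equation*}
where $\bD_aS=\pa_tS+\i(H_a S-SH)$ is the asymmetric Heisenberg derivative.

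The main obstacle is showing that $\bD_aS_a$ is integrable along the evolution, i.e.\ expressible as terms of the form $A^*B$ with $\int\norm{B\psi(t)}^2\d t<\infty$ plus $\vO_\unif(r^{-1_+})$. I would expand by the product rule over the factors of $S_a$. The derivatives of the $f(\wcH)$ and $f(\wcH_a)$ factors are handled as in Lemma \ref{lemmaNegForbidden}. The derivatives of the factors $\chi_{\kappa,K}(M_i)$, $h_\kappa(M_0)$ and $\chi_{-L,L}(M_b)$ produce terms containing $\sqrt{p\cdot\nabla^2 m_i\,p}$, controlled by Corollary \ref{cor:smoothEst3} and Lemma \ref{lemma:concretLem}, where the square-root issue is circumvented with $P_{\rho,W}$ as in the alternative proof of \eqref{eq:Smoothbnd4}. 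The crucial asymmetric term $H_aS_a-S_aH$ is treated by using $M_a=\xi_aM_a$ from \eqref{eq:partM}, so that $(H_a-H)\xi_a=I_a\xi_a=\vO(r^{-1-2\mu})$. The commutator of $M_a$ with $p^2$ is controlled through \eqref{eq:Hes0}, i.e.\ by the $G_{b_j}$-terms $Q_i\tilde f(\wcH)E_I$, and these are square-integrable by \eqref{eq:Smoothbnd7}. The $\vO(r^{-1_+})$ errors are handled by \eqref{eq:LocalDecay} via the factor $g$.

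Given this decomposition, I would pair with test vectors and apply Cauchy--Schwarz together with the local versions (Corollary \ref{cor:local-strictly-genG5} and Remark \ref{remark:integral-estimates6}) on unit time intervals. This gives $o(1)$ as $t\to\infty$ exactly as in \eqref{eq:BBB03}, and the same integrability yields $S_a([t]-k)-S_a(t)\to 0$ along $\psi$. That establishes \eqref{eq:dec2}. The same identity with $k=0$ and $s\in[0,t-[t]]$ gives $U_a(t-[t])^*S_a(t)\psi(t)-S_a([t])\psi([t])\to0$, and \eqref{eq:dec22} follows from \eqref{eq:dec2} since $U_a(t-[t])$ commutes with nothing but is applied outside.
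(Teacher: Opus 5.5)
Your proposal follows the paper's proof of Lemma~\ref{lemma:locIN} essentially step for step: the reduction via $g_1$ with $\t g\succ g_1\succ g$ and Stone--Weierstrass to a Duhamel formula for $U_a(\sigma)^*S_a(t)U(\sigma)$, then the expansion of the asymmetric Heisenberg derivative using \eqref{eq:partM} and \eqref{82a09}, \eqref{eq:Hes0} with Lemma~\ref{lemma:concretLem}, the $P_{\rho,W}$-regularization of the square root, and local-in-time bounds on the test-vector side, with the case $k=0$ giving \eqref{eq:dec22}. One small correction: the boundary term $S_a([t]-k)-S_a(t)$ does not vanish along $\psi([t])$ by integrability; as in the paper, it vanishes by $1$-periodicity of $S_a$, the bound $S_a(t)-S_a([t])=\vO(r^{-2\delta})$, and Proposition~\ref{lemma:scatDef22}.
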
  
\end{subequations}
\begin{proof} The
  assertion \eqref{eq:dec22} will follow easily from \eqref{eq:dec2}, which
  in turn is  just a  repetition of the missing
  part  \eqref{eq:psidecPh}. 
 
  \Step {I}  Using \eqref{eq:approxINT}  we  show the refinement
  \eqref{eq:dec2}. Essentially this is done by   mimicking  Step II of   the proof of Lemma \ref{lemma:asympOBser}.
Pick  $g_1 \in C^\infty_{\c}(\T\setminus
\vT_\p)$ such that  $\t
  g\succ g_1 \succ g$. Then $(\t g -1) g_1=0$, and recalling  that 
  \begin{align}\label{eq:reduc}
    \begin{split}
     U(t)&=U(t-[t])U([t])=U(t-[t])U(1)^{[t]},\\  U_a(t)&=U_a(t-[t])U_a([t])=U_a(t-[t])U_a(1)^{[t]},  
    \end{split}
  \end{align}
 it  suffices (thanks to \eqref{eq:approxINT}) to show
  that for each $a\in \vA_1$
  \begin{align}\label{eq:BBB}
    \lim_{t\to \infty}\,\norm[\Big]{
   \parbb{U_a(t-[t])^*S_a(t)U(t-[t])g_1(U(1))- g_1(U_a(1))S_a(t)}\psi([t])}=0.
  \end{align} 
By the
  Stone--Weierstrass theorem
  we can  replace the factors $g_1(U(1))$ and  $g_1(U_a(1))$ by 
  arbitrary integer powers $U(1)^k$ and $U_a(1)^k$, respectively. With
  this done and with $\sigma:=t-[t]+k$  we write parallel  to the proof of Lemma \ref{lemma:asympOBser} 
  \begin{align*}
    U_a(\sigma)^*&S_a(t)U(\sigma)-S_a(t)\\
  &= U_a(\sigma)^*S_a({\sigma+[t]-k}) U(\sigma)-S_a({[t]-k})   +\parb{S_a({[t]-k}) -S_a(t)}\\
                 &= \int_0^\sigma\, U_a(s)^*{(\bD^a_1 S_a)({s+[t]-k}})U(s)\,\d \s +\vO_{\unif}(r^{-0_+});\\
    &\q \bD^a_1S_a=\pa_tS_a+\bD^a_2S_a,\q \bD^a_2 S_a=\i(H_aS_a -S_a H).
\end{align*}   Thanks to Proposition \ref{lemma:scatDef22} it suffices  to show that
\begin{align}\label{eq:zeroINF}
    \lim_{t\to \infty}\,\norm[\Big]{
   \int_0^\sigma\, U_a(s)^*{(\bD^1_a S_a)_{s+[t]-k}}\psi(s+[t])\,\d \s} =0.
\end{align} Since $\pa_tS_a=\vO_{\unif}(r^{-1_+})=\vO_{\unif}(r^{-0_+})$ we can here replace $\bD_1^a S_a$  by $\bD_2^a S_a$, in fact by $\i[H,S_a]$.
Here we use the  localization properties of $M_a$ in the appearing factor of $M_a\chi_{-L,L}(M_{a})$ (cf. (\ref{eq:partM}) and (\ref{82a09}) and the surrounding discussion). For the same reason we can in $\i[H,S_a]$ replace the left factor ${f}\big(\wcH_a\big)$ of $S_a$ by ${f}\big(\wcH\big)$. (This modification of  $S_a$ will be used tacitly  below.)

We can compute   this commutator  along the lines of Step VI in the proof of Lemma \ref{lemmaNegForbidden}. In particular the starting point will be the replacement of $H$ by $m^\delta\wc f\parb{\wcH} m^\delta$. As before  the computation is based on the product rule and   involves several terms for which
we need various already established integral bounds, including (\ref{eq:Smoothbnd19G25})-\eqref{eq:GenQ}.  We  take  inner product with any $\varphi\in \vH$, expand  the commutator  and then apply  \cas (as in Step II of   the proof of Lemma \ref{lemma:asympOBser}). For terms  in  the   expansion of $\i[H,S_a]$ (more precisely of $\i[m^\delta\wc f\parb{\wcH} m^\delta,S_a]$) not directly treatably by Proposition \ref{lemma:scatDef22} one  needs a  factorization  of the form $Q_1^*T_1$ or $Q_2^*T_2$ with $Q_1$ and $Q_2$ indeed treatable by   Corollary \ref{cor:local-strictly-genG5} and Remark \ref{remark:integral-estimates6}  (we give some elaboration below).  Similarly the factors  $T_1$ or $T_2$ need expansions into terms of the form $BQ$, where $B$ is bounded and the involved $Q$-operators are treatable by  Corollaries
\ref{cor:smoothEst} and  \ref{cor:smoothEst3} as well as by   Lemma 
\ref{lemma:concretLem}. In the end the above integral is estimated  this way as $o(t^0)\norm{\varphi}$ as $t\to \infty$, proving  (\ref{eq:zeroINF})  and therefore also \eqref{eq:BBB}. Consequently   \eqref{eq:dec2} follows from this scheme  of proof.

It remains to see that the outlined factorization procedure is doable. 
Let us first recall that  in the proof of Lemma \ref{lemma:asympOBser} the exact computation  \eqref{eq:posBND9} lead to 
\begin{align*}
    \lim_{t\to \infty}\,\norm[\Big]{
   \int_0^\sigma\, U(s)^*Q^*_{j, s+[t]-k}Q_{j, s+[t]-k}\psi(s+[t])\,\d \s} =0;\q j=1,2.
\end{align*} Here the  factorization $Q^*_{j, s+[t]-k}Q_{j, s+[t]-k}$ (coming from \eqref{eq:posBND9}) was treated  by a simplified version of the discussed  scheme.
Since the present observable $S_a$ has a more complicated structure (containing a wider mix of  factors) the factorization for $\i[H,S_a]$ is more involved. In fact we also have to deal with the  `square root problem' encountered  in Subsection~\ref{subsec:A complicated proof}. The latter comes about when dealing with contributions from $\i[H,h_\kappa(M_0)]$ and the factors of $\i[H,\chi_{\kappa,K}(M_i)]$ of $\chi_I(\bM)$ and  $\chi_I(\bM)^*$, $i=0,\dots, I$. In all cases  the `dangerous factor' $4\parb{p\cdot 
  \nabla^2m_i(x)p}$ enters. We need again the regularization by $W=\inp{p}^{1/3}$, and using the  notation
\begin{align*}
  Q_{2,i,\chi, \rho,W}&=Q_{2,i,\chi, \rho,W}(t)=2
\sqrt{P_i}\sq{\chi'}(M_i)  E_{i-1}W,\\
P_i=P_{i,\rho,W}(t) &=\tilde{f}\big(\wcH\big) W^{-1}\parb{p\cdot 
\nabla^2m_i(x)p}W^{-1}\tilde{f}\big(\wcH\big)+r^{-2\rho}
\end{align*}
of \eqref{eq:altBnd},  we obtain after commutation to the right a factor of the form
\begin{align*}
  Q_{2,i,\chi, \rho,W}(s+[t]-k)\psi(s+[t])=Q_{2,i,\chi, \rho,W}(s+[t])\psi(s+[t]).
\end{align*} We can then use that  
\begin{align*}
\begin{split}
  \int_{0}^\sigma &
\norm[\big]{Q_{2,i,\chi, \rho,W}(s+[t]-k)\psi(s+[t])}^2\,\d s \\
  &=\int_{[t]}^{\sigma+[t]}
\norm[\big]{Q_{2,i,\chi, \rho,W}(s)U(s)\psi}^2\,\d s \to 0 \text{ for } t\to \infty.   
\end{split}
\end{align*} It remains in this case to pull a  factor $(\sq{\chi'}(M_i) {f}\big(\wcH\big)W)^* \sqrt{P_i}$ to the left to combine with $U_a
(s)\varphi$ using \eqref{eq:GenQ}, and for that we need a version of the alternative proof of \eqref{eq:Smoothbnd4} given in Subsection \ref{subsec:A complicated proof} to work out the commutation (we comment on this issue below). Once placed to the left we use the bound
\begin{align}\label{eq:commmm}
  \int_{0}^\sigma \norm[\big]{ \sqrt{P_i} \sq{\chi'}(M_i) {f}\big(\wcH\big)W \xi_a\varphi_a(s)}^2\,\d s \leq C\|\varphi\|^2 ;\q \varphi_a(s)=U_a(s)\varphi.
\end{align} Here $\xi_a$ is the localization function from (\ref{eq:partM}) and (\ref{82a09}  (which clearly can be moved to the left too). It allows us to replace  any appearance of $\wcH$ in (\ref{eq:commmm}) by $\wcH_a$. Then (\ref{eq:commmm}) is a consequence of \eqref{eq:GenQ} (applied to  $H_a$ rather than to $H$)  and a commutation with $W^{-1}$ as in  the proof of Corollary \ref{cor:smoothEst2b} (using now that $W{f}\big(\wcH_a\big)=\vO(r^{\delta/3})$).

For the commutation error we can largely mimic Steps IV and V in  our  alternative proof
of \eqref{eq:Smoothbnd4}. However presently we have the problem that 
the factor ${f}\big(\wcH\big)W=\vO(r^{\delta/3})$  is not bounded. On the other hand we now only need \eqref{eq:LocalDecay} one-sided, which yields an extra factor  $r^{-\rho}$.
Viewed  this way the fact  that $r^{\delta/3-\rho}$ is bounded   allows us to mimic the procedure  of  commutation given there.

We conclude  the desired bound  $o(t^0)\norm{\varphi}$ for the discussed  contributions to the commutator of $S_a$.

        The  contribution from  the two appearances of $\i[H,{f}\big(\wcH\big)]$  is  dealt with by (\ref{eq:Smoothbnd1}) and \eqref{eq:Smoothbnd19G25}, while the contribution from  $\i[H,\tilde{f}\big(\wcH\big)]$  is treated by \eqref{eq:ExpsBreveH}.
        
        The third  class of contributions comes from   the commutators $\i[H,{M_a\chi_{-L,L}(M_{a})}]$ and $\i[H,{\chi_{-L,L}(M_{b})}]$, $b\neq a$. For either case we use \eqref{eq:Hes0}, writing 
        \begin{align*}
  \begin{split}
 p\cdot\parb{\chi^2_+(\abs{x})\nabla^2m_a(x)} p
=\sum_{j\leq J}\, \,Q(a,j)^*\vG_j Q(a,j);\quad 
                                                                  \vG_j=\vG_j(x_{b_j})\text{ 
                                                                  bounded}.   
  \end{split}
        \end{align*} After commutation we apply Lemma 
        \ref{lemma:concretLem} to treat to integral involving $\psi$ (after we used \caS).  For the  integral involving $\varphi $ we first use (\ref{eq:Hes_est}) and then \eqref{eq:GenQ}  on the form
    \begin{equation}
      \label{eq:GenQ9}
      \int_{0}^\sigma
\norm[\big]{2\sqrt{p\cdot \nabla^2m_i(x) p}\,\tilde{f}\big(\wcH_a\big){\chi_{\kappa,K}}(M_i)f\parb{\wcH_a} \varphi_a(s)}^2\,\d s \leq C \|\varphi\|^2 .
\end{equation} Note at this point that commuting with $\,Q(a,j)$ is rather straightforward (there is no `square root problem' here).
        
   Up to  missing 
 details on bounding various commutation errors (for convenience omitted)  we have proven  
 \eqref{eq:BBB}. Consequently   \eqref{eq:dec2} follows. 
 
\Step {II}  We show \eqref{eq:dec22}. Since
$S_a(t)=S_a([t])+\vO(r^{-2\delta})$, it suffices (thanks to
\eqref{eq:dec2}) to show that 
\begin{align}\label{eq:BBBb}
    \lim_{t\to \infty}\,\norm[\Big]{
   \parbb{U_a(t-[t])^*S_a(t)U(t-[t])- S_a(t)}\psi([t])}=0.
  \end{align}  Clearly
  \eqref{eq:BBBb} amounts to \eqref{eq:BBB} for the case  $g_1=1$ (corresponding to taking $k=0$ in the above proof).
\end{proof}

\subsection{Proof of Proposition \ref{prop:negB}}\label{subsec:Proof
  of Proposition}
By density it suffices to consider any  given state $\psi
=g(U(1))\psi \in \vH^+_{\ener}$ (as in Lemma \ref{lemma:locIN}). Let  also $\varepsilon >0$ be
given. Under the conditions of Lemma \ref{lemma:locIN} (with the given 
$\psi$ and $\varepsilon$) we  introduce for each
$a\in \vA_1$
\begin{subequations}
\begin{equation}\label{eq:1varphi}
  \varphi_{a,\varepsilon}=\lim_{t\to \infty}\,
    U_a(t)^*U_a(t-[t])\t g(U_a(1))S_a([t])\psi([t]) \in \vH.
\end{equation} 
  Thanks to \eqref{eq:dec22} it suffices to show that  
 $\varphi_{a,\varepsilon}$ is a well-defined vector.   
 Using \eqref{eq:reduc} this amounts  to showing  the existence of the limit
\begin{align*}
  \lim_{t\to \infty}\,
    U_a([t])^*\t g(U_a(1))S_a([t])\psi([t])=\lim_{t\to \infty}\,
    \t g(U_a(1))U_a([t])^*S_a([t])\psi([t]).
\end{align*} Hence    
(obviously) it suffices to show
the existence of the limit
\begin{align}\label{eq:varphiLimi}
  \varphi_{a,\varepsilon}= \lim_{t\to \infty}\,
    \t g(U_a(1))U_a(t)^*S_a(t)\psi(t).
\end{align} 
\end{subequations}

 For the latter  goal  let us introduce   the evolution
$\wt\varphi_a(t)=U_a(t)\t g(U_a(1))\varphi$, $\varphi\in \vH$. To
verify 
the  Cauchy condition,  mimicking Step III of  the proof of Lemma \ref{lemma:asympOBser}, we write for big $t_2>t_1>1$
\begin{align*}
  \inp[\big]{\wt\varphi_a(t_2), S_a(t_2)\psi(t_2)}- \inp[\big]{\wt\varphi_a(t_1),
  S_a(t_1)\psi(t_1)}=\int^{t_2}_{t_1}\,
  \tfrac{\d} {\d t}\inp{\wt\varphi_a(t), S_a(t)\psi(t)}\,\d t.
\end{align*} 
Next we compute the derivative as in the proof of Lemma \ref{lemma:locIN}, expand  (using in particular \eqref{eq:Hes0})  and
then estimate by  Corollaries
\ref{cor:smoothEst} and  \ref{cor:smoothEst3} as well as  by  Lemma 
 \ref{lemma:concretLem}. We apply the bounds for $H$  (as explicitly stated) as well as similar  bounds 
for $H_a$. (The latter bounds  are  just  special cases.) Indeed (more specifically) the contribution from the Heisenberg derivative of the factors of  functions of $M_a$ and $M_b$, $b\neq a$,  and the factor of $h_\kappa(M_0)$ appearing in  the expression $R_a$ are dealt with by \eqref{eq:Hes0} and  \eqref{eq:Smoothbnd7}, respectively \eqref{eq:Smoothbnd5} (applied with $i=0$). The contribution from the  factors of $f(\wcH)$ and $f(\wcH_a)$ are dealt with by \eqref{eq:Smoothbnd1}, while the contribution from the  factors of $\chi_{I}(\bM)$ are treated by \eqref{eq:Smoothbnd50}. As before the contribution from  $\i[H,\tilde{f}\big(\wcH\big)]$  is treated by \eqref{eq:ExpsBreveH}. Of course these 
applications need various commutation arguments. In particular the mentioned application of \eqref{eq:Smoothbnd50} needs commutation using the scheme from our alternative proof
of \eqref{eq:Smoothbnd4} in Subsection \ref{subsec:A complicated proof}. As in the proof of Lemma \ref{lemma:locIN} we    skip the
details on bounding commutation errors.

Thanks
to the standard Kato type form of our bounds  the above integral is bounded this way
as $o(t_1^0)\norm{\varphi}$ as $t_1\to \infty$, uniformly in $t_2>t_1$
and  
$\varphi\in \vH$. This yields the existence of the limit
   \eqref{eq:varphiLimi}.
 \qed

\section{Proof of Theorem \ref{thm:ACn-body-short1}}\label{sec:Existence of
  channel wave operators}  We prove the existence and orthogonality of the forward  channel wave
operators. For simplicity we will only consider this case (the backward case can be treated similarly, or by the time-reversal property).

\Step {I} We  prove the existence of the forward  channel wave
operators. This is done essentially along the lines of the proofs of Lemma \ref{lemma:locIN} and Proposition \ref{prop:negB}.

Recall that for any  \emph{channel}
$\beta =(b,\lambda^\beta, u^\beta)$, i.e.   $b\in\vA_1$, $\lambda^\beta\in [0,2\pi)$ and  
$U^b(1)u^\beta = \e^{-\i \lambda^\beta }u^\beta$ for a normalized $u^\beta\in
\mathcal H^b$,   the corresponding forward  {channel wave operator} is
given by 
\begin{subequations}
\begin{equation}\label{eq:wave_op3}
  W_\beta^{+}\psi_b=\lim_{t\to \infty}U(0,t)\parbb{\parb{\e^{-\i \lambda^\beta [t]}U^b(t- [t])u^\beta} \otimes
\parb{\e^{-\i
   tp_b^2} \psi_b}};\quad \psi_b\in L^2(\bX_b).
 \end{equation}  Alternatively written 
\begin{align}\label{eq:wave_op4}
  W_\beta^{+}\psi_b=\lim_{t\to \infty}U(0,t)U_b(t)\parb{u^\beta \otimes
\psi_b};\quad \psi_b\in L^2(\bX_b).
 \end{align}  
\end{subequations}

By density it suffices to show the existence of the latter limit  for any Schwartz
function, viz   $\psi_b\in \vS(\bX_b)$.  In fact we can  assume that
its Fourier
 transform $\widehat \psi_b =F_b\psi_b \in C^\infty_\c(\bX_b)$,  
 that the vector $\varphi_b:={u^\beta \otimes
\psi_b}$ satisfies $\varphi_b=g(U_b(1))\varphi_b$ for some 
$g\in C^\infty_{\c}(\T\setminus
\vT_\p)$ and that for some (small) $\epsilon'>0$ 
\begin{equation}\label{eq:collision}
\supp \widehat \psi_b
\subseteq \bX_b\cap  \bY_b(\epsilon')=(\bX_b\setminus\set{0})\setminus \cup_{c\not\leq b,\,c\in\vA_1} \,\bX_c(\epsilon').
\end{equation}
Denoting
$\varphi_b(t)=U_b(t)\varphi_b$ we need to show the existence of the limit 
\begin{equation*}
 \lim_{t\to \infty}U(0,t)U_b(t)\varphi_b=\lim_{t\to \infty}U(0,t)\varphi_b(t).
 \end{equation*} 
As in the proof of Proposition \ref{prop:negB} we will verify the
  Cauchy condition.

First we introduce  a
version of 
 the `partition of unity' $\sum_{a\in \vA_1}\,S_a(t)$ of
 \eqref{eq:approxINT}, more precisely on the form $\sum_{a\in \vA_1}\,S^b_a(t)$,
 where  essentially given  by the same `building blocks'  as in
  \eqref{eq:approxINT} we let  
  \begin{align*}
    S^b_a(t)&={f}\big(\wcH\big)
  \chi_{I}(\bM)^*R^b_a(t) \chi_{I}(\bM)f\parb{\wcH_b};\\
 R^b_a(t)&=h_\kappa(M_0) \parb{M_a\chi_{-L,L}(M_{a})}\tilde{f}\big(\wcH_b\big)\prod_{ \vA_1\ni c\neq a}\chi_{-L,L}(M_c).  
  \end{align*}  Note in particular that this construction involves
   small parameters $\epsilon=\epsilon_0$,  $\epsilon_1, \dots, \epsilon_I$, $\kappa> 0$  and
  large  parameters $K,L>1$, which can be adjusted for  verifying the
  Cauchy condition.  We use 
  \eqref{eq:collision}   for an    $\epsilon<<\epsilon'$
  (in fact $2 \sq{6\epsilon}< \sq{1-(1-\epsilon')^2}$ suffices). Then
    Lemma \ref{lemma:ma1}
  \ref{item:13a} and \ref{item:14a} (applied with this $\epsilon$ and  implemented as in
  \eqref{eq:partM} and \eqref{82a09})  and stationary phase analysis
  of the state $\e^{-\i
   tp_b^2} \psi_b$  yield (leaving out the details of proof) 
   that 
  \begin{align*}
    \varphi_b(t)\approx \sum_{a\in \vA_1}\,S^b_a(t)\varphi_b(t)\approx S^b_b(t)\varphi_b(t).
  \end{align*} 
  Here we  used  the symbol $\approx$ in the same meaning as in Subsection \ref{subsec:A prior  localization}.
  
Next, arguing as in the proof of Lemma \ref{lemma:locIN}, 
   it follows that for any $\tilde{g}\in C^\infty_{\c}(\T\setminus
\vT_\p)$   with  $\t g\succ g$
\begin{align*}
    \varphi_b(t)\approx U(t-[t])\t g(U(1))S^b_b([t])\varphi_b([t]),
  \end{align*} and it remains to show the  existence of the limit 
\begin{equation*}
 \lim_{t\to \infty}U(0,[t])\t g(U(1))S^b_b([t])\varphi_b([t])=\lim_{t\to \infty}\t g(U(1))U(0,t)S^b_b(t)\varphi_b(t).
 \end{equation*} 

 Note that this reduction is completely similar to the reduction to   the existence of the limit
   \eqref{eq:varphiLimi}  in the proof of Proposition
 \ref{prop:negB}. From here we can complete the proof in the same way as in the proof of Proposition
 \ref{prop:negB}, recalling that  the involved 
 integral estimates appeared on a symmetric form. 
 Hence the first part of  Theorem
 \ref{thm:ACn-body-short1} follows. 

  \Step {II}  We prove the second  part of  Theorem
 \ref{thm:ACn-body-short1}, i.e. the  orthogonality
  of channels (for the forward case). The assertion  for the
  time-independent case  appears as \cite[Theorem XI.36 (b)]{RS}. It can  essentially  be proven similarly for
  smooth potentials, to be demonstrated below.

  Consider two different channels $\alpha
  =(a,\lambda^\alpha, u^\alpha)$ and $\alpha =(b,\lambda^\beta,
  u^\beta)$. The orthogonality is obvious if  $a= b$. So we consider
  only the case  $a\neq b$. Let 
  \begin{align*}
    \varphi_\alpha(\sigma)= \parb{U^a(\sigma)u^\alpha} \otimes
\psi_a \mand \varphi_\beta(\sigma)= \parb{U^b(\sigma)u^\beta} \otimes
\psi_b ;\q \sigma=t-[t].
  \end{align*}
We need  to show that  
\begin{equation}\label{eq:bnVanish}
  \inp[\big]{\varphi_\alpha(\sigma), \e^{-\i t (p_b^2-p_a^2)}
  \varphi_\beta(\sigma)}\to 0\text{ for }t\to \infty.
\end{equation} 

By approximation we can assume that $u^\alpha \in
\vS(\bX^a)$ and  $\psi_a\in \vS(\bX_a)$, and similarly when $\alpha$
is replaced by $\beta$.
By the standard convolution technique and familiar Duhamel type computations (cf. (\ref{eq:Duhamel})) we can furthermore assume that
 Condition \ref{cond:smooth2wea3n12} is valid with all `pair potentials'
 being smooth in $x$ with uniformly bounded derivatives. With  these
 assumptions it is easy to check by  Duhamel type computations as in (\ref{eq:Duhamel})
 that
 \begin{equation}
   \label{eq:kbnd} \forall s\in \R:\q\sup_{\sigma\in [0,1)}\,
   \norm{\varphi_\alpha(\sigma)}_{L^2_s(\bX)}< \infty\mand \sup_{\sigma\in [0,1)}\,
   \norm{\varphi_\beta(\sigma)}_{L^2_s(\bX)}< \infty.
 \end{equation} We decompose $\bX=\bY\oplus \bZ$, where $\bZ$ is
 determined by first writing $p_b^2-p_a^2=\Sigma_{1\leq j\leq n}\, c_j
 p_j^2$ with all  $c_j\neq 0$,  corresponding to an orthonormal basis
 in a certain $n$-dimensional subspace $\bZ$. By using \eqref{eq:kbnd} for any $s> n/2$ and
 \caS, it follows 
 that 
\begin{equation}
   \label{eq:kbnd2} \sup_{\sigma\in [0,1)}\,
   \norm{\varphi_\alpha(\sigma)}_{L^2(\bY, L^1(\bZ))}< \infty\mand \sup_{\sigma\in [0,1)}\,
   \norm{\varphi_\beta(\sigma)}_{L^2(\bY, L^1(\bZ))}< \infty.
 \end{equation} Writing  
\begin{align*}
  \inp[\big]{\varphi_\alpha(\sigma), \e^{-\i t (p_b^2-p_a^2)}
  \varphi_\beta(\sigma)}=\int_{\bY}\, \inp[\Big]{\varphi_\alpha(\sigma,y, \cdot),  \prod_{1\leq j\leq n}\,\e^{-\i t c_j
 p_j^2}
  \varphi_\beta(\sigma,y, \cdot)}_{L^2(\bf Z)}\, \d y,
\end{align*} we  estimate the right-hand side,   
first by  
\eqref{eq:kbnd2} and then by appying \caS. Note that each one-dimensional  free dynamics
contributes  by a
factor $t^{-1/2} $, yielding the total bound $\vO(t^{-n/2})$. In
particular \eqref{eq:bnVanish} follows.
\qed
\section{Proof of Theorem
  \ref{thm:ACn-body-short2}}\label{sec:Proof of
  Theorem (2)}   Consider for $N=2$ vectors    $\psi\in \vH$  obeying  $\psi=g(U(1))\psi$ for some $g\in C^\infty_{\c}(\T\setminus
\vT_{\p})$. The set of these  vectors
 is  dense
in  $\vH_{\ac}=\vH_{\ac}(H)$. We  will show that any such given 
$\psi\in\vH^+_{\wave}=\vH^+_{\wave}(H)$, proving Theorem
\ref{thm:ACn-body-short2}. 

Let $\psi(t)=U(t)\psi$. Pick    $\t g\succ g$ such
that  also $\tilde{g}\in C^\infty_{\c}(\T\setminus
\vT_{\p} )$. It then holds that 
\begin{equation}\label{eq:dec2233}
  \lim_{t\to \infty}\,\norm[\big]{\psi(t)- 
    U_{a_{\min}}(t-[t])\t g(U_{a_{\min}}(1))\psi([t])}=0. 
\end{equation} This  is  a simplified version of  \eqref{eq:dec22}, proven similarly although  easierly  since   the quantity 
 $\bD^a_1 S_a$ appearing in  the proof of Lemma \ref{lemma:locIN} now is replaced by $-\i V$ (which in fact may be treated directly by  Proposition \ref{lemma:scatDef22}).

Next, 
 arguing as in the proof of Proposition \ref{prop:negB},  we deduce 
 the existence of 
 the limit 
\begin{equation}\label{eq:phi2}
  \varphi=\lim_{t\to \infty}\,
    U_{a_{\min}}(t)^*U_{a_{\min}}(t-[t])\t g(U_{a_{\min}}(1))\psi([t]).
  \end{equation}

  It  follows from \eqref{eq:dec2233} and (\ref{eq:phi2})  that
\begin{equation*}
   \norm[\big]{\psi-W^+_{a_{\min}} 
    \varphi }=\lim_{t\to \infty}\,\norm[\big]{\psi(t)- 
    U_{a_{\min}}(t)\varphi }=0. 
\end{equation*} Hence $\psi\in \ran\parb{ 
    W^+_{a_{\min}}} =\vH^+_{\wave}$.
 \qed

\section{Proof of Theorem \ref{thm:ACn-body-short3} and remaining results}\label{sec:Proof
  of Theorem (N)}
We prove our main result Theorem \ref{thm:ACn-body-short3}. Viewed as a criterion for   asymptotic completeness we provide another  one,   and we   prove Propositons \ref{thm:ACn-body-short3EXC} 
and \ref {prop:asympt-compl-anoth9} (which can also be viewed  as
criteria). The principal tool is for all issues  variations  of Proposition \ref{prop:negB}.

\subsection{Proof of Theorem \ref{thm:ACn-body-short3} }\label{sec:Proof
  of Theorem (N1)}
  We proceed by induction in $N$.
  For $N=2$ the  assertion (\ref{eq:mainR}), i.e. that $\vH^+_{\wave}(H)=\vH^+_{\ener}(H)$, is a consequence of  Theorem \ref{thm:ACn-body-short2}. Suppose $N\geq 3$ and that (\ref{eq:mainR}) 
 is true for  $N'\leq N-1$, then we need to show it for
  $N'=N$. By this  induction hypothesis it follows  that $\vH^+_{\wave}(H^a)=\vH_{\ener}^+(H^a)$ for all
  $a\in \vA_3$.
   
 Let  $\psi\in \vH^+_{\ener}(H)$ be given. We need to show that $\psi\in\vH^+_{\wave}(H)$. Letting also $\varepsilon >0$ be given,  by invoking   Proposition \ref{prop:negB} and Remark \ref{remark:asymptClust} it follows that   
 there exists a family of states
  $\set[[big]{\varphi_{a,\varepsilon}\in \vH^+_{\ener}(H_a) \mid a\in \vA_1}$ such that 
   \begin{equation}
\label{eq:scatdeepsilon9}
      \limsup _{t\to \infty}\, \norm[\Big]{U(t)\psi-\sum_{a\in \vA_1}
        U_a(t)\varphi_{a,\varepsilon}}\leq \varepsilon .
   \end{equation} 
 We claim that there exist  $\vHlim_{t\to \infty} U(0,t)
   U_a(t)\varphi_{a,\varepsilon}$,
 and in fact that these limiting vectors are in
  $\vH^+_{\wave}(H)$. Since $\varepsilon>0$ is arbitrary and 
$\vH^+_{\wave}(H)$ is  closed in $\vH$, it  then follows that 
$\psi\in\vH^+_{\wave}(H)$, proving the  theorem.

For $a=a_{\min}$,  clearly 
\begin{equation*}
    \lim_{t\to \infty} U(0,t)
   U_a(t)\varphi_{a,\varepsilon}=W_{\alpha_{\min}}^+
   \varphi_{a,\varepsilon} \in \vH^+_{\wave}(H).
\end{equation*} So let us henceforth   fix any  $a\in \vA_3$. We need to prove that $\vHlim_{t\to \infty} U(0,t)
   U_a(t)\varphi_{a,\varepsilon}$ exists and represents a vector in  $\vH^+_{\wave}(H)$, completing the proof.

Writing $I=g+\bar g$, $g\in C^\infty_{\c}(\T\setminus
\vT_\p(U(1)))$, and recalling that $U_a(t)=U^a(t)\otimes \e^{-\i tp_a^2} $, 
    we  decompose 
   \begin{equation*}
   U_a(t)=\parb{U^a(t)g(U^a(1))}\otimes \e^{-\i tp_a^2}
   +\parb{U^a(t)\bar g(U^a(1))}\otimes \e^{-\i tp_a^2}  . 
   \end{equation*} We apply this formula to  a sequence of
   growing  non-negative functions $g_n\uparrow 1-1_{\vT_\p}$
(and hence that $\bar g_n\downarrow 1_{\vT_\p}$).
By the spectral theorem it follows that
\begin{equation*}
  \sup_{t\in \R}\, \norm[\Big]{\parb{U^a(t)\bar g_n(U^a(1))\otimes
      I}\varphi_{a,\varepsilon}-\sum_{\substack{\alpha =(a,\lambda^\alpha,
      u^\alpha);\\a\text{ fixed}}}\parb{\e^{-\i \lambda^\alpha [t]}U^a(t-
      [t])u^\alpha} \otimes\inp {u^\alpha, \varphi_{a,\varepsilon}} }\to 0.
\end{equation*} In particular with
$\psi_\alpha^a=\psi_{\alpha,\varepsilon}^a:=\inp {u^\alpha,\varphi_{a,\varepsilon}}\,(=\inp {u^\alpha, \varphi_{a,\varepsilon}
}_{\vH^a}\in L^2(\bX_a))$
\begin{equation*}
  \limsup_{t\to\infty}\, \norm[\Big]{U(0,t)\parb{U^a(t)\bar g_n(U^a(1))\otimes
      \e^{-\i tp_a^2} }\varphi_{a,\varepsilon}-\sum_{\alpha =(a,\lambda^\alpha,
      u^\alpha);\, a\text{ fixed}}\, W^+_\alpha \psi_\alpha^a}\to 0.
\end{equation*} For $n$ taken sufficiently large this introduces an
error of any prescribed order. 

Next we write the orthogonal projection onto $\vH^+_{\wave}(H^a)$ as 
\begin{equation*}
      \sum_{\beta=(b,\lambda^\beta,
      u^\beta),\, b\lneq a}^\oplus \,
      P^a_\beta;\q  P^a_\beta =W_\beta^+(H^a)W_\beta^+(H^a)^*.
    \end{equation*}
We
claim  for any fixed (large) $n$  
\begin{equation}\label{eq:formEN}
  \varphi^n_{a,\varepsilon}:=\parb{g_n(U^a(1))\otimes I} \varphi_{a,\varepsilon} =\sum_{\beta=(b,\lambda^\beta,
    u^\beta),\, b\lneq a}\,
      \parb{P^a_\beta g_n(U^a(1))\otimes I} \varphi_{a,\varepsilon}.  
    \end{equation} To see this we    let
    $\set{\psi_{a,1},\psi_{a,2}, \dots}$ be an orthormal basis in
    $L^2({\bX_a})$. First we prove  that for each $j\in \N$ the vector
    \begin{equation}\label{eq:Egi}
      \psi^a_j:=\inp{\psi_{a,j},\varphi^n_{a,\varepsilon}}_{L^2({\bX_a})}\in\vH^+_{\ener}(H^a).
    \end{equation} Clearly $\psi^a_j\in\vH_{\ac}(H^a)$. Moreover  
  \cas yields
\begin{align*}
     \norm{F((p^a)^2>E)U^a(t)\psi^a_j}&\leq
      \norm{\e^{-\i t p_a^2}\psi_{a,j}}\,\norm{F((p^a)^2>E)U_a(t)\varphi^n_{a,\varepsilon}}
      \\
&\leq
      \norm{F(p^2>E)U_a(t)\varphi^n_{a,\varepsilon}}.
    \end{align*}  Since $\varphi_{a,\varepsilon}\in
    \vH^+_{\ener}(H_a) $ also $\varphi^n_{a,\varepsilon}\in
    \vH^+_{\ener}(H_a) $, and therefore indeed \eqref{eq:Egi} follows.

    By combining \eqref{eq:Egi} and  the property
    $\vH^+_{\ener}(H^a)=\vH^+_{\wave}(H^a)$,  we then  deduce  that $
    \psi^a_j\in \vH^+_{\wave}(H^a)$. Consequently, when taking partial
    inner product with  the  $j$'th basis
    function on both sides of  \eqref{eq:formEN}, the resulting
    formula  
    is correct. Since this is valid for all $j\in \N$,  we  finally conclude    \eqref{eq:formEN} by the Parseval
    inversion formula.   

    By the orthogonality  of channels we can   replace the  sum by a finite
sum (up to another  error of the prescribed order), say denoted
\begin{equation*}
  \sum_{\beta,\,\text {finite}}^\oplus \,
      \parb{P^a_\beta g_n(U^a(1))\otimes I} \varphi_{a,\varepsilon} .
\end{equation*} Then we compute for each of the finitely many terms,
for some $\psi_\beta^a=\psi_{\beta,\varepsilon}^a\in
L^2(\bX_b)$
\begin{equation*}
  \lim_{t\to \infty }\, \norm[\Big]{\parb{U^a(t)P^a_\beta g_n(U^a(1))\otimes I}
    \varphi_{a,\varepsilon} -\parb{\e^{-\i \lambda^\beta [t]}U^b(t-[t])u^\beta} \otimes \parb{\parb{\e^{-\i t(p^a_b)^2} \otimes I} \psi_\beta^a}}= 0.
\end{equation*}   Indeed, this holds with
\begin{equation*}
  \psi_\beta^a=\parb{W_\beta^+(H^a)^*g_n(U^a(1))\otimes I}
  \varphi_{a,\varepsilon} \in L^2(\bX_b^a) \otimes L^2(\bX_a)=L^2(\bX_b) .
\end{equation*}
 Reintroducing the
factor $I\otimes \e^{-\i tp_a^2} $, we conclude that 
\begin{equation*}
  \lim_{t\to \infty }\, \norm[\Big]{\parb{U^a(t)P^a_\beta g_n(U^a(1))\otimes \e^{-\i tp_a^2} }
    \varphi_{a,\varepsilon} -\parb{\e^{-\i \lambda^\beta [t]}U^b(t-
      [t])u^\beta} \otimes \parb{\e^{-\i tp_b^2}  \psi^a_\beta}}= 0,
\end{equation*} and therefore that 
\begin{equation*}
  \lim_{t\to \infty }\, \norm[\Big]{U(0,t)\parb{U^a(t)P^a_\beta g_n(U^a(1))\otimes \e^{-\i tp_a^2} }
    \varphi_{a,\varepsilon} -W^+_\beta \psi^a_\beta}= 0.
\end{equation*}
 
In conclusion,  the  limiting vector  $\lim_{t\to \infty} \,U(0,t)
   U_a(t)\varphi_{a,\varepsilon}\in \vH$ exists, and   up to an error
   of  any prescribed order we have written  this    vector as a  finite sum of vectors on the form $W^+_\beta
\psi_\beta$; $b\in \vA_1$. Thanks to  the closedness of 
$\vH^+_{\wave}(H)$  it follows that the limiting  vector  is  in
$\vH^+_{\wave}(H)$, as wanted. Consequently  also    
$\psi\in\vH^+_{\wave}(H)$. \qed 

\subsection{Asymptotic completeness, another criterion}\label{subsec:Asymptotic completeness, another potential scheme}
 The above proof of Theorem \ref{thm:ACn-body-short3}
   relies on Proposition \ref{prop:negB}
 and Remark \ref{remark:asymptClust}. 
 However the proof does not need the full strength of these assertions.  Thus   the  proof
   works well if for any given 
   $\psi^a\in \vH^+_{\ener}(H^a)$ and   $\varepsilon >0$ we   know  the existence of a family of states
  $\set[[big]{\varphi^a_{b,\varepsilon}\in \vH^+_{\ener}(H^a_b) \mid b\lneq a}$ (in the  above inductive argument used with 
  $a=a_0$)  such that 
   \begin{equation}
\label{eq:scatdeepsilon92}
      \liminf _{t\to \infty}\, \norm[\Big]{U^a(t)\psi^a-\sum_{ b\lneq a}
        U^a_b(t)\varphi^a_{b,\varepsilon}}\leq \varepsilon.
   \end{equation} Here, for a general $a\in \vA_2$ and
   $ b\lneq a$,  the dynamics $U^a_b(t)$ is  generated  by $H^a_b=H^b \otimes I +I\otimes
   (p^a_b)^2$. Note that \eqref{eq:scatdeepsilon92} is slightly weaker than (\ref{eq:scatdeepsilon}) (applied to all elements of $\vA_2$).
   
 By Proposition \ref{prop:negB}  we know \eqref{eq:scatdeepsilon92}  for $\psi^a\in
   \vH^+_{\ener}(H^a)$, however it could be that the assertion is
   valid under the milder condition that  $\psi^a\in
   \vH_{\ac}(H^a)$  and say for a family of  states  $\varphi^a_{b,\varepsilon}$ just known to be elements of    $\vH^a $.  Let us   formulate
   \eqref{eq:scatdeepsilon92} this way as an hypothesis valid for any   $a\in
   \vA_2$, $\psi^a\in \vH_{\ac}(H^a)$ and  $\varepsilon >0$. 
   \begin{proposition}\label{prop:asympt-compl-anoth} The following assertions  $i)$ and $ii)$ are equivalent.
  \begin{enumerate}[i)]
  \item\label{item:1asymp4}  $\forall  a\in \vA_2\,\forall \psi^a\in
    \vH_{\ac}(H^a)\, \forall \varepsilon>0 $:
    \begin{equation*}
      \q\q\text{ \q
        The asymptotic bound }\eqref{eq:scatdeepsilon92} 
    \text{ holds for a family of  states  } \varphi^a_{b,\varepsilon}\in \vH^a ;\, b\lneq a.
    \end{equation*}
     \item \label{item:2equi5} $\forall  a\in \vA_2$: \q $\vH^+_{\wave }(H^a)=\vH_{\ac}(H^a)$. 
  \end{enumerate}
     \end{proposition}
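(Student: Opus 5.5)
The implication ii)$\Rightarrow$i) is the easy direction and I would do it first. Fix $a\in\vA_2$ and $\psi^a\in\vH_{\ac}(H^a)$. By ii), $\psi^a\in\vH^+_{\wave}(H^a)$, so up to an error $\varepsilon$ we may write $\psi^a$ as a finite sum $\sum_\beta W^+_\beta(H^a)\psi_\beta$. Here the channels are $\beta=(b,\lambda^\beta,u^\beta)$ with $b\lneq a$ (for $b=a_{\min}$ relative to $a$ this is the free channel), and the $\psi_\beta$ are taken in $L^2(\bX^a_b)$. By definition of the channel wave operators,
\begin{equation*}
  U^a(t)W^+_\beta(H^a)\psi_\beta - U^a_b(t)\parb{u^\beta\otimes\psi_\beta}\to 0 .
\end{equation*}
Grouping the channels with the same $b$ gives $\varphi^a_{b,\varepsilon}=\sum_{\beta:\,b\text{ fixed}}u^\beta\otimes\psi_\beta\in\vH^a$. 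Unitarity of $U^a(t)$ then yields \eqref{eq:scatdeepsilon92}, even with $\limsup$.

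For i)$\Rightarrow$ii) I would proceed by induction on the length of chains below $a$, i.e. on the number of bodies of $H^a$, mimicking the proof of Theorem \ref{thm:ACn-body-short3}. In the base case $a$ covers only $a_{\min}$, and ii) for it is Theorem \ref{thm:ACn-body-short2}; alternatively it follows directly from i), since then $U^a_{a_{\min}}$ is free. For the induction step, fix $a$ and assume ii) for all $c\lneq a$ with $c\in\vA_2$. Take $\psi^a\in\vH_{\ac}(H^a)$ and $\varepsilon>0$, and choose $\varphi^a_{b,\varepsilon}$ as in i). Since i) only gives a $\liminf$, pick a sequence $t_k\to\infty$ along which the bound holds.

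The key step is to show that, for each $b\lneq a$, the state $U^a_b(t_k)\varphi^a_{b,\varepsilon}$ is, up to a small error, of the form $U^a_b(t_k)\chi$ with $\chi$ a finite sum of $u^\beta\otimes\psi$, where $\beta$ are channels of $H^b$ (eigen-channels, or free for $b=a_{\min}$). For this I would split $\varphi^a_{b,\varepsilon}$ using $g_n(U^b(1))+\bar g_n(U^b(1))$ exactly as in the proof of Theorem \ref{thm:ACn-body-short3}:
\begin{itemize}
\item the $\bar g_n$ part converges to the pure point part, i.e. to the channels with cluster decomposition $b$;
\item on the $g_n$ part, which is absolutely continuous for $U^b(1)$, the induction hypothesis $\vH^+_{\wave}(H^b)=\vH_{\ac}(H^b)$ together with the partial inner product/Parseval argument (leading to \eqref{eq:formEN}) expresses it via $P^b_\beta$ for $\beta$ with cluster decomposition below $b$.
\end{itemize}
Channel orthogonality (Theorem \ref{thm:ACn-body-short1}) reduces both parts to finite sums. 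Then, with $W^+_\beta(H^a)$ existing by Theorem \ref{thm:ACn-body-short1}, one gets
\begin{equation*}
  \norm[\Big]{U^a(t_k)\psi^a-\sum_\beta U^a(t_k)W^+_\beta(H^a)\psi_\beta}\leq 3\varepsilon
\end{equation*}
for large $k$. Unitarity transfers this to $t=0$, so $\psi^a$ is within $3\varepsilon$ of $\vH^+_{\wave}(H^a)$. Closedness of $\vH^+_{\wave}(H^a)$ gives ii) for $a$.

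The main obstacle I anticipate is that i) only provides $\varphi^a_{b,\varepsilon}\in\vH^a$, with no finite-energy or absolute-continuity information. The proof of Theorem \ref{thm:ACn-body-short3} needed $\vH^+_{\ener}$ in order to invoke the induction hypothesis $\vH^+_{\ener}=\vH^+_{\wave}$. Here, however, the induction hypothesis is full completeness $\vH^+_{\wave}(H^b)=\vH_{\ac}(H^b)$, so only absolute continuity of the partial components $\inp{\psi_{b,j},\varphi^n}$ is required. That is automatic after applying $g_n(U^b(1))$, since $g_n$ vanishes on $\vT_\p$ and $\sigma_{\sc}=\emptyset$. Care is also needed to handle the $\liminf$, which I would do via the subsequence $t_k$ and unitarity, as indicated above.
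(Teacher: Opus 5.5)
Your proposal is correct and is essentially the paper's proof. The direction ii)$\Rightarrow$i) follows directly from the definition of the channel wave operators, and i)$\Rightarrow$ii) goes by induction on the number of bodies, rerunning the proof of Theorem~\ref{thm:ACn-body-short3} with hypothesis i) at the top level in place of Proposition~\ref{prop:negB}, and with subsystem completeness $\vH^+_{\wave}(H^b)=\vH_{\ac}(H^b)$ in place of $\vH^+_{\ener}=\vH^+_{\wave}$. Your two observations are exactly the details the paper leaves implicit in its brief ``mimic the proof'' argument: absolute continuity of the partial components is automatic after applying $g_n(U^b(1))$, and the $\liminf$ suffices because the channel limits exist and $U^a(t)$ is unitary.
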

   \begin{proof}
     Clearly \ref{item:2equi5} implies \ref{item:1asymp4}, and
      \ref{item:2equi5}  is valid for $N=2$ thanks to  Theorem
     \ref{thm:ACn-body-short2}. 

 For  $N\geq 3$
the set $\vA_3\neq \emptyset$.  Suppose $N\geq 3$ and that
$\ref{item:1asymp4}_{N'}$ is known to imply $\ref{item:2equi5}_{N'}$
for  $N'\leq N-1$, then we need to show this  implication  for
  $N'=N$. Hence (assuming
    $a_0$ has length $N$)   we know that $\vH^+_{\wave}(H^a)=\vH_{\ac}(H^a)$ for all
    $a\in \vA_3$. Using  in addition  \eqref{eq:scatdeepsilon92}  for $a=a_0$ 
    we can then 
    mimic the above proof of Theorem \ref{thm:ACn-body-short3}
  to obtain $\ref{item:2equi5}_{N}$, completing the
  induction proof.
\end{proof}

\subsection{Proof of Propositions \ref{thm:ACn-body-short3EXC} 
and \ref {prop:asympt-compl-anoth9}}\label{sec:Proof
of Theorem (N2)}

The assumption $\psi\in \vH^+_{\ener}$ of Proposition \ref{prop:negB} was used in the proof  of the proposition to provide factors of the form $h(M_i)$ or  $h(M_b)$, $h\in C_\c^\infty(\R)$,  in the definition of $S_a(t)$ in (\ref{eq:approxINT}). More precisely the assumption allowed us to freely multiply the evolution $\psi(t)$ by $S_a(t)$  (as in  \ref{eq:approxINT}). If we only know that  $\psi\in \vH_{\ac}$, then we are not a priori allowed to use (\ref{eq:approxINT}). This suggests that we need to split an arbitrarily
given  $\psi=g(U(1))\psi\in \vH_{\ac}$ into a sum of two  pieces, 
 say $\psi=\psi_{\rm ie}+\psi_{\rm fi}$, where $\psi_{\rm ie}\in \vH^+_{\rm ie}$ and that (\ref{eq:approxINT}) applies to  $\psi$ replaced by $\psi_{\rm fi}$. 
 This splitting  would mean that  we have the following version of \eqref{eq:scatdeepsilon} at disposal.

For any $\varepsilon>0$ there exists a family of states
  $\set[[big]{\varphi_{a,\varepsilon}\in \vH\mid a\in \vA_1}$ such that 
   \begin{equation}
\label{eq:scatdeepsilon97}
      \limsup _{t\to \infty}\, \norm[\Big]{U(t)\psi_{\rm fi}-\sum_{a\in \vA_1}
        U_a(t)\varphi_{a,\varepsilon}}\leq \varepsilon .
   \end{equation} 
  Note that \eqref{eq:scatdeepsilon} is the key estimate in Subsection \ref{sec:Proof
    of Theorem (N1)}. Similarly the version given by \eqref{eq:scatdeepsilon92} is a main ingredient in  Subsection \ref{subsec:Asymptotic completeness, another potential scheme}.

  \begin{subequations}
  To make such a decomposition $\psi= \psi_{\rm ie}+\psi_{\rm
    fi}$, we need to rewrite the construction of the asymptotic observable  
  $M^+$ of Lemma \ref {lemma:asympOBser}. It is now determined   by 
  \begin{equation}
  \label{eq:stro9}
  h(M^+) \psi=\vHlim_{t\to \infty} \,U(t)^{-1}h\parb{ M}\psi(t) \in\vH_{\ac};\q h\in C_\c(\R),\,\psi(t)=U(t)\psi.
\end{equation}  Note that we here used $M$ rather than $\wt M_t$ as in
\eqref{eq:stro}, which however thanks to  \eqref{eq:similarLOC}
amounts to the same limiting operator.  We recall that
\begin{equation}\label{eq:resolution05}
  \inp{\psi, h(M^+)\psi}=\int_{\R}\, h(x)\,\vE^+_\psi(\d  x),
\end{equation} where $\vE^+(\cdot)$ is the spectral resolution of resolution of the positive operator $M^+$  as defined on the subspace
$\vE^+(\R_+)\vH_{\rm ac}\subseteq \vH_{\rm ac}$.
 \end{subequations}

Let us introduce the notation $M_0^+=M^+$, $\vE^+_0=\vE^+(\R_+)$ and $\vH^+_0=\vE^+_0\vH_{\rm ac}$. For the other factors $M_1, \dots M_I$ used in the definiton of $S_a(t)$ in (\ref{eq:approxINT}) we introduce similarly positive asymptotic observables $M^+_i$ on  subspaces  
$\vH^+_i\subseteq \vH^+_0$ by 
\begin{align*}
  &h(M_i^+) \psi=\vHlim_{t\to \infty } \,U(t)^{-1}h\parb{ M_i}\psi(t) \in\vH^+_0,\q i=1,\dots, I,\, \psi \in\vH^+_0,\\
  &h(M_i^+)=\int_{\R}\, h(x)\,\vE^+_i(\d  x),\q \vH^+_i=\vE_i^+\vH^+_0,\q \vE_i^+=\vE^+_i(\R_+).
\end{align*}

The above projections $\vE_i^+$, $i=0,\dots, I$ commute. Hence their product, say denoted by $ \vE_{\bf M}^+=\Pi_i \,\vE_i^+$, is another  orthogonal projection. On  the subspace $\vH_{\bf M}^+= \vE_{\bf M}^+ \vH_{\ac}$ we can by the same recipe introduce self-adjoint asymptotic observables for  $M_b$, $b\in \vA_1$, defined on subspaces $\vH^+_b\subseteq \vH^+_{\bf M}$ by 
\begin{align*}
  &h(M_b^+) \psi=\vHlim_{t\to \infty} \,U(t)^{-1}h\parb{ M_b}\psi(t) \in \vH_{\bf M}^+ ,\q b\in \vA_1,\, \psi \in \vH_{\bf M}^+ ,\\
  &h(M_b^+)=\int_{\R}\, h(x)\,\vE^+_b(\d  x),\q \vH^+_b=\vE_b^+\vH^+_{\bf M},\q \vE_b^+=\vE^+_b(\R).
\end{align*} Their product, say denoted by $ \vE_{\vA_1}^+=\Pi_{b} \,\vE_b^+$, is another orthogonal projection, and so is $  \vE_{\rm tot}^+:=\vE_{\vA_1}^+\vE_{\bf M}^+ $ projecting onto $ \vH_{\rm tot}^+:=\vE_{\rm tot}^+\vH_{\ac}$.

Now, back to our decomposition problem $\psi= \psi_{\rm ie}+\psi_{\rm fi}$, we define $ \psi_{\rm fi}= \vE_{\rm tot}^+ \psi$ and $\psi_{\rm ie}=\psi- \ \psi_{\rm fi}$. From the very construction it follows  that indeed $\psi_{\rm ie}\in \ran( I - \vE_{\rm tot}^+ )\subseteq \vH^+_{\rm ie}$, as wanted. Here we used that $(I-\vE^+_0)\vH_{\rm ac}\subseteq \vH^+_{\rm ie}$, $(I-\vE^+_i)\vH^+ _{0}\subseteq \vH^+_{\rm ie}$  for $i=1, \dots, I$  and  also that $(I-\vE^+_b)\vH_{\bf M}^+ \subseteq \vH^+_{\rm ie}$ for $b\in \vA_1$. On the other hand the evolution of the vector $ \psi_{\rm fi}$ conforms with  (\ref{eq:approxINT}) up to an error that vanishes in the limits $\kappa\to 0$ and $K,L\to \infty$, i.e. within any  given  $\varepsilon$-tolerance. Hence indeed (\ref{eq:approxINT}) applies to  $\psi$ replaced by $\psi_{\rm fi}$, as wanted. We then conclude \eqref{eq:scatdeepsilon97} by  mimicking  our proof of \eqref{eq:scatdeepsilon} (which was based on (\ref{eq:approxINT}) for a given $\psi=g(U(1))\psi\in \vH^+_{\ener}$).
\begin{proof}[\bf Proof of the propositions] We mimic in both cases the proof of Theorem \ref{thm:ACn-body-short3}  using the input \eqref{eq:scatdeepsilon97}.

  For Propositon \ref{thm:ACn-body-short3EXC} we use the input \eqref{eq:scatdeepsilon97} (for any given   $\psi=g(U(1))\psi\in \vH_{\ac}$) in combination with  Theorem \ref{thm:ACn-body-short2}.   The proof is completed without induction. Note that indeed the argument shows that $ \psi_{\rm fi}=\psi- \psi_{\rm ie}\in \vH^+_{\wave}$. 

  For Propositon \ref {prop:asympt-compl-anoth9} we argue  similarly  (for the difficult implication), but now we use   induction  proceeding  completely similarly to Subsection \ref{subsec:Asymptotic completeness, another potential scheme} (or similarly  to Subsection \ref{sec:Proof
  of Theorem (N1)}). We  conclude that  $\psi \in \vH^+_{\wave}$, as wanted. 
  \end{proof}


\begin{thebibliography}{DoGa}

 \bibitem[Ad]{Ad} T. Adachi,  
 \emph{Asymptotic completeness for N-body quantum systems with
  long-range interactions in a time-periodic electric field}, 
Comm. Math. Phys. \textbf{275} (2007), 443--477.

\bibitem[AT]{AT} T. Adachi,\quad H. Tamura,  \emph{Asymptotic completeness for
      long-range   many-particle  systems with Stark effect. II},
  Comm. Math. Phys. \textbf{174} (1996),  537--559.

 \bibitem[BM]{BM}
A. Black,  T. Malinovitch, \emph{Scattering for Schr{\"o}dinger operators
  with conical decay},  
J.   Funct. Anal.   \textbf{288}  (2025),  37 pp.

\bibitem[De]{De}
J. Derezi\'nski, \emph{Asymptotic completeness for $N$-particle long-range quantum
    systems}, Ann. of Math.  \textbf{38}   (1993),
  427--476.


\bibitem[DG]{DG}
J. Derezi{\'n}ski, C. G{\'e}rard, \emph{Scattering theory of
 classical and quantum {$N$}-particle systems}, Texts and Monographs in
  Physics,  Berlin, Springer 1997.

\bibitem[GGM]{GGM} V. Georgescu, C. G{\'e}rard, J.S. M{\o}ller, \emph{Commutators, $C_0$-semigroups and resolvent estimates}. J. Funct. Anal. \textbf{216} (2004),  303--361. 


\bibitem[Gr]{Gra} G.M. Graf, \emph{Asymptotic completeness for
    $N$-body short-range quantum systems: a new proof},
  Commun. Math. Phys. \textbf{132} (1990), 73--101.

\bibitem[Ho]{Ho} J.S. Howland,  \emph{Scattering theory for
Hamiltonians periodic in time},   Indiana Univ. Math. J. \textbf{28}
 (1979),   471--494.


\bibitem[HMS]{HMS}I. Herbst,  J.S. M{\o}ller, E.  Skibsted, \emph{Asymptotic
    completeness for N-body Stark Hamiltonians},
  Comm. Math. Phys. \textbf{174} (1996),  509--535.

  \bibitem[HS]{HS} W. Hunziker, I. Sigal, \emph{Time-dependent
      scattering theory of $N$-body quantum systems},  Rev. Math. Phys. \textbf{12} (2000), 1033--1084.

    
 



\bibitem[H{\"o}]{H1} L. H{\"o}rmander, \emph{The analysis of linear
 partial differential operators. {I--IV}}, Berlin, Springer
 1983--85.

\bibitem[Is]{Is}
 H. Isozaki, \emph{Many-body Schr{\"o}dinger equation--Scattering Theory and Eigenfunction
Expansions}, Mathematical Physics Studies, Singapore, Springer  2023. 



\bibitem[Ka1]{Ka1} T. Kato, \emph{Wave operators and similarity for some non-selfadjoint operators},
  Math. Ann.  \textbf{162} (1966), 258--276.


\bibitem[Ka2]{Ka2} T. Kato, \emph{Smooth operators and commutators},
  Studia Math. \textbf{31} (1968), 535--546.

\bibitem[KY]{KY} H. Kitada, K. Yajima,  \emph{Bound states and scattering states for time
periodic hamiltonians}, Ann. Inst. Henri Poincar\'e \textbf{39}  (1983), 145--157.
  


  
\bibitem[MS]{MS} J.S. M\o ller, E. Skibsted,   \emph{Spectral theory of time-periodic
many-body systems},  Adv. Math. {\bf 188} 
 (2004),
137--221.


\bibitem[Na]{Na} S. Nakamura,   \emph{Asymptotic completeness for three-body Schrödinger equations with time-periodic potentials},
J. Fac. Sci. Univ. Tokyo Sect. IA Math. {\bf 33} (1986),  379--402.

\bibitem[RS]{RS}
M.~Reed, 
B.~Simon, \emph{Methods of modern mathematical physics {I}--{I\hspace{-.1em}V}},
New York, Academic Press 1972-78.

\bibitem[Ru]{Ru} W. Rudin, \emph{Functional Analysis}, New Delhi, Tata  McGraw-Hill, TMH edition 1973.

\bibitem[SS]{SS}
I. Sigal, A. Soffer, \emph{The $N$-particle scattering problem:
  asymptotic completeness for short-range systems},  Ann. of Math. (2) \textbf{126} (1987), 35--108.


\bibitem[Sk1]{Sk1}
 E. Skibsted, \emph{Stationary scattering theory: the $N$-body
   long-range case},  Commun. Math. Phys. \textbf{401} (2023),  2193--2267.
   



\bibitem[Sk2]{Sk2}
E. Skibsted, \emph{Stationary completeness: the $N$-body
  short-range case},
Adv. Math. \textbf{481} (2025), 1--50.

  \bibitem[Yaf]{Ya1} D.R. Yafaev, \emph{Radiation conditions and
    scattering theory for $N$-particle Hamiltonians},
  Commun. Math. Phys. \textbf{154}  (1993), 523--554.

\bibitem[Yaj1]{Yaj1}  K. Yajima,  \emph{Scattering theory for
Schr{\"o}dinger equations with potentials periodic in time},  
J. Math. Soc. Japan \textbf{29} (1977), 729--743.

\bibitem[Yaj2]{Yaj2}  K. Yajima,  \emph{
Boundedness of energy for $N$-body Schr{\"o}dinger  equations with time dependent small potentials}, preprint 2024,  
https://doi.org/10.48550/arXiv.2403.07400


\bibitem[Yo]{Yo}  K. Yoshida, \emph{Functional Analysis},  Berlin, Springer  1965.

\end{thebibliography}
\end{document}